\newcommand{\AECh}[2]{\textcolor{magenta}{\sout{#1}}\textcolor{blue}{#2}}
\def\namedlabel#1#2{\begingroup
   \def\@currentlabel{#2}%
   \label{#1}\endgroup
}
\newcommand{\mmid}[0]{\parallel}
\newcommand{\gammatilde}[0]{\tilde{\gamma}}
\newcommand{\cC}{\mathcal{C}}
\newcommand{\ppar}{\ \par}
\newcommand{\Size}[1]{\left\lvert #1 \right\rvert}
\newcommand{\Span}[1]{\left\langle\, #1 \,\right\rangle}
\newcommand{\Set}[1]{\left\{ #1 \right\}}
\newcommand{\F}{\mathbb{F}}
\newcommand{\Gal}{\mathop{\mathrm{Gal}}\nolimits}
\newcommand{\ord}{\mathop{\mathrm{ord}}\nolimits}
\renewcommand{\epsilon}{\varepsilon}
\renewcommand{\theta}[0]{\vartheta}
\renewcommand{\phi}[0]{\varphi}
\DeclareMathOperator{\GL}{GL}
\DeclareMathOperator{\End}{End}
\DeclareMathOperator{\Aut}{Aut}
\DeclareMathOperator{\Inn}{Inn}
\DeclareMathOperator{\Hol}{Hol}
\DeclareMathOperator{\diag}{diag}
\DeclareMathOperator{\Stab}{Stab}
\DeclareMathOperator{\Orb}{Orb}
\DeclareMathOperator{\Norm}{Norm}
\DeclareMathOperator{\tr}{tr}
\DeclareMathOperator{\id}{id}
\DeclareMathOperator{\Perm}{Perm}
\newtheorem{dummy}{Dummy}
\numberwithin{dummy}{section}
\numberwithin{figure}{section}
\newtheorem{theorem}[dummy]{Theorem}
\newtheorem{remark}[dummy]{Remark}
\newtheorem{lemma}[dummy]{Lemma}
\newtheorem{recap}[dummy]{Recap}
\newtheorem{proposition}[dummy]{Proposition}
\newtheorem{prop}[dummy]{Proposition}
\theoremstyle{definition}
\newtheorem{definition}[dummy]{Definition}
\newtheorem*{remark*}{Remark}
\theoremstyle{remark}
\def\imod#1{\allowbreak\mkern10mu({\operator@font mod}\,\,#1)}
\newcounter{todocounter}
\numberwithin{equation}{section}
\definecolor{MyGreen}{RGB}{20, 140, 60}
\definecolor{airforceblue}{rgb}{0.36, 0.54, 0.66}
\definecolor{celestialblue}{rgb}{0.29, 0.59, 0.82}
\definecolor{amaranth}{rgb}{0.9, 0.17, 0.31}
\definecolor{antiquefuchsia}{rgb}{0.57, 0.36, 0.51}
\definecolor{awesome}{rgb}{1.0, 0.13, 0.32}
\definecolor{atomictangerine}{rgb}{1.0, 0.6, 0.4}
\definecolor{brickred}{rgb}{0.8, 0.25, 0.33}
\definecolor{palered-violet}{rgb}{0.86, 0.44, 0.58}
\definecolor{palerobineggblue}{rgb}{0.59, 0.87, 0.82}
\definecolor{caribbeangreen}{rgb}{0.0, 0.8, 0.6}
\definecolor{dodgerblue}{rgb}{0.12, 0.56, 1.0}
\begin{document}

\date{24 March 2023, 15:50 CET -- version 5.12}
      
\title[$p^{2} q$]%
      {Hopf-Galois structures on\\
        extensions of degree $p^{2} q$\\
        and skew braces of order $p^{2} q$:\\
        the elementary abelian Sylow $p$-subgroup case}      
      
\author{E. Campedel}

\address[E.~Campedel]%
{Dipartimento di Matematica e Applicazioni\\
Edificio U5\\
Universit\`a degli Studi di Milano-Bicocca\\
via Roberto Cozzi, 55\\
20126 Milano}

\email{e.campedel1@campus.unimib.it}

\author{A. Caranti}

\address[A.~Caranti]%
 {Dipartimento di Matematica\\
  Universit\`a degli Studi di Trento\\
  via Sommarive 14\\
  I-38123 Trento\\
  Italy} 

\email{andrea.caranti@unitn.it} 

\urladdr{https://caranti.maths.unitn.it/}

\author{I. Del Corso}

\address[I.~Del Corso]%
        {Dipartimento di Matematica\\
          Universit\`a di Pisa\\
          Largo Bruno Pontecorvo, 5\\
          56127 Pisa\\
          Italy}
\email{ilaria.delcorso@unipi.it}

\urladdr{http://people.dm.unipi.it/delcorso/}

\begin{abstract}    
  Let $p, q$  be distinct primes, with  $p > 2$. In a  previous paper we
  classified the  Hopf-Galois structures on Galois  extensions of degree
  $p^{2}  q$, when  the  Sylow  $p$-subgroups of  the  Galois group  are
  cyclic.  This  is equivalent to  classifying the skew braces  of order
  $p^2q$, for which the Sylow  $p$-subgroups of the multiplicative group
  is cyclic.   In this paper  we complete the classification  by dealing
  with the  case when the  Sylow $p$-subgroups  of the Galois  group are
  elementary abelian.

  According to  Greither and  Pareigis, and  Byott, we  will do  this by
  classifying the regular subgroups of the holomorphs of the groups $(G,
  \cdot)$ of order  $p^{2} q$, in the case when  the Sylow $p$-subgroups
  of $G$ are elementary abelian.
  
  We  rely   on  the  use   of  certain  gamma   functions  $\gamma:G\to
  \Aut(G)$. These  functions are  in one-to-one correspondence  with the
  regular subgroups  of the holomorph  of $G$, and are  characterised by
  the  functional equation  $\gamma(g^{\gamma(h)} \cdot  h) =  \gamma(g)
  \gamma(h)$, for $g,  h \in G$.  We develop methods  to deal with these
  functions, with  the aim of  making their enumeration easier  and more
  conceptual.
\end{abstract}

\subjclass[2010]{12F10 16W30 20B35 20D45}

\keywords{Hopf-Galois extensions, Hopf-Galois structures, holomorph,
  regular subgroups, braces, skew braces}

\thanks{This paper is part of the thesis submitted by E. Campedel as a
  requirement for the PhD degree at the University of Milano--Bicocca.
  \\
  The authors are members of INdAM---GNSAGA. The authors
  gratefully acknowledge support from the 
  Departments of  Mathematics of  the Universities  of Milano--Bicocca,
  Pisa, and Trento.}

\maketitle

\thispagestyle{empty}

%\input{Introduction}
% !TEX root =p2q-ea.tex
\section{Introduction}
\label{sec:intro}

\subsection{The general problem, and the classical approach}

Let   $L/K$   be   a   finite  Galois   field   extension,   and   let
$\Gamma=\Gal(L/K)$.  The  group  algebra  $K[\Gamma]$  is  a  $K$-Hopf
algebra,  and  its   natural  action  on  $L$  endows   $L/K$  with  a
Hopf--Galois structure. In  general this is not  the only Hopf--Galois
structure  on  $L/K$,  and  the  study  of  Galois  module  structures
other than the  classical one  is  relevant, for  example in  the
context  of  algebraic  number  theory  where  different  Hopf--Galois
structures  may  behave differently  at  integral  level (see  Child's
book~\cite{ChildsBook} for an overview and \cite{Byott97} for a specific
result).
 
 This motivated the study  of the  Hopf-Galois structures on a finite
 field extension $L/K$ and their classification. 

On   the  other   hand,  the   celebrated  result   by  Greither   and
Pareigis~\cite[Theorem~2.1]{GP}    showed   that    all   Hopf--Galois
structures  on $L/K$  can be  described  in a  purely group  theoretic
way.  In the  reformulation  due to  Byott~\cite{Byott96} this  result
states that to each Hopf--Galois structure  on $L/K$ one can associate
a group $G$, with the same  cardinality as $\Gamma$, and such that the
holomorph  $\Hol(G)$,  regarded  as  a subgroup  of  $\Perm(G)$,
contains a regular subgroup isomorphic to $\Gamma$.
 
As first noticed by  Bachiller in~\cite{Bac16}, and
clearly  explained  in the  appendix  to  \cite{SV2018} by  Byott  and
Vendramin,  classifying  the  regular  subgroups   of  $\Hol(G)$  is
equivalent to  determining the operations  ``$\circ$'' on $G$  such that
$(G, \cdot, \circ)$ is a (right) skew brace (in the relevant literature it
is more common to use left skew  braces; we have translated the statements
in the  literature from  left to  right). Therefore,  the Hopf--Galois
structures on  an extension  with Galois group  isomorphic to  a group
$\Gamma$ correspond  bijectively to  the skew  braces $(G,  \cdot, \circ)$
with $(G,\circ)=\Gamma$; see also the recent work~\cite{st23}.  This has further motivated  the study of
this context, which in recent years has been deeply investigated.

\begin{definition}
\label{def:e-e'}
  Let $\Gamma$, $G=(G, \cdot)$ be finite groups with $\Size{G} =  \Size{\Gamma}$. We define the following numbers.
  \begin{enumerate}
  \item $e(\Gamma, G)$, the number of Hopf-Galois structures of type $G$ on a Galois extension with group $\Gamma$,   
  \item $e'(\Gamma,G)$, the number of regular subgroups of $\Hol(G)$ isomorphic to $\Gamma$, 
  \item $e''(\Gamma, G)$, the total number of (right) skew braces $(G, \cdot, \circ)$ such that $\Gamma     \cong (G,\circ)$.
  \item $f'(\Gamma,G)$, the number of classes of regular subgroups of $\Hol(G)$ isomorphic to $\Gamma$, under conjugation by elements of $\Aut(G)$,
  \item $f''(\Gamma, G)$, the number of isomorphism classes of skew braces $(G, \cdot, \circ)$ such that $\Gamma \cong (G,\circ)$.
  \end{enumerate}
\end{definition}

\begin{remark*}
  Recall that, given a group $(G, \cdot)$, by the \emph{(total) number
    of skew  braces on $(G,  \cdot)$} we  mean the number  of distinct
  operations ``$\circ$'' on the set  $G$ such that $(G, \cdot, \circ)$
  is a skew brace.
\end{remark*}

\begin{theorem}
 \label{th:hgs-regular-skew}
  Let  $L/K$ be  a finite  Galois  field extension  with Galois  group
  $\Gamma$.  For  any group $G=(G, \cdot)$  with $\Size{G} =  \Size{\Gamma}$,
it is equivalent to determine 
\begin{description}
\item[{{\cite[Theorem 4.2]{skew}}}\namedlabel{item:skew-GV17}{\cite[Theorem 4.2]{skew}}] $e'(\Gamma,G)$ and $e''(\Gamma, G)$;
\item[{{\cite[Proposition A.3]{SV2018}}}\namedlabel{item:classes-BV18}{\cite[Proposition A.3]{SV2018}}] $f'(\Gamma,G)$ and $f''(\Gamma, G)$.
\end{description}
The number $e(\Gamma, G)$ is given by
\begin{description}
  \item[{{\cite[Corollary p.~3220]{Byott96}}}\namedlabel{item:hgs-byott96}{\cite[Corollary p.~3220]{Byott96}}]
  \begin{equation}
    \label{eq:e_vs_e-prime}
    e(\Gamma,   G)  =   \frac{\Size{\Aut(\Gamma)}}{\Size{\Aut(G)}}  \,
    e'(\Gamma, G).
  \end{equation}
  Moreover $e(\Gamma)$, the total  number of Hopf-Galois structures on
  $L/K$, is given by $\sum_{G}e(\Gamma, G)$  where the sum is over all
  isomorphism types $G$ of groups of order $\Size{\Gamma}$.
\end{description}  
 \end{theorem}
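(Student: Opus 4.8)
The three assertions are a synthesis of known results, and the plan is to recall the arguments behind each. Throughout, write $\lambda_{\Gamma}\colon\Gamma\to\Perm(\Gamma)$ for the left regular representation and realise $\Hol(G)$ inside $\Perm(G)$ as the normaliser of the right regular representation $\rho_{G}(G)$, so that $\Hol(G)=\rho_{G}(G)\rtimes\Aut(G)$. By the theorem of Greither--Pareigis recalled above, the Hopf--Galois structures of type $G$ on $L/K$ correspond bijectively to the regular subgroups $N\le\Perm(\Gamma)$ with $N\cong G$ that are normalised by $\lambda_{\Gamma}(\Gamma)$; thus $e(\Gamma,G)$ is exactly the number of such subgroups.

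For the skew brace assertions the key is the dictionary between regular subgroups of $\Hol(G)$ and skew brace operations on $(G,\cdot)$. Given a regular subgroup $H\le\Hol(G)$, the orbit map $H\to G$, $h\mapsto h(1)$, is a bijection; transporting the group law of $H$ along it defines an operation $\circ$ on the set $G$, and one checks that $(G,\cdot,\circ)$ is a right skew brace with $(G,\circ)\cong H$. Conversely, a right skew brace $(G,\cdot,\circ)$ produces a map $\gamma\colon G\to\Aut(G,\cdot)$ satisfying the functional equation $\gamma(g^{\gamma(h)}\cdot h)=\gamma(g)\gamma(h)$ of the abstract, and hence a regular subgroup of $\Hol(G)$ indexed by $G$ and isomorphic to $(G,\circ)$; these two constructions are mutually inverse. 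Restricting to the subgroups, respectively operations, for which the relevant group is isomorphic to $\Gamma$ gives a bijection, so in particular $e'(\Gamma,G)=e''(\Gamma,G)$. Since an isomorphism of skew braces on the fixed group $(G,\cdot)$ is in particular an automorphism of $(G,\cdot)$, and conjugation in $\Hol(G)$ by $\Aut(G,\cdot)$ matches this relabelling under the dictionary, one gets $f'(\Gamma,G)=f''(\Gamma,G)$ as well; this is \ref{item:skew-GV17} and \ref{item:classes-BV18}.

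For Byott's formula the idea is to count the set
$\mathcal{S}=\{\mu\colon G\hookrightarrow\Perm(\Gamma)\ :\ \mu(G)\text{ is regular and normalised by }\lambda_{\Gamma}(\Gamma)\}$
in two ways. First, $\Aut(G)$ acts freely on $\mathcal{S}$ by precomposition, and two elements share an orbit exactly when they have the same image; since these images are precisely the subgroups counted by $e(\Gamma,G)$, we get $\Size{\mathcal{S}}=\Size{\Aut(G)}\,e(\Gamma,G)$. Second, for $\mu\in\mathcal{S}$ the regularity of $N:=\mu(G)$ on $\Gamma$ lets us identify $\Perm(\Gamma)$ with $\Perm(N)$ carrying $N$ to its own regular representation, hence $\Norm_{\Perm(\Gamma)}(N)$ with $\Hol(N)$, and the isomorphism $\mu\colon G\to N$ further identifies this with $\Hol(G)$; transporting $\lambda_{\Gamma}(\Gamma)$ through yields an injective homomorphism $\Gamma\hookrightarrow\Hol(G)$ with regular image, and this assignment is a bijection of $\mathcal{S}$ onto the set $\mathcal{T}$ of all such homomorphisms. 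As $\Aut(\Gamma)$ acts freely on $\mathcal{T}$ by precomposition with orbit set the regular subgroups of $\Hol(G)$ isomorphic to $\Gamma$, we have $\Size{\mathcal{T}}=\Size{\Aut(\Gamma)}\,e'(\Gamma,G)$, and equating the two counts yields \eqref{eq:e_vs_e-prime}, i.e.\ \ref{item:hgs-byott96}. Finally, each Hopf--Galois structure on $L/K$ has a well-defined type, namely the isomorphism class of its associated regular subgroup, so the set of all Hopf--Galois structures partitions by type, giving $e(\Gamma)=\sum_{G}e(\Gamma,G)$ over isomorphism types $G$ of order $\Size{\Gamma}$.

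The well-definedness of $\gamma$ and its functional equation, the freeness of the two $\Aut$-actions, and the partition by type are all routine; the one genuinely delicate point is the holomorph identification underpinning Byott's two-way count---that for a regular $N\le\Perm(\Gamma)$ one has a natural isomorphism $\Norm_{\Perm(\Gamma)}(N)\cong\Hol(N)$ intertwining the regular actions, under which the condition ``$N$ normalised by $\lambda_{\Gamma}(\Gamma)$'' becomes ``the image of $\lambda_{\Gamma}(\Gamma)$ is a regular subgroup of $\Hol(N)\cong\Hol(G)$'', together with the bookkeeping showing the resulting assignment $\mathcal{S}\to\mathcal{T}$ is a bijection compatible with both $\Aut$-actions. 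This is exactly the computation carried out in \ref{item:hgs-byott96}, and everything else follows formally.
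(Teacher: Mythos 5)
Your proposal is correct: it accurately reconstructs the standard arguments behind the three cited results (the Greither--Pareigis/Byott translation with the two-way count of embeddings, and the dictionary between regular subgroups of $\Hol(G)$ and skew brace operations, including the $\Aut(G)$-conjugacy versus isomorphism-class refinement). The paper itself offers no proof beyond the citations, so your sketch is essentially the same approach as the sources the theorem rests on.
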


In the paper~\cite{p2qciclico}, to the  introduction of which we refer
for   more   details   on    the   literature,   we   classified   the
\emph{Hopf-Galois  structures} on  a Galois  extension $L/K$  of order
$p^2q$, where $p$, $q$ are distinct  primes, $p$ is odd, and the Sylow
$p$-subgroups of $\Gamma = \Gal(L/K)$ are cyclic. In the same paper we
also computed, for $(G, \cdot)$ a group of order $p^{2} q$ with cyclic
Sylow  $p$-subgroups,  the \emph{number  of  skew  braces} $(G,  \cdot,
\circ)$, that  is, the number of  group operations ``$\circ$'' on  the set
$G$, such that  $(G, \cdot, \circ)$ is a skew  brace. We also computed
the   number  of \emph{isomorphism  classes} of  such skew  braces,
together with the cardinality of each such  class.

Acri e Bonatto  in~\cite{AcriBonatto_p2q_I, AcriBonatto_p2q_II}, using
a different method, determine  the number of \emph{isomorphism classes
  of \emph{all} skew braces} $(G, \cdot, \circ)$  of order $p^{2} q$; for the
case of groups $(G, \cdot)$ with cyclic Sylow 
$p$-subgroups, their results coincide with ours.

The  classification  of~\cite{p2qciclico}  has been  extended  to  all
groups  of   order  $p^{2}  q$  in   the  PhD  thesis  of   the  first
author~\cite{cam-thesis}.     The    present   paper  completes the work
of~\cite{p2qciclico}, by determining     the
classification of Hopf--Galois structures  on a Galois extension $L/K$
of order $p^2q$, where $p$, $q$  are distinct primes, in the remaining
cases  where $p > 2$  and  the Sylow  $p$-subgroups  of $\Gamma  =
\Gal(L/K)$  are elementary  abelian.   Our
methods work also in the case $p=2$ (see~\cite{cam-thesis}), but we do not
include  this case here,  since the  classification in  the case  $4
q$ has already appeared in~\cite{Ko07, SV2018}.

As  in~\cite{p2qciclico}, we explicitly determine  the number
of  Hopf--Galois  structures  on  $L/K$  for  each  type,  showing  in
particular that  each $G$ with elementary  abelian Sylow $p$-subgroups
defines some structure.

We accomplish this as follows. For any given group $G  = (G, \cdot)$ of
order   $p^{2} q$, with  $p > 2$, and for each group $\Gamma$ of order
$p^{2} q$ with elementary abelian Sylow $p$-subgroups, we determine
the following numbers.

\begin{enumerate}
  \item\label{en:1} The  total number of regular  subgroups of $\Hol(G)$
    isomorphic to $\Gamma$ (Theorem~\ref{number_regular}).
    \\
    This is the
    same, according to Theorem~\ref{th:hgs-regular-skew}~\ref{item:skew-GV17},
    as the total  number of  (right) skew braces  $(G, \cdot,  \circ)$ such
    that $\Gamma \cong (G,\circ)$.
  \item\label{en:2} The  number of  isomorphism classes of  (right) skew
    braces  $(G, \cdot,  \circ)$ such  that $\Gamma  \cong (G,
    \circ)$ (Theorem~\ref{th:number_skew}).
    \\
    This is the same, according to Theorem~\ref{th:hgs-regular-skew}~\ref{item:classes-BV18},
    as the number of  conjugacy  classes  in  $\Hol(G)$  of  regular  subgroups
    isomorphic to $\Gamma$; our numbers here coincide  with the numbers found by
    Acri e Bonatto in~\cite{AcriBonatto_p2q_II}.
    \\
    Additionally, in~Theorem~\ref{th:number_skew} we also
    determine the length of each such conjugacy class.
  \item The  number of Hopf--Galois structures  of type $G$ on  a Galois
    extension    with    Galois    group    isomorphic    to    $\Gamma$
    (Theorem~\ref{number_hgs}).
\end{enumerate}

\subsection{The methods}

As  in   our  previous  paper~\cite{p2qciclico},  we   follow  Byott's
approach, that is, for each group $G  = (G, \cdot)$ of order $p^{2} q$
with elementary abelian Sylow  $p$-subgroups, we determine the regular
subgroups of  $\Hol(G)$ isomorphic  to $\Gamma$.   As we  noted above,
this is in  turn equivalent to determining the right  skew braces $(G,
\cdot, \circ)$ such that $(G, \circ)\cong \Gamma$.

Our method relies on the use  of the alternate brace operation $\circ$
on $G$ mainly through the use of the function
\begin{align*}
  \gamma :\ &G  \to \Aut(G)\\
            &g \mapsto (x \mapsto  (x \circ g) \cdot g^{-1}),
\end{align*}
which is characterised by the functional equation
\begin{equation}
  \label{eq:GFE0}
  \gamma(g^{\gamma(h)} \cdot h) = \gamma(g) \gamma(h).
\end{equation}
(See \cite[Theorem 2.2]{p2qciclico} and the ensuing discussion for the
details.)   The  functions   $\gamma$  satisfying~\eqref{eq:GFE0}  are
called    \emph{gamma   functions    (GF)}   and    we   will    refer
to~\eqref{eq:GFE0} as the \emph{gamma  functional equation (GFE)}. The
GF's are  in one-to-one correspondence  with the regular  subgroups of
$\Hol(G)$,  and occur  naturally in  the  theory of  skew braces.   It
follows  that  to determine  the  number  $e'(\Gamma, G)$  defined  in
Definition~\ref{def:e-e'}
we can  count the number of
functions $\gamma :\ G \to \Aut(G)$ verifying \eqref{eq:GFE0} and such
that, for the operation $\circ$ defined on $G$ by
$$g\circ h=g^{\gamma(h)} h,$$ we have $(G,\circ)\cong\Gamma$.

The classification of the groups of order $p^2q$ is known after
H{\"o}lder \cite{Holder1893}; we have recorded the classification of these groups and of their automorphism
groups in \cite{classp2q}.

To enumerate the gamma functions we  use the general results listed in
Section~\ref{sec:tools}  below; some  of these  had been  developed in
\cite{p2qciclico}. In the course of  our discussion, we will appeal to
some ad-hoc  arguments; indeed,  counting the  gamma functions  in the
case of  groups with  elementary abelian Sylow  $p$-subgroups presents
several additional difficulties compared to the cyclic case.

The first is due to the sheer number of groups involved. Following the
notation  in Section~\ref{sec:the-groups},  we distinguish  the groups
into \textit{types} indexed  by the numbers 5,  6, 7, 8, 9,  10 and 11
(see   Table~\ref{table:grp_aut}).  Each   type   corresponds  to   an
isomorphism class of groups of order  $p^2q$, except for type 8, which
correspond  to $\frac{q-3}{2}$  isomorphism  classes  $G_k$, where  $k
\in\cC_q$, $k\ne 0,\pm 1$ and $G_k\simeq G_{k^{-1}}$.

Our analysis is further complicated by the difficulty of proving the
existence of a Sylow $q$-subgroup which is invariant under its image
under $\gamma$ (see
Subsections~\ref{ssec:G9_kerp}, \ref{ssec:G8_kerp},
\ref{ssec:G7_kerp-inn}, \ref{sub:G7-non-int}), and %
by having to deal
with the case when $\gamma(G)$ is not contained in the group of inner
automorphisms of $G$ (e.g. see Section~\ref{sec: 7}). 
\\

We now sketch the main tools and arguments we will use in counting the
gamma functions. 

We make use of  an argument of  duality, as introduced by A.~Koch and
P.J.~Truman in~\cite{KochTruman}, which we employ in the form spelled out
in~\cite{p2qciclico}. Each gamma function $\gamma$ can 
be  paired   with  a  gamma  function   $\gammatilde$,  which  defines
Hopf-Galois   structures   of    the   same   type   (\cite[Subsection
  2.8]{p2qciclico}).   Under suitable  assumptions,  we  can use  this
pairing to  halve the number of  GF we have to  consider. Moreover, in
some circumstances, the duality argument allows us to choose a GF with
a    kernel     that    is    more    suitable     for    calculations
(Lemma~\ref{lemma:duality} and Proposition~\ref{prop:duality}).

The theory developed in Section~\ref{sec:tools} offers some methods to
build gamma functions on $G$ piecewise. The first tool is
Proposition~\ref{prop:lifting}, which is sort of  a homomorphism theorem
for gamma functions. Under suitable assumptions, it gives a one-to-one
correspondence between certain gamma functions defined on $G$ and the
(relatives) gamma functions defined on a quotient of $G$, which is
smaller and then easier to investigate. We refer to this method as
\textit{lifting and restriction}. 

A further tool is Proposition~\ref{prop:more-lifting}
(\textit{gluing}), which is a generalisation of
Proposition~\ref{prop:lifting}, and describes a way to construct GF's
on $G$, when $G$ is of the form $G=AB$, for $A, B\leq G$, starting
from a \textit{relative gamma function} defined on $A$ (see Definition~\ref{def:GF}) and a relative gamma function defined on $B$. 
\\

Let $r\in\Set{p ,q}$. To apply the tools above it will be useful to know when there exists a Sylow $r$-subgroup $H$ which is $\gamma(H)$-invariant (\textit{invariant} for short).

In~\cite[Theorem 3.3]{p2qciclico} we prove that for $G$ a group of order $p^2q$ and $\gamma$ a GF on $G$, there always exists an invariant Sylow $p$-subgroup.

If $G$ is a group of order $p^2q$, then either $G$ has a unique Sylow $q$-subgroup or it has $p^{f}$ Sylow $q$-subgroups, where $f=1,2$.

In the first case, since the unique Sylow $q$-subgroup $B$ is characteristic, it is invariant. 

In the second case, there are $p$ Sylow $q$-subgroups when $G$ is of type 6, and $p^2$ when $G$ is of type 7, 8, 9, 10 and also %
4 (for the last one we refer to \cite{classp2q},\cite{p2qciclico}).

Let $\gamma$ be a GF in $G$, and consider the action of $\gamma(G)$ on the set $\mathcal{Q}$ of the Sylow $q$-subgroups of $G$. If $p^{2} \mid \Size{\ker(\gamma)}$, then $\Size{\gamma(G)}=1$ or $q$, so that there exists at least one orbit of length $1$, namely there exists $B \in \mathcal{Q}$ which is $\gamma(G)$-invariant.

Moreover, if $q \mid \Size{\ker(\gamma)}$, then there exists a Sylow $q$-subgroup $B$ contained in $\ker(\gamma)$, therefore it is $\gamma(B)$-invariant.

In the remaining cases, namely when $\Size{\ker(\gamma)}=1$ or $p$, we will prove for some specific type of group $G$ that we can find such a Sylow $q$-subgroup (see~\ref{subsub:G6-p-divides-ker}, \ref{ssec:q-sylow-G789}, \ref{ssec:G9-ker-1}, \ref{sub:G7-non-int}, and~\ref{ssec:G-10-ker-1}).
(For the type 4 see~\cite[Subsection 4.4]{p2qciclico}).

Therefore, we obtain the following.
\begin{prop}
\label{remark:invariant-q-Sylow}
If $G$ is a group of order $p^2q$ and $\gamma$ is a GF on $G$, then there always exist both an invariant Sylow $p$-subgroup and an invariant Sylow $q$-subgroup of $G$.
\end{prop}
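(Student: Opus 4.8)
The plan is to prove Proposition~\ref{remark:invariant-q-Sylow} by assembling the partial results already indicated in the discussion preceding it, organizing the argument by the possible sizes of $\ker(\gamma)$. The existence of an invariant Sylow $p$-subgroup is immediate from \cite[Theorem 3.3]{p2qciclico}, so the whole content is the existence of an invariant Sylow $q$-subgroup. I would fix $G$ of order $p^2 q$ and a gamma function $\gamma$ on $G$, write $B$ for a generic Sylow $q$-subgroup, and let $\gamma(G)$ act by conjugation on the set $\mathcal{Q}$ of Sylow $q$-subgroups; a $\gamma(G)$-invariant member of $\mathcal{Q}$ is precisely a fixed point of this action, equivalently an orbit of length $1$.

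First I would dispose of the \emph{easy cases} by a counting/characteristicity argument, exactly as sketched above. If $G$ has a unique Sylow $q$-subgroup it is characteristic, hence $\gamma(B)$-invariant, so we may assume $|\mathcal{Q}| = p^f$ with $f \in \{1,2\}$ (types $4, 6, 7, 8, 9, 10$, per \cite{classp2q}). Next, split on $|\ker(\gamma)|$, which divides $p^2 q$. If $p^2 \mid |\ker(\gamma)|$, then $|\gamma(G)| \in \{1, q\}$; since $\gamma(G)$ acts on $\mathcal{Q}$ with $|\mathcal{Q}| = p^f$ coprime to $|\gamma(G)|$, every orbit has length dividing both $p^f$ and a power of $q$, hence length $1$, giving the invariant $B$. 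If instead $q \mid |\ker(\gamma)|$, then $\ker(\gamma)$ contains a Sylow $q$-subgroup $B$ (being a subgroup whose order is a multiple of $q$), and $\gamma(B) = \{\id\}$ fixes $B$, so $B$ is $\gamma(B)$-invariant. This leaves only $|\ker(\gamma)| \in \{1, p\}$.

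For the \emph{remaining cases} $|\ker(\gamma)| \in \{1, p\}$, I would invoke, type by type, the specific arguments developed later in the paper: type $4$ is handled in \cite[Subsection 4.4]{p2qciclico}, and types $6, 7, 8, 9, 10$ are handled in Subsections~\ref{subsub:G6-p-divides-ker}, \ref{ssec:q-sylow-G789}, \ref{ssec:G9-ker-1}, \ref{sub:G7-non-int}, and~\ref{ssec:G-10-ker-1} respectively. Since the Proposition is stated here but its proof genuinely rests on results proved throughout the body of the paper, the natural form of the proof is a short dispatch: reduce to $|\ker(\gamma)| \in \{1,p\}$ by the elementary arguments above, then cite the relevant subsection for each type. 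One should double-check that the listed subsections collectively exhaust all isomorphism types with more than one Sylow $q$-subgroup and all the sub-cases $|\ker(\gamma)| = 1$ and $|\ker(\gamma)| = p$ that actually occur for each such type (for instance, some types may force $\ker(\gamma)$ to have order a multiple of $q$, or may not admit $|\ker(\gamma)| = 1$ at all, which would shorten the case analysis).

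The main obstacle is not in this proof \emph{per se} — here it is bookkeeping — but in the forward references it relies on: the genuinely hard work is establishing, in each of those later subsections, that when $|\ker(\gamma)|$ is $1$ or $p$ one can still locate a Sylow $q$-subgroup that is invariant under its $\gamma$-image, and this is flagged in the introduction itself as one of the principal difficulties of the elementary abelian case (``the difficulty of proving the existence of a Sylow $q$-subgroup which is invariant under its image under $\gamma$''). So I would present Proposition~\ref{remark:invariant-q-Sylow} with a brief proof recording the reduction to $|\ker(\gamma)| \in \{1, p\}$ and pointing to the per-type results, with the understanding that the substantive verification is deferred to those sections and must be completed there for the proof to be valid.
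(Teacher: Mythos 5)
Your proposal is correct and follows essentially the same route as the paper: existence of an invariant Sylow $p$-subgroup from \cite[Theorem 3.3]{p2qciclico}, reduction of the Sylow $q$-case to $\Size{\ker(\gamma)}\in\Set{1,p}$ via the unique/characteristic case, the $p^2\mid\Size{\ker(\gamma)}$ fixed-point count, and the $q\mid\Size{\ker(\gamma)}$ case, with the hard residual cases deferred to the type-by-type subsections exactly as the paper does. One small repair: in the $p^2\mid\Size{\ker(\gamma)}$ step, orbit lengths divide $\Size{\gamma(G)}\in\Set{1,q}$ (not $p^f$), so the correct conclusion is not that every orbit is trivial but that, since $q\nmid p^f$, not all orbits can have length $q$, hence at least one Sylow $q$-subgroup is fixed — which is all that is needed.
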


\subsection{Hopf-Galois structures of order $p^2q$}

\begin{theorem}
  \label{number_hgs}
  Let $L/K$ be a Galois field extension of order $p^{2} q$, where $p$ and $q$ are two distinct primes with  $p > 2$, and let $\Gamma = \Gal(L/K)$.

  Let $G$ be a group of order $p^{2} q$.

  If the Sylow $p$-subgroups of $G$ and $\Gamma$ are not isomorphic, then there are no Hopf-Galois structures of type $G$ on $L/K$.

  If the Sylow $p$-subgroups of $\Gamma$ and $G$ are elementary abelian, then the numbers $e(\Gamma, G)$ of Hopf-Galois structures of type $G$ on $L/K$ are given in the following tables.
  \begin{enumerate}[(i)]
  \item For $q \nmid p-1$:
      \begin{center}
     \scalebox{0.9}{      
      \begin{tabular}{|c | c c|}
      \hline
        \diagbox[width=3.0em]{$\Gamma$}{$G$} & 5  & 11  \\
        \hline
        5 & $p^{2}$ & $2p(p^2-1)$ \\
        11 & $p^2q$ & $2p(1+qp^2-2q)$  \\
        \hline
      \end{tabular}
     } 
    \end{center}
    where the upper left sub-tables of sizes  $1\times 1$ and $2\times 2$ give
    respectively the cases $p \nmid q-1$ and  $p \mid q-1$.
\item For $q \nmid p-1$and  $q \mid p+1$: 
    \begin{center}
     \scalebox{0.9}{
      \begin{tabular}{|c|cc|}
      \hline
        \diagbox[width=3.0em]{$\Gamma$}{$G$}  & 5 & 10 \\
        \hline
        5 & $p^2$ & $p(p-1)(q-1)$ \\
       10 & $p^2$ & $2+2p^2(q-3)-p^3+p^4$ \\
        \hline
      \end{tabular}
     } 
   \end{center}               
  \item For $q \mid p-1$:\\
If $q=2$,
    \begin{center}
     \scalebox{0.9}{ 
      \begin{tabular}{|c | c c c|}
      \hline
        \diagbox[width=3.0em]{$\Gamma$}{$G$} & 5  & 6 & 7  \\
        \hline
        5 & $p^2$ & $2p(p+1)$   & $p(3p+1)$  \\
        6 & $p^2$ & $2p(p+1)$   & $p(3p+1)$  \\
        7 & $p^2$ & $2p^2(p+1)$ & $2+p(p+1)(2p-1)$  \\
        \hline
      \end{tabular}
     } 
    \end{center}
If $q=3$,
    \begin{center}
     \scalebox{0.9}{    
      \begin{tabular}{|c | c c c c|}
      \hline
        \diagbox[width=3.0em]{$\Gamma$}{$G$} & 5  & 6 & 7 & 9 \\
        \hline
        5 & $p^2$ & $4p(p+1)$ & $2p(3p+1)$ 			& $4p(p+1)$ \\
        6 & $p$ & $2p(p+3)$ & $4p(p+1)$ 		& $p(3p+5)$  \\
        7 & $p^2$ & $2p^2(p+1)^2$  & $2+p^2(2p^2+3p+2)$  & $p(p+1)^3$ \\
        9 & $p^2(2p-1)$ & $4p(p^2+1)$ & $2(2p^3+3p^2-2p+1)$ & $2+2p+p^3(p +3)$ \\
        \hline
      \end{tabular}
     } 
    \end{center}    
If $q>3$,
     \begin{center}
     \scalebox{0.9}{
      \begin{tabular}{|l | c c |}
      \hline
        \diagbox[width=7.0em]{$\Gamma$}{$G$} & 5  & 6  \\
        \hline
        5 					  		& $p^2$ & $2p(p+1)(q-1)$  \\
        6 					  		& $p$   & $2p(p+2q-3)$ \\
        7 					  		& $p^2$ & $2p^2(p+1)(pq-2p+1)$  \\
        8, $G_2$ 			  		& $p^3$ & $4p(p^2+pq-3p+1)$  \\        
        8, $G_k \not\simeq G_2$    	& $p^2$ & $4p(p^2+pq-3p+1)$  \\                
        9 					  		& $p^2$ & $4p(p^2+pq-3p+1)$  \\
        \hline
      \end{tabular}
     } 
    \end{center}
    \begin{center}
     \scalebox{0.9}{
      \begin{tabular}{|l | cc |}
      \hline
        \diagbox[width=5.9em]{$\Gamma$}{$G$} & 7  & 9 \\
        \hline
        5  						& $p(3p+1)(q-1)$ & $2p(p+1)(q-1)$  \\
        6 						& $4(p^2+pq-2p)$  & $p(4q+3p-7)$ \\
        7 						& $2+p^2(2p^2+pq+2q-4)$ & $p(p+1)(p^2(2q-5)+2p+1)$  \\
        8, $G_2$ 				& $2p(p^2q-4p+pq+2)$ & $p(p^3+3p^2-14p+4pq-6)$  \\
        8, $G_k \not\simeq G_2$ & $4p(2p^2-5p+pq+2)$ & $p(p^3+5p^2-18p+4pq+8)$ \\        
        9 						& $2(4p^3-9p^2+2p^2q+2p+1)$  & $2+4p+p^2(p^2+5p+4q-16)$ \\
        \hline
      \end{tabular}
      }
    \end{center} 
    \begin{center}
     \scalebox{0.84}{ 
      \begin{tabular}{|c | ccc|}
      \hline
        \diagbox[width=3.0em]{$\Gamma$}{$G$8} & $G\not\simeq G_{\pm 2}$  & $G \simeq G_{\pm 2}$, $q>5$ &  $G \simeq G_{2}$, $q=5$ \\
        \hline
        5 & $4p(p+1)(q-1)$ & $4p(p+1)(q-1)$ & $16p(p+1)$ \\
        6 & $8p(q+p-2)$ & $8p(q+p-2)$ & $8p(p+3)$ \\
        7 & $4p^2(p+1)(pq-3p+2)$  & $4p^2(p+1)(pq-3p+2)$ & $8p^2(p+1)^2$ \\
        8 & Table~\ref{table:G-8-G-circ-8-1} & Table~\ref{table:G-8-G-circ-8-2} & $4(1+p+3p^{2}(p+1))$ \\ 
        9 & $8p(2p^2+pq-5p+2))$ & $4p(3p^2+2pq-8p+3)$ &  $16p(2p^3-2p+p+1)$ \\
        \hline
      \end{tabular}
     } 
    \end{center}    
      \begin{table}[H]
    \caption{$G$ and $\Gamma$ of type 8, $G \simeq G_k \not\simeq G_{\pm 2}$ } 
	\label{table:G-8-G-circ-8-1}
    \centering
	\begin{center}
     \scalebox{0.88}{
      \begin{tabular}{|l | c |}
      \hline		
        \multirow{2}{*}{$\Gamma$} 
        & \multirow{2}{*}{if either $k$ or $k^{-1}$ is a solution of $x^2-x-1=0$:}    \\
		& \\
        \hline
        $G_{k}$, $G_{1-k}$ & $2(1+5p+4p^2q-17p^2+7p^3)$ \\
        $G_{1+k}$ & $4(3p+2p^2q-8p^2+3p^3)$  \\
        $G_s \not\simeq G_k, G_{1+k}, G_{1-k}$  & $8(2p+p^2q-5p^2+2p^3)$ \\
        \hline
        \multirow{2}{*}{$\Gamma$} & \multirow{2}{*}{if $k$ and $k^{-1}$ are the solutions of $x^2+x+1=0$:}    \\
		& \\
        \hline
        $G_{k}$ & $2(1+6p+4p^2q-19p^2+8p^3)$ \\
        $G_{1-k}$, $G_{1-k^{-1}}$ & $2(7p+4p^2q-18p^2+7p^3)$ \\
        $G_{1+k}$ & $2(1+4p+4p^2q-15p^2+6p^3)$  \\
        $G_{s} \not\simeq G_k, G_{1+k}, G_{1-k}, G_{1-k^{-1}}$  & $8(2p+p^2q-5p^2+2p^3)$ \\
        \hline        
        \multirow{2}{*}{$\Gamma$} & \multirow{2}{*}{if $k$ and $k^{-1}$ are the solutions of $x^2-x+1=0$: \phantom{b}}    \\
		& \\
        \hline
        $G_{-k}$ & $2(1+6p+4p^2q-19p^2+8p^3)$ \\
        $G_{1+k}$, $G_{1+k^{-1}}$ & $2(7p+4p^2q-18p^2+7p^3)$ \\
        $G_{1-k}$ & $2(1+4p+4p^2q-15p^2+6p^3)$  \\
        $G_{s} \not\simeq G_{-k}, G_{1-k}, G_{1+k}, G_{1+k^{-1}}$  & $8(2p+p^2q-5p^2+2p^3)$ \\
        \hline        
        \multirow{2}{*}{$\Gamma$} & \multirow{2}{*}{if $k$ and $k^{-1}$ are the solutions of $x^2+1=0$: \phantom{b}}    \\
		& \\
        \hline
        $G_{k}$ & $4(1+2p+2p^2q-9p^2+4p^3)$ \\
        $G_{1+k}$, $G_{1-k}$ & $4(3p+2p^2q-8p^2+3p^3)$ \\
        $G_{s} \not\simeq G_{k}, G_{1+k}, G_{1-k}$  & $8(2p+p^2q-5p^2+2p^3)$ \\
        \hline        
\multirow{2}{*}{$\Gamma$} & \multirow{2}{*}{if $k^2 \ne \pm k \pm 1, -1$:}  \\
		& \\
        \hline
        $G_k$, $G_{-k}$ & $2(1+6p+4p^2q-19p^2+8p^3)$ \\
        $G_{1+k}$, $G_{1+k^{-1}}$, $G_{1-k}$, $G_{1-k^{-1}}$ & $2(7p+4p^2q-18p^2+7p^3)$ \\
        $G_{s} \not\simeq G_{\pm k}, G_{1\pm k}, G_{1\pm k^{-1}}$ \phantom{ a}   
        & $8(2p+p^2q-5p^2+2p^3)$  \\
        \hline
      \end{tabular}
     } 
   \end{center} 
    \end{table}

   \begin{table}[H]
    \caption{$G$ and $\Gamma$ of type 8, $G \simeq G_k$ for $k=\pm 2$,} 
	\label{table:G-8-G-circ-8-2}
    \centering   
     \scalebox{0.9}{ 
      \begin{tabular}{|l | c |}
      \hline
		$\Gamma $ & if $q>7$: \\
        \hline
        $G_{2}$ & $2(1+5p+4p^2q-17p^2+7p^3)$ \\
        $G_{3}$, $G_{\frac{3}{2}}$ & $2(7p+4p^2q-18p^2+7p^3)$ \\
        $G_{-2}$ & $2(1+6p+4p^2q-19p^2+8p^3)$ \\
        $G_{s} \not\simeq G_2, G_3, G_{\frac{3}{2}}, G_{-2}$  & $8(2p+p^2q-5p^2+2p^3)$ \\
        \hline
		$\Gamma $ & if $q=7$: \\
        \hline
        $G_{2}$ & $2(1+5p+11p^2+7p^3)$ \\
        $G_{3}$ & $2(1+4p+13p^2+6p^3)$ \\
        \hline
      \end{tabular}
     } 
    \end{table}
  \end{enumerate}
\end{theorem}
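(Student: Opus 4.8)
The plan is to reduce Theorem~\ref{number_hgs} entirely to Byott's formula~\eqref{eq:e_vs_e-prime}, $e(\Gamma, G) = \frac{\Size{\Aut(\Gamma)}}{\Size{\Aut(G)}}\,e'(\Gamma, G)$, feeding in on one side the numbers $e'(\Gamma, G)$ of regular subgroups of $\Hol(G)$ isomorphic to $\Gamma$ produced in Theorem~\ref{number_regular}, and on the other side the orders of the automorphism groups of the groups of order $p^{2}q$ with elementary abelian Sylow $p$-subgroup, which are tabulated in~\cite{classp2q}. In this sense the theorem is a corollary of Theorem~\ref{number_regular}: almost all of the work lies in organising the cases and in verifying that $\frac{\Size{\Aut(\Gamma)}}{\Size{\Aut(G)}}\,e'(\Gamma,G)$ simplifies to the polynomial displayed in each cell of the tables.

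First I would settle the vanishing assertion. If the Sylow $p$-subgroups of $G$ and of $\Gamma$ are not isomorphic, then $e'(\Gamma, G)=0$: given a gamma function $\gamma$ on $G$ with $(G,\circ)\cong\Gamma$, where $g\circ h = g^{\gamma(h)}h$, by~\cite[Theorem~3.3]{p2qciclico} there is a $\gamma$-invariant Sylow $p$-subgroup $P$ of $G$; since each $\gamma(h)$ with $h\in P$ stabilises $P$, the subset $P$ is closed under $\circ$, so $(P,\circ)$ is a subgroup of $(G,\circ)$ of order $p^{2}$, hence a Sylow $p$-subgroup of $\Gamma$, and $(P,\cdot,\circ)$ is a brace of order $p^{2}$, so that $(P,\cdot)\cong(P,\circ)$ because $p>2$ --- contradicting the hypothesis. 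By~\eqref{eq:e_vs_e-prime} then $e(\Gamma,G)=0$. (This dichotomy, which is the bridge between the present paper and~\cite{p2qciclico}, is already available there.)

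For the substantive case I would proceed type by type through the pairs $(\Gamma, G)$ of types in $\{5,6,7,8,9,10,11\}$. Using the classification recorded in~\cite{classp2q}, I would first pin down, for each of these types, precisely under which arithmetic hypotheses on $p$ and $q$ a group of that type with elementary abelian Sylow $p$-subgroup exists --- this is where the alternatives ``$q\mid p-1$'', ``$q\nmid p-1$ and $q\mid p+1$'', ``$p\mid q-1$'', and the exceptional small primes $q=2,3,5,7$ enter --- and record $\Size{\Aut(G)}$ in each case. Then, for every admissible pair, I would substitute the value of $e'(\Gamma,G)$ coming from Theorem~\ref{number_regular} into~\eqref{eq:e_vs_e-prime}, carry out the cancellation in $\Size{\Aut(\Gamma)}/\Size{\Aut(G)}$, and check that the outcome matches the corresponding entry, including those of Tables~\ref{table:G-8-G-circ-8-1} and~\ref{table:G-8-G-circ-8-2}. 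A final observation that every listed entry is positive for all $p>2$ and all primes $q\neq p$ yields the remark that each such $G$ supports at least one Hopf--Galois structure.

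I expect the only genuine obstacle to be the bookkeeping for type $8$. That type accounts for $\frac{q-3}{2}$ isomorphism classes $G_k$ (with $G_k\simeq G_{k^{-1}}$), and when $G$ and $\Gamma$ are both of type $8$ the value $e(\Gamma,G)$ depends on how the two parameters lie relative to each other under the linear-fractional action governing isomorphisms between these groups; consequently one must split into the cases in which $k$, $k^{-1}$, or $\pm k$ is a root modulo $q$ of one of $x^{2}-x-1$, $x^{2}+x+1$, $x^{2}-x+1$, $x^{2}+1$, the generic case, and the degeneracies special to $q=5$ and $q=7$. The delicate point is to keep this partition exactly aligned with the partition used in the proof of Theorem~\ref{number_regular} and to track the cancellations correctly on each branch; the other types are comparatively routine once $\Size{\Aut(G)}$ is in hand.
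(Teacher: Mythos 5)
Your proposal is correct and follows exactly the route the paper takes: the paper's proof of Theorem~\ref{number_hgs} consists precisely of invoking Theorem~\ref{th:hgs-regular-skew}~\ref{item:hgs-byott96} (Byott's formula~\eqref{eq:e_vs_e-prime}), substituting the values of $e'(\Gamma,G)$ from Propositions~\ref{prop:G5}--\ref{prop:G11} (i.e.\ Theorem~\ref{number_regular}) together with the automorphism group orders of Table~\ref{table:grp_aut}, and noting that $e'(\Gamma,G)=e(\Gamma,G)=0$ when the Sylow $p$-subgroups of $\Gamma$ and $G$ are not isomorphic (\cite[Corollary 3.4]{p2qciclico}, for which your invariant-Sylow-subgroup sketch is the right justification). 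Your anticipated care with the type~8 case split is indeed where the bookkeeping lies, and it matches the partition already fixed in the proof of Proposition~\ref{prop:G8}.
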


\begin{theorem}
  \label{number_regular}
  Let $G = (G, \cdot)$ be a group of order $p^{2} q$, where $p, q$ are distinct primes, with $p > 2$.

  If $\Gamma$ is a group of order $p^{2} q$ and the Sylow $p$-subgroups of $G$ and $\Gamma$ are not isomorphic, then no regular subgroup of $\Hol(G)$ is isomorphic to $\Gamma$.
  
  If $G$ and $\Gamma$ have elementary abelian Sylow $p$-subgroups, then the following tables give equivalently
  \begin{enumerate}
  \item
    the number $e'(\Gamma, G)$ of regular subgroups of $\Hol(G)$ isomorphic to $\Gamma$;
  \item
    the number of (right) skew braces $(G,  \cdot, \circ)$ such that $\Gamma \cong (G, \circ)$.
  \end{enumerate}
 
  \begin{enumerate}[(i)]
  \item For $q \nmid p-1$:
    \begin{center}
     \scalebox{0.9}{      
      \begin{tabular}{|c | c c|}
      \hline
        \diagbox[width=3.0em]{$\Gamma$}{$G$} & 5  & 11  \\
        \hline
        5 & $p^{2}$ & $2pq$ \\
        11 & $p^{2}(p^{2}-1)$ & $2p(1+qp^2-2q)$  \\
        \hline
      \end{tabular}
     } 
    \end{center}
    where the upper left sub-tables of sizes  $1\times 1$ and $2\times 2$ give
    respectively the cases $p \nmid q-1$ and  $p \mid q-1$.
  \item For $q \nmid p-1$ and $q \mid p+1$: 
    \begin{center}
     \scalebox{0.9}{      
      \begin{tabular}{|c|cc|}
      \hline
        \diagbox[width=3.0em]{$\Gamma$}{$G$}  & 5 & 10 \\
        \hline
        5 & $p^2$ & $2p^2$ \\
       10 & $\frac{1}{2}p(p-1)(q-1)$ & $2+2p^2(q-3)-p^3+p^4$ \\
        \hline
      \end{tabular}
     } 
    \end{center}                 
  \item For $q \mid p-1$:\\
If $q=2$,
    \begin{center}
     \scalebox{0.9}{      
      \begin{tabular}{|c | c c c|}
      \hline
        \diagbox[width=3.0em]{$\Gamma$}{$G$} & 5  & 6 & 7  \\
        \hline
        5 & $p^2$ & $2p$ & $p^3(3p+1)$  \\
        6 & $p^2(p+1)$ & $2p(p+1)$ & $p^3(p+1)(3p+1)$  \\
        7 & $1$ & $2$ & $2+p(p+1)(2p-1)$  \\
        \hline
      \end{tabular}
	 }
    \end{center}
If $q=3$,
    \begin{center}
     \scalebox{0.89}{      
      \begin{tabular}{|c | c c c c |}
      \hline
        \diagbox[width=3.0em]{$\Gamma$}{$G$} & 5  & 6 & 7 & 9 \\
        \hline
        5 & $p^2$ & $2p$ & $p^3(3p+1)$ & $4p^2$  \\
        6 & $2p(p+1)$ & $2p(p+3)$ & $4p^3(p+1)^2$ & $2p^2(3p+5)$ \\
        7 & $2$ & $2(p+1)$ & $2+p^2(2p^2+3p+2)$ & $2p^2+4p+2$  \\
        9 & $2p^3+p^2-p$ & $2(p^2+1)$ & $2p^5+5p^4+p^3-p^2+p$ & $p^4+3p^3+2p+2$ \\
        \hline
      \end{tabular}
     } 
    \end{center}
If $q>3$, 
    \begin{center}
     \scalebox{0.9}{      
      \begin{tabular}{|l | c c |}
      \hline
        \diagbox[width=7.0em]{$\Gamma$}{$G$} & 5  & 7  \\
        \hline
        5 					  		& $p^2$ & $p^3(3p+1)$  \\
        6 					  		& $p(p+1)(q-1)$ & $4p^2(p+1)(p^2+pq-2p)$  \\
        7 					  		& $q-1$ & $2+p^2(2p^2+pq+2q-4)$  \\
        8, $G_2$ 			  		& $p^2(p+1)(q-1)$ & $2p^2(p+1)(p^2q-4p+pq+2)$  \\        
        8, $G_k \not\simeq G_2$    	& $p(p+1)(q-1)$ & $4p^2(p+1)(2p^2-5p+pq+2)$  \\                
        9 					  		& $\frac{1}{2}p(p+1)(q-1)$ & $4p^5+p^4(q-2)+p^3(2q-7)+3p^2+p$  \\
        \hline
      \end{tabular}
	 }
    \end{center}
    \begin{center}
     \scalebox{0.9}{      
      \begin{tabular}{|l | cc |}
      \hline
        \diagbox[width=7.0em]{$\Gamma$}{$G$} & 6 & 9 \\
        \hline
        5  						& $2p$ & $4p^2$  \\
        6 						& $2p(p+2q-3)$ & $2p^2(4q+3p-7)$  \\
        7 						& $2+2p(q-2)$ & $2+4p+2p^2(2q-5)$ \\
        8, $G_2$ 				& $4(1+p(p+q-3))$ & $2p(p^3+3p^2-14p+4pq-6)$ \\
        8, $G_k \not\simeq G_2$ & $4(1+p(p+q-3))$ & $2p(p^3+5p^2-18p+4pq+8)$ \\        
        9 						& $2+2p(p+q-3)$ & $2+4p+p^2(p^2+5p+4q-16)$ \\
        \hline
      \end{tabular}
     } 
    \end{center}        
    \begin{center}
     \scalebox{0.84}{      
      \begin{tabular}{|c | ccc|}
      \hline
        \diagbox[width=3.0em]{$\Gamma$}{$G$8} & $G\not\simeq G_{\pm 2}$  & $G \simeq G_{\pm 2}$, $q>5$ &  $G \simeq G_{2}$, $q=5$ \\
        \hline
        5 & $4p^2$ & $4p^2$ & $4p^2$ \\
        6 & $8p^2(q+p-2)$ & $8p^2(q+p-2)$ & $8p^2(p+3)$ \\
        7 & $8p+4p^2(q-3)$  & $8p+4p^2(q-3)$ & $8p+8p^2$ \\
        8 & Table~\ref{table:G-8-G-circ-8-1} & Table~\ref{table:G-8-G-circ-8-2} & $4(1+p+3p^{2}(p+1))$ \\ 
        9 & $4p(2+p(q+2p-5))$ & $2p(3+p(2q+3p-8))$ &  $8p(1+p+2p(p^2-1))$ \\
        \hline
      \end{tabular}
     } 
    \end{center}
  \end{enumerate}
\end{theorem}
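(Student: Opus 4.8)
The plan is to prove the theorem by the method already developed in~\cite{p2qciclico} and extended by the tools of Section~\ref{sec:tools}: for every group $G=(G,\cdot)$ of order $p^{2}q$ with elementary abelian Sylow $p$-subgroup (types $5,6,7,8,9,10,11$ of Table~\ref{table:grp_aut}) we enumerate all gamma functions $\gamma\colon G\to\Aut(G)$ satisfying the GFE~\eqref{eq:GFE0}, and for each of them we identify the isomorphism type of the associated group $(G,\circ)$, where $g\circ h=g^{\gamma(h)}h$. By~\cite[Theorem 2.2]{p2qciclico} the gamma functions are in bijection with the regular subgroups of $\Hol(G)$, so collecting these counts type by type yields the value of $e'(\Gamma,G)$; the identification of $e'(\Gamma,G)$ with the number of skew braces $(G,\cdot,\circ)$ such that $(G,\circ)\cong\Gamma$ is then immediate from Theorem~\ref{th:hgs-regular-skew}~\ref{item:skew-GV17}. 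Thus the substantive content is the enumeration, which is carried out group by group in the body of the paper.

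For the first assertion, that $e'(\Gamma,G)=0$ when the Sylow $p$-subgroups of $G$ and $\Gamma$ are not isomorphic: by Proposition~\ref{remark:invariant-q-Sylow} (already by~\cite[Theorem 3.3]{p2qciclico}) there is a $\gamma(P)$-invariant Sylow $p$-subgroup $P$ of $G$, whence $(P,\circ)$ is a subgroup of $(G,\circ)$ of order $p^{2}$ with $\gamma|_{P}$ landing in $\Aut(P)$; since for $p$ odd a skew brace of order $p^{2}$ has isomorphic additive and multiplicative groups, $(P,\circ)\cong(P,\cdot)$, so a Sylow $p$-subgroup of $(G,\circ)$ is isomorphic to one of $(G,\cdot)$. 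Hence the existence of a regular subgroup of $\Hol(G)$ isomorphic to $\Gamma$ forces $\Gamma$ and $G$ to have isomorphic Sylow $p$-subgroups.

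The main count is organised by the type of $G$, and within each type the gamma functions are stratified according to $\Size{\ker(\gamma)}\in\{1,p,q,p^{2},pq,p^{2}q\}$ and to whether $\gamma(G)\subseteq\Inn(G)$. When $q\mid\Size{\ker(\gamma)}$ or $p^{2}\mid\Size{\ker(\gamma)}$ one argues directly: there is an invariant Sylow $q$-subgroup $B$, and the lifting-and-restriction map of Proposition~\ref{prop:lifting} reduces the count to that of (relative) gamma functions on a quotient of $G$ of order $p$, $q$, $pq$ or $p^{2}$, which is elementary. The new cases are $\Size{\ker(\gamma)}=1$ and $\Size{\ker(\gamma)}=p$: here one must first produce a $\gamma(B)$-invariant Sylow $q$-subgroup $B$ (this is the content of Subsections~\ref{ssec:q-sylow-G789}, \ref{ssec:G9-ker-1}, \ref{sub:G7-non-int}, \ref{ssec:G-10-ker-1} and the corresponding ones for type $6$), and then reconstruct $\gamma$ via the gluing Proposition~\ref{prop:more-lifting} from a relative gamma function on $B$ (Definition~\ref{def:GF}) and one on the invariant Sylow $p$-subgroup, counting the compatible pairs. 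Throughout, the duality pairing $\gamma\leftrightarrow\gammatilde$ (Lemma~\ref{lemma:duality}, Proposition~\ref{prop:duality}) is used both to halve the enumeration and to replace a $\gamma$ with an awkward kernel by its dual, whose kernel is better adapted to these tools. Once the gamma functions are listed, the isomorphism type of $(G,\circ)$ is read off from the classification of groups of order $p^{2}q$ in~\cite{classp2q} by computing the $\circ$-orders of a few distinguished elements and the $\circ$-conjugation action of a Sylow $p$-element on $B$; summing over the $\gamma$ with fixed target $\Gamma$ gives the table entries, the split into the cases $q\nmid p-1$, $q\mid p+1$, $q\mid p-1$ (and $q=2,3$) reflecting which types exist and how the relevant automorphism actions behave.

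I expect the principal obstacles to be twofold. The first is proving, in the low-kernel cases, that $\gamma(G)$ stabilises some Sylow $q$-subgroup: there is no analogue of the general theorem for Sylow $p$-subgroups, so the argument must be made separately for each type, most delicately for types $7,8,9$, where there are $p^{2}$ Sylow $q$-subgroups and $\gamma(G)$ acts on $\mathcal{Q}$ (identified with a projective line), which is precisely why these are isolated in their own subsections. The second is the bookkeeping for type $8$, where $G\simeq G_{k}$ ranges over $\frac{q-3}{2}$ isomorphism classes and the target $\Gamma\simeq G_{s}$ may or may not be related to $G_{k}$ by one of the exceptional relations $s\in\{k,k^{-1},\,1\pm k,\,1\pm k^{-1},\,-k,\dots\}$ arising from roots of $x^{2}\pm x\pm1$, $x^{2}+1$; tracking which pairs $(k,s)$ fall into which family, together with the coincidences these induce among the $G_{s}$, is what makes Tables~\ref{table:G-8-G-circ-8-1} and~\ref{table:G-8-G-circ-8-2} intricate. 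The remaining work — the elementary quotient counts, the order computations identifying $(G,\circ)$, and the assembly of the tables — is routine but long, and is performed type by type in the sections that follow.
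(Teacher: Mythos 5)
Your proposal follows exactly the route the paper takes: the theorem is assembled by piecing together the type-by-type enumerations of gamma functions (Propositions~\ref{prop:G5}--\ref{prop:G11}), stratified by $\Size{\ker(\gamma)}$ and handled via lifting, gluing, and duality, with the first assertion reduced to the existence of an invariant Sylow $p$-subgroup and the fact that $(P,\circ)\cong(P,\cdot)$ for $p$ odd (the paper cites \cite[Corollary 3.4]{p2qciclico} for this). The outline is correct and matches the paper's own proof in structure and in all essential points.
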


As a consequence, we are able to compute the numbers of isomorphism
classes of skew braces of size $p^2q$; these numbers coincide with those
given in~\cite{AcriBonatto_p2q_I, AcriBonatto_p2q_II} for  $q>2$,
and~\cite{Cre2021} for $q=2$.

\begin{theorem}  
\label{th:number_skew}
  Let $G = (G, \cdot)$ be a group of order $p^{2} q$, where $p, q$ are
  distinct primes, with $p > 2$.
  For each group $\Gamma$ of order $p^{2} q$ with elementary abelian Sylow $p$-subgroups the following tables give equivalently
  \begin{enumerate}
  \item
    the number of conjugacy classes within $\Hol(G)$ of regular
    subgroups isomorphic to $\Gamma$;
  \item
    the number of isomorphism classes of skew braces $(G, \cdot,
    \circ)$ such that $\Gamma \cong (G, \circ)$.
  \end{enumerate} 
  \begin{enumerate}[(i)]
  \item For $q \nmid p-1$:
    \begin{center}         
     \scalebox{0.9}{      
      \begin{tabular}{|c | c c |}
        \hline
        \diagbox[width=3.0em]{$\Gamma$}{$G$} & 5 & 11  \\
        \hline
        5 & $2$ & $4$  \\
        11 & $4$ & $6p-4$  \\
        \hline
       \end{tabular}
      } 
    \end{center}
    where the upper left sub-tables of sizes  $1\times 1$ and $2\times 2$ give
    respectively the cases $p \nmid q-1$ and  $p \mid q-1$.
\item For $q \nmid p-1$ and $q \mid p+1$: 
    \begin{center}
     \scalebox{0.9}{      
      \begin{tabular}{|c|cc|}
      \hline
        \diagbox[width=3.0em]{$\Gamma$}{$G$}  & 5 & 10 \\
        \hline
        5 & $2$ & $2$ \\
       10 & $1$ & $p+2q-4$ \\
        \hline
      \end{tabular}
     } 
    \end{center} 
\item For $q \mid p-1$: \\
If $q=2$,
    \begin{center}
     \scalebox{0.9}{      
      \begin{tabular}{|c | c c c|}
      \hline
        \diagbox[width=3.0em]{$\Gamma$}{$G$} & 5  & 6 & 7  \\
        \hline
        5 & $2$ & $2$ & $5$  \\
        6 & $2$ & $8$ & $p+10$  \\
        7 & $1$ & $2$ & $5$  \\
        \hline
      \end{tabular}
     } 
    \end{center} 
If $q=3$,
    \begin{center}
     \scalebox{0.9}{      
      \begin{tabular}{|c | c c c c|}
      \hline
        \diagbox[width=3.0em]{$\Gamma$}{$G$} & 5  & 6 & 7 &  9 \\
        \hline
        5 & $2$ & $2$ & $5$ & $3$ \\
        6 & $1$ & $12$ & $16$ & $p+14$  \\
        7 & $1$ & $4$ & $8$ & $4$  \\
        9 & $2$ & $6$ & $10$ & $p+8$ \\
        \hline
      \end{tabular}
     } 
    \end{center}
If $q>3$,
    \begin{center}
     \scalebox{0.9}{      
      \begin{tabular}{|l | c c c c c|}
      \hline
        \diagbox[width=7.0em]{$\Gamma$}{$G$} & 5  & 6 & 7 & 8, $G_k$ & 9 \\
        \hline
        5 						& $2$ & $2$ & $5$ & $4$ & $3$ \\
        6 						& $1$ & $4q$ & $4(q+1)$ & $8(q+1)$ & $4q+p+2$ \\
        7 						& $1$ & $2(q-1)$ & $3q-1$ & $4(q-1)$ & $2(q-1)$ \\
        8, $G_s\not\simeq G_2$  & $1$ & $4q$ & $4(q+1)$ & $8(q+1)$ & $4q+p+2$ \\
        8, $G_2$ 				& $2$ & $4q$ & $6q$ & $8(q+1)$ & $4q+p+2$ \\
        9 						& $1$ & $2q$ & $2(q+1)$ & $4(q+1)$ & $3q+p-1$ \\
        \hline
      \end{tabular}
     } 
    \end{center}  
  \end{enumerate}
\end{theorem}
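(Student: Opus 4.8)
The plan is to deduce Theorem~\ref{th:number_skew} from the enumeration underlying Theorem~\ref{number_regular}, exploiting the equivalence of Theorem~\ref{th:hgs-regular-skew}~\ref{item:classes-BV18}: it suffices to count, for each admissible pair $(G,\Gamma)$, the orbits of $\Aut(G)$ acting by conjugation inside $\Hol(G)=G\rtimes\Aut(G)$ on the set of regular subgroups isomorphic to $\Gamma$. Translating to gamma functions, this conjugation descends to an explicit action $\gamma\mapsto\gamma^{\alpha}$ of $\Aut(G)$ on the set $\mathcal{G}_\Gamma$ of GF's $\gamma$ on $G$ with $(G,\circ_\gamma)\cong\Gamma$, and for the regular subgroup $N_\gamma$ associated with $\gamma$ one has $N_{\Hol(G)}(N_\gamma)\cap\Aut(G)=\Stab_{\Aut(G)}(\gamma)$ while $N_\gamma\le N_{\Hol(G)}(N_\gamma)$; since $\Size{\Hol(G)}=\Size{G}\cdot\Size{\Aut(G)}$ and $\Size{N_\gamma}=\Size{G}$, the $\Hol(G)$-conjugacy class of $N_\gamma$ has length $\Size{\Aut(G)}/\Size{\Stab_{\Aut(G)}(\gamma)}$, that is, exactly the length of the $\Aut(G)$-orbit of $\gamma$. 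Hence the table entry is the number of $\Aut(G)$-orbits on $\mathcal{G}_\Gamma$, and the conjugacy-class lengths mentioned elsewhere are the orbit lengths; as a built-in consistency check, for each fixed $(G,\Gamma)$ the orbit lengths must sum to $e'(\Gamma,G)$ from Theorem~\ref{number_regular}.

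I would then proceed type by type, following the same subdivision (types $5,6,7,8,9,10,11$, and the parameter $k\in\cC_q$ for type $8$) already used for Theorem~\ref{number_regular}, and reuse the explicit parametrisation of the GF's produced there. In each case the first step is to normalise: by Proposition~\ref{remark:invariant-q-Sylow} every GF admits both an invariant Sylow $p$-subgroup and an invariant Sylow $q$-subgroup, so conjugating by a suitable $\alpha\in\Aut(G)$ (which does not change the orbit) one may assume these are fixed standard subgroups, reducing $\mathcal{G}_\Gamma$ to an explicit finite list of GF's depending on a few parameters in $\cC_q$ or $\cC_p$. The second step is to make the residual $\Aut(G)$-action on these parameters explicit: Proposition~\ref{prop:lifting} (lifting and restriction) and Proposition~\ref{prop:more-lifting} (gluing) transport the action to GF's on a quotient of $G$ or on its Sylow factors, where it becomes a concrete action of a subquotient of $\Aut(G)$ (typically by multiplication or power maps on $\cC_q$), whose orbits and stabilisers can be read off directly. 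The duality pairing $\gamma\mapsto\gammatilde$ (Lemma~\ref{lemma:duality}, Proposition~\ref{prop:duality}) is used both to halve the number of GF's to be examined and, when convenient, to replace a GF by one with a more tractable kernel before computing its orbit.

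The main obstacle is type $8$, and to a lesser extent the non-inner case of type $7$. For type $8$ one must simultaneously track which groups $G_s$ ($s\in\cC_q$, $s\ne 0,\pm1$, $G_s\cong G_{s^{-1}}$) actually arise as $(G,\circ_\gamma)$ and how $\Aut(G_k)$ permutes the GF's producing them; the answer depends on the $\Gal$-type over $\cC_q$ of the quadratic satisfied by $k$ (the cases $x^2-x-1$, $x^2+x+1$, $x^2-x+1$, $x^2+1$, the degenerate values $k=\pm2$, and the generic case $k^2\ne\pm k\pm1,-1$), since these coincidences create extra identifications among the $G_s$, fusing or splitting orbits and altering stabiliser orders; handling this forces the case split matching Tables~\ref{table:G-8-G-circ-8-1} and~\ref{table:G-8-G-circ-8-2}, together with the small-$q$ exceptions $q=2,3,5,7$. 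For type $7$ the difficulty, as flagged in the introduction, is that when $\gamma(G)\not\le\Inn(G)$ an invariant Sylow $q$-subgroup is harder to produce and the $\Aut(G)$-action mixes the ``inner'' and ``non-inner'' parts of $\mathcal{G}_\Gamma$, requiring ad hoc arguments. Throughout, after obtaining each orbit decomposition I would cross-check it against the corresponding entry of Theorem~\ref{number_regular} (orbit lengths summing to $e'(\Gamma,G)$) and against the totals of~\cite{AcriBonatto_p2q_I, AcriBonatto_p2q_II} and~\cite{Cre2021}, giving a strong independent control on the bookkeeping.
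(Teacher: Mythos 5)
Your proposal is correct and follows essentially the same route as the paper: the theorem is proved by piecing together the orbit/stabiliser computations for the $\Aut(G)$-action on gamma functions carried out type by type (Propositions~\ref{prop:G5}, \ref{prop:G6}, \ref{prop:G9}, \ref{prop:G8}, \ref{prop:G7}, \ref{prop:G10}, \ref{prop:G11}), using exactly the normalisation via invariant Sylow subgroups, lifting/gluing, duality, and the quadratic case distinction for type~8 that you describe. Your explicit justification that $\Hol(G)$-conjugacy classes of regular subgroups coincide with $\Aut(G)$-orbits (via regularity of $N_\gamma$ forcing $N_{\Hol(G)}(N_\gamma)=N_\gamma\cdot\Stab_{\Aut(G)}(N_\gamma)$) is a correct and welcome spelling-out of a point the paper leaves implicit in Definition~\ref{def:e-e'} and Theorem~\ref{th:hgs-regular-skew}.
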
  
The lengths of the conjugacy classes are spelled out in Propositions~\ref{prop:G5}, \ref{prop:G6}, \ref{prop:G9}, \ref{prop:G8}, \ref{prop:G7}, \ref{prop:G10} and \ref{prop:G11}.

\section{Tools}
\label{sec:tools}

\begin{definition}
  \label{def:GF}
  Let  $G$ be  a group,  $A  \le G$,  and  $\gamma: A  \to \Aut(G)$  a
  function.

  $\gamma$ is said to satisfy the \emph{gamma functional equation} (or
  \emph{GFE} for short) if
  \begin{equation*}
    \gamma( g^{\gamma(h)} h )  = \gamma(g) \gamma(h), \qquad \text{for
      all $g, h \in A$.}
  \end{equation*}

  We will say that $A$ is \emph{invariant} if it is invariant under
  the action of $\gamma(A)$.
  
  $\gamma$  is  said  to  be  a  \emph{relative  gamma  function}  (or
  \emph{RGF} for  short) on $A$  if it satisfies the  gamma functional
  equation, and $A$ is $\gamma(A)$-invariant.

  If $A = G$, a relative gamma function is simply called a \emph{gamma
    function} (or \emph{GF} for short) on $G$.
\end{definition}

We will make use of the following results from~\cite{p2qciclico}; some
of  these   results  were  stated  in   ~\cite{p2qciclico}  under  the
assumption that the relevant group $G$ is finite, but the proofs stand
verbatim for arbitrary groups.

\begin{lemma}[{{\cite[Lemma 2.13]{p2qciclico}}}]\ppar
  \label{Lemma:gamma_morfismi}
  Let $G$ be a  group, $A \le G$ and $\gamma : A \to \Aut(G)$ be a
  function such that $A$ is invariant under $\gamma(A)$.

  Then any two of the following conditions imply the third one.
  \begin{enumerate}
  \item $\gamma([A, \gamma(A)]) = \Set{1}$.
  \item $\gamma : A \to \Aut(G)$ is a morphism of groups.
  \item $\gamma$ satisfies the GFE.
  \end{enumerate}
\end{lemma}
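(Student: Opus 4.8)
The plan is to reduce the whole statement to one elementary identity: for all $g,h\in A$,
\[
  g^{\gamma(h)}h = c(g,h)\cdot gh, \qquad \text{where } c(g,h):=g^{\gamma(h)}g^{-1}=\gamma(h)(g)\,g^{-1}\in A
\]
(the membership in $A$ uses that $A$ is $\gamma(A)$-invariant). First I would establish two structural facts that hold as soon as $\gamma$ satisfies the GFE. Writing $[a,\phi]:=a^{-1}a^{\phi}$ for $a\in A$ and $\phi\in\gamma(A)$, one has $c(g,h)^{-1}=[g^{-1},\gamma(h)]$, so $N:=[A,\gamma(A)]$ is exactly the subgroup of $A$ generated by the elements $c(g,h)$. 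Secondly, $\gamma(A)$ is a \emph{subgroup} of $\Aut(G)$: it contains $\gamma(1)=1$ (put $g=h=1$ in the GFE); it is closed under products since $\gamma(v)\gamma(w)=\gamma(v^{\gamma(w)}w)$ by the GFE; and it is closed under inverses since, setting $u:=\gamma(v)^{-1}(v^{-1})\in A$, the GFE gives $1=\gamma(1)=\gamma(u^{\gamma(v)}v)=\gamma(u)\gamma(v)$, whence $\gamma(v)^{-1}=\gamma(u)\in\gamma(A)$. Using that $\gamma(A)$ is a subgroup, one checks on the generators that $\gamma(w)([a,\phi])=[\gamma(w)(a),\,\gamma(w)\phi\gamma(w)^{-1}]$ is again a generator of $N$, so $\gamma(w)(N)\subseteq N$; applying this also to $\gamma(w)^{-1}\in\gamma(A)$ gives $\gamma(w)(N)=N$ for every $w\in A$.

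With these facts in hand, two of the three implications are immediate. For $(1)\wedge(2)\Rightarrow(3)$, since $\gamma$ is a homomorphism, $c(g,h)\in N$ and $\gamma(N)=\{1\}$, we get $\gamma(g^{\gamma(h)}h)=\gamma(c(g,h))\gamma(gh)=\gamma(g)\gamma(h)$. For $(2)\wedge(3)\Rightarrow(1)$, the GFE combined with the homomorphism property gives $\gamma(g)\gamma(h)=\gamma(c(g,h)\cdot gh)=\gamma(c(g,h))\gamma(g)\gamma(h)$, so $\gamma(c(g,h))=1$; since the $c(g,h)$ generate $N$ and $\gamma$ is a homomorphism, $\gamma(N)=\{1\}$.

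The real content is $(1)\wedge(3)\Rightarrow(2)$. The crux is the claim that $\gamma(nw)=\gamma(w)$ for every $n\in N$ and $w\in A$. Here the structural facts pay off: since $\gamma(w)$ maps $N$ \emph{onto} $N$, write $n=\gamma(w)(n_{0})=n_{0}^{\gamma(w)}$ with $n_{0}\in N$; then the GFE and $\gamma(n_{0})=1$ give $\gamma(nw)=\gamma(n_{0}^{\gamma(w)}w)=\gamma(n_{0})\gamma(w)=\gamma(w)$. Granting this, for any $x,y\in A$ the GFE yields $\gamma(x)\gamma(y)=\gamma(x^{\gamma(y)}y)=\gamma(c(x,y)\cdot xy)=\gamma(xy)$, i.e.\ $\gamma$ is a homomorphism.

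I expect the main obstacle to be precisely the two preliminary facts — that $\gamma(A)$ is a subgroup and hence that $N=[A,\gamma(A)]$ is $\gamma(A)$-invariant. They are not hard once one thinks to prove them, but they are indispensable: without the surjectivity of $\gamma(w)\colon N\to N$, every naive attempt at $(1)\wedge(3)\Rightarrow(2)$ reduces $\gamma(nw)=\gamma(w)$ to another instance of itself and stalls. After that, the lemma is just bookkeeping with the identity $g^{\gamma(h)}h=c(g,h)\,gh$.
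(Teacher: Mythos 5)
Your argument is correct, and it is worth noting that this paper does not actually prove the lemma --- it is imported verbatim from \cite[Lemma~2.13]{p2qciclico} --- so the only meaningful comparison is with the structure of that original argument, which your proof matches in spirit: the two ``easy'' implications are one-line computations once one writes $g^{\gamma(h)}h=c(g,h)\,gh$ with $c(g,h)=g^{\gamma(h)}g^{-1}\in[A,\gamma(A)]$, and all the content sits in $(1)\wedge(3)\Rightarrow(2)$. Your way of discharging that direction --- first showing from the GFE alone that $\gamma(A)$ is a subgroup of $\Aut(G)$ and that $N=[A,\gamma(A)]$ is setwise $\gamma(A)$-invariant, then using surjectivity of $\gamma(w)_{\restriction N}$ to write any $n\in N$ as $n_0^{\gamma(w)}$ and conclude $\gamma(nw)=\gamma(n_0)\gamma(w)=\gamma(w)$ --- is exactly the right mechanism, and you correctly identify that without the ``onto'' part the reduction $\gamma(nw)=\gamma(w)$ circles back on itself. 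All the individual computations check out: $c(g,h)^{-1}=[g^{-1},\gamma(h)]$ so the $c(g,h)$ generate $N$; $[a,\phi]^{\gamma(w)}=[a^{\gamma(w)},\gamma(w)^{-1}\phi\gamma(w)]$ stays in $N$ once $\gamma(A)$ is known to be closed under products and inverses; and the three implications then follow as you state.

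One small caveat, which is really a caveat about the hypothesis rather than about your proof: your verification that $\gamma(A)$ is closed under inverses takes $u=(v^{-1})^{\gamma(v)^{-1}}$ and needs $u\in A$, i.e.\ it needs each $\gamma(v)$ to restrict to a \emph{bijection} of $A$, not merely to map $A$ into itself. For finite $A$ (the only case used in this paper) the two are the same; since the authors claim the proofs ``stand verbatim for arbitrary groups,'' the invariance in Definition~\ref{def:GF} should be read as $\gamma(a)$ restricting to an automorphism of $A$, and with that reading your argument is complete. If one insisted on the weaker ``into'' reading for infinite $A$, this is the one step that would need repair.
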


\begin{prop}[{{\cite[Proposition 2.14]{p2qciclico}}}]\ppar
  \label{prop:lifting}
  Let $G$ be a  group and  let $A$, $B$ be subgroups of $G$ such
  that    $G = A B$.

  If $\gamma$ is a GF on $G$, and $B \le \ker(\gamma)$, then
  \begin{equation}    
    \gamma(a b) = \gamma(a), \text{ for } a \in A, b \in B,
  \end{equation}
  so that $\gamma(G) = \gamma(A)$.
  
  Moreover, if $A$  is  $\gamma(A)$-invariant, then 
  \begin{equation}
    \gamma'=\gamma_{\restriction A}\colon A\to\Aut(G)
  \end{equation}  
  is a RGF on $A$ and  $\ker(\gamma)$  is  invariant  under  the
  subgroup   
  \begin{equation*}
    \Set{ \gamma'(a) \iota(a) : a \in A }
  \end{equation*}
  of   $\Aut(G)$.

  Conversely, let $\gamma':A\to \Aut(G)$ be a RGF such that
  \begin{enumerate}
  \item
    $\gamma'(A \cap B) \equiv 1$,
  \item
    $B$ is invariant under $\{ \gamma'(a) \iota(a) : a \in A \}$.
  \end{enumerate}
  Then the map 
  \begin{equation*}
    \gamma(ab) = \gamma'(a), \text{ for } a \in A, b \in B,
  \end{equation*}
  is a well defined GF on $G$, and $\ker(\gamma) = \ker(\gamma') B$.
\end{prop}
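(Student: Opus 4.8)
The plan is to run everything through the skew-brace dictionary of \cite[Theorem 2.2]{p2qciclico}: a GF $\gamma$ on $G$ produces the operation $g\circ h=g^{\gamma(h)}h$, which is a group law on the set $G$ with the same identity as $(G,\cdot)$ and with $\circ$-inverse $g^{\ominus}=(g^{-1})^{\gamma(g)^{-1}}$; the GFE says exactly that $\gamma\colon(G,\circ)\to\Aut(G)$ is a homomorphism, so $\ker(\gamma)$ is normal in $(G,\circ)$, and since $\circ$ and $\cdot$ agree on $\ker(\gamma)$ it is also a subgroup of $(G,\cdot)$. I will write $\iota(a)\colon x\mapsto a^{-1}xa$ for inner automorphisms and compose maps left to right, matching the exponential notation $g^{\gamma(h)}$, so that $b^{\gamma(a)}a=a\cdot b^{\gamma(a)\iota(a)}$ for all $a,b$.

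For the forward implication, given a GF $\gamma$ on $G$ with $B\le\ker(\gamma)$, I would first set $g=a$, $h=b$ with $a\in A$, $b\in B$ in the GFE: since $\gamma(b)=\id$ this gives $\gamma(ab)=\gamma(a)\gamma(b)=\gamma(a)$, hence $\gamma(G)=\gamma(A)$ because $G=AB$. Assuming in addition that $A$ is $\gamma(A)$-invariant, the elements $a_1^{\gamma(a_2)}a_2$ stay in $A$, so the GFE restricted to $A$ exhibits $\gamma'=\gamma_{\restriction A}$ as an RGF on $A$. For the invariance of $\ker(\gamma)$ under $\{\gamma'(a)\iota(a):a\in A\}$ I would compute the $\circ$-conjugate $a^{\ominus}\circ k\circ a$ for $k\in\ker(\gamma)$, $a\in A$: from $\gamma(k\circ a)=\gamma(k)\gamma(a)=\gamma(a)$ and $(a^{\ominus})^{\gamma(a)}=a^{-1}$ one gets $a^{\ominus}\circ k\circ a=a^{-1}k^{\gamma(a)}a=k^{\gamma'(a)\iota(a)}$, which lies in $\ker(\gamma)$ since $\ker(\gamma)$ is normal in $(G,\circ)$.

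For the converse, I would take the RGF $\gamma'$ on $A$ with its two hypotheses and define $\gamma(ab)=\gamma'(a)$. Well-definedness: if $a_1b_1=a_2b_2$, then $a_2^{-1}a_1=b_2b_1^{-1}\in A\cap B$, so $a_1=a_2c$ with $c\in A\cap B$; using $\gamma'(c)=\id$, the GFE for $\gamma'$ (with $g=a_2$, $h=c$) gives $\gamma'(a_1)=\gamma'(a_2c)=\gamma'(a_2)$. To verify the GFE for $\gamma$ at $g=a_1b_1$, $h=a_2b_2$, I would apply the automorphism $\gamma(h)=\gamma'(a_2)$ to get $g^{\gamma(h)}h=a_1^{\gamma'(a_2)}b_1^{\gamma'(a_2)}a_2b_2$, rewrite $b_1^{\gamma'(a_2)}a_2=a_2\cdot b_1^{\gamma'(a_2)\iota(a_2)}$ with the $B$-factor lying in $B$ by hypothesis~(2), and use the $\gamma'(A)$-invariance of $A$ to see $a_1^{\gamma'(a_2)}a_2\in A$; then $\gamma(g^{\gamma(h)}h)=\gamma'(a_1^{\gamma'(a_2)}a_2)=\gamma'(a_1)\gamma'(a_2)=\gamma(g)\gamma(h)$ by the GFE for $\gamma'$. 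Automorphisms preserve $G$, so $\gamma$ is a GF, and $\gamma(ab)=\id\iff a\in\ker(\gamma')$ yields $\ker(\gamma)=\ker(\gamma')B$.

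The main obstacle, modest as it is, sits in the two rewritings into $AB$-form: recognising the $\circ$-conjugate $a^{\ominus}\circ k\circ a$ as $k^{\gamma'(a)\iota(a)}$, so that normality of $\ker(\gamma)$ in $(G,\circ)$ can be invoked, and, in the converse, commuting $b_1^{\gamma'(a_2)}$ past $a_2$ at the cost of an inner automorphism — which is exactly where the invariance of $B$ under $\{\gamma'(a)\iota(a)\}$ enters, while $\gamma'(A)$-invariance of $A$ is what keeps the remaining factor inside $A$. Everything else is bookkeeping once the dictionary of \cite[Theorem 2.2]{p2qciclico} is set up, and the only real pitfall is keeping the composition and conjugation conventions consistent.
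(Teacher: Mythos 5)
Your proof is correct, and it follows the standard route: the paper itself imports this statement from \cite[Proposition 2.14]{p2qciclico} without reproving it, and your argument (direct verification of the GFE on the $AB$-decomposition, together with reading the invariance of $\ker(\gamma)$ off the normality of the kernel in $(G,\circ)$ via the identity $a^{\ominus}\circ k\circ a=k^{\gamma'(a)\iota(a)}$) is exactly the intended one. All the conventions are handled consistently and no step is missing.
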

In this situation we will say that $\gamma$ is a \emph{lifting} of
$\gamma'$.

\begin{lemma}[{{\cite[Lemma 2.23]{p2qciclico}}}]\ppar
  \label{lemma:duality}
  Let $G$ be a non-abelian group.
  Let $C$ be a non-trivial subgroup of $G$ such that:
  \begin{enumerate}
  \item
    $C$ is abelian;
  \item
    $C$ is characteristic in $G$;
  \item $C\cap Z(G)=\Set{1}$.
  \end{enumerate}
  Let $\gamma\colon G\to\Aut(G)$  be a GF, and suppose  that for every
  $c\in C$  we have  $\gamma(c)=\iota(c^{-\sigma})$ for  some function
  $\sigma\colon C\to C$.  

  Then  $\sigma\in\End(C)$, and the following relations hold in $\End(C)$:
  \begin{equation}
    \label{eq:sigma}
    \sigma \, \gamma(g)_{\restriction C} \, (\sigma - 1)
    =
    (\sigma - 1) \, \gamma(g)_{\restriction C} \, \iota(g)_{\restriction
      C} \, \sigma,
    \quad\text{for $g \in G$.}
  \end{equation}
\end{lemma}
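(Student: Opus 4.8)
The plan is to exploit two features of $C$: since $C$ is abelian, the inner automorphism $\gamma(c)=\iota(c^{-\sigma})$ (conjugation by the element $c^{-\sigma}\in C$) acts trivially on $C$; and since $C\cap Z(G)=\Set{1}$ and $Z(G)=\ker(\iota)$, the restriction of $\iota$ to $C$ is injective, so any identity $\iota(a)=\iota(b)$ with $a,b\in C$ collapses to $a=b$. Because $C$ is characteristic, $\gamma(g)$ and $\iota(g)$ restrict to automorphisms of the abelian group $C$ for every $g$, and I will read \eqref{eq:sigma} as an identity in the ring $\End(C)$, writing $\varphi_{\restriction C}$ for restrictions.

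\textbf{Step 1: $\sigma\in\End(C)$.} Since $c^{-\sigma}\in C$ and $C$ is abelian, $\gamma(c)_{\restriction C}=1$ for all $c\in C$; in particular $c^{\gamma(c')}=c$ for $c,c'\in C$. Feeding $g=c$, $h=c'$ (both in $C$) into the GFE therefore gives $\gamma(c\cdot c')=\gamma(c)\gamma(c')=\iota(c^{-\sigma})\iota(c'^{-\sigma})=\iota(c^{-\sigma}c'^{-\sigma})$, while $\gamma(c\cdot c')=\iota\bigl((cc')^{-\sigma}\bigr)$. Injectivity of $\iota_{\restriction C}$ yields $(cc')^{-\sigma}=c^{-\sigma}c'^{-\sigma}$, i.e. $c\mapsto c^{-\sigma}$ (hence $c\mapsto c^{\sigma}$) is a group endomorphism of $C$, so $\sigma\in\End(C)$.

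\textbf{Step 2: $\gamma$ on cosets of $C$.} For $d\in C$ and $g\in G$ write $dg=(d^{\gamma(g)^{-1}})^{\gamma(g)}g$, which makes sense because $\gamma(g)_{\restriction C}\in\Aut(C)$; the GFE then gives $\gamma(dg)=\gamma\bigl(d^{\gamma(g)^{-1}}\bigr)\gamma(g)=\iota\bigl((d^{\gamma(g)^{-1}})^{-\sigma}\bigr)\gamma(g)$. Thus $\gamma$ on the coset $Cg$ is determined by $\gamma(g)$ and $\sigma$; restricting to $C$ and using Step 1 also shows $\gamma(x)_{\restriction C}$ and $\iota(x)_{\restriction C}$ depend only on $Cx$, which will let me replace $\gamma(\cdot)_{\restriction C}$ on suitable twisted elements by $\gamma(g)_{\restriction C}$.

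\textbf{Step 3: the relation.} Fix $g\in G$, $c\in C$. From $\gamma(c)=\iota(c^{-\sigma})$ one gets $g^{\gamma(c)}\cdot c=(c^{-\sigma})^{-1}\,g\,c^{-\sigma}c$; since $c^{-\sigma},c\in C$ are commuting and $C$ is normal, sliding the trailing $C$-part past $g$ via $\iota(g)_{\restriction C}$ rewrites this in the normal form $u\,g$ with an explicit $u\in C$ built from $\sigma$ and $\iota(g)_{\restriction C}$. Now compute $\gamma(ug)$ in two ways: the GFE for the pair $(g,c)$ gives $\gamma(ug)=\gamma(g)\gamma(c)=\gamma(g)\iota(c^{-\sigma})$, while Step 2 gives $\gamma(ug)=\iota(a)\,\gamma(g)$ for an explicit $a\in C$. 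Equating, moving the $\iota$-factor across $\gamma(g)$ by $\gamma(g)\iota(x)\gamma(g)^{-1}=\iota\bigl(x^{\gamma(g)^{-1}}\bigr)$, and cancelling $\iota$ via injectivity on $C$, produces an identity in $\End(C)$ relating $a$ to an expression in $c,\sigma,\gamma(g)_{\restriction C}$; valid for all $c$, it is an identity of endomorphisms, and expanding $u$ and $a$ and rearranging turns it into exactly \eqref{eq:sigma}.

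\textbf{Main obstacle.} Steps 1 and 2 are essentially formal; the delicate point is Step 3, namely carrying $g^{\gamma(c)}c$ into the normal form $ug$ and then tracking inverses and the order of composition in $\End(C)$ carefully enough that the resulting ring identity lands in precisely the asserted shape $\sigma\,\gamma(g)_{\restriction C}\,(\sigma-1)=(\sigma-1)\,\gamma(g)_{\restriction C}\,\iota(g)_{\restriction C}\,\sigma$ rather than an equivalent rearrangement.
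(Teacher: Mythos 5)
Your proposal is essentially the standard argument for this lemma (which the paper imports from \cite[Lemma 2.23]{p2qciclico} without reproving): Steps 1 and 2 are complete and correct, resting exactly on the three hypotheses in the way intended ($C$ abelian kills $\gamma(c)_{\restriction C}$, $C$ characteristic keeps everything inside $\End(C)$, and $C\cap Z(G)=\Set{1}$ makes $\iota_{\restriction C}$ injective so that identities of inner automorphisms descend to identities in $\End(C)$). One point in Step 3 deserves attention. Writing $g^{\gamma(c)}c=c^{\sigma}gc^{1-\sigma}$ and sliding the trailing factor to the left gives $u=c^{\sigma+(1-\sigma)\iota(g)_{\restriction C}^{-1}}$ — note the \emph{inverse} of $\iota(g)_{\restriction C}$, since $gx=x^{\iota(g)^{-1}}g$; with that bookkeeping your two evaluations of $\gamma(ug)$ yield
\begin{equation*}
  \sigma\,\gamma(g)_{\restriction C}^{-1}\,(\sigma-1)
  =(\sigma-1)\,\iota(g)_{\restriction C}^{-1}\,\gamma(g)_{\restriction C}^{-1}\,\sigma ,
\end{equation*}
which is~\eqref{eq:sigmaprime}, not literally~\eqref{eq:sigma}. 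This is the ``equivalent rearrangement'' you worried about, and it is harmless: as the paper records immediately after the lemma, the substitution $g\mapsto g^{\ominus 1}=g^{-\gamma(g)^{-1}}$ (a bijection of $G$, with $\gamma(g^{\ominus 1})=\gamma(g)^{-1}$) converts~\eqref{eq:sigmaprime} for all $g$ into~\eqref{eq:sigma} for all $g$. So either add that one-line conversion, or apply the GFE to the pair $(g^{\ominus 1},c)$ from the start to land on~\eqref{eq:sigma} directly; with that addition the proof is complete.
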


Note that $\gamma(g) \iota(g) = \iota(g^{\gamma(g)^{-1}})
\gamma(g)$. Setting $g' = g^{\ominus 1} = g^{-\gamma(g)^{-1}}$, we see
that $\gamma(g) \iota(g) = \iota(g')^{-1}
\gamma(g')^{-1}$. Therefore~\eqref{eq:sigma} can be rewritten as
\begin{equation}
  \label{eq:sigmaprime}
  \sigma \, \gamma(g)_{\restriction C}^{-1} \, (\sigma - 1)
  =
  (\sigma - 1) \,
  \iota(g)_{\restriction C}^{-1} \, \gamma(g)_{\restriction C}^{-1} \,  \sigma,
  \quad\text{for $g \in G$.}
\end{equation}

Lemma~\ref{lemma:duality} admits a converse, which partly extends Proposition~\ref{prop:lifting}. 

\begin{proposition}
  \label{prop:more-lifting}
Let $G = A B$ be a group, where $A, B$ are subgroups of $G$, such that
 \begin{enumerate}
  \item $A \cap B = \Set{1}$,
  \item $A$ is abelian,
  \item $A$ is characteristic in $G$,
  \item $A \cap Z(G) = \Set{1}$.
 \end{enumerate}
 
If moreover there exists a RGF $\gamma:B \to \Aut(G)$ and $\sigma \in \End(A)$ which satisfy~\eqref{eq:sigma}, 
then the extension of $\gamma$ to the function $\gamma : G \to \Aut(G)$ defined by, for $a \in A$ and $b \in B$,
 \begin{equation}
  \label{eq:formula}
    \gamma(a b) =  \iota(a^{-\gamma(b)^{-1} \sigma}) \gamma(b)
  \end{equation} 
is a GF on $G$.

Conversely, every GF $\gamma$ on $G$ such that $\gamma(a)=\iota(a^{-\sigma})$, where $\sigma\in \End(A)$, 
and for which $B$ is $\gamma(B)$-invariant, is obtained as the extension of a RGF on $B$ as in~\eqref{eq:formula}.
\end{proposition}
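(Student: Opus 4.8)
The plan is to verify directly that the function defined by~\eqref{eq:formula} satisfies the GFE, and then separately to reverse the construction. The key structural input is Lemma~\ref{lemma:duality} together with the observation that since $A$ is abelian and characteristic and $A \cap Z(G) = \Set{1}$, the conjugation map $\iota$ restricted to $A$ is faithful on $A$ (so $\iota$ identifies $A$ with $\Inn(G)$ restricted appropriately), and crucially $\gamma(b)$ for $b \in B$ normalizes $A$, so $\gamma(b)_{\restriction A} \in \Aut(A)$ and the composition $a \mapsto a^{\gamma(b)^{-1}\sigma}$ lies in $\End(A)$, making the right-hand side of~\eqref{eq:formula} well-defined as an element of $\Inn(G)\cdot\gamma(B) \le \Aut(G)$.

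First I would set up notation: write $g = ab$, $g' = a'b'$ with $a,a' \in A$, $b,b' \in B$, and compute both sides of the GFE $\gamma(g^{\gamma(g')}g') = \gamma(g)\gamma(g')$. Because $A$ is characteristic, $g^{\gamma(g')} = a^{\gamma(g')}b^{\gamma(g')}$ and one must decompose $b^{\gamma(g')}$; here the hypothesis that $\gamma$ is a RGF \emph{on $B$} (so $B$ is $\gamma(B)$-invariant) is essential — but note $\gamma(g') = \iota((a')^{-\gamma(b')^{-1}\sigma})\gamma(b')$ involves an inner part, so $b^{\gamma(g')}$ will be $b$ conjugated by an element of $A$ (landing in $B$ up to an $A$-factor since $G = AB$) and then hit by $\gamma(b')$. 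The bulk of the work is bookkeeping: expressing $g^{\gamma(g')}g'$ in the form $\tilde a \tilde b$ with $\tilde a \in A$, $\tilde b \in B$, then applying the formula~\eqref{eq:formula} to it, and checking the resulting expression equals $\iota(a^{-\gamma(b)^{-1}\sigma})\gamma(b)\iota((a')^{-\gamma(b')^{-1}\sigma})\gamma(b')$. The relation~\eqref{eq:sigma} (equivalently~\eqref{eq:sigmaprime}) is exactly what is needed to reconcile the inner-automorphism exponents that appear on the two sides; I would expect to use it in the form~\eqref{eq:sigmaprime} after pushing all $\iota$-factors to the left of the $\gamma(b)$-factors using the identity $\gamma(b)\iota(a) = \iota(a^{\gamma(b)})\gamma(b)$.

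For the converse, suppose $\gamma$ is a GF on $G$ with $\gamma(a) = \iota(a^{-\sigma})$ for $\sigma \in \End(A)$ (the fact that $\sigma$ is forced to be an endomorphism is Lemma~\ref{lemma:duality}) and with $B$ invariant under $\gamma(B)$. Then $\gamma_{\restriction B}$ is automatically a RGF on $B$, and $\sigma$ satisfies~\eqref{eq:sigma} by Lemma~\ref{lemma:duality}. It remains to show $\gamma(ab)$ is given by~\eqref{eq:formula}: apply the GFE to the pair $(a, b)$ — one needs $a^{\gamma(b)}b$ to be expressible appropriately, and since $\gamma(b)$ normalizes $A$ we get $a^{\gamma(b)} \in A$, so $\gamma(a^{\gamma(b)}b) = \gamma(a^{\gamma(b)})\gamma(b) = \iota((a^{\gamma(b)})^{-\sigma})\gamma(b)$. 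Writing $ab = c^{\gamma(b)}b$ with $c = a^{\gamma(b)^{-1}} \in A$ and substituting gives $\gamma(ab) = \gamma(c^{\gamma(b)}b) = \iota((c^{\gamma(b)})^{-\sigma})\gamma(b) = \iota(a^{-\gamma(b)^{-1}\sigma \, \text{(conjugated)}})\gamma(b)$, and a careful tracking of how $\sigma$ interacts with $\gamma(b)$-conjugation recovers~\eqref{eq:formula} precisely. One should also check well-definedness (independence of the factorization $g = ab$, using $A \cap B = \Set{1}$).

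The main obstacle I anticipate is the noncommutativity bookkeeping in the forward direction: when I form $g^{\gamma(g')}g'$ and rewrite it as $\tilde a \tilde b$, the $A$-component $\tilde a$ picks up contributions from three sources — $a^{\gamma(g')}$, the $A$-part of $b^{\gamma(g')}$, and the $A$-part created when moving $b^{\gamma(g')}$ past $b'$ — each twisted by various automorphisms, and then $\gamma(b)^{-1}\sigma$ is applied to the whole thing. Getting the exponents to match the right-hand side $\iota(a^{-\gamma(b)^{-1}\sigma})\gamma(b)\iota((a')^{-\gamma(b')^{-1}\sigma})\gamma(b')$ is where~\eqref{eq:sigma} must be invoked, and identifying the \emph{exact} point at which to substitute it (likely after collecting the coefficient of a single ``$\gamma(g)_{\restriction A}$-conjugated'' term) is the delicate step. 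I would organize this by first handling the special case $a' = 1$ (pure $B$-element on the right) and the case $b = b' = 1$ (pure $A$-elements) separately, then assembling the general case, since~\eqref{eq:formula} is multiplicative-like in a way that these two special cases essentially generate.
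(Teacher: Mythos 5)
Your overall strategy coincides with the paper's: the forward direction is a direct verification of the GFE for $\gamma(a_1b_1)$, $\gamma(a_2b_2)$, rewriting $(a_1b_1)^{\gamma(a_2b_2)}a_2b_2$ in the form $\tilde a\,\tilde b$ and invoking~\eqref{eq:sigmaprime} at the end to match the two inner-automorphism exponents, exactly as in the paper; and the converse is obtained by feeding the factorisation $ab=(a^{\gamma(b)^{-1}})^{\gamma(b)}\cdot b$ into the GFE. Your proposed reduction of the forward computation to the two special cases (second argument in $A$, second argument in $B$) is a workable reorganisation, but it is not free: the GFE does not trivially ``multiply up'', and you would have to justify the assembly step (essentially associativity of $g\circ h=g^{\gamma(h)}h$ on the relevant pairs, which itself uses the GFE instances already established). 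The paper avoids this by doing the single general computation.

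There is, however, one concrete error in your converse. You write $\gamma(a^{\gamma(b)}b)=\gamma(a^{\gamma(b)})\gamma(b)$, but the functional equation says $\gamma(g^{\gamma(h)}h)=\gamma(g)\gamma(h)$, i.e.\ $\gamma(a^{\gamma(b)}b)=\gamma(a)\gamma(b)$ --- the first factor is $\gamma$ of $a$, not of $a^{\gamma(b)}$. As written, your chain yields $\iota(a^{-\sigma})\gamma(b)$ after substituting $c=a^{\gamma(b)^{-1}}$, which is not~\eqref{eq:formula}, and the subsequent ``careful tracking of how $\sigma$ interacts with $\gamma(b)$-conjugation'' is an attempt to repair a discrepancy that should not be there. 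Applying the GFE correctly to the pair $(c,b)$ with $c=a^{\gamma(b)^{-1}}$ gives immediately
\begin{equation*}
\gamma(ab)=\gamma\bigl(c^{\gamma(b)}b\bigr)=\gamma(c)\gamma(b)=\iota\bigl(c^{-\sigma}\bigr)\gamma(b)=\iota\bigl(a^{-\gamma(b)^{-1}\sigma}\bigr)\gamma(b),
\end{equation*}
which is~\eqref{eq:formula} with no further conjugation bookkeeping. (Also, since $\sigma\in\End(A)$ is a hypothesis of the converse, the appeal to Lemma~\ref{lemma:duality} to ``force'' $\sigma$ to be an endomorphism is not needed for the identity above, though it is what guarantees that~\eqref{eq:sigma} holds, so that $\gamma_{\restriction B}$ and $\sigma$ are admissible data for the forward construction.)
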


In this situation we will say  that $\gamma$ is the \emph{gluing} of a
RGF on  $A$ and  a RGF on  the $\gamma(B)$-invariant  
subgroup $B$.

\begin{proof}
Assume   first  that   $\gamma$   is  a   GF  on   $G$   and  $B$   is
$\gamma(B)$-invariant. Then  $\gamma_{|B}$ is RGF  on $B$, and  for $a
\in A$ and $b\in B$, $ \gamma(a b)=\gamma(a^{\gamma(b)^{-1}})\gamma(b)
$.   By Lemma~\ref{lemma:duality}  there exists  $\sigma \in  \End(A)$
such   that  $\gamma(a)=\iota(a^{-\sigma})$   for   $a   \in  A$;   we
obtain~\eqref{eq:formula}.

Conversely, let $\gamma$ be a RGF on $B$ and $\sigma \in \End(A)$ such that~\eqref{eq:sigma} is satisfied. We now show that the function defined in~\eqref{eq:formula} satisfies the GFE. Let $a_{1}, a_{2} \in A$ and $b_{1}, b_{2} \in B$.

We have
  \begin{align*}
     \gamma(a_{1} b_{1}) \gamma(a_{2} b_{2}) 
    &=
    \iota(a_{1}^{-\gamma(b_{1})^{-1} \sigma}) \gamma(b_{1})
    \iota(a_{2}^{- \gamma(b_{2})^{-1} \sigma}) \gamma(b_{2}) \\
    &=
    \iota(a_{1}^{-\gamma(b_{1})^{-1} \sigma}
    a_{2}^{-\gamma(b_{2})^{-1} \sigma\gamma(b_{1}^{-1})} )
    \gamma(b_{1}) \gamma(b_{2}).
  \end{align*}
  On the other hand
  \begin{align*}
    & \gamma(
    ( a_{1} b_{1} )^{\gamma( a_{2} b_{2})} a_{2} b_{2} ) =\\
    &
    \gamma(
    a_{1}^{\gamma(b_{2})}
    b_{1}^{ \iota(a_{2}^{-\gamma(b_{2})^{-1} \sigma})  \gamma(b_{2}) }
    a_{2} b_{2}
    ) =\\
    &
    \gamma(
    a_{1}^{\gamma(b_{2})}
    (
    a_{2}^{\gamma(b_{2})^{-1} \sigma}
    b_{1}
    a_{2}^{-\gamma(b_{2})^{-1} \sigma}
    )^{\gamma(b_{2}) }
    a_{2} b_{2}
    ) =\\
    &
    \gamma(
    a_{1}^{\gamma(b_{2})}
    a_{2}^{\gamma(b_{2})^{-1} \sigma \gamma(b_{2})}
    b_{1}^{\gamma(b_{2}) }
    a_{2}^{-\gamma(b_{2})^{-1} \sigma \gamma(b_{2})}
    a_{2} b_{2}
    ) =\\
    &
    \gamma(
    a_{1}^{\gamma(b_{2})}
    a_{2}^{\gamma(b_{2})^{-1} \sigma \gamma(b_{2})}
    a_{2}^{-\gamma(b_{2})^{-1} \sigma \iota(b_{1})^{-1} \gamma(b_{2})}
    a_{2}^{\gamma(b_{2})^{-1} \iota(b_{1})^{-1} \gamma(b_{2}) }
    b_{1}^{\gamma(b_{2}) } b_{2}
    ) =\\
    &
    \iota(
    (
    a_{1}^{\gamma(b_{2})}
    a_{2}^{\gamma(b_{2})^{-1} \sigma \gamma(b_{2})}
    a_{2}^{-\gamma(b_{2})^{-1} \sigma \iota(b_{1})^{-1} \gamma(b_{2})}
    a_{2}^{\gamma(b_{2})^{-1} \iota(b_{1})^{-1} \gamma(b_{2}) }
    )^{- \gamma(b_{2})^{-1} \gamma(b_{1})^{-1} \sigma }
    ) \\
    &\qquad
    \gamma(b_{1}) \gamma(b_{2}) =\\ 
    &
    \iota(
    a_{1}^{- \gamma(b_{1})^{-1} \sigma }
    a_{2}^
    {
      -\gamma(b_{2})^{-1} \sigma \gamma(b_{1})^{-1} \sigma
      + \gamma(b_{2})^{-1} \sigma \iota(b_{1})^{-1} \gamma(b_{1})^{-1} \sigma
      - \gamma(b_{2})^{-1} \iota(b_{1})^{-1} \gamma(b_{1})^{-1} \sigma
    }
    ) \\
    &\qquad
    \gamma(b_{1}) \gamma(b_{2}).
  \end{align*}
  Now~\eqref{eq:sigmaprime} shows that the two expressions
  \begin{equation*}
    - \sigma \gamma(b_{1})^{-1}_{\restriction A}
  \end{equation*}
  and
  \begin{equation*}
    -\sigma \gamma(b_{1})^{-1}_{\restriction A} \sigma
    + \sigma \iota(b_{1})^{-1}_{\restriction A}
    \gamma(b_{1})^{-1}_{\restriction A} \sigma 
    - \iota(b_{1})^{-1}_{\restriction A} \gamma(b_{1})^{-1}_{\restriction A} \sigma
  \end{equation*}
  coincide.
\end{proof}

\begin{proposition}[{{\cite[Proposition 2.24]{p2qciclico}}}]\ppar
  \label{prop:duality}
  Let $G$ be a finite non-abelian group.
  Let $C$ be a subgroup of $G$ such that:
  \begin{enumerate}
  \item 
    $C = \Span{c}$ is cyclic, of order a power of the prime $r$,
  \item \label{i2_duality}
    $C$ is characteristic in $G$,
  \item 
    $C \cap Z(G) = \Set{1}$, and
  \item 
    there  is  $a \in  G$  which  induces  by  conjugation on  $C$  an
    automorphism whose order is not a power of $r$.
  \end{enumerate}
  Let $\gamma\colon G\to\Aut(G)$  be a GF, and suppose  that for every
  $c\in C$  we have  $\gamma(c) = \iota(c^{-\sigma})$, for  some function
  $\sigma \colon C \to C$.  

  Then
  \begin{enumerate}
  \item
    either $\sigma = 0$, that is, $C \le \ker(\gamma)$, 
  \item
    or $\sigma = 1$, that is, $\gamma(c) = \iota(c^{-1})$, so that $C
    \le \ker(\tilde\gamma)$.
  \end{enumerate}
\end{proposition}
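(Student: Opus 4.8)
The plan is to turn the statement into an identity in the commutative ring $\End(C)$ and then use the last hypothesis to cancel a factor. First I would apply Lemma~\ref{lemma:duality}: its structural hypotheses—that $C$ be abelian (here $C=\Span{c}$ is cyclic), characteristic in $G$, and meet $Z(G)$ trivially—are among our hypotheses, and by assumption $\gamma(c)=\iota(c^{-\sigma})$ for all $c\in C$. Hence $\sigma\in\End(C)$ and relation~\eqref{eq:sigma} holds for every $g\in G$. Since $C$ is characteristic in $G$, both $\gamma(g)_{\restriction C}$ and $\iota(g)_{\restriction C}$ lie in $\Aut(C)$ for every $g\in G$.

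Next, write $\Size{C}=r^{m}$ and identify $\End(C)$ with $\Z/r^{m}\Z$ by sending $\phi$ to the exponent $k$ with $\phi(c)=c^{k}$; this is a commutative ring whose units are exactly $\Aut(C)=(\Z/r^{m}\Z)^{\ast}$. In this ring $\sigma$, $\gamma(g)_{\restriction C}$ and $\iota(g)_{\restriction C}$ all commute, so~\eqref{eq:sigma} reads $\gamma(g)_{\restriction C}\,\sigma(\sigma-1)=\gamma(g)_{\restriction C}\,\iota(g)_{\restriction C}\,\sigma(\sigma-1)$, and cancelling the unit $\gamma(g)_{\restriction C}$ yields
\[
  \bigl(\iota(g)_{\restriction C}-1\bigr)\,\sigma(\sigma-1)=0\quad\text{in }\Z/r^{m}\Z,\qquad\text{for every }g\in G.
\]

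Now I would specialise to $g=a$, the element provided by the last hypothesis. The kernel of the reduction map $(\Z/r^{m}\Z)^{\ast}\to(\Z/r\Z)^{\ast}$ has order $r^{m-1}$, hence is an $r$-group; as $\iota(a)_{\restriction C}$ has order not a power of $r$, it is not in this kernel, i.e.\ $\iota(a)_{\restriction C}\not\equiv 1\pmod r$, so $\iota(a)_{\restriction C}-1$ is a unit of $\Z/r^{m}\Z$. Cancelling it gives $\sigma(\sigma-1)=0$ in $\Z/r^{m}\Z$; since $\sigma$ and $\sigma-1$ are coprime integers and $r^{m}$ is a prime power, $r^{m}$ divides one of them, whence $\sigma=0$ or $\sigma=1$ in $\End(C)$.

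Finally I would translate back: if $\sigma=0$ then $\gamma(c)=\iota(c^{-\sigma})=\id$ for all $c\in C$, i.e.\ $C\le\ker(\gamma)$; if $\sigma=1$ then $\gamma(c)=\iota(c^{-1})$ for all $c\in C$, which by the description of the dual gamma function in~\cite[Subsection~2.8]{p2qciclico} is exactly the assertion $C\le\ker(\gammatilde)$. The one point that needs care is not to try to compute $\gamma(a)$ directly—about which nothing is known—but to feed $g=a$ into the relation~\eqref{eq:sigma} already obtained from Lemma~\ref{lemma:duality}, in which only $\iota(a)_{\restriction C}$ occurs; cyclicity of $C$ is then what makes $\End(C)$ commutative and converts ``$\iota(a)_{\restriction C}$ has order not a power of $r$'' into ``$\iota(a)_{\restriction C}-1$ is invertible'', which is the whole mechanism of the proof.
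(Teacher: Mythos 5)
Your proof is correct and follows exactly the route the paper sets up: Lemma~\ref{lemma:duality} gives $\sigma\in\End(C)$ and relation~\eqref{eq:sigma}, which in the commutative ring $\End(C)\cong\Z/r^{m}\Z$ collapses to $(\iota(g)_{\restriction C}-1)\,\sigma(\sigma-1)=0$, and hypothesis (4) makes $\iota(a)_{\restriction C}-1$ a unit, forcing $\sigma\in\Set{0,1}$ since $r^{m}$ is a prime power. This is essentially the argument of \cite[Proposition 2.24]{p2qciclico}, to which the paper defers, and all the points needing care (that $\gamma(g)_{\restriction C},\iota(g)_{\restriction C}\in\Aut(C)$ because $C$ is characteristic, and the translation of $\sigma=1$ into $C\le\ker(\gammatilde)$) are handled correctly.
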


\begin{remark}
\label{remark:duality-G-7}
In Proposition~\ref{prop:duality}, once $\gamma$ has been chosen, we can replace the hypothesis~\eqref{i2_duality} with the slightly more general hypothesis that $C$ is normal and $\gamma(G)$-invariant.
In fact, in that case $\gamma(g)_{\restriction C} \in \Aut(C)$. The proof of Lemma~\ref{lemma:duality} still stands, as $c^{\iota(g)}, c^{\gamma(g)} \in C$, so that equation~\eqref{eq:sigma} is satisfied.
\end{remark}

\begin{lemma}[{{\cite[Lemma 2.9]{p2qciclico}}}]\ppar
  \label{lemma:conjugacy}
  Let $G$ be a group, $N$ a regular subgroup of $\Hol(G)$, and
  $\gamma$ the associated gamma function.

  Let $\phi \in \Aut(G)$.
  \begin{enumerate}
  \item 
    The gamma function $\gamma^{\phi}$ associated to the regular subgroup
    $N^{\phi}$  is given by
    \begin{equation}
      \label{eq:conjugacy}
      \gamma^{\phi}(g)
      =
      \gamma(g^{\phi^{-1}})^{\phi}
      =
      \phi^{-1} \gamma(g^{\phi^{-1}}) \phi,
    \end{equation}
    for $g \in G$.
  \item
    \label{item:invariance-under-conjugacy}
    If $H \le G$ is invariant under $\gamma(H)$, then $H^{\phi}$ is
    invariant under $\gamma^{\phi}(H^{\phi})$.
  \end{enumerate}  
\end{lemma}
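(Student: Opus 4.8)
The plan is to work directly with the description of a regular subgroup of $\Hol(G)$ by its gamma function. Writing $\rho(g)\colon x\mapsto xg$ for the right translations, one has $\Hol(G)=\rho(G)\rtimes\Aut(G)$ inside $\Sym(G)$, and a regular subgroup $N\le\Hol(G)$ with associated gamma function $\gamma$ is exactly $N=\Set{\gamma(g)\rho(g):g\in G}$, where $\gamma(g)\rho(g)$ is the permutation $x\mapsto x^{\gamma(g)}g$ (that is, $x\mapsto x\circ g$). Two facts about conjugation by $\phi\in\Aut(G)\le\Hol(G)$ will be used: it preserves $\Aut(G)$, acting there by $\alpha\mapsto\phi^{-1}\alpha\phi$, and on translations it acts by $\phi^{-1}\rho(g)\phi=\rho(g^{\phi})$, both checked by evaluating at an arbitrary $x\in G$.

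For part~(1): since $\phi$ is a bijection of $G$, the conjugate $N^{\phi}=\phi^{-1}N\phi$ is again a regular subgroup of $\Sym(G)$, and it lies in $\Hol(G)$ since $\phi$ does; hence it has a well-defined gamma function $\gamma^{\phi}$, characterised by $N^{\phi}=\Set{\gamma^{\phi}(h)\rho(h):h\in G}$. Conjugating a typical element of $N$,
\begin{equation*}
  \phi^{-1}\bigl(\gamma(g)\rho(g)\bigr)\phi=\bigl(\phi^{-1}\gamma(g)\phi\bigr)\bigl(\phi^{-1}\rho(g)\phi\bigr)=\gamma(g)^{\phi}\,\rho(g^{\phi}),
\end{equation*}
with $\gamma(g)^{\phi}=\phi^{-1}\gamma(g)\phi\in\Aut(G)$. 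Re-indexing by $h=g^{\phi}$ gives $N^{\phi}=\Set{\gamma(h^{\phi^{-1}})^{\phi}\,\rho(h):h\in G}$, and comparison with the displayed characterisation of $\gamma^{\phi}$ yields $\gamma^{\phi}(h)=\gamma(h^{\phi^{-1}})^{\phi}=\phi^{-1}\gamma(h^{\phi^{-1}})\phi$, i.e.\ \eqref{eq:conjugacy}.

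For part~(2) I would argue formally from \eqref{eq:conjugacy}. Take $h_{1},h_{2}\in H$, so that $h_{1}^{\phi},h_{2}^{\phi}$ is a typical pair in $H^{\phi}$. By \eqref{eq:conjugacy}, $\gamma^{\phi}(h_{2}^{\phi})=\phi^{-1}\gamma(h_{2})\phi$, whence
\begin{equation*}
  \bigl(h_{1}^{\phi}\bigr)^{\gamma^{\phi}(h_{2}^{\phi})}=\bigl(h_{1}^{\phi}\bigr)^{\phi^{-1}\gamma(h_{2})\phi}=\bigl(h_{1}^{\gamma(h_{2})}\bigr)^{\phi}.
\end{equation*}
Since $H$ is $\gamma(H)$-invariant, $h_{1}^{\gamma(h_{2})}\in H$, so the right-hand side lies in $H^{\phi}$; as $h_{1},h_{2}$ were arbitrary, $H^{\phi}$ is $\gamma^{\phi}(H^{\phi})$-invariant.

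There is no serious obstacle here: the whole argument is bookkeeping. The one point needing care is the direction of the twist — the re-indexing of $N^{\phi}$ produces $h^{\phi^{-1}}$, not $h^{\phi}$ — together with the convention that in $\Hol(G)$ the automorphism factor stands to the \emph{left} of the translation factor, so that conjugation by $\phi$ distributes cleanly over the factorisation $\gamma(g)\rho(g)$. Getting either of these backwards would invert $\phi$ in the wrong place.
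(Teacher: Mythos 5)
Your proof is correct and follows essentially the same route as the cited source ([p2qciclico, Lemma 2.9]): write $N=\Set{\gamma(g)\rho(g):g\in G}$, conjugate termwise using $\phi^{-1}\rho(g)\phi=\rho(g^{\phi})$, re-index by $h=g^{\phi}$ to read off $\gamma^{\phi}$, and then deduce invariance of $H^{\phi}$ by a direct computation from \eqref{eq:conjugacy}. The bookkeeping, including the direction of the twist $h^{\phi^{-1}}$, is handled correctly.
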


We will refer to the action~\eqref{eq:conjugacy} of $\Aut(G)$ on
$\gamma$ of the Lemma as 
\emph{conjugation}.

\begin{lemma}
\label{lemma:generatori}
Let $G$ be a group of order $p^2q$, $p>2$, and assume that the Sylow $p$-subgroup $A$ of $G$ is normal. Let $b \in G$ an element of order $q$.

\begin{enumerate} 
\item If $A=\Span{a}$ is cyclic, then $\Set{a, b}$ is a set of generators for both $G$ and $(G, \circ)$, for each possible operation $\circ$ on $G$.
\item If $A=\Span{a_1, a_2}$ is elementary abelian and $\gamma$ is a GF on $G$ such that $\Span{a_1}$ is $\gamma(\Span{a_1})$-invariant, then $\Set{a_1, a_2, b}$ is a set of generators for both $G$ and $(G, \circ)$.
\end{enumerate}
\end{lemma}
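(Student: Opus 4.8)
The plan is to treat the statements for $(G,\cdot)$ and for $(G,\circ)$ side by side: the multiplicative one is immediate, while the $\circ$-statement needs a little group theory inside $(G,\circ)$ together with one genuinely arithmetic computation in which the hypothesis $p>2$ enters. Three preliminary remarks will be used throughout. First, since $A$ is a \emph{normal} Sylow $p$-subgroup it is the unique one, hence characteristic in $G$; thus $\phi(A)=A$ for every $\phi\in\Aut(G)$, so $A$ is $\gamma(H)$-invariant for every subgroup $H\le G$ and every GF $\gamma$ on $G$. Second, a \emph{finite} $\cdot$-subgroup $H$ of $G$ that is $\gamma(H)$-invariant is automatically a subgroup of $(G,\circ)$: it contains the identity, and for $x,y\in H$ one has $x\circ y=x^{\gamma(y)}y\in H$ because $\gamma(y)$ maps $H$ onto $H$, so $H$ is closed under $\circ$, whence a $\circ$-subgroup. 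Third, the GFE says precisely that $\gamma\colon(G,\circ)\to\Aut(G)$ is a group homomorphism, so $\gamma(a^{\circ n})=\gamma(a)^n$. For the multiplicative statement: in case~(1) the group $\langle a,b\rangle$ contains $A=\langle a\rangle$ and an element $b$ of order $q$, so its order is divisible by $\mathrm{lcm}(p^{2},q)=p^{2}q$, hence $\langle a,b\rangle=G$; in case~(2) the same applies to $\langle a_{1},a_{2},b\rangle\supseteq\langle a_{1},a_{2}\rangle=A$.

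For the $\circ$-statement in case~(2): by the preliminaries $A$ is a $\circ$-subgroup of order $p^{2}$ (it is characteristic, hence $\gamma(G)$-invariant), and $\langle a_{1}\rangle$ is a $\circ$-subgroup of order $p$ (invariant by hypothesis). Then $\langle a_{1},a_{2}\rangle_{\circ}$ is a $\circ$-subgroup contained in $A$ (as $a_{1},a_{2}\in A$) and properly containing $\langle a_{1}\rangle_{\circ}=\langle a_{1}\rangle$ — proper since $a_{2}\notin\langle a_{1}\rangle$, because $a_{1},a_{2}$ is a basis of the elementary abelian group $A$. Its order divides $p^{2}$ and exceeds $p$, so $\langle a_{1},a_{2}\rangle_{\circ}=A$. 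Finally $\langle a_{1},a_{2},b\rangle_{\circ}\supseteq A$ and contains $b\notin A$, so its order is a divisor of $p^{2}q$ strictly larger than $p^{2}$, hence equal to $p^{2}q$; thus $\langle a_{1},a_{2},b\rangle_{\circ}=G$.

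The remaining case, the $\circ$-statement in case~(1), is where the real work lies. Here $A=\langle a\rangle$ is again a $\circ$-subgroup, of order $p^{2}$, so $\langle a\rangle_{\circ}\le(A,\circ)$ has order dividing $p^{2}$; the crux of the whole lemma is to show $a^{\circ p}\neq e$, so that this order is exactly $p^{2}$ and $\langle a\rangle_{\circ}=A$ — after which one adjoins $b\notin A$ and concludes $\langle a,b\rangle_{\circ}=G$ exactly as above. To prove $a^{\circ p}\neq e$: the automorphism $\gamma(a)$ preserves the cyclic group $A$, so $x^{\gamma(a)}=x^{k}$ on $A$ for some integer $k$ with $p\nmid k$; since $\gamma$ is a homomorphism on $(G,\circ)$, induction gives $a^{\circ n}=a^{\,1+k+\cdots+k^{n-1}}$, hence $a^{\circ p}=a^{S}$ with $S=1+k+\cdots+k^{p-1}$. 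The main obstacle is the claim $S\not\equiv 0\pmod{p^{2}}$, which I would prove by splitting into two cases: if $k\equiv 1\pmod p$, writing $k=1+pt$ gives $S\equiv p+p^{2}t\,\tfrac{p-1}{2}\equiv p\pmod{p^{2}}$, where $p>2$ is exactly what makes $\tfrac{p-1}{2}$ an integer; if $k\not\equiv 1\pmod p$ then $k-1$ is a unit and $S=\tfrac{k^{p}-1}{k-1}\equiv\tfrac{k-1}{k-1}=1\pmod p$ by Fermat. Either way $p^{2}\nmid S$, so $a^{\circ p}=a^{S}\neq e$ because $a$ has $\cdot$-order $p^{2}$. (For $p=2$ this computation genuinely fails, e.g.\ $k=3$ gives $S=4\equiv 0\pmod 4$, which is consistent with the case $4q$ being excluded from the theorem.)
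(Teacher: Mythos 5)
Your proof is correct. It follows the same skeleton as the paper's: observe that $A$ (being the unique, hence characteristic, Sylow $p$-subgroup) and the hypothesised invariant subgroups are $\circ$-subgroups, show that the given generators of $A$ already generate $(A,\circ)$, and then adjoin $b\notin A$ to get all of $(G,\circ)$ by an index argument. The difference is in how the middle step is justified. The paper outsources it to two earlier results from the cyclic-case paper: \cite[Theorem 3.3]{p2qciclico} (for $p>2$ an invariant Sylow $p$-subgroup satisfies $A\simeq(A,\circ)$) and \cite[Corollary 2.18]{p2qciclico} (preservation of the order of $a$ in the cyclic case). You instead prove these facts from scratch: in the elementary abelian case by a Lagrange-type counting argument ($\Span{a_1,a_2}_\circ$ sits strictly between a $\circ$-subgroup of order $p$ and one of order $p^2$), which as a bonus does not even use $p>2$; and in the cyclic case by the explicit formula $a^{\circ n}=a^{1+k+\cdots+k^{n-1}}$ together with the congruence $1+k+\cdots+k^{p-1}\not\equiv 0\pmod{p^2}$, where $p>2$ enters through the integrality of $\tfrac{p-1}{2}$. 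This makes the lemma self-contained and isolates exactly where the hypothesis $p>2$ is used (and why $p=2$ fails, e.g.\ $k=3$), at the cost of redoing a computation the paper already had on file. Both arguments are sound.
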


\begin{proof}
Clearly the generator(s) of $A$ together with the element $b$ generate $G$. 

Let $\gamma$ be a GF on $G$ and let $\circ$ be the corresponding operation on $G$. 
Since $A$ is $\gamma(A)$-invariant, $A$ is a subgroup of $(G,\circ)$, and since $p>2$, $A\simeq (A, \circ)$ (see \cite[Theorem 3.3]{p2qciclico}). 

If $A=\Span{a}$ is cyclic then $\ord_{A}(a)=\ord_{(A,\circ)}(a)$ (take $\gamma_{|A}$ in \cite[Corollary 2.18]{p2qciclico}), therefore $a$ generates $(A, \circ)$ too. 

If $A$ is elementary abelian then every non-trivial element of $(A,\circ)$ has order $p$. Moreover if $A_1:=\Span{a_1}$ is $\gamma(A_1)$-invariant then $a_2 \not\in A_1=(A_1, \circ)$, so that $a_1, a_2$ generate $(A, \circ)$ too.

Now, since $b \in G\setminus A$ and $[G:A]=q$, 
the generator(s) of $A$ together with the element $b$ generate also $(G,\circ)$.
\end{proof}

%\input{groups_p2q}
% !TEX root =p2q-ea.tex

\section{Groups of order \texorpdfstring{$p^2q$}{p2q}}
\label{sec:the-groups}

We briefly describe  the groups of order $p^{2} q$,  and list them and
their automorphisms  in the table below,  referring to~\cite{classp2q}
for the details.

We will  say that two  groups have the  same \emph{type} if  they have
isomorphic automorphism  groups.  For groups  of order $p^{2}  q$ each
type corresponds  to an  isomorphism class, except  for type  8, which
corresponds to $\frac{q-3}{2}$ isomorphism classes.

We use the notation $\cC_{n}$ for a cyclic group of order $n$.

\begin{description}
\item[Type 5] Abelian group.
\item[Type 6] This is the non-abelian group with centre of order $p$ for $q \mid p - 1$, which we denote  by $\cC_{p} \times (\cC_{p} \rtimes \cC_{q})$.  It can be described as
$$\Span{a_1, a_2, b : a_1^p = a_2^p = b^q = 1,\ a_2^{\iota(b)} = a_2^{\lambda} },$$
where $\lambda$ is an element of order $q$ in $\cC_p^{\ast}$, $\lambda \ne 1$. 
\item[Type 7] This is the non-abelian group for $q\mid p-1$ in which a generator of $\cC_q$ acts on $\cC_p \times \cC_p$ as a non-identity scalar matrix. We denote it by $(\cC_{p} \times \cC_{p}) \rtimes_{S} \cC_{q}$, and it can be described as
$$\Span{a_1, a_2, b : a_1^p = a_2^p = b^q = 1,\ a_1^{\iota(b)} = a_1^{\lambda},\ a_2^{\iota(b)} = a_2^{\lambda} },$$
where $\lambda$ is an element of order $q$ in $\cC_p^{\ast}$, $\lambda \ne 1$. 
\item[Type 8] These are the non-abelian groups for $q\mid p-1$, $q>3$, in which a generator of $\cC_q$ acts on $\cC_p \times \cC_p$ as a diagonal, non-scalar matrix with no eigenvalue $1$, and determinant different from $1$. 
We denote this type by $(\cC_{p} \times \cC_{p}) \rtimes_{D1} \cC_{q}$, and it consists of the groups $G_k$ which can be described as
$$G_k = \Span{a_1, a_2, b : a_1^p = a_2^p = b^q = 1,\ a_1^{\iota(b)} = a_1^{\lambda},\ a_2^{\iota(b)}=a_2^{\lambda^{k}} }, $$
where $\lambda$ is an element of order $q$, $\lambda \ne 1$, and $k$ is an integer modulo $q$, $k \ne 0, \pm 1$. 

Since for each $k \ne 0, \pm 1$ we have that $G_{k} \simeq G_{k^{-1}}$, the type 8 includes $\frac{q-3}{2}$ isomorphism classes of groups.

We will denote by $\mathcal{K}$ the set of the elements $k \ne 0, \pm 1$ for which $\Set{G_k : k \in \mathcal{K}}$ is a set of representatives of the isomorphism classes of groups of type 8.

\item[Type 9] This is the non-abelian group for $q\mid p-1$, $q>2$, in which a generator of $\cC_q$ acts on $\cC_p \times \cC_p$ as a diagonal, non-scalar matrix with no eigenvalue $1$, and determinant $1$. We denote it by $(\cC_{p} \times \cC_{p}) \rtimes_{D1} \cC_{q}$, and it can be described as
$$\Span{a_1, a_2, b : a_1^p = a_2^p = b^q = 1,\ a_1^{\iota(b)} = a_1^{\lambda},\ a_2^{\iota(b)}=a_2^{\lambda^{-1}} }, $$
where $\lambda$ is an element of order $q$ in $\cC_p^{\ast}$, $\lambda \ne 1$.

\item[Type 10] This is the non-abelian group group for $q \mid p+1$, $q>2$, in which a generator of $\cC_q$ acts on $\cC_p \times \cC_p$ as a matrix $C$ with $\det(C)=1$ and $\tr(C)=\lambda + \lambda^{-1}$, where $\lambda\ne 1$ is a $q$-th root of unity in a quadratic extension of $\mathbb{F}_p$.
We denote it by $(\cC_{p} \times \cC_{p}) \rtimes_{C} \cC_{q}$, and it can be described as
$$\Span{a_1, a_2, b : a_1^p = a_2^p = b^q = 1,\ a_1^{\iota(b)}=a_1^{\lambda+\lambda^{-1}}a_2,\ a_2^{\iota(b)}= a_1^{-1}} .$$
\item[Type 11] This is the non-abelian group with centre of order $p$ for $p \mid q - 1$, which we denote  by $(\cC_{q} \rtimes \cC_{p}) \times \cC_{p}$. It can be described as
$$\Span{a_1, a_2 , b : a_1^p = a_2^p = b^q = 1,\ b^{\iota(a_1)}=b^{u}},$$ 
where $u$ is an element of order $p$ in $\cC_{q}^{\ast}$.
\end{description}

\begin{center}
  \begin{table}[H]
    \caption{Groups of order $p^2q$ and their automorphisms} 
    \label{table:grp_aut}
    \centering
    \begin{tabular}{|c|c|c|c|}
    \hline
    Type & Conditions & $G$ & $\Aut(G)$  \\
    \hline
    5 & & $\cC_{p} \times \cC_{p} \times \cC_{q}$ & $\GL(2, p) \times \cC_{q-1}$  \\
    6 & $q \mid p - 1$ & $\cC_{p} \times (\cC_{p} \rtimes \cC_{q})$ & $\cC_{p-1} \times \Hol(\cC_{p})$ \\
    7 & $q \mid p - 1$ & $(\cC_{p} \times \cC_{p}) \rtimes_{S} \cC_{q}$ & $\Hol(\cC_{p} \times \cC_{p})$ \\
    8 & $3 < q \mid p - 1$ & $(\cC_{p} \times \cC_{p}) \rtimes_{D0} \cC_{q}$ & $\Hol(\cC_{p}) \times \Hol(\cC_{p})$ \\
    9 & $2 < q \mid p - 1$ & $(\cC_{p} \times \cC_{p}) \rtimes_{D1} \cC_{q}$ & $ (\Hol(\cC_{p}) \times \Hol(\cC_{p})) \rtimes  \cC_{2} $ \\
    10 & $2 < q \mid p + 1$ & $(\cC_{p} \times \cC_{p}) \rtimes_{C} \cC_{q}$ & $(\cC_{p} \times \cC_{p})  
    \rtimes (\cC_{p^{2}-1} \rtimes \cC_{2}) $ \\
    11 & $p \mid q - 1$ & $(\cC_{q} \rtimes \cC_{p}) \times \cC_{p}$ & $\Hol(\cC_{p}) \times \Hol(\cC_{q})$ \\
    \hline
    \end{tabular}
  \end{table}
\end{center}

%\input{case_distinction}
% !TEX root =p2q-ea.tex

\section{The main case distinction}
\label{sec:case-distinction}

In this section we spell out the case distinction we will pursue in the
following sections, and collect a few  facts that will be
useful at several points in the classification.

For type 6, we can apply Lemma~\ref{lemma:duality}~and
Proposition~\ref{prop:duality} to a non-central subgroup of order
$p$. 
For type 11, we can apply these to the Sylow $q$-subgroup $C$.

For $G$ of the  remaining types, namely 7, 8, 9 and  10, denote by $A$
the elementary abelian Sylow $p$-subgroup of  $G$.  We will show in
Sections~\ref{sec: 7,8,9} and~\ref{sec : 10} that for the types 8, 9 and
10  (when  $p>2$) one has
\begin{equation}
\label{eq: RGF-on-A-by-sigma}
\forall a \in A, \  \gamma(a)=\iota(a^{-\sigma}) ,
\end{equation}
for some $\sigma \in \End(A)$.  If $G$  is  of  type  7  then
it is not always the case that $\gamma(A) \le \Inn(G)$; we will treat the case
$\gamma(A)\not\leq   \Inn(G)$  separately in
Section~\ref{sec:  7}).  
Therefore,  for $G$  of types 8, 9, 10, or of type 7 and $\gamma(A) \le \Inn(G)$, equation \eqref{eq: RGF-on-A-by-sigma} holds and we can apply Lemma~\ref{lemma:duality} with $C=A$, getting equation~\eqref{eq:sigma}.

We have the following case distinction.

\subsection{\texorpdfstring{$\sigma, 1 - \sigma$ are}{} not both invertibile}
\label{subsec:sigma}
This means that $\sigma$ has an eigenvalue $0$ or $1$. If it is $0$, then $p\mid \Size{\ker(\gamma)}$. If it is $1$, consider the dual gamma function defined as $\gammatilde(g)=\gamma(g^{-1})\iota(g^{-1})$ (see \cite[Proposition 2.22]{p2qciclico}). Then for $a \in A$, $\gammatilde(a)=\gamma(a^{-1})\iota(a^{-1})=\iota(a^{\sigma-1})$, so that $p\mid \Size{\ker(\gammatilde)}$. Therefore, up to switch $\gamma$ with $\gammatilde$, we can assume the eigenvalue is $0$, so that $p$ divides the order of the kernel of $\gamma$.

\subsection{\texorpdfstring{$\sigma, 1 - \sigma$ are}{} both invertibile}
\label{subsec:sigma_1-sigma_inv}
This means that $\sigma$ has no eigenvalues $0, 1$. Then equation~\eqref{eq:sigma} yields
\begin{equation}
  \label{eq:conjugation}
  (\sigma^{-1} - 1)^{-1} \gamma(b)_{\restriction C} (\sigma^{-1} - 1)
  = \gamma(b)_{\restriction C} \iota(b)_{\restriction C},
\end{equation}
where $b \ne 1$ is a $q$-element. 
Thus $\gamma(b)_{\restriction C}$ and $\gamma(b)_{\restriction C} \iota(b)_{\restriction C}$ are conjugate, and this yields some information about the eigenvalues of $\gamma(b)_{\restriction C}$.

For type 7, if $q>2$~\eqref{eq:conjugation} is plainly impossible, as
\begin{equation*}
  \iota(b) = 
  \begin{bmatrix}
    \lambda &  \\
    & \lambda
  \end{bmatrix},
\end{equation*}
for some $\lambda \ne 1$, $\lambda$ of order $q$.

For type 8, the two normal subgroups of order $p$ are
characteristic, so $\gamma(b)_{\restriction C}$ and
$\iota(b)_{\restriction C}$ commute, as they are simultaneously diagonal. Let 
\begin{equation*}
  \iota(b)_{\restriction C}
  =
  \begin{bmatrix}
    \lambda_{1} & \\
     & \lambda_{2}\\
  \end{bmatrix},
  \qquad
  \gamma(b)_{\restriction C}
  =
  \begin{bmatrix}
    \alpha_{1} & \\
     & \alpha_{2}\\
  \end{bmatrix},
\end{equation*}
with $\lambda_{i} \ne 1$. This implies $\alpha_{1} = \lambda_{2}
\alpha_{2}$ and $\alpha_{2} = \lambda_{1} \alpha_{1}$, so that
$\alpha_{1} = \lambda_{1} \lambda_{2} \alpha_{1}$ and $\lambda_{1}
\lambda_{2} = 1$, against the assumption of type $8$.

For type 9, however, this is well possible. This time
there is an automorphism of order two exchanging the two eigenspaces,
but since $\gamma(b)_{\restriction C}$ has odd order $q$, it leaves
them invariant, so that once more $\gamma(b)_{\restriction C}$ and
$\iota(b)_{\restriction C}$ commute, as they are simultaneously
diagonal.

In the same notation as for type 8, here we get
$\lambda_{1} = \lambda$, $\lambda_{2} = \lambda^{-1}$, $\alpha_{1} =
\alpha$ and $\alpha_{2} = \lambda \alpha$. We get 
\begin{equation*}
  \sigma^{-1} - 1
  =
  \begin{bmatrix}
     & s_{1}\\
    s_{2} & \\
  \end{bmatrix}
\end{equation*}
(with $s_{1} s_{2} \ne 1$), or
\begin{equation*}
  \sigma
  =
  (1 - s_{1} s_{2})^{-1}
  \begin{bmatrix}
    1 & - s_{1}\\
    -s_{2} & 1
  \end{bmatrix}.
\end{equation*}

For type 10, the eigenvalues of $\iota(b)$ are not in the base field,
but in a quadratic extension of it. Still, this is similar to
case $9$.

\subsection{Some results on \texorpdfstring{$\GL(2,p)$}{GL(2,p)}}

We collect here some information about $\GL(2,p)$, which will be useful for the groups $G$ of type 5 or 7.
We will denote by $A$ and $B$, the Sylow $p$-subgroup (which is unique in both cases) and a Sylow $q$-subgroup of $G$, respectively.

\subsubsection{Sylow $p$-subgroups}
\label{sss:GL2_1}
$\GL(2,p)$ has $p+1$ Sylow $p$-subgroups and each of them fixes a $p$ subgroup of $\cC_{p} \times \cC_{p}$. In the following we will denote by $\alpha$ an element of order $p$ of $\GL(2,p)$.

\subsubsection{Elements of order $p$ when $p\mmid \Size{\ker(\gamma)}$}
\label{sss:GL2_2}
Suppose that $G$ is of type 5 or 7, and let $\gamma$ be a GF on $G$ such that $\Span{a_1}\leq \ker(\gamma) \neq A$, where $a_1 \in A$, $a_1 \neq 1$. Let $a_2 \in A \setminus \Span{a_1}$, then $\gamma(a_2)=\alpha$  (possibly modulo $\iota(A)$), where $\alpha \in \GL(2,p)$ has order $p$. Then
\begin{equation}
\label{eq:betafixedpoint}
  a_{1}^{\alpha} a_{2} = a_{1} \circ a_{2} = a_{2} \circ a_{1} = a_{2} a_{1},
\end{equation}
so that $a_{1}$ is fixed by $\alpha$. This means that $\ker(\gamma)$ determines $\Span{\alpha}$, which is the Sylow $p$-subgroup of $\GL(2,p)$ fixing $\ker(\gamma)$.

\subsubsection{Sylow $q$-subgroups}
\label{sss:GL2_3}

Suppose that $q\mid p-1$ and recall that $\Size{\GL(2, p)} = (p - 1)^{2} p (p + 1)$. 

If $q>2$ a Sylow $q$-subgroup of $\GL(2, p)$ has order $q^{2 e}$, where $q^{e} \mmid p - 1$.
Every Sylow $q$-subgroup of $\GL(2, p)$ is of the form

Every Sylow $q$-subgroup of $\GL(2, p)$ is of the form
\begin{align*}
  Q_{A_{1}, A_{2}}
  &=
  \begin{aligned}[t]
  \{
  \beta \in \GL(2, p)
  \colon&
  \text{$A_{1}, A_{2}$ are eigenspaces of $\beta$ with respect}
    \\&\text{to
      eigenvalues of order dividing $q^{e}$}
    \}
  \end{aligned}
  \\&\cong
  \cC_{q^{e}} \times \cC_{q^{e}},
\end{align*}
for any choice of a pair $\Set{ A_{1}, A_{2} }$ of distinct one-dimensional subspaces  of $A$. Thus there are $\frac{p(p+1)}{2}$ Sylow $q$-subgroups.

Moreover, each Sylow $q$-subgroup of $\GL(2,p)$ has $q^{2} - 1$ elements of order $q$. However, the scalar elements are common to all the Sylow $q$-subgroups. Hence $\GL(2,p)$ has
\begin{equation*}
  (q^{2} - q) \cdot \frac{(p + 1) p}{2} + q - 1
\end{equation*}
elements of order $q$.

If $q=2$, the Sylow $2$-subgroups of $\GL(2,p)$ are described in \cite{CartFong}. Note that in this case if $\theta$ has order $2$, then its minimal polynomial divides $x^2-1$, and therefore its egeinvalues belong to $\Set{\pm 1}$. Moreover all the elements with eigenvalues $1, -1$ are conjugate, and such an element, say $\theta$, is stabilised by the diagonal matrices, therefore $\Size{\Orb(\theta)}=p(p+1)$. 
Thus there are $p(p+1)$ non-scalar elements of order $2$, plus the scalar matrix $\diag(-1, -1)$.

\subsubsection{Elements of order $q$ when $\Size{\ker(\gamma)}=p$}
\label{sss:GL2_4}
Suppose that $q \mid p-1$ and $G$ is of type 5 or 7. Let $\gamma$ be a GF on $G$ with kernel $\Span{a_1}$, where $a_1 \in A$. Let $b \in G$ be such that $\gamma(b)=\beta$ (possibly modulo $\iota(A)$), where $\beta$ is an element of order $q$ in the normaliser of $\alpha$. 
Then $\alpha^{\beta}=\alpha^{t}$ for a certain $t$, and Subsection~\ref{sss:GL2_2} yields that $\Span{a_1}$ is fixed by $\alpha$, so that
$$ a_1^{\beta \alpha} = a_1^{\alpha^{t^{-1}}\beta} = a_1^{\beta} ,$$
namely $a_1^{\beta}$ is fixed by $\alpha$ as well. Therefore $a_1^{\beta} \in \Span{a_1}$, so that $\Span{a_1}$ is an eigenspace for $\beta$ too.

Let  $\Span{a_3}$  be  another  eigenspace for  $\beta$.  Then,  since
$\det(\alpha)^{p}=1$, replacing $a_3$  with a suitable  element in
$\Span{a_3}$  we can  write,  with respect  to  the basis  $\Span{a_1,
  a_3}$,
$$\alpha=\begin{bmatrix}
1 & 0 \\
1 & 1
\end{bmatrix}, \ 
\beta = \begin{bmatrix}
\lambda^{x_1} & 0 \\
0 & \lambda^{x_2}
\end{bmatrix},
$$
where $\lambda$ has order $q$, and $x_1, x_2$ are not both $0$. 

Note that if $\beta$ is a scalar matrix, there are $q-1$ elements $\beta$ as above. If $\beta$ is non-scalar, taking into account the choice of $\Span{a_3}$, there are $q(q-1)p$ possibilities for $\beta$.

%\input{05}
% !TEX root =p2q-ea.tex
\section{Type 5}

Here $G = (\cC_{p} \times \cC_{p}) \times \cC_{q}$, and $\Aut(G) = \GL(2, p) \times \cC_{q-1}$.

Let $A$ be the Sylow $p$-subgroup, and $B = \Span{b}$ the Sylow $q$-subgroup.

In the following we will denote by $\alpha$ an element of order $p$ of $\GL(2,p)$.
If $p\mid q-1$ we will denote by $\eta$ an element of order $p$ of $\cC_{q-1}$: clearly $\eta$ fixes $A$ point-wise. If $q\mid p-1$ we will denote by $\beta$ an element of order $q$ of $\GL(2,p)$.

\subsection{Abelian groups}
\label{sssec:Gcirc5}
Assume here  $(G, \circ)$  abelian. These are  in particular  the only
cases  when  there  are  no divisibilities.

$\gamma(b)$ will have order dividing $q$, so it is an element in $\GL(2,
p)$ of order dividing $q$. Then for $a \in A$ we have
\begin{equation*}
  b^{\ominus 1} \circ a \circ b
  =
  b^{-\gamma(b)^{-1} \gamma(a) \gamma(b)} b a^{\gamma(b)}
  =
  b^{-\gamma(a)} b a^{\gamma(b)}
  =
  a,
\end{equation*}
from which we get that $\gamma(b) = 1$, and also that
$\gamma(a)_{\restriction  B} = 1$. 

Thus $B \le \ker(\gamma)$.
If $\gamma(G)=\Set{1}$, then we obtain the right regular representation, which corresponds to one group of type 5. Otherwise $\gamma(G) \ne \Set{1}$, and we can only have $\gamma(G) = \gamma(A) = \Span{\alpha}$, where $\alpha \in \GL(2, p)$ has order $p$.
Therefore, each GF on $G$ is the lifting of a RGF  $\gamma\colon A\to \Aut(G)$ with $\Size{\gamma(A)}=p$. 

Let  $1 \ne a_{1} \in A$ and let $\ker(\gamma)=\Span{a_1}$ (we have $p+1$ choices for such a subgroup); the argument in~\ref{sss:GL2_2} shows that 
$a_{1}$ is fixed by $\alpha$, so that $\ker(\gamma)$ determines $\gamma(A)$, and we have $\gamma(a_2)=\alpha^{i}$, for $1\leq i \leq p-1$.
Note that for each $i$ the unique morphism defined by $\gamma(a_1)=1$ and $\gamma(a_2)=\alpha^i$ is such that $[A,\gamma(A)]=\ker(\gamma)$, so by Lemma~\ref{Lemma:gamma_morfismi}, these morphisms coincide with the RGF's. Therefore here we have $(p+1)(p-1)=p^2-1$ different GF's on $G$ giving groups $(G,\circ)$ of type 5.

As to the conjugacy classes, since $B\leq \ker(\gamma)$ is characteristic, every automorphism $\phi$ of $G$ stabilises $\gamma_{|B}$. Moreover, if $\mu \in \Aut(B) \cong \cC_{q-1}$, then $\Span{\mu}$ centralises $a$ and $\gamma(a)$, so that it centralises $\gamma$.

Now, let $\delta \in \Aut(A) \cong \GL(2,p)$. If $\delta$ stabilises $\gamma$, then $\gamma^{\delta}(a_1)=1$, namely $\gamma(a_1^{\delta^{-1}})=1$. Therefore $\delta^{-1}$ fixes $\Span{a_1}$, and writing $\delta=(\delta_{ij})_{i,j}$ with respect to the basis $\Set{a_1, a_2}$, this implies that $\delta_{12}=0$.

As for $a_2$, we have
\begin{equation*}
\gamma^{\delta}(a_2)
= \delta^{-1}\gamma(a_2^{\delta^{-1}})\delta 
= \delta^{-1}\alpha^{\delta_{22}^{-1}}\delta ,
\end{equation*}
and it coincides with $\gamma(a_2)$ precisely when $\delta^{-1}\alpha^{\delta_{22}^{-1}}\delta=\alpha$. An explicit computation shows that the latter yields $\delta_{11}=\delta_{22}^2$.
Therefore, the stabiliser of $\gamma$ has order $(q-1)p(p-1)$, and there is one orbit of length $p^2-1$.

In the following we exclude the abelian cases just dealt with.

\subsection{\texorpdfstring{$p \mid q - 1$}{p | q-1}}
\label{sssec:Gcirc11}
Here $B \le \ker(\gamma)$, and the only type of groups we can have here is the type 11, beside the type 5 already considered.

Suppose first $\ker(\gamma) = \Span{a_{1}} B$ has order $p q$. Then $\gamma(G)$ has order $p$, and let $a_2$ be such that $\gamma(a_{2}) = \alpha^{i} \eta^{j}$, where $0\le i<p$, $j \ne 0$ (since we are assuming $(G,\circ)$ non abelian).  
The argument in~\ref{sss:GL2_2} shows that $a_1^{\alpha}=a_1$, and Lemma~\ref{lem:kernel-pq-gamma-morphisms} yields that $\gamma$ is a RGF if and only if it is a morphism.
Therefore the GF's are as many as the choices of $(\Span{a_1}, i,j)$, namely $(p+1)p(p-1)=p(p^2-1)$, and each of them corresponds to a group $(G,\circ)$ of type 11.

As to the conjugacy classes, again $\cC_{q-1}$ stabilises every $\gamma$. Moreover, if $\delta \in\GL(2,p)$ stabilises $\gamma$, then $\delta^{-1}$ fixes $\Span{a_1}$, so that $\delta_{12}=0$. This time
\begin{equation*}
\gamma^{\delta}(a_2)
= \delta^{-1}\gamma(a_2^{\delta_{22}^{-1}})\delta 
= \delta^{-1}\alpha^{i\delta_{22}^{-1}}\delta \eta^{j \delta_{22}^{-1}} ,
\end{equation*}
where $j \ne 0$. Therefore $\delta$ stabilises $\gamma$ precisely when $\delta_{12}=0$, $\delta_{22}=1$, and $\delta$ centralises $\alpha^{i}$. If $i=0$, the latter yields no condition, while corresponds to take $\delta_{11}=1$ if $i\ne 0$. So the $\delta$'s in the stabiliser are those of the form 
$$
\delta = \begin{bmatrix}
\delta_{11} & 0 \\
\delta_{21} & 1
\end{bmatrix}  \text{  if  } i=0, \text{  and  } 
\delta = \begin{bmatrix}
1 & 0 \\
\delta_{21} & 1
\end{bmatrix} \text{  if  } i\ne 0.
$$
Therefore, if $i=0$ the stabiliser has order $(q-1)p(p-1)$, and there is one orbit of length $p^2-1$.
If $i \ne 0$, the stabiliser has order $(q-1)p$, and there is one orbit of length $(p^2-1)(p-1)$.
\vskip 0.3cm

Now suppose $\ker(\gamma) = B$ has order $q$. Then $\gamma(G)=\gamma(A)=\Span{\alpha, \eta}.$
Let  $a_{1}, a_{2} \in A$ such that
\begin{equation}
\label{eq:assignment}
  \begin{cases}
    \gamma(a_{1}) = \eta\\
    \gamma(a_{2}) = \alpha \\
  \end{cases},
\end{equation}
Since
$$ a_1^{\alpha}a_2 = a_1 \circ a_2 = a_2 \circ a_1 = a_2 a_1 ,$$
$a_{1}$ is a fixed point of $\alpha$, and since $\alpha$ has determinant equal to $1$, we can suppose
$$\alpha=\begin{bmatrix}
1 & 0 \\
d & 1
\end{bmatrix},
$$
with respect to $\Set{a_1, a_2}$, where $ 1 \leq d \leq p-1$.
By Lemma \ref{lemma:generatori} and Lemma~\ref{remark:gammaGFkerq}, each assignment~\eqref{eq:assignment} defines exactly one GF. Therefore, in this case, we have $p + 1$ choices for $\gamma(G)$, and once $\gamma(G)$ has been chosen, there are $p-1$ ways to choose $a_1$ among the fixed points of $\alpha$, and $p^2-p$ choices for $a_2$, which is any element of $A\setminus\Span{a_1}$. So there are $(p^2-1)p(p-1)$ groups of type 11.

As to the conjugacy classes, every automorphism in $\cC_{q-1}$ stabilises $\gamma$. 
Since $B=\ker(\gamma)$ is characteristic, by Lemma~\ref{lemma:generatori}, we just consider the action of $\GL(2,p)$ on $\gamma$ defined on the generators of $A$.

Let $\delta \in \GL(2,p)$. Then, $\gamma^{\delta}(a_1) = \gamma(a_1)$ if and only if $\gamma(a_1^{\delta^{-1}})=\gamma(a_1)$, as $\delta^{-1}$ centralises $\eta$. The latter yields $\gamma(a_1^{\delta^{-1}})_{|A}=1$, so that $a_1^{\delta^{-1}}\in \Span{a_1}$, namely $\delta_{12}=0$. Moreover, since $\gamma_{|\Span{a_1}}$ is a morphism, $\gamma(a_1^{\delta^{-1}})=\eta$ if and only if $\delta_{11}=1$.
Now, since 
$$\gamma(a_2^k)
=\gamma(a_1)^{-d(\frac{k(k-1)}{2})}\gamma(a_2)^k 
=\eta^{-d(\frac{k(k-1)}{2})}\alpha^k
,$$ 
we have 
\begin{equation*}
\gamma^{\delta}(a_2)
= \delta^{-1} \gamma(a_2^{\delta^{-1}}) \delta \\
=\delta^{-1} \gamma(a_1^{-\delta_{21}\delta_{22}^{-1}}a_2^{\delta_{22}^{-1}}) \delta \\
=\eta^{-\delta_{22}^{-1}(\delta_{21}+\frac{d}{2}(\delta_{22}^{-1}-1))}  \delta^{-1} \alpha^{\delta_{22}^{-1}} \delta ,
\end{equation*} 
and the latter coincides with $\gamma(a_2)$ precisely when
$$
\begin{cases}
\delta^{-1} \alpha^{\delta_{22}^{-1}} \delta= \alpha \\
\delta_{21}=-\frac{d}{2}(\delta_{22}^{-1}-1) .
\end{cases}
$$ 
The first condition yields $\delta_{22}^2=1$, namely $\delta_{22}=\pm 1$, so that the second yields $\delta_{21}= 0, d$ respectively when $\delta_{22}=1, -1$.
Therefore the stabiliser has order $2(q-1)$ and we get $2$ orbits of length $\frac{1}{2}(p^2-1)p(p-1)$.

\subsection{\texorpdfstring{$q \mid p - 1$}{q | p-1}}
\label{subsub:G5-q-divides-p-1}
Here $\gamma(G)\subseteq \GL_2(p)$, so $p\mid \ker(\gamma)$ and $\gamma(G)$ acts trivially on $B$, so that
\begin{equation}
\label{pallino}
 b^{\ominus 1} \circ a \circ b
 =
  b^{-\gamma(b)^{-1} \gamma(a) \gamma(b)} a^{\gamma(b)} b
  =
  b^{-1} b a^{\gamma(b)}
  =
  a^{\gamma(b)}.
  \end{equation}
If $pq\mid \ker(\gamma)$, then equation~\eqref{pallino} becomes
$$ b^{\ominus 1} \circ a \circ b =
  a,$$ 
so $(G,\circ)$ of type 5 and has already been considered. Thus we just deal with the cases of kernel $p^2$ and $p$.

If $\ker(\gamma) = A$ the GF's are exactly the morphisms. Let  $\lambda\in\cC_p^*$ be an element of order $q$. By Subsubsection~\ref{sss:GL2_3}, with respect to a suitable basis $\Set{a_1,a_2}$ of $A$, we have   
$$\beta=[\gamma(b)]=
\begin{bmatrix}
  \lambda^{x_1} & 0\\
    0 & \lambda^{x_2}\\
\end{bmatrix} .
$$ 
Now, since $a^{k}=a^{\circ k}$, equation~\eqref{pallino} yields that the action of $\iota(b)$ on $A$ in $(G, \circ)$ is precisely $\gamma(b)$.
According to the choices of $\gamma(b)$ we easily obtain, besides the abelian cases,
\begin{enumerate}
\item $q - 1$ groups of type $7$, corresponding to the choices $x_1=x_2\ne0$.
\item $\frac{(p+1)p}{2}\cdot2\cdot (q - 1)$ groups of type 6: choose the
  eigenspaces, and then the eigenvalue different from $1$.
\item if $q>2$, we get  $\frac{(p+1)p}{2} \cdot (q-1)$ groups of type 9.
\item if  $q>3$ we get  $\frac{(p+1)p}{2} \cdot (q-1)(q-3)$  groups of
  type 8.  More precisely, denoting by  $Z_\circ$ the action of $b$ on
  $A$ in $(G, \circ)$, since $Z_{\circ}$ is similar to $\diag(\mu^{x_1 x_2^{-1}},
  \mu)$, where $\mu=\lambda^{x_2}$, they split in $p(p+1)(q-1)$ groups
  isomorphic to $G_s$ for every $s \in \mathcal{K}$.
\end{enumerate}

\begin{remark}
  In the following we will write $Z_{1} \sim Z_{2}$ to mean that the
  two square matrices $Z_{1}, Z_{2}$ are similar.
\end{remark}

As to the conjugacy classes, since $A=\ker(\gamma)$ is characteristic,
$\gamma_{|A}$ is stabilised by every automorphism $\phi$ of $G$.

As for $\gamma_{|B}$, let $\mu \in \cC_{q-1}$, so that $b^{\mu^{-1}}=b^{k}$ for some $k$, and let $\delta \in \GL(2,p)$. Then
$$ \gamma^{\mu\delta}(b)=\delta^{-1}\gamma(b)^k\delta .$$
Therefore $\mu\delta$ stabilises $\gamma$ precisely when $T$ and $T^k$ are conjugate, and in that case they need to have the same eigenvalues, namely $kx_1=x_1$ and $kx_2=x_2$ or $kx_1=x_2$ and $kx_2=x_1$.
Note that if $k=1$, then $\delta$ stabilises $\gamma$ if and only if it is in the centraliser of $T$: if $T$ is scalar, then every $\delta \in \GL(2,p)$ stabilises $\gamma$, while for a non-scalar matrix $T$ the condition is equivalent to have $\delta$ a diagonal matrix with no diagonal elements equal to zero. 

Referring to the cases above, we have the following.
\begin{enumerate}
\item $T$ is scalar and $T \sim T^k$ if and only if $k=1$, so that the stabiliser has order $\Size{\GL(2,p)}$, and there is one orbit of length $q-1$.
\item $T$ is non-scalar and $k=1$. In this case the centraliser of $T$ consists of the elements $\delta=\diag(\delta_{11}, \delta_{22})$, with $\delta_{ii} \ne 0$, therefore it has $(p-1)^2$ elements. Thus $\Size{\Stab(\gamma)}=(p-1)^2$, and there is one orbit of length $p(p+1)(q-1)$.
\item $T$ is non-scalar and $k=\pm 1$. If $k=1$ then the elements in the stabiliser are the diagonal matrices as above. If $k=-1$ the stabiliser consists of the elements $\mu\delta$, where $b^{\mu^{-1}}=b^{-1}$, and
$$ 
\delta=\begin{bmatrix}
0 & \delta_{12} \\
\delta_{21} & 0
\end{bmatrix},
$$
where $\delta_{12}\ne 0 \ne \delta_{21}$. Therefore $\Size{\Stab{\gamma}}=2(p-1)^2$, and there is one orbit of length $\frac{1}{2}p(p+1)(q-1)$.
\item $T$ is non-scalar and $k=1$, indeed if $kx_1=x_2$ and $kx_2=x_1$, then $x_2^{-1}x_1=k=x_1^{-1}x_2$, namely $x_1= \pm x_2$ (contradiction).
Therefore $\Size{\Stab{\gamma}}=(p-1)^2$, and for each $G_k$ there is one orbit of length $p(p+1)(q-1)$.
\end{enumerate}

If $\ker(\gamma)=\Span{a_1}$ has order $p$, then $\gamma(G)$ is a subgroup of order $pq$ of $\GL(2,p)$, so $\gamma(G)=\Span{\alpha, \beta}$, where $\alpha$ has order  $p$, $a_1^{\alpha}=a_1$, and $\beta$ is an element of order $q$ in the normaliser of $\alpha$ in $\GL(2,p)$. 
By Subsubsection~\ref{sss:GL2_4}, we can choose $\Span{a_2}\leq A$ such that, with respect to the basis $\{a_1,a_2\}$,
$$
[\alpha]=
\begin{bmatrix}
1 & 0\\
    d &1\\
\end{bmatrix}, \ 
[\beta]=
\begin{bmatrix}
  \lambda^{x_1} & 0\\
    0 & \lambda^{x_2}\\
\end{bmatrix},
$$
where $d\in\cC_p^*$, $\lambda$ has order $q$ in $\cC_p^\ast$, and $(x_1,x_2)\ne (0,0)$.

Let $\gamma$ be a GF such that $\gamma(G)=\Span{\alpha, \beta}$. 
Then $\gamma(a_1)=1$ and $\gamma(a_2)=\alpha^{d}$, where $1 \leq d \leq p-1$. Morever, let $b\in B$ be such that $\gamma(b)=\beta$.

By applying $\gamma$ to~\eqref{pallino}, we get
$$ 
\gamma(b)^{-1}\gamma(a)\gamma(b)=\gamma(a^{\gamma(b)})
$$
which for $a=a_2$, in terms of our notation, can be rewritten as 
\begin{align*}
\beta^{-1}\alpha^d\beta=&\alpha^{d \lambda^{x_2}}\\
\alpha^{d \lambda^{x_1-x_2}}=&\alpha^{d \lambda^{x_2}}
\end{align*}
which correspond to the condition 
\begin{equation}
\label{x1x2}
x_1 \equiv 2x_2\pmod q.
\end{equation}
This condition restrict the choices of $\beta$ to a set of  $(q-1)p$ maps, namely the elements of order $q$ in the normaliser of $\alpha$ with diagonal $\lambda^{2x_2}, \lambda^{x_2}$. Thus for each choice of $\Span{\alpha}$ only one group of order $pq$ can be the image of a GF.

We note that the maps $\beta$ fulfilling equation~\eqref{x1x2} normalise but do not centralise $\Span{\alpha}$, so $\Span{\alpha, \beta}$ is not abelian. 

The condition~\eqref{x1x2} is also sufficient to have that the map $\gamma$, defined as
$$ \gamma(a_1^{e} a_2^{f} b^{g}) = \beta^{g}\alpha^{f},$$
is a gamma function, indeed we have
\begin{align*}
\gamma((a_1^{e} a_2^{f} b^{g})^{\gamma(a_1^{u} a_2^{v} b^{z})}a_1^{u} a_2^{v} b^{z})
&= \gamma((a_1^{e} a_2^{f} b^{g})^{\beta^{z}\alpha^{v}}a_1^{u} a_2^{v} b^{z}) \\
&= \gamma((a_1^{\ast} a_2^{f\lambda^{x_2z}} b^{g}) a_1^{u} a_2^{v} b^{z}) \\
&= \gamma(a_1^{\ast} a_2^{f\lambda^{x_2z}+v} b^{g+z}) \\
&= \beta^{g+z}\alpha^{f\lambda^{x_2z}+v}.
\end{align*}
On the other hand,
\begin{align*}
\gamma(a_1^{e} a_2^{f} b^{g})\gamma(a_1^{u} a_2^{v} b^{z}) 
&= \beta^{g}\alpha^{f}\beta^{z}\alpha^{v} \\
&= \beta^{g+z}\alpha^{f\lambda^{(x_1-x_2)z}+v},
\end{align*}
so that $\gamma$ defined as above is a GF if and only if $x_1 \equiv 2x_2 \pmod q$.

Moreover, since we have $p+1$ choices for $\Span{\alpha}$, $p-1$ for $d$, and $p(q-1)$ for $\beta$, we obtain $p(p^2-1)(q-1)$ groups $(G, \circ)$.

As for the type of $(G,\circ)$, with respect to the basis $\{a_1,a_2\}$ we have
$$
T=[\beta]=
\begin{bmatrix}
  \lambda^{2x_2} & 0\\
    0 & \lambda^{x_2}
\end{bmatrix};
$$
Since $a_1^{k}=a_1^{\circ k}$ and $a_2^{k}=a_2^{\circ k}$ modulo $\Span{a_1}$, denoting by $Z_{\circ}$ the action of $b$ on $A$ in $(G,\circ)$, we have $Z_\circ \sim T$.
Therefore,
\begin{itemize}
\item if $q>3$ all groups $(G,\circ)$ are of type 8, and they are all isomorphic to $G_2$;
\item if $q=3$ all groups  $(G,\circ)$ are of type 9;
\item if $q=2$ we have $x_1=0$, $x_2=1$, so all groups  $(G,\circ)$ are of type 6.
\end{itemize}

As to the conjugacy classes, let $\phi \in \Aut(G)$, and write $\phi=\mu\delta$ as above. 
If $\phi$ is in the stabiliser of $\gamma$ then $\phi$, and hence $\delta$, stabilises $\Span{a_1}$, so $\delta_{12}=0$. Moreover, 
\begin{equation*} 
\gamma^{\phi}(a_2) 
=\phi^{-1}\gamma(a_2^{\delta^{-1}})\phi 
=\phi^{-1}\gamma(a_2^{\delta_{22}^{-1}})\phi
=\delta^{-1}\alpha^{\delta_{22}^{-1}}\delta,
\end{equation*}
and $\gamma^{\phi}(a_2)=\gamma(a_2)$ if and only if $\delta_{11}=\delta_{22}^2$.
Now, 
$$ \gamma^{\phi}(b)
=\phi^{-1} \gamma(b^{\mu^{-1}}) \phi 
=\phi^{-1} \gamma(b^{k}) \phi
=\delta^{-1} T^{k} \delta,
$$
so that, if $\phi$ stabilises $\gamma$, then $T$ and $T^{k}$ are conjugate, and they have the same eigenvalues. This implies that either $k=1$ or $k=2$ and $q=3$. If $k=1$, then every diagonal matrix $\delta$ commutes with $T$. If $q=3$ and $k=2$, then the condition $\delta^{-1} T^{-1} \delta = T$ yields $\lambda^{x_2}=\lambda^{-x_2}$, and since $x_2 \ne 0$ this case does not arise.
Therefore the stabiliser has order $p-1$, and there is one orbit of length $p(p^2-1)(q-1)$.

\subsection{\texorpdfstring{$q \mid p + 1$}{q | p+1}} 
We have to exclude the cases already considered, so we restrict to $q>2$ (otherwise $q$ also divides $p-1$) and $(G,\circ)$ non-abelian. Therefore $(G,\circ)$ can only have type 10.

As above $p\mid\Size{ \ker(\gamma)}$, and the only possibility is $\Size{ \ker(\gamma)}=p^2$ since a group of type 10 has no normal subgroups of order $p$ or $pq$.

Lemma~\ref{Lemma:gamma_morfismi} guarantees that in this case all the GF's are morphisms, so to count them we can just count the possibilities for the image of $b$.

An element $\theta \in \GL(2,p)$ of order $q$ has determinant equal to $1$, as $q \nmid p-1$, and its eigenvalues $\lambda$, $\lambda^{-1}$, belongs to a quadratic extension of $\cC_p$. Therefore, every subgroup of $\GL(2,p)$ of order $q$ is conjugate to $\Span{\theta}$, and in $\GL(2,p)$ there are 
$$\frac{\Size{\GL(2,p)}}{\Size{\Stab(\Span{\theta})}}$$
subgroups of order $q$. Now, if $\theta$ and $\theta^{k}$ are conjugate, they have the same eigenvalues, and this yields $k=\pm 1$. For each of these two choices we obtain $p^2-1$ elements in the stabiliser, therefore there are
\begin{equation*}
  \frac{(p^{2} - 1) (p^{2} - p)}{2 (p^{2} - 1)}
  =
  \dbinom{p}{2}  
\end{equation*}
subgroups of order $q$ in $\GL(2, p)$.

So we can choose the image of $b$ in such a subgroup in $q - 1$ ways, and we get
\begin{enumerate}
\item $\dbinom{p}{2} (q - 1)$ groups of type 10.
\end{enumerate}

As to the conjugacy classes, $A=\ker(\gamma)$ is characteristic, therefore every automorphism $\phi$ of $G$ stabilises $\gamma_{|A}$.

Let $b\in B$ such that $\gamma(b)=\theta$, and let $\phi=\mu\delta \in \Aut(G)$.
Then
$$ \gamma^{\phi}(b)=\delta^{-1}\gamma(b^{k})\delta,$$
so that $\phi$ stabilises $\gamma$ if and only if $\theta$ and $\theta^k$ are conjugate via $\delta$. 
As above, in this case $k=\pm 1$, and for each of these values of $k$ there are $p^2-1$ possibilities for $\delta$.

Therefore, we get one orbit of length $\frac{1}{2}(q-1)p(p-1)$.

We summarise, including the right regular representation.

\begin{prop}
\label{prop:G5}
Let $G$ be a group of order $p^2q$, $p>2$, of type 5. Then in $\Hol(G)$ there are:
\begin{enumerate}
\item $p^2$ groups of type 5, which split in one conjugacy class of length one, and one conjugacy class of length $p^2-1$; 
\item if $p|(q-1)$,
 \begin{enumerate}[(a)]
 \item $p^2(p^2-1)$ groups of type 11, which split in one conjugacy class of length $p^2-1$, one conjugacy class of length $(p-1)(p^2-1)$, and two conjugacy classes of length $\frac{1}{2}p(p-1)(p^2-1)$;
 \end{enumerate}
\item if $q|(p-1)$,
 \begin{enumerate}[(a)]
 \item $p(p+1)(q-1)$ groups of type 6, which form one conjugacy class of length $p(p+1)(q-1)$;
 \item $q-1$ groups of type 7, which form one conjugacy class of length $q-1$;
 \item if $q=2$, further $p(p^2-1)$ groups of type 6, which form one conjugacy class of length $p(p^2-1)$;
 \item if $q>2$, $\frac{1}{2}p(p+1)(q-1)$ groups of type 9, which form one conjugacy class of length $\frac{1}{2}p(p+1)(q-1)$; 
 \item if $q=3$, further $p(p^2-1)(q-1)$ groups of type 9, which form one conjugacy class of length $p(p^2-1)(q-1)$;  
 \item if $q>3$, 
	\begin{itemize}
	\item[$-$] $p^2(p+1)(q-1)$ groups of type 8 isomorphic to $G_2$, which split in one conjugacy class of length $p(p+1)(q-1)$ and one conjugacy class of length $p(p^2-1)(q-1)$;
	\item[$-$] for every $s \ne 2$, $s \in \mathcal{K}$, $p(p+1)(q-1)$ groups of type 8 isomorphic to $G_s$, which form one conjugacy class of length $p(p+1)(q-1)$;
	\end{itemize}
 \end{enumerate} 
\item if $q|(p+1)$ and $q>2$,
 \begin{enumerate}[(a)]
 \item $\frac{p(p-1)}{2}(q-1)$ groups of type 10, which form one conjugacy class of length $\frac{p(p-1)}{2}(q-1)$.
 \end{enumerate}
\end{enumerate}
\end{prop}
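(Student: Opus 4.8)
The proof is a synthesis of the case analysis already carried out in this section, and the plan is to assemble those counts carefully. Since $G$ is abelian of type 5, every GF $\gamma$ on $G$ comes with the operation $g\circ h=g^{\gamma(h)}h$, and the divisibilities among $p$, $q$, $p\pm1$ split the discussion into regimes that, for counting purposes, are mutually exclusive: the trivial case (no divisibility), $p\mid q-1$, $q\mid p-1$, and $q\mid p+1$ with $q>2$. First I would record the trivial GF $\gamma\equiv1$, which gives the right regular representation, hence a single group of type 5 forming a conjugacy class of length one; adjoining the $p^{2}-1$ nontrivial GF's of Subsection~\ref{sssec:Gcirc5} producing groups of type 5 (which form one $\Aut(G)$-orbit) yields item~(1).

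Next I would go through the remaining regimes. For $p\mid q-1$, Subsection~\ref{sssec:Gcirc11} shows $B\le\ker(\gamma)$, so $(G,\circ)$ is of type 5 or 11; the GF's with $\Size{\ker(\gamma)}=pq$ give $p(p^{2}-1)$ groups of type 11, splitting according as the parameter $i$ is zero or not into one orbit of length $p^{2}-1$ and one of length $(p-1)(p^{2}-1)$, while those with $\ker(\gamma)=B$ give $p(p-1)(p^{2}-1)$ groups of type 11 in two orbits of length $\tfrac{1}{2}p(p-1)(p^{2}-1)$; adding the totals gives $p^{2}(p^{2}-1)$ groups of type 11, which is item~(2)(a). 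For $q\mid p-1$, Subsection~\ref{subsub:G5-q-divides-p-1} supplies, from the GF's with $\ker(\gamma)=A$ (which are exactly the morphisms): $q-1$ groups of type 7, $p(p+1)(q-1)$ of type 6, and, when $q>2$, $\tfrac{1}{2}p(p+1)(q-1)$ of type 9, and, when $q>3$, for each $s\in\mathcal{K}$ a family of $p(p+1)(q-1)$ groups isomorphic to $G_{s}$; and, from the GF's with $\ker(\gamma)=\Span{a_{1}}$ of order $p$, a further $p(p^{2}-1)(q-1)$ groups, all isomorphic to $G_{2}$ if $q>3$, all of type 9 if $q=3$, all of type 6 if $q=2$. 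Finally, for $q\mid p+1$ with $q>2$, the corresponding subsection yields $\binom{p}{2}(q-1)=\tfrac{1}{2}p(p-1)(q-1)$ groups of type 10 in a single orbit, which is item~(4)(a).

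The one nonmechanical point, and the step most prone to error, is the bookkeeping within the $q\mid p-1$ regime. The groups isomorphic to $G_{2}$ produced with $\ker(\gamma)=A$ and those produced with $\ker(\gamma)$ of order $p$ must be merged, giving in all $p(p+1)(q-1)+p(p^{2}-1)(q-1)=p^{2}(p+1)(q-1)$ groups isomorphic to $G_{2}$, lying in two conjugacy classes of lengths $p(p+1)(q-1)$ and $p(p^{2}-1)(q-1)$, exactly as in item~(3)(f); for $s\ne2$ only the kernel-$A$ family occurs, giving one orbit of length $p(p+1)(q-1)$; and the extra type-6 family (when $q=2$) and type-9 family (when $q=3$) coming from the kernel-$p$ GF's must be listed separately, as in items~(3)(c) and~(3)(e), after checking against the stabiliser computations there that each forms a single $\Aut(G)$-orbit. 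I would then cross-check that in every type the orbit lengths sum to the number of GF's of that type, and verify exhaustiveness of the regime split, noting that $p\mid q-1$ is incompatible with $q\mid p\pm1$, that $q\mid p-1$ and $q\mid p+1$ force $q=2$ (handled in the $q\mid p-1$ branch), and that in the no-divisibility case only type 5 occurs. No new idea is needed beyond this careful collation.
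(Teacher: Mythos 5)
Your proposal is correct and follows exactly the route the paper takes: Proposition~\ref{prop:G5} is stated in the paper as a summary ("We summarise, including the right regular representation") of the case-by-case counts in Subsections~\ref{sssec:Gcirc5}, \ref{sssec:Gcirc11}, \ref{subsub:G5-q-divides-p-1} and the $q\mid p+1$ subsection, and your collation — including the merging of the two $G_{2}$ families into classes of lengths $p(p+1)(q-1)$ and $p(p^{2}-1)(q-1)$, and the separate listing of the extra type-6 ($q=2$) and type-9 ($q=3$) families from the kernel-$p$ gamma functions — reproduces that summary faithfully.
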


\section{Type 6}

In this case $q \mid p-1$, and $G = \cC_{p} \times (\cC_{p} \rtimes \cC_{q})$.
The Sylow $p$-subgroup $A$ is characteristic in $G$. Write $C = \Span{c}$ for the normal subgroup of
order $p$ in $\cC_{p} \rtimes  \cC_{q}$, and $Z = \Span{z}$ for the central factor of order $p$, so that $A = C Z = \Span{c, z}$. 

We have
$$\Aut(G) = \cC_{p-1}  \times   \Hol(\cC_{p}).$$
Write $\Span{\psi}=\cC_{p-1}$ for the central factor in $\Aut(G)$, and let $\Hol(\cC_p) = \iota(C) \rtimes \Span{\mu}$, where, according to~\cite{classp2q},
\begin{equation}
\label{eq:aut-G-6}
\psi:
\begin{cases}
z \mapsto z^{k} \\
c \mapsto c \\
b \mapsto b 
\end{cases},
\ \  
\mu:
\begin{cases}
z \mapsto z \\
c \mapsto c^{h} \\
b \mapsto b 
\end{cases},
\end{equation}
with $1 \leq k, h \leq p-1$. 

By Lemma~\ref{lemma:duality}~and Proposition~\ref{prop:duality} we can assume that $C \le \ker(\gamma)$. 
In the following, it is useful to keep in mind that
\begin{equation*}
  \begin{cases}
    b^{\ominus 1} \circ c \circ b
    =
    c^{\gamma(b) \iota(b)}\\
    b^{\ominus 1} \circ z \circ b
    =
    (b^{-\gamma(b)^{-1} \gamma(z) \gamma(b)} b) z^{\gamma(b)}.
  \end{cases}
\end{equation*}

\subsection{The case \texorpdfstring{$A \leq \ker(\gamma)$}{A in ker}}

Suppose $\ker(\gamma)= A$, as the case $\ker(\gamma)=G$ yields the right regular representation.
So $\gamma(G)$ has order $q$. 

The action of $\gamma(G)$ of order $q$ on the set of the Sylow $q$-subgroups of $G$ has at least one fixed point, namely there is at least one $\gamma(G)$-invariant Sylow $q$-subgroup $B$ of $G$. Therefore, by Proposition~\ref{prop:lifting}, the GF's on $G$ are induced by the RGF's on $B$, and each $\gamma$ is obtained $s$ times, where $s$ is the number of $\gamma(G)$-invariant Sylow $q$-subgroups of $G$.

Note moreover that $[B,\gamma(B)]=1$, as $B$ and $\gamma(B)$ have order $q$, so that by Lemma~\ref{Lemma:gamma_morfismi} the RGF's on $B$ are precisely the morphisms $B\to \Aut(G)$.

Let $\beta$ be the element of order $q$ in the central factor $\cC_{p-1}$ of $\Aut(G)$, such that $z^{\beta} = z^{\lambda}$, where $\lambda$ is the eigenvalue of $C$ under the action of $\iota(b)$, namely
$c^{b} = c^{\lambda}$.
 
Here $c^{\circ k}=c^k$ and $z^{\circ k}=z^k$. Let $Z_\circ$ be the action of $b$ on $A$ in $(G, \circ)$.
We will write $Z_\circ$ with respect to the basis $\Span{c, z}$ of $(A, \circ)$.
\begin{enumerate}
\item If $\gamma(b) = \beta^{i}$, for some $0 < i < q$, then $Z_\circ = \diag(\lambda, \lambda^i)$. Here the choice of $B$ is immaterial, and we get
  \begin{enumerate}
  \item 1 group of type 7 when $i = 1$;
  \item 1 group of type 9 when $i = q - 1$ and $q>2$;
  \item $q - 3$ groups of type 8, when $q > 3$. They split in $2$ groups isomorphic to $G_s$ for every $s\in \mathcal{K}$.
  \end{enumerate}
\item If $\gamma(b) = \iota(b)^{j}$, for some $0 < j < q$, then $Z_\circ = \diag(\lambda^{j+1}, 1)$ and we
  get
  \begin{enumerate}
  \item $p$ groups of type 5 when $j = -1$, for the possible choices of $B$;
  \item $p (q - 2)$ groups of type 6, for the possible choices of $B$.
  \end{enumerate}
\item If $\gamma(b)  = \beta^{i} \iota(b)^{j}$, for some $0  < i, j < q$, then $Z_\circ = \diag(\lambda^{1+j}, \lambda^i)$ and we get
  \begin{enumerate}
  \item $p (q - 1)$ groups of type 6, when $j = -1$;
  \item $p (q - 2)$ of type 7, when $i = j + 1 \ne 0$;
  \item $p (q - 2)$ of type 9, when $-i = j + 1 \ne 0$ and $q>2$;
  \item $p ((q - 1)^{2} - 3 q + 5) = p (q - 2) (q - 3)$ groups of type 8 in the remaining cases; they occur only for $q > 3$. They split in $2p(q-2)$ groups isomorphic to $G_s$, for every $s \in \mathcal{K}$.
  \end{enumerate}
\end{enumerate}

As to the conjugacy classes, since $A = \ker(\gamma)$ is characteristic, to find the automorphisms which stabilise $\gamma$, we can look at the action of $\Aut(G)$ on $\gamma_{|B}$.

The central factor $\Span{\psi}$ of $\Aut(G)$ and $\Span{\mu}$ are in the stabiliser of $\gamma$, as they centralise $b$ and $\gamma(b)$. 
As for $\iota(C)$ we have
\begin{equation*} 
\gamma^{\iota(c^m)}(b) = \iota(c^{-m})\gamma(b)\iota(c^m) = \beta^{i}\iota(b^j c^{m(1-\lambda^{j})}), 
\end{equation*}
so that it stabilises $\gamma$ if and only if $m=0$ or $j=0$. 

Therefore, if $\gamma$ is a GF defined by $\gamma(b)=\beta^i\iota(b)^j$, $j \ne 0$, the stabiliser has order $(p-1)^2$, and the orbits have length $p$. Otherwise $\gamma(b)=\beta^i$ and every automorphism stabilises $\gamma$, so that the orbits have length $1$. 
More precisely we obtain
\begin{enumerate}
\item $p$ groups of type 5 which form one class of length $p$;
\item $p(q-2)+p(q-1)=p(2q-3)$ groups of type 6 which split in $2q-3$ classes of length $p$;
\item $p(q-2)+1$ groups of type 7, which split in $q-2$ classes of length $p$ and one class of length one (the last one is for $j=0$).
\item if $q>3$, $2p(q-2)+ 2$ groups for each isomorphism class $G_s$ of groups of type 8, which split in $2(q-2)$ classes of length $p$, and $1$ classes of length one (these are for $j=0$).
\item if $q>2$, $p(q-2)+1$ groups of type 9, which split in $q-2$ classes of length $p$, and one class of length one (this is for $j=0$). 
\end{enumerate}

\subsection{The case \texorpdfstring{$C \leq \ker(\gamma) \neq A$}{C in ker}}
\label{subsub:G6-p-divides-ker}

Suppose now  $C \le \ker(\gamma) \ne  A$, so that we will have $\gamma(z) = \iota(c)^{s}$, for some $s \ne 0$. 
If $\gamma(b)$ is a (possibly trivial) $q$-element in $\gamma(G)$, then $b$ is a $q$-element in $G$, and we will have
\begin{equation*}
  \gamma(b) \in \Span{ \beta, \iota(b c^{m}) }
\end{equation*}
for some $m$.

If $\gamma(b) = \beta^{t}$, for some $t$, then $\gamma(G)$ is abelian, so that $(G, \circ)$ is of type 5 or 6. However,
\begin{equation*}
  b^{\ominus 1} \circ c \circ b
  =
  c^{\gamma(b) \iota(b)}
  =
  c^{\lambda} = c^{\circ \lambda}
  \ne
  c,
\end{equation*}
so that $(G, \circ)$ is not abelian, and thus of type 6.

We also have
\begin{equation*}
  b^{\ominus 1} \circ z \circ b
  =
  b^{- \gamma(b)^{-1} \gamma(z) \gamma(b)} z^{\gamma(b)} b
  \equiv_{\bmod{C}}
  z^{\lambda^{t}} = z^{\circ \lambda^{t}},
\end{equation*}
so that $t = 0$, as $(G, \circ)$ has to be of type 6. 
Therefore the kernel has order $p q$, $\gamma(G)=\gamma(Z)$ and $[Z,\gamma(Z)]=1$, so that by Proposition~\ref{prop:lifting} and Lemma~\ref{Lemma:gamma_morfismi} the GF's on $G$ are precisely the morphisms $Z\to \Aut(G)$, which are as many as the choices for $s$, namely $p-1$.

If $\gamma(b) = \beta^{t} \iota(b c^{m})^{l}$ for some $l \ne 0$ and $t$, replacing $b$ with $b c^{m}$ we see that we can take $m = 0$.

We have
\begin{equation*}
  b^{\ominus 1} \circ c \circ b
  = c^{\gamma(b) \iota(b)}
  = c^{\lambda^{l + 1}}
  = c^{\circ \lambda^{l + 1}}.
\end{equation*}
Then
\begin{equation*}
  b^{\ominus 1} \circ z \circ b
  =
  b^{- \gamma(b)^{-1} \gamma(z) \gamma(b)} z^{\gamma(b)} b
  \equiv_{\bmod{C}} z^{\lambda^{t}} 
  = z^{\circ \lambda^{t}}.
\end{equation*}
However
\begin{equation*}
  \gamma(b^{\ominus 1} \circ z \circ b)
  =
  \gamma(b)^{-1} \gamma(z) \gamma(b)
  =
  \iota(c)^{s \lambda^{l}}
  =
  \gamma(z)^{\lambda^{l}}.
\end{equation*}
It follows that $t = l$. 

The latter is also a sufficient condition in order to have that the map $\gamma$ defined by
$$\gamma(b^{m}c^{k}z^{n})=\beta^{mt}\iota(b^{ml}c^{ns})$$ 
satisfies the GFE. Indeed
\begin{align*} 
\gamma(b^{m}c^{k}z^{n})\gamma(b^{u}c^{v}z^{w})
&= \beta^{mt}\iota(b^{ml}c^{ns})\beta^{ut}\iota(b^{ul}c^{ws}) \\
&= \beta^{(m+u)t}\iota(b^{(m+u)l}c^{(n\lambda^{ul}+w)s}),
\end{align*}
\begin{align*}
\gamma((b^{m}c^{k}z^{n})^{\gamma(b^{u}c^{v}z^{w})}b^{u}c^{v}z^{w}) 
&= \gamma((b^{m}c^{k}z^{n})^{\beta^{ut}\iota(b^{ul}c^{ws})}b^{u}c^{v}z^{w})  \\
&= \gamma((b^{m}c^{\ast}z^{n\lambda^{ut}})b^{u}c^{v}z^{w})  \\
&= \gamma((b^{m+u}c^{\ast}z^{n\lambda^{ut}+w})  \\
&= \beta^{(m+u)t}\iota(b^{(m+u)l}c^{(n\lambda^{ut}+w)s}) ,
\end{align*}
and they are equal if and only if $l=t$.

As for $(G, \circ)$ we have that $Z_\circ \sim \diag(\lambda^{t+1}, \lambda^{t})$.
\begin{enumerate}
\item For $t = - 1$ we get groups of type 6, with $p$ choices for $B$ and $p-1$ choices for $s$.
\item For $q>2$ and $t = (q-1)/2$ we have $\lambda^{t+1} \lambda^{t} = \lambda^{2 t  + 1} = 1$, so $p (p -1)$ groups of type 9.
\item For $q>3$ for each of the remaining $q - 3$ values of $t$, we get $p (p - 1)$
  groups of type 8, so $(q-3)p(p-1)$ in total. They split in $2p(p-1)$ groups isomorphic to $G_s$, for every $s \in \mathcal{K}$.
\end{enumerate}

As to the conjugacy classes, write $\phi=\psi \iota(c^m) \mu$ for an automorphism of $G$, with $\psi$ and $\mu$ as in~\eqref{eq:aut-G-6}. Here $C=\ker(\gamma)$ is characteristic, so that by Lemma~\ref{lemma:generatori}, we can look at the action of $\phi$ on $\gamma$ defined on the generators $z$, $b$. 

Write $\mu^{-1} \iota(c^{m}) \mu = \iota(c^{m})^{r}$ for the commutation rule in $\Hol(\cC_p)$, where $1 \leq r \leq p-1$. Then
\begin{equation*}
\gamma^{\phi}(z) 
= \phi^{-1} \gamma(z^{\psi^{-1}}) \phi 
= \mu^{-1} \iota(c^{sk^{-1}}) \mu 
= \iota(c^{sk^{-1}})^{r},
\end{equation*}
so that $\gamma^{\phi}(z)=\gamma(z)$ if and only if $k=r$. Moreover
\begin{align*}
\gamma^{\phi}(b) 
&= \phi^{-1} \gamma(c^{m(1-\lambda^{-1})}b) \phi \\
&= \mu^{-1}\iota(c^{-m})  \gamma(b) \iota(c^m)\mu \\
&= \beta^{t} \mu^{-1}\iota(c^{-m}) \iota(b^{t}) \iota(c^m)\mu \\
&= \beta^{t} \mu^{-1}\iota(b^{t} c^{m(1-\lambda^{t})})\mu \\
&= \beta^{t} \iota(b^{t}) \iota(c^{m(1-\lambda^{t})})^{r} ,
\end{align*}
so that $\gamma^{\phi}(b)=\gamma(b)$ if and only if $t=0$ or $m=0$.

Therefore, if $t=0$, namely when $\ker(\gamma)$ has size $pq$, the stabiliser has order $p(p-1)$, and there is one orbit of length $p-1$. Otherwise, if $t \ne 0$, namely $\ker(\gamma)$ has size $p$, then the stabiliser has order $p-1$, and there are $q-1$ orbits of length $p(p-1)$.

We summarise, including the right regular representations, and doubling the numbers just obtained.

\begin{prop}
\label{prop:G6}
Let $G$ be a group of order $p^2q$, $p>2$, of type 6. Then in $\Hol(G)$ there are:
\begin{enumerate}
\item $2p$ groups of type 5, which split in two conjugacy classes of length $p$; 
\item $2p(p+2q-3)$ groups of type 6, which split in two conjugacy classes of length $1$, $2(2q-3)$ conjugacy classes of length $p$, two conjugacy classes of length $p-1$, and two conjugacy classes of length $p(p-1)$; 
\item $2(p(q-2)+1)$ groups of type 7, which split in two conjugacy classes of length $1$, and $2(q-2)$ conjugacy classes of length $p$;
\item if $q>3$, for every $s\in \mathcal{K}$ there are $4(1+p(p+q-3))$ groups of type 8 isomorphic to $G_s$ which split in $4$ conjugacy classes of length $1$, $4(q-2)$ conjugacy classes of length $p$, and $4$ conjugacy classes of length $p(p-1)$;
\item if $q>2$, $2(1+p(p+q-3))$ groups of type 9, which split in two conjugacy classes of length $1$, $2(q-2)$ conjugacy classes of length $p$, and two conjugacy classes of length $p(p-1)$.
\end{enumerate}
\end{prop}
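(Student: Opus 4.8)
This proposition collects the results of the two preceding subsections (the case $A\le\ker(\gamma)$ and the case $C\le\ker(\gamma)\ne A$, see~\ref{subsub:G6-p-divides-ker}) by means of a duality reduction, so the task is to explain how they fit together and why the doubling is legitimate. First I would record the dichotomy. The subgroup $C=\Span{c}$ is the derived subgroup of $G$, hence characteristic; it is abelian of prime order $p$, meets $Z(G)=Z$ trivially, and the $q$-element $b$ induces on it an automorphism of order $q\ne p$. So Proposition~\ref{prop:duality} applies (the required hypothesis $\gamma(c)\in\iota(C)$ for $c\in C$ being verified as in~\cite{p2qciclico}): for every GF $\gamma$ on $G$ one has either $C\le\ker(\gamma)$ or $C\le\ker(\gammatilde)$, and these are mutually exclusive, since $\gamma(c)=1$ for all $c\in C$ forces $\gammatilde(c^{-1})=\gamma(c)\iota(c)=\iota(c)\ne 1$. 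As $\gamma\mapsto\gammatilde$ is an involution preserving the isomorphism type of $(G,\circ)$ and compatible with the $\Aut(G)$-conjugation action on gamma functions (Lemma~\ref{lemma:conjugacy} and~\cite[Subsection~2.8]{p2qciclico}), it restricts to a type- and orbit-length-preserving bijection between the GF's with $C\le\ker(\gamma)$ and those with $C\le\ker(\gammatilde)$. Hence it suffices to enumerate the GF's with $C\le\ker(\gamma)$ together with their $\Aut(G)$-orbits and then to double every count; the trivial GF $\gamma\equiv 1$ lies in this set and is paired with $\iota^{-1}$ (whose operation gives $G^{\opp}\cong G$), so both regular representations appear through the doubling.

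Next I would split the GF's with $C\le\ker(\gamma)$ according to whether $z\in\ker(\gamma)$: if so, then $A=CZ\le\ker(\gamma)$, whence $\ker(\gamma)\in\Set{A,G}$ and we are in the first subsection; otherwise $C\le\ker(\gamma)\ne A$, $\gamma(z)=\iota(c)^{s}$ with $s\ne 0$, and we are in~\ref{subsub:G6-p-divides-ker}. In both subsections one fixes a $\gamma(G)$-invariant Sylow $q$-subgroup $B$ (which exists, cf.\ Proposition~\ref{remark:invariant-q-Sylow}), applies Proposition~\ref{prop:lifting} to pass to a relative gamma function on $B$ (or on $Z$, when $\Size{\ker(\gamma)}=pq$), and then uses Lemma~\ref{Lemma:gamma_morfismi} to identify such relative gamma functions with ordinary group morphisms into $\Aut(G)=\Span{\psi}\times(\iota(C)\rtimes\Span{\mu})$. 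These morphisms are parametrised by the images of $b$ (and of $z$), and the isomorphism type of $(G,\circ)$ is read off from the similarity class of the matrix $Z_{\circ}$ giving the action of $b$ on $A$ in $(G,\circ)$, which in each sub-case is a diagonal matrix whose eigenvalues are explicit powers of $\lambda$. This produces the per-type GF counts listed in the two subsections.

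For the conjugacy classes I would use that $\ker(\gamma)$ is characteristic ($=A$ in the first subsection, $=C$ in the second), so by Lemma~\ref{lemma:generatori} the conjugation action of $\Aut(G)$ on $\gamma$ is determined by its effect on the values of $\gamma$ at the generators $b$ (respectively $z$ and $b$); a direct computation of the stabiliser of $\gamma$ inside $\Span{\psi}\times(\iota(C)\rtimes\Span{\mu})$ gives the orbit lengths. Adding up, type by type, the numbers of GF's and of $\Aut(G)$-orbits coming from the two subsections and doubling then yields exactly the statement, the two length-$1$ classes for type $6$ being the right regular representation and its dual.

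The conceptually routine but delicate part is the case bookkeeping inside the two subsections: deciding which morphisms actually occur under the constraints forced by the GFE (for instance $t=l$ in~\ref{subsub:G6-p-divides-ker}, and then the separation of $t=-1$, $t=(q-1)/2$ and the remaining $q-3$ values of $t$), distributing the type-$8$ gamma functions among the $\tfrac{q-3}{2}$ isomorphism classes $G_{s}$, and tracking the multiplicity coming from the choice of the invariant Sylow $q$-subgroup (notably the ``$p$ choices of $B$'' when $\ker(\gamma)=C$). One must also confirm the disjointness invoked above, so that the doubling is exact and the left regular representation is not double-counted.
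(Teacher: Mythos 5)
Your proposal is correct and follows essentially the same route as the paper: duality via Proposition~\ref{prop:duality} to reduce to $C\le\ker(\gamma)$, the split according to whether $A\le\ker(\gamma)$ or $C\le\ker(\gamma)\ne A$, lifting to morphisms on $B$ (or $Z$), reading off the type from $Z_{\circ}$, computing stabilisers for the orbit lengths, and doubling at the end (you even make explicit the disjointness and the compatibility of duality with $\Aut(G)$-conjugation, which the paper leaves implicit). The only caveat is that the substance of the paper's proof lies in the case-by-case counting you defer as "delicate bookkeeping", so as written this is an accurate strategy outline rather than a complete verification of the stated numbers.
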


%\input{789-common}
% !TEX root =p2q-ea.tex
\section{Prologue to Sections \ref{sec: 9}, \ref{sec: 8} and \ref{sec: 7}} 
\label{sec: 7,8,9}

In this section we collect some arguments which are common to the study of the groups of types 7, 8 and 9.

In these groups the Sylow $p$-subgroup $A$ of $G$ is
characteristic. With respect to a suitable basis $a_1, a_2$, the action of a generator $b$ of a Sylow $q$-subgroup $B$ on $A$ can be represented by the matrix \begin{equation*}
 Z =Z_k=
 \begin{bmatrix}
  \lambda & 0\\
  0 & \lambda^{k}\\
 \end{bmatrix},
\end{equation*}
where $\lambda \ne 1$ has order $q$, and $k\ne0$ is an integer modulo
$q$. If $k=1$ the type is 7, if $k=-1$ the type is 9, and if $k\neq 0,\pm 1$ the type is 8.  

\begin{remark}
\label{rem:k-inverse}
Note that if we choose as a basis $a_2, a_1$, then the action of the generator $b^{k^{-1}}$ of $B$ on $A$ is represented by the matrix $Z_{k^{-1}}$.
\end{remark}

\newtheorem{assump}[dummy]{Assumption}

For types 8 and 9, the gamma function $\gamma$
is of the form
\begin{equation}
\label{eq:gamma=iota}
\gamma(a)=\iota(a^{-\sigma})
\end{equation} 
for some $\sigma\in \End(A)$  (see
Sections~\ref{sec: 9},~\ref{sec: 8}). This is not always the case for
groups of type 7 (see Section~\ref{sec: 7}). Therefore, in this
section we will work under the following 
\begin{assump}
\label{assump}
 $G$ is a group of type 8 or 9, or a group of type 7 such that $\gamma(A) \le \Inn(G)$.
\end{assump}
Lemma~\ref{lemma:duality} shows that the condition  $\gamma(A) \le \Inn(G)$ implies~\eqref{eq:gamma=iota}.

 We will prove that every GF of the groups verifying
 Assumption~\ref{assump} admits an invariant Sylow $q$-subgroup B,
 so that it can be obtained as a
 \emph{lifting} of a RGF on $B$ or as a  \emph{gluing} of a RGF on $B$
 and the  map $a\mapsto\iota(a^{-\sigma})$, for some
 $\sigma\in\End(A)$. 
Note that
the assumptions (1), (2), (3) and (4) of Proposition~\ref{prop:more-lifting} are fulfilled for all groups under considerations with $A$ the  Sylow $p$-subgroup and $B$ any Sylow $q$-subgroup of $G$. 

To count each GF exactly once we will also need to determine the exact number of invariant Sylow $q$-subgroups in each case.

A proof of these facts will require a detailed analysis, that will be carried out in
several steps in the next subsections.  We will then complete the classification for the three types in Sections~\ref{sec: 9},~\ref{sec: 8},~\ref{sec: 7}. 
 
\subsection{Invariant Sylow $q$-subgroups of $G$}
\label{ssec:q-sylow-G789}

When $q\mid \Size{\ker(\gamma)}$, there is a Sylow $q$-subgroup contained in $\ker(\gamma)$,  and this is clearly invariant.

Consider thus the case $q\nmid
\Size{\ker(\gamma)}$, so that $\gamma(G)$ contains an element of order
$q$. 
In this subsection we give a characterization of the invariant Sylow $q$-subgroups of $G$, 
that we will then use  to count them
for the various types. 

 For the groups $G$ under consideration, an element of order $q$ of
 $\Aut(G)$ has the form $\iota(a_\ast)\beta$, where $a_{\ast} \in A$,
 and $\beta$ is an element of order $q$ is such that, with respect to the chosen basis, acts as
\begin{equation}
\label{eq:T-matrix}
T=\begin{bmatrix}
\lambda^{x_1}& 0\\
0 &  \lambda^{kx_2}\\
  \end{bmatrix},
\end{equation}
where $\lambda$ and $k$ are as above, and $x_1$ and $x_2$ are not both zero (this will be detailed  in Sections~\ref{sec: 9}, \ref{sec: 8} and \ref{sec: 7}). 

Let  $b \in G$ be an element of order $q$ such that 
 \begin{equation}
 \label{eq:gammab=}
 \gamma(b)=\iota(a_\ast)\beta.
 \end{equation}
A Sylow $q$-subgroup of $G$ is of the form $\Span{b^x}$, for $x \in A$, and it is invariant if and only if $(b^x)^{\gamma(b^x)}\in \Span{b^x}$. 
Now,
\begin{equation*}
\gamma(b^{x}) = \gamma(x^{-1+Z^{-1}}b)
= \iota(x^{(1-Z^{-1})T^{-1}\sigma}a_\ast)\beta,
\end{equation*}
so that
\begin{align*}
(b^{x})^{\gamma(b^{x})} &= (x^{-1+Z^{-1}}b)^{\iota(x^{(1-Z^{-1})T^{-1}\sigma}a_\ast)\beta} \\
&=(x^{-(1-Z^{-1})(1+T^{-1}\sigma(1-Z^{-1}))}a_\ast^{-(1-Z^{-1})}b)^{\beta} \\
&= x^{-(1-Z^{-1})(1+T^{-1}\sigma(1-Z^{-1}))T}a_\ast^{-(1-Z^{-1})T}b.
\end{align*}

Since  $(b^x)^j=x^{-1+Z^{-j}}b^j$
for all $j$, the last expression is in $\Span{b^x}$ if and only if 

$$ x^{-(1-Z^{-1})(1+T^{-1}\sigma(1-Z^{-1}))T}a_\ast^{-(1-Z^{-1})T}b= x^{-(1-Z^{-1})}b,$$
which, writing $M =$
$1-(1+T^{-1}\sigma(1-Z^{-1}))T$, can be rewritten as 
\begin{equation}
\label{eq:inv q-sylow case 1}
x^{(1-Z^{-1})M}= a_\ast^{(1-Z^{-1})T}.
\end{equation}

The number of solutions $x$ of the system~\eqref{eq:inv q-sylow case 1} is the number of invariant Sylow $q$-subgroups: a solution $x$ corresponds to the invariant Sylow $q$-subgroup $B=\Span{b^{x}}$.

When the kernel of $\gamma$  has size $p^2$, the existence of an invariant Sylow $q$-subgroup can be easily shown by noticing that the action of the group  $\gamma(G)$, of size $q$, on the set of Sylow $q$-subgroups, that has cardinality $p^2$,  admits at least one fixed point. This means that the system~\eqref{eq:inv q-sylow case 1} is always solvable.
 
When the kernel of $\gamma$ has size
$p$ or $1$  the system~\eqref{eq:inv q-sylow case 1} can be unsolvable
for some $a_{\ast}.$  
 However, we will show in~\ref{sssec:inv-q-Sylow-kerp-G789} \textcolor{blue}{and in~\ref{ssec:G9-ker-1}} that for all groups fulfilling Assumption~\ref{assump} the following are equivalent:
 \begin{enumerate}
 \item
   equation \eqref{eq:inv q-sylow case 1}  admits  a solution $x$ for a given $a_{\ast}$,
\item
the assignments given in \eqref{eq:gamma=iota} and \eqref{eq:gammab=} can be extended to a GF on $G$. 
\end{enumerate}
In fact, if the assignments in \eqref{eq:gamma=iota} and \eqref{eq:gammab=} can be extended to a GF, then   
$$\gamma(b^{m})=\gamma({(b^{m-1})}^{\gamma(b)^{-1}})\gamma(b),$$ 
and an inductive argument shows that
\begin{equation} 
\label{eq:ordine-q}
\gamma(b^{m})=\iota(a_{\ast}^{-A_m \sigma + 1+T^{-1}+ \cdots + T^{-(m-1)}}) \beta^{m} ,
\end{equation}
where 
$$A_m=\sum_{i=1}^{m-1}(1-Z^{-i})T^{-i} .$$ 
Since $\gamma(b^{q})=1$ and the center of $G$ is trivial,~\eqref{eq:ordine-q} yields
\begin{equation}
\label{eq:condizione-a_ast}
a_{\ast}^{A_q \sigma}=a_{\ast}^{1+T^{-1}+ \cdots + T^{-(q-1)}}.
\end{equation}

In Subsections~\ref{ssec:G789_kerp} and~\ref{ssec:G9-ker-1} we will see that the elements $a_{\ast}$ satisfying~\eqref{eq:condizione-a_ast} are exactly those for which the system~\eqref{eq:inv q-sylow case 1} admits solutions.
The case $\Size{\ker(\gamma)}=1$ is specific to the type 9 and it will be considered in Section~\ref{sec: 9}.

\subsection{The case \texorpdfstring{$\Size{\ker(\gamma)}=p^2$}{ker = p2}}
\label{ssec:G789-kernel p^2} As we already noticed, 
the action on $G$ of the group $\gamma(G)$ of order $q$
 fixes at least one of the $p^2$ Sylow $q$-subgroups of $G$, say $B=\Span{b}$. Therefore, by Proposition~\ref{prop:lifting}, each $\gamma$ on $G$ is the lifting of at least one RGF defined on a such Sylow $q$-subgroup $B$. 
To count each GF exactly once, we need to compute the number of invariant Sylow $q$-subgroups.

In this case, the element $\gamma(b)$ of order $q$ acts
trivially on $B$, so that $a_{\ast}=1$ and  $[B, \gamma(B)] = \Set{1}$; by Lemma~\ref{Lemma:gamma_morfismi}, the RGF's on $B$ are thus precisely the morphisms.  

 Let $\gamma(b)_{|A}=\beta$, where $\beta$ is as in~\eqref{eq:T-matrix}.

Equation~\eqref{eq:inv q-sylow case 1} yields
$$ x^{(1-Z^{-1})M}= 1, $$
where
$$ M = 1-T = \begin{bmatrix}
1-\lambda^{x_1} & 0\\
0 & 1-\lambda^{kx_2}\\
\end{bmatrix} . $$
Since $\det(1-Z^{-1})\ne 0$, we obtain that
\begin{enumerate}
\item there is a unique solution, namely a unique invariant Sylow $q$-subgroup, when both $x_1,x_2 \neq 0$; 
\item there are $p$ solutions, that is, $p$ invariant Sylow $q$-subgroups, when either $x_1=0$ or $x_2=0$.
\end{enumerate}

The action of $b$ on $A$ with respect to the operation $\circ$ is given by
\begin{equation*}
  b^{\ominus 1} \circ a \circ b = b^{-\gamma(b)^{-1} \gamma(a) \gamma(b)} a^{\gamma(b)} b=a^{\gamma(b)\iota(b)},
 \end{equation*}
and denoting by $Z_{\circ}$ its associated matrix, we have
  \begin{equation*}
   Z_{\circ} \sim
  \begin{bmatrix}
  \lambda^{1+x_1} & 0\\
    0 & \lambda^{k+kx_2}\\
  \end{bmatrix}.
\end{equation*}

The enumeration of the groups $(G,\circ)$ depends on the type of the
groups $G$, that is, on the parameter $k$. 
We will treat the various types separately in Subsections~\ref{ssec:G9-kernel p2}, \ref{ssec:G8-kernel p2} and ~\ref{ssec:G7-kernel p2}.

\subsection{The case \texorpdfstring{$\Size{\ker(\gamma)}=p$}{ker = p}}
\label{ssec:G789_kerp}

Here $\Size{\gamma(G)}=pq$. Write $\gamma(b)=\iota(a_{\ast})\beta$, where $b$ has order $q$, and $a_{\ast}$ and $\beta$ are as in Subsection \ref{ssec:q-sylow-G789}. Equation~\eqref{eq:sigma} yields
\begin{equation}
\label{eq:sigmaTZ}
\sigma T(\sigma-1)=(\sigma-1)TZ\sigma.
\end{equation}

The condition $\Size{\ker(\gamma)}=p$ means that $\Size{\ker(\sigma)}=p$, so let $\ker(\sigma)=\Span{v}$. Using~\eqref{eq:sigmaTZ} we obtain $v^{-TZ\sigma}=1$, therefore $v^{-TZ}\in\Span{v}$, namely, $v$ is an eigenvector for $TZ$. 
If $TZ$ is not scalar, then its eigenspaces are $\Span{a_1}$ and
$\Span{a_2}$, so either $v\in \Span{a_1}$ or $v\in
\Span{a_2}$. When  $TZ$ is scalar, $v$ can be
any non-zero element of $A$. 

We proceed by distinguishing three cases, namely when $\ker(\sigma)=\Span{a_1}$, when $\ker(\sigma)=\Span{a_2}$, and lastly when $\ker(\sigma)$ is generated by $v=a_1^{x}a_2^{y}$, where $x,y \ne 0$.

\underline{Case $A$}: $\ker(\sigma)=\Span{a_1}$. Here
$a_1^{\sigma}=1$, and  $a_2^\sigma=a_1^\mu a_2^\nu$ for some $\mu,
\nu$ not both $0$. Evaluating
equation~\eqref{eq:sigmaTZ} in $a_2$ we get 
\begin{equation}
\label{eq:system_case_a}
\begin{cases}
\mu(\nu-\lambda^{x_1-x_2k})=\mu(\nu-1)\lambda^{k}\\
(\nu-1)\nu=(\nu-1)\nu\lambda^{k}.
\end{cases}
\end{equation}
Since $\lambda^{k} \ne 1$, the second equation gives that  either $\nu=0$ or $\nu=1$.
If $\nu=0$, then $\mu\ne0$ and $x_1-x_2k=k$.
If $\nu=1$, then either $\mu=0$ or $\mu\neq 0$ and ${x_1}={x_2k}$.

\underline{Case $A^*$}: $\ker(\sigma)=\Span{a_2}$. 
This case can be reduced to case $A$ according to Remark~\ref{rem:k-inverse}, obtaining a system as in~\eqref{eq:system_case_a} with $\lambda^{k^{-1}}$ replacing $\lambda^{k}$.

When $k=k^{-1}$, that is for the groups $G$ of type 7 and 9, the conditions are the same, so we will double the results we will obtain in case $A$.

For the groups $G$ of type 8, $k^{-1} \ne k$, therefore we will sum the results we will obtain in case $A$ for $k$ with the same results for $k^{-1}$. 

\underline{Case $B$}: $\ker(\sigma)=\Span{a_1^{x}a_2^{y}}$, where $x, y \ne 0$. Here $TZ$ is scalar, namely $x_1=x_2 k+k-1$. 
We can replace the generator $v = a_1^{x}a_2^{y}$ of the kernel
  by $v^{x^{-1}}=a_1a_2^{yx^{-1}}$, and thus assume $v=a_1a_2^{z}$,
  for some $z \neq 0$. Rescaling $a_{2}$, we can also assume $z = 1$,
  keeping in mind that this covers $p-1$ cases here.
Therefore, here $\sigma$ is defined as
$$ a_1^\sigma = a_1^{-\mu}a_2^{-\nu},\ a_2^\sigma = a_1^{\mu}a_2^{\nu} ,$$
for some $\mu, \nu$ not both $0$. 
Evaluating equation~\eqref{eq:sigmaTZ} on $a_2$ we get

\begin{equation}
\label{eq:system_case_b}
\begin{cases}
\mu(-\mu\lambda^{-1}-\lambda^{-1}+\nu\lambda^{-k})=\mu(-\mu +\nu-1)\\
\nu(-\mu\lambda^{-1}-\lambda^{-k}+\nu\lambda^{-k})=\nu(-\mu +\nu-1).
\end{cases}
\end{equation}

If $\nu = 0$ then $\mu \neq 0$ and we get $-(\mu+1)\lambda^{-1}=-(\mu+1)$, that is, $\mu=-1$, so $A^\sigma=\Span{a_1}$.
If $\nu \neq 0$ and $\mu=0$, we get $(\nu-1)\lambda^{-k}=\nu-1$, that is, $\nu=1$ and $A^\sigma=\Span{a_2}$. 
Lastly, if both $\nu, \mu \neq 0$, then if $k=1$ we get $\mu+1-\nu=0$, and if $k\neq 1$ the system has no solution. 

If $k=1$, namely $G$ is of type 7, then the condition $\mu+1-\nu=0$ includes also the cases above in which $\mu=-1$ and $\nu=0$, or $\mu=0$ and $\nu=1$.

If $k\ne 1$, namely $G$ is of type 8 or 9, the last case does not happen. Moreover we reduce the case $(\mu,\nu)=(0,-1)$ to the case $(\mu, \nu)=(1, 0)$ according to Remark~\ref{rem:k-inverse}, obtaining a system as in~\eqref{eq:system_case_b} with $\lambda^{k^{-1}}$ replacing $\lambda^{k}$.

As for case $A^*$, for the groups $G$ of type 9 we will double the results we will obtain in case $(\mu, \nu)=(1, 0)$, and for the groups $G$ of type 8 we will sum the results we will obtain in case $(\mu, \nu)=(1, 0)$ for $k$ with the same results for $k^{-1}$.

For a group $G\simeq G_{k}$ which fulfills
Assumption~\ref{assump}, we sum up our analysis as follows: 
\begin{itemize}
\item Case A: $\ker(\sigma)=\Span{a_1}$.
 \begin{enumerate}
  \item[(A1)] $\nu=0$, $\mu \neq 0$, $x_1-x_2 k= k$;
  \item[(A2)] $\nu=1$, $\mu \neq 0$, $x_1=x_2 k$;
  \item[(A3)] $\nu=1$, $\mu = 0$.
 \end{enumerate}
\item Case A$^\ast$: $\ker(\sigma)=\Span{a_2}$.\\
It is equivalent to considering the case A for $G\simeq G_{k^{-1}}$, as explained above. 
Therefore, replacing $\lambda^k$ with $\lambda^{k^{-1}}$ in (A1), (A2) and (A3), here we obtain the subcases (A1$^\ast$), (A2$^\ast$) and (A3$^\ast$).
\item Case B: $\ker(\sigma)=\Span{a_1a_2}$.
 \begin{enumerate}
  \item If $k\ne 1$ (namely $G$ is of type 8 or 9):
  \begin{enumerate}
  \item[(B1) \phantom{}] $\nu=0$, $\mu =-1$, $x_1=x_2 k +k-1$;
  \item[(B1$^\ast$)]
  $\nu=1$, $\mu = 0$, $x_1=x_2 k +k-1$.
  As explained above, this case is equivalent to considering the case (B1) for $G\simeq G_{k^{-1}}$.
  \end{enumerate}
  \item If $k=1$ (namely $G$ is of type 7):
  \begin{enumerate}
  \item[(B2)] $\mu+1-\nu=0$, $x_1=x_2$.
  \end{enumerate}
 \end{enumerate}
\end{itemize}

\subsubsection{Invariant Sylow $q$-subgroups}
\label{sssec:inv-q-Sylow-kerp-G789}
We are now ready to prove that, when $|\ker(\gamma)|=p$, the groups fulfilling 
Assumption~\ref{assump} always have at least one invariant Sylow $q$-subgroup and to determine their number in terms of our assignments $\gamma(a)=\iota(a^\sigma)$ and $\gamma(b)=\iota(a_{\ast})\beta$.
 
Subsection~\ref{ssec:q-sylow-G789} yields that, for $x\in A$, the Sylow $q$-subgroup $\Span{b^{x}}$ is invariant if and only if $x$ is a solution of~\eqref{eq:inv q-sylow case 1}:
$$ x^{(1-Z^{-1})M}=a_{\ast}^{(1-Z^{-1})T} ,$$
where  $M=1-(1+T^{-1}\sigma(1-Z^{-1}))T$, and $\det(1-Z^{-1})\ne 0$.

If  $\det(M)\ne 0$, then  the system~\eqref{eq:inv q-sylow case 1} admits a unique solution for each $a_\ast$, or equivalently, for each choice of $\gamma(b)$. 

If $\det(M)=0$, then for some  $a_{\ast}$'s  the system~\eqref{eq:inv q-sylow case 1} admits $r=p$ or $p^{2}$ solutions, and for others it has no solution.
However, we show that in the latter case the values for $\gamma(a), \gamma(b)$ do not extend
to a full GF on $G$.
 
In fact, if $\gamma$ is a GF on $G$ such that $\gamma(b)=\iota(a_\ast)\beta$, then $\gamma$
satisfies~\eqref{eq:ordine-q}, and~\eqref{eq:condizione-a_ast}.  
We will show that if the system~\eqref{eq:inv q-sylow case 1} has no solution for some $a_\ast$, then $a_{\ast}$ does not satisfy the condition~\eqref{eq:condizione-a_ast}, and thus the assignments $\gamma(a)=\iota(a^{-\sigma})$ and $\gamma(b)=\iota(a_\ast)\beta$ cannot be extended to a GF.

Write $a_{\ast}=a_1^{x}a_2^{y}$. We distinguish several cases.
\begin{description}
\item[Case A] 
Here $\ker(\sigma)=\Span{a_1}$, and 
$$ M= \begin{bmatrix}
1-\lambda^{x_1} & 0\\
-\mu\lambda^{x_1-x_2 k}(1-\lambda^{-1}) & 1-\lambda^{x_2 k}-\nu(1-\lambda^{-k})
\end{bmatrix}.$$
According to the division into subcases, we have
\begin{description}
\item[A1] $ \det(M)= (1-\lambda^{x_1})(1-\lambda^{x_1-k})$, so that there is a unique invariant Sylow $q$-subgroup when $x_1 \ne 0,k$.

If $x_1=0$, then the system~\eqref{eq:inv q-sylow case 1} admits
(a number $p$ of) solutions if and only if
$a_{\ast}=a_1^{-\mu y} a_2^{y}$. In this case there are $p$ invariant
Sylow $q$-subgroups. 
Moreover, if $a_{\ast}=a_1^{x} a_2^{y}$, the condition~\eqref{eq:condizione-a_ast} yields that $x = -\mu y$. Therefore, by the discussion above, the case in which there are no invariant Sylow $q$-subgroups does not arise.

If $x_1=k$, then~\eqref{eq:inv q-sylow case 1} admits (a number $p$ of) solutions if and only if $a_{\ast}=a_1^{x}$, and in this case there are $p$ invariant Sylow $q$-subgroups.~\eqref{eq:condizione-a_ast} yields that $y = 0$, therefore the case in which there are no invariant Sylow $q$-subgroups does not arise.

\item[A2] $ \det(M)= (1-\lambda^{x_1})(\lambda^{-k}(1-\lambda^{x_1+k}))$, so that there is a unique invariant Sylow $q$-subgroup when $x_1 \ne 0,-k$. Note that here necessarily $x_1\ne 0$, otherwise we would also have $x_2=0$, namely $\beta=1$.

If $x_1=-k$, then the system~\eqref{eq:inv q-sylow case 1} admits (a number $p$ of) solutions if and only if $a_{\ast}=a_1^{x}$, and in this case there are $p$ invariant Sylow $q$-subgroups. Also here the $a_{\ast}$'s for which~\eqref{eq:inv q-sylow case 1} has no solutions are precisely those for which~\eqref{eq:condizione-a_ast} is not satisfied, in fact here~\eqref{eq:condizione-a_ast} yields $y=0$.

\item[A3] $ \det(M)= (1-\lambda^{x_1})(\lambda^{-k}(1-\lambda^{x_2 k+k}))$, so that there is a unique solution when $x_1 \ne 0$ and $x_2 \ne -1$.

If either $x_1=0$ and $x_2 \ne -1$, or $x_1\ne 0$ and $x_2 = -1$, then $(1-Z^{-1})M$ has rank $1$, and
the system~\eqref{eq:inv q-sylow case 1} admits $p$ solutions if and only if $a_{\ast}=a_2^{y}$ in the first case, and $a_{\ast}=a_1^{x}$ in the second case.
Once again, the $a_{\ast}$'s for which~\eqref{eq:inv q-sylow case 1} has no solutions are precisely those for which~\eqref{eq:condizione-a_ast} is not satisfied, in fact here~\eqref{eq:condizione-a_ast} yields $x=0$ in the case $x_1=0$ and $x_2 \ne -1$, and $y=0$ in the case $x_1 \ne 0$ and $x_2 = -1$.

If $x_1=0$ and $x_2=-1$, then $(1-Z^{-1})M$ has rank $0$, and the system~\eqref{eq:inv q-sylow case 1} admits $p^2$ solutions if and only if $a_{\ast}^{(1-Z^{-1})T}=1$, namely when $a_{\ast}=1$. Moreover, the condition~\eqref{eq:condizione-a_ast} yields $x,y=0$, so that the case in which the system has no solution does not arise. 
\end{description}

\item[Case B]
Here $\ker(\sigma)=\Span{a_1a_2}$ and~\eqref{eq:inv q-sylow case 1} yields
$$ M= \begin{bmatrix}
1-\lambda^{x_1}+\mu(1-\lambda^{-1}) & \lambda^{-x_1+k x_2}\nu(1-\lambda^{-k})\\
-\lambda^{x_1-k x_2}\mu(1-\lambda^{-1}) & 1-\lambda^{k x_2}-\nu(1-\lambda^{-k})
\end{bmatrix}.$$
According to the division into subcases, we have

\item[B1] $ \det(M)= \lambda^{-1}(1-\lambda^{x_1+1})(1-\lambda^{x_1+1-k})$, and there exists a unique invariant Sylow $q$-subgroup when $x_1 \ne -1,k-1$.

If $x_1=-1,k-1$, then there are $p$ invariant Sylow $q$-subgroups when $a_{\ast}=a_1^{x}a_2^{x}$ in the case $x_1=-1$, and $a_{\ast}=a_1^{x}$ in the case $x_1=k-1$. 
The other cases, namely those for which there are no invariant Sylow $q$-subgroups, do not arise, in fact the condition~\eqref{eq:condizione-a_ast} yields precisely $x=y$ in the case $x_1=-1$, and $y=0$ in the case $x_1=k-1$. 

\item[B2] $\det(M)=(\lambda^{x_1}-1)\lambda^{-1}(\lambda^{x_1+1}-1)$, and there exists a unique invariant Sylow $q$-subgroup when $x_1\ne 0,-1$. Note that here necessarily $x_1\ne 0$, otherwise we would also have $x_2=0$.

If $x_1=-1$, then there are $p$ invariant Sylow $q$-subgroups when $a_{\ast}=a_1^{x}a_2^{x}$. 
The other cases, namely those for which there are no invariant Sylow $q$-subgroups, do not arise, in fact the condition~\eqref{eq:condizione-a_ast} yields precisely $x=y$. 
\end{description}

By the discussion above we get the following.
\begin{prop}
\label{prop:inv_q-sylow_kerp}
If $G$ is a group satisfying Assumption~\ref{assump},
and $\gamma$ is a GF on $G$ with $\Size{\ker(\gamma)}=p$, then the
number of invariant Sylow $q$-subgroups is 
\begin{enumerate}
\item[\rm{(A1)}] $1$ when $x_1\neq 0, k$ and $p$ otherwise.
\item[\rm{(A2)}] $1$ when $x_1 \neq -k$ and $p$ otherwise.
\item[\rm{(A3)}] $1$ when $x_1\neq 0$ and $x_2\neq -1$, $p^2$ when $x_1= 0$ and $x_2= -1$, and $p$ otherwise.
\item[\rm{(A1$^\ast$)}] $1$ when $x_2\neq 0, k^{-1}$ and $p$ otherwise.
\item[\rm{(A2$^\ast$)}] $1$ when $x_2 \neq -k^{-1}$ and $p$ otherwise.
\item[\rm{(A3$^\ast$)}] $1$ when $x_2\neq 0$ and $x_1\neq -1$, $p^2$ when $x_2= 0$ and $x_1= -1$, and $p$ otherwise.
\item[\rm{(B1) \phantom{}}] $1$ when $x_1 \neq  -1, k-1$ and $p$ otherwise.
\item[\rm{(B1$^\ast$)}] $1$ when $x_2 \neq  -1, -1+k^{-1}$ and $p$ otherwise.
\item[\rm{(B2) \phantom{}}] $1$ when $x_1\ne -1$ and $p$ otherwise.
\end{enumerate}
\end{prop}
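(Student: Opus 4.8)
The plan is to read the claimed counts directly off the affine $\mathbb{F}_{p}$-linear system~\eqref{eq:inv q-sylow case 1}, proceeding subcase by subcase according to the classification of $\sigma$ obtained in Subsection~\ref{ssec:G789_kerp}. Recall from Subsection~\ref{ssec:q-sylow-G789} that the invariant Sylow $q$-subgroups $\Span{b^{x}}$ of $G$ are in bijection with the solutions $x \in A$ of $x^{(1-Z^{-1})M} = a_{\ast}^{(1-Z^{-1})T}$, where $M = 1 - (1 + T^{-1}\sigma(1 - Z^{-1}))T$. Since $A$ is elementary abelian of rank $2$ and $Z$ has eigenvalues $\lambda, \lambda^{k}$ with $\lambda, \lambda^{k} \neq 1$, the matrix $1 - Z^{-1}$ is invertible; hence the solution set of~\eqref{eq:inv q-sylow case 1} is either empty or a coset of a subgroup of $A$ of order $p^{2 - \mathrm{rank}(M)}$. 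So the count will be $1$ when $\det(M) \neq 0$, it will be $p$ or $0$ when $\mathrm{rank}(M) = 1$, and it will be $p^{2}$ or $0$ when $M = 0$; the content of the Proposition is to evaluate $\mathrm{rank}(M)$ in each subcase and to rule out the value $0$.

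First I would substitute, in each of the subcases (A1), (A2), (A3), (B1), (B2), the explicit normal form of $\sigma$ recorded in Subsection~\ref{ssec:G789_kerp}: in Case A one has $a_{1}^{\sigma} = 1$ and $a_{2}^{\sigma} = a_{1}^{\mu}a_{2}^{\nu}$ subject to the stated constraints, while in Case B one has $a_{1}^{\sigma} = a_{1}^{-\mu}a_{2}^{-\nu}$, $a_{2}^{\sigma} = a_{1}^{\mu}a_{2}^{\nu}$ together with $x_{1} = x_{2}k + k - 1$. Writing $T = \diag(\lambda^{x_{1}}, \lambda^{kx_{2}})$ and $Z = \diag(\lambda, \lambda^{k})$ in the basis $\Set{a_{1}, a_{2}}$, the matrix $M = 1 - (1 + T^{-1}\sigma(1-Z^{-1}))T$ becomes an explicit $2 \times 2$ matrix, and computing and factoring $\det(M)$ gives exactly the thresholds on $x_{1}$ (or, in the starred subcases, on $x_{2}$) appearing in the statement; for instance in (A1) one gets $\det(M) = (1 - \lambda^{x_{1}})(1 - \lambda^{x_{1} - k})$, which vanishes precisely when $x_{1} \in \Set{0, k}$. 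The starred subcases (A1$^{\ast}$)--(A3$^{\ast}$) and (B1$^{\ast}$) need not be recomputed: interchanging $a_{1}$ and $a_{2}$ replaces $k$ by $k^{-1}$ (Remark~\ref{rem:k-inverse}), so they follow from the unstarred ones.

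The delicate point is to exclude the possibility of \emph{zero} invariant Sylow $q$-subgroups in the cases where $\mathrm{rank}(M) \le 1$. For this I would invoke that $\gamma$ is a genuine GF: iterating the GFE yields formula~\eqref{eq:ordine-q} for $\gamma(b^{m})$, and $\gamma(b^{q}) = 1$ together with $Z(G) = \Set{1}$ forces $a_{\ast}$ to satisfy~\eqref{eq:condizione-a_ast}, namely $a_{\ast}^{A_{q}\sigma} = a_{\ast}^{1 + T^{-1} + \cdots + T^{-(q-1)}}$, where $A_{q} = \sum_{i=1}^{q-1}(1 - Z^{-i})T^{-i}$. In each relevant subcase one then checks by a direct computation with the explicit $\sigma$ that this constraint on $a_{\ast} = a_{1}^{x}a_{2}^{y}$ is \emph{equivalent} to $a_{\ast}^{(1-Z^{-1})T}$ lying in the image of the map $x \mapsto x^{(1-Z^{-1})M}$, i.e.\ to solvability of~\eqref{eq:inv q-sylow case 1}; e.g.\ in (A1) with $x_{1} = 0$ both conditions reduce to $x = -\mu y$, with $x_{1} = k$ both reduce to $y = 0$, and in (A3) with $x_{1} = 0$, $x_{2} = -1$ both force $x = y = 0$, producing the $p^{2}$ count. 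Consequently, whenever the assignments $\gamma(a) = \iota(a^{-\sigma})$ and $\gamma(b) = \iota(a_{\ast})\beta$ extend to a GF, the system~\eqref{eq:inv q-sylow case 1} is solvable and the number of solutions is $1$, $p$ or $p^{2}$ exactly as tabulated; collecting the subcases gives the Proposition.

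I expect this last equivalence — matching the GF-extension condition~\eqref{eq:condizione-a_ast} with the linear-algebraic solvability condition for~\eqref{eq:inv q-sylow case 1} — to be the main obstacle, since it requires evaluating the geometric matrix sums $A_{q}$ and $1 + T^{-1} + \cdots + T^{-(q-1)}$ and comparing them with the image of $(1-Z^{-1})M$ separately in each subcase. Once this is settled, the remaining determinant computations are routine and amount only to bookkeeping.
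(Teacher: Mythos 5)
Your proposal is correct and follows essentially the same route as the paper: compute $\det(M)$ case by case to get the thresholds, use Remark~\ref{rem:k-inverse} for the starred subcases, and rule out the empty solution set by matching the solvability condition for~\eqref{eq:inv q-sylow case 1} against the necessary condition~\eqref{eq:condizione-a_ast} coming from $\gamma(b^{q})=1$. The specific reductions you anticipate (e.g.\ $x=-\mu y$ in (A1) with $x_{1}=0$, $y=0$ with $x_{1}=k$, and $a_{\ast}=1$ in the rank-$0$ subcase of (A3)) are exactly those carried out in the paper.
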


\subsubsection{Enumerating the GF's}
\label{sssec:enumerate-GF-789}
We have shown that  there is always  a Sylow
$q$-subgroup $B$ which is invariant under $\gamma(B)$, and
Proposition~\ref{prop:inv_q-sylow_kerp} yields the exact number of
such invariant Sylow $q$-subgroups. 
Moreover, every $a \in A$ satisfies equation~\eqref{eq:gamma=iota},
therefore by Proposition~\ref{prop:more-lifting}, all the GF's here can be obtained as a gluing of a function $\gamma_A$ on $A$ defined as in~\eqref{eq:gamma=iota} and a RGF $\gamma_B$ on $B$, where $B$ is an invariant Sylow $q$-subgroup, whenever $\gamma_A$ and $\gamma_B$ satisfy~\eqref{eq:sigmaTZ}.

To enumerate the GF's we can count the possible couples
$(\gamma_A,\gamma_B)$ with the properties above, taking into account
that every such choice defines a unique $\gamma$\AECh{}{,} and
that a given $\gamma$ built in this way is obtained $s$ times, where $s$ is the number of invariant Sylow $q$-subgroups of $G$.
Thus to obtain the number of distinct GF's on $G$ we count the choices for $(\gamma_A,\gamma_B)$ as above, and then divide this number by $s$.

Let $Z_\circ$ denote the action of $b$ on $A$ in $(G,
\circ)$. We have  
\allowdisplaybreaks
 \begin{align*}
b^{\ominus 1} \circ a \circ b
 &= (b^{\gamma(b)^{-1} \gamma(a) \gamma(b)})^{-1} a^{\gamma(b)} b\\
 &= (b^{\iota(a^{-\sigma} )\beta})^{-1} a^{\gamma(b)} b\\
 &= ((a^{-\sigma(-1+Z^{-1})}b)^{\beta})^{-1} a^{\gamma(b)} b\\
 &= (a^{-\sigma(-1+Z^{-1})T}b)^{-1} a^{T} b\\
 &= b^{-1} a^{\sigma(-1+Z^{-1})T+T} b\\
 &= a^{(\sigma(1-Z)+Z)T},
\end{align*}
and since $a^{\circ k}=a^k$, with respect to the basis $\Span{a_1, a_2}$ of $(A, \circ)$ we have
\begin{equation}
\label{eq:Zcirc}
Z_\circ = (\sigma(1-Z)+Z)T.
\end{equation}

\textbf{Case A.} Here $\ker(\sigma)=\Span{a_1}$ and equality~\eqref{eq:Zcirc} yields 
$$Z_\circ=
\begin{bmatrix}
 \lambda^{x_1 + 1}& 0\\
\mu(1-\lambda)\lambda^{x_1} & \lambda^{x_2 k}(\nu(1-\lambda^k)+\lambda^{k})\\
  \end{bmatrix}.
$$

\begin{enumerate}
\item[(A1)] We have $p-1$ choices for $\sigma$, and
$$
 Z_\circ \sim \begin{bmatrix}
 \lambda^{x_1 +1}& 0\\
0 & \lambda^{x_1}\\
  \end{bmatrix}.
$$

\item[(A2)] We have $p-1$ choices for $\sigma$, and
$$
 Z_\circ \sim \begin{bmatrix}
 \lambda^{x_1 +1}& 0\\
0 & \lambda^{x_1}\\
  \end{bmatrix}.
$$

\item[(A3)]
We have $1$ choice for $\sigma$, and
$$  Z_\circ \sim
\begin{bmatrix}
\lambda^{x_1+1}& 0\\
0 &\lambda^{x_2 k}\\
\end{bmatrix}.
$$
\end{enumerate}

\textbf{Case B.} Here $\ker(\sigma)=\Span{a_1a_2}$, $x_1=x_2 k +k-1$, and equality~\eqref{eq:Zcirc} yields 
$$Z_\circ=
\begin{bmatrix}
 \lambda^{x_1 + 1}-\lambda^{x_1}\mu(1-\lambda)& -\lambda^{k x_2} \nu (1-\lambda^k)\\
 \lambda^{x_1}\mu(1-\lambda) & \lambda^{k x_2 + k }+\lambda^{k x_2}\nu(1-\lambda^k)\\
  \end{bmatrix}.
$$
\begin{enumerate}
\item[(B1)] Here $k\ne 1$. We have $p-1$ choices for $\sigma$, and
$$Z_\circ=
\begin{bmatrix}
 \lambda^{x_1}& 0\\
-\lambda^{x_1}(1-\lambda) & \lambda^{x_2 k+k}\\
  \end{bmatrix}
  \sim
\begin{bmatrix}
\lambda^{x_1} & 0 \\
0 & \lambda^{x_1+1}\\
\end{bmatrix}.
$$

\item[(B2)] Here $k=1$, $x_1=x_2$, $\mu+1=\nu$, and we have $p(p-1)$ choices for 
$$\sigma=
\begin{bmatrix}
 -\mu & -\mu-1\\
  \mu & \mu+1\\
  \end{bmatrix}.$$
We have
$$Z_\circ=
\begin{bmatrix}
 \lambda^{x_1 + 1}-\lambda^{x_1}\mu(1-\lambda)& -\lambda^{x_1} (\mu+1) (1-\lambda)\\
 \lambda^{x_1}\mu(1-\lambda) & \lambda^{x_1 + 1}+\lambda^{x_1}(\mu+1)(1-\lambda)\\
  \end{bmatrix},
$$
and 
$$ Z_\circ \sim \begin{bmatrix}
\lambda^{x_1+1} & 0 \\
0 & \lambda^{x_1} 
\end{bmatrix} .$$
\end{enumerate}

\subsubsection{Conjugacy classes}
\label{sssec:789-conj-classes}

Here we exhibit a general scheme to compute the
conjugacy classes for 
groups which satisfy the assumptions of this section in the case
$\Size{\ker(\gamma)}=p$, when the automorphisms are of the form
$\phi=\iota(x)\delta$, where $x \in A$ and $\delta_{|A} \in
\GL(2,p)$. We will show in
Subsections~\ref{ssec:G9_kerp}, \ref{ssec:G8_kerp}
and~\ref{ssec:G7_kerp-inn} that this scheme can be applied
to the groups of types 9, 8 and to the groups $G$ of
  type 7, when $\gamma(A)\leq \Inn(G)$. 

Suppose thus that an automorphism of $G$ has the form $\phi=\iota(x)\delta$, where $x \in A$ and $\delta \in \GL(2,p)$. We have
\begin{align*} 
\gamma^{\phi}(a)
&=\phi^{-1}\gamma(a^{\delta^{-1}})\phi \\
&=\delta^{-1} \iota(a^{-\delta^{-1}\sigma}) \delta  \\
&= \iota(a^{-\delta^{-1}\sigma\delta}),
\end{align*}
and
\begin{align*} 
\gamma^{\phi}(b)
&=\phi^{-1}\gamma(x^{1-Z^{-1}}b)\phi \\
&=\phi^{-1} \iota(x^{-(1-Z^{-1})T^{-1}\sigma})\beta \phi \\
&=\delta^{-1} \iota(x^{-1+T^{-1}-(1-Z^{-1})T^{-1}\sigma})\beta \delta \\
&=\iota(x^{(-1+T^{-1}-(1-Z^{-1})T^{-1}\sigma)\delta}) \delta^{-1} \beta \delta.
\end{align*}
Write $H =
  -1+T^{-1}-(1-Z^{-1})T^{-1}\sigma$. Setting
$\gamma^{\phi}(a)=\iota(a^{-\sigma})$ and $\gamma^{\phi}(b)=\beta$, we
obtain that $\phi$ stabilises $\gamma$ if and only if the conditions 
\begin{align}
& [\sigma, \delta] = 1, \label{eq:G-7-ker-p-conj-clas-1} \\
& [\beta, \delta] = 1, \label{eq:G-7-ker-p-conj-clas-2} \\
& x^{H\delta}  = 1, \label{eq:G-7-ker-p-conj-clas-3}
\end{align}
hold.

We now distinguish two cases, namely when
$\delta$ is diagonal  
and when $\delta$ is not necessarily diagonal. 

\subsubsection{$\delta$ is a diagonal matrix}
Suppose first that $\delta$ is diagonal, namely $\delta=\diag(\delta_{11}, \delta_{22})$; as we will see in Subsections~\ref{ssec:G9_kerp} and~\ref{ssec:G8_kerp}, this will be the case for the groups $G$ of type 9 and 8, therefore we do not consider here the case (B2).

In this case equation~\eqref{eq:G-7-ker-p-conj-clas-2} is satisfied for every $\delta$.
In  both cases (A) and (B1), the
condition~\eqref{eq:G-7-ker-p-conj-clas-1} yields
$\mu\delta_{22}^{-1}\delta_{11}=\mu$, so that $\mu=0$ or $\delta$ is
scalar. 
Note that $\mu=0$ only in the case (A3), thus we consider any $\delta$ in this case, and $\delta$ scalar in the cases (A1), (A2) and (B1).

In the case (A1) we have
$$ H=\begin{bmatrix}
-1+\lambda^{-x_1} & 0 \\
-\mu(1-\lambda^{-k})\lambda^{k+x_1} & -1+\lambda^{k+x_1}
\end{bmatrix} ,$$
so that~\eqref{eq:G-7-ker-p-conj-clas-1} has one solution if $x_1 \ne 0, k$, and $p$ solutions if $x_1=0, k$. 

In the case (A2),
$$ H=\begin{bmatrix}
-1+\lambda^{-x_1} & 0 \\
-\mu(1-\lambda^{-k})\lambda^{-x_1} & -1+\lambda^{-k-x_1}
\end{bmatrix} ,$$
and~\eqref{eq:G-7-ker-p-conj-clas-1} has one solution if $x_1 \ne -k$, and $p$ solutions if $x_1= -k$. 

In the case (A3),
$$ H=\begin{bmatrix}
-1+\lambda^{-x_1} & 0 \\
0 & -1+\lambda^{-k-kx_2}
\end{bmatrix} ,$$
and~\eqref{eq:G-7-ker-p-conj-clas-1} has one solution if $x_1 \ne 0$ and $x_2 \ne -1 $, $p^2$ solutions if $x_1= 0$ and $x_2=-1$, and $p$ solutions otherwise. 

In the case (B1),
$$ H=\begin{bmatrix}
-1+\lambda^{-x_1-1} & 0 \\
(1-\lambda^{-k})\lambda^{k-1-x_1} & -1+\lambda^{k-1-x_1}
\end{bmatrix} ,$$
and~\eqref{eq:G-7-ker-p-conj-clas-1} has one solution if $x_1 \ne -1, k-1$, and $p$ solutions otherwise.

\subsubsection{$\delta$ is a (not necessarily diagonal) matrix}
Suppose now that $\delta$ is an arbitrary matrix. As we
will see in Subsection~\ref{ssec:G7_kerp-inn} this will be the case
for the groups $G$ of type 7, therefore we do not consider here the
case (B1). 

If we are in  case (A1), then~\eqref{eq:G-7-ker-p-conj-clas-1} yields $\delta_{12}=0$ and $\delta_{11}=\delta_{22}$, and~\eqref{eq:G-7-ker-p-conj-clas-2} yields $\delta_{21}=0$. Therefore $\delta$ is scalar, and with the same computations above (taking $k=1$) we obtain that $H$ has rank $2$ if $x_1 \ne 0, 1$, and $1$ otherwise. 

If we are in  case (A2),
then~\eqref{eq:G-7-ker-p-conj-clas-1} yields $\delta_{12}=0$ and
$\delta_{21}=\mu(\delta_{22}-\delta_{11})$. Since in the case (A2) $T$
is scalar, equation~\eqref{eq:G-7-ker-p-conj-clas-2} is satisfied for
every $\delta$. Moreover $H$ has rank $2$ if $x_1 \ne -1$, and $1$
otherwise.

If we are in  case (A3),
then~\eqref{eq:G-7-ker-p-conj-clas-1} yields $\delta_{12},
\delta_{21}=0$, namely $\delta$ is diagonal. In particular
~\eqref{eq:G-7-ker-p-conj-clas-2} is satisfied. $H$ has rank $2$ if
$x_1 \ne 0$ and $x_2 \ne -1$, $0$ if $x_1=0$ and $x_2=-1$, and $1$
otherwise.

If we are in case (B2), then~\eqref{eq:G-7-ker-p-conj-clas-1} yields
$$
\begin{cases}
\mu \delta_{12} = -(\mu+1)\delta_{21} \\
\mu(\delta_{12}-\delta_{11})=\mu(\delta_{21}-\delta_{22})\\
(\mu+1)(\delta_{12}-\delta_{11})=(\mu+1)(\delta_{21}-\delta_{22}) .
\end{cases}
$$
If $\mu=0$ then $\delta_{21}=0$ and
$\delta_{12}=\delta_{11}-\delta_{22}$; if $\mu=-1$, then
$\delta_{12}=0$ and $\delta_{21}=\delta_{22}-\delta_{21}$; if $\mu \ne
0, -1$, then $\delta_{12}=-\frac{\mu+1}{\mu}\delta_{21}$ and
$\frac{2\mu+1}{\mu}\delta_{21}=\delta_{22}-\delta_{11}$. Therefore, in
all  cases we have one choice for the elements
$\delta_{12}, \delta_{21}$, and $(p-1)^{2}$ choices for
$\delta_{11},\delta_{22}$. 
Since $T$ is scalar here, ~\eqref{eq:G-7-ker-p-conj-clas-2} is always satisfied. Moreover
$$
H=\begin{bmatrix}
-1+\lambda^{-x_1}+\mu(1-\lambda^{-1})\lambda^{-x_1} & (\mu+1)(1-\lambda^{-1})\lambda^{-x_1} \\
-\mu(1-\lambda^{-1})\lambda^{-x_1} & -1+\lambda^{-x_1}+(\mu+1)(1-\lambda^{-1})\lambda^{-x_1}
\end{bmatrix}
$$
has determinant $(1-\lambda^{-x_1})(1-\lambda^{-x_1-1})$. Since $x_1 \ne 0$, we obtain that $H$ has rank $2$ if $x_1 \ne -1$, and $1$ otherwise.

\section{Type 9} 
\label{sec: 9}

Here $q\mid p-1$, where $q>2$, and $G=(\cC_{p} \times \cC_{p}) \rtimes_{D_1} \cC_{q}$. The Sylow $p$-subgroup $A=\Span{a_1, a_2}$ of $G$ is characteristic, and if $a_1, a_2\in A$ are in the eigenspaces of the action of a generator $b$ of a Sylow $q$-subgroup $B$ on $A$, then this action can be represented by a non-scalar diagonal matrix $Z$, with no eigenvalues $1$ and $\det(Z)=1$.

For all the section, we consider $A=\Span{a_1, a_2}$, where $a_1, a_2$ are eigenvectors for $\iota(b)$. With respect to that basis, we have
\begin{equation*}
 Z =
 \begin{bmatrix}
  \lambda & 0\\
  0 & \lambda^{-1}\\
 \end{bmatrix},
\end{equation*}
where $\lambda \ne 1$ has order $q$.

The divisibility condition on $p$ and $q$ implies that $(G,\circ)$ can  be of type 5, 6, 7, 8 and 9.

According to Subsections 4.1 and 4.3 of~\cite{classp2q}, we have 
\begin{equation*}
\Aut(G) = (\Hol(\cC_{p}) \times \Hol(\cC_{p})) \rtimes \cC_{2} .
\end{equation*}

The Sylow $p$-subgroup of $\Aut(G)$ has order $p^2$ and is characteristic, so, since $G$ has trivial center, all its elements are conjugation by elements of $A$.

If $\gamma$ is a GF on $G$, then $\gamma_{|A}: A\to\Inn(G)\leq \Aut(G)$ is a RGF, as $A$ is characteristic in $G$. Moreover, Lemma~\ref{Lemma:gamma_morfismi} yields that $\gamma_{|A}$ is a morphism, as $\iota(A)$ acts trivially on the abelian group $A$.
Therefore, for each gamma function $\gamma$ there exists $\sigma\in \End(A)$ such that
\begin{equation}
\label{eq:gamma=iota_9}
\gamma(a)=\iota(a^{-\sigma})
\end{equation} 
for each $a\in A.$

\subsection{Duality}
\label{ssec:G9_duality}
Since every $\gamma$ on $G$ satisfies equation~\eqref{eq:gamma=iota_9}, we can apply Lemma~\ref{lemma:duality} with $C=A$, and this yields equation~\eqref{eq:sigma}. 

Now, by the discussion in Subsections~\ref{subsec:sigma} and~\ref{subsec:sigma_1-sigma_inv}, if $\sigma$ and $1-\sigma$ are not both invertible, then $p\mid \Size{\ker(\gamma)}$ or $p\mid \Size{\ker(\gammatilde)}$, namely $\sigma$ has $0$ or $1$ as an eigenvalue. Otherwise $\sigma$ and $1-\sigma$ are both invertible, and there are actually $\sigma$ with no eigenvalues $0$ and $1$, and this corresponds to the existence of $\gamma$ such that $p \nmid \Size{\ker(\gamma)}, \Size{\ker(\gammatilde)}$.

Except for the case when both $\gamma$ and $\gammatilde$ have kernel of size not divisible by $p$, we will use duality to swich to a more convenient kernel.

\subsection{Outline}
\label{ssec:strategyG789}

We will use Proposition~\ref{prop:lifting} to deal with the kernels of size $q, pq,$ and $p^2$. As for the kernels of size $p$ and $1$, we will appeal to Proposition~\ref{prop:more-lifting}.
To do this, we will show that each $\gamma$ on $G$ with kernel of size $p$ or $1$ always admits at least one invariant Sylow $q$-subgroup $B$.

In order to do that, in Subsubsection~\ref{sssec:order-q-automorphisms} we describe the elements of $\Aut(G)$ of order $q$.

\subsubsection{Description of the elements of order $q$ of $\Aut(G)$}
\label{sssec:order-q-automorphisms}

The Sylow $q$-subgroups of $\Aut(G)$ are of the form $\cC_{q^e}\times\cC_{q^e}$, for $q^e||p-1$, and they
can be described as the Sylow $q$-subgroups of $C_{\Aut(G)}( \Span{\iota(b)} )$, where $\Span{b}$ varies among the Sylow $q$-subgroups of $G$. 
Since they are abelian, each of them contains exactly one subgroup of type $\iota(\Span{b})$, and 
this establishes a one-to-one correspondence between the Sylow $q$-subgroups of $G$ and the Sylow $q$-subgroups of $\Aut(G)$.

We  note also that for $a \in A$ one has
\begin{equation*}
   C_{\Aut(G)}( \Span{\iota(b)} )^{\iota(a)}
 = C_{\Aut(G)}( \Span{\iota(b^{a})} ).
\end{equation*}

For $b \in G \setminus A$, recalling that $\iota(b)$ acts on $A$ as $\diag(\lambda, \lambda^{-1})$, we write
\begin{equation}
  \label{eq:b-and-beta}
  \begin{aligned}
    \beta_{1} \colon &a_{1} \mapsto a_{1}^{\lambda}\\
    &a_{2} \mapsto a_{2}\\
    &b \mapsto b\\
  \end{aligned}
  \qquad
  \begin{aligned}
    \beta_{2} \colon &a_{1} \mapsto a_{1}\\
    &a_{2} \mapsto a_{2}^{\lambda^{-1}}\\
    &b \mapsto b\\
  \end{aligned}
\end{equation}
so that $\iota(b) = \beta_{1} \beta_{2}$, and $\Span{\beta_{1}, \beta_{2}}$ is the $q$-part of a Sylow $q$-subgroup of $C_{\Aut(G)}( \Span{\iota(b)} )$.

Now, if $\beta \in \Aut(G)$ is an element of order $q$, then it belongs to the centraliser of $\Span{\iota(b)}$, where $\Span{b}$ is a Sylow $q$-subgroup of $G$. Therefore, if $\beta_{1}, \beta_{2}$ are as above, then $\beta \in \Span{\beta_{1}, \beta_{2}}$, namely $\beta=\beta_{1}^{x_1}\beta_{2}^{x_2}$, where $0\le x_1, x_2<q$ not both zero.
\vskip 0.3cm

Let us start with the enumeration of the GF's on $G$. We proceed case by case, according to the size of the kernel.

As usual, if $\Size{\ker(\gamma)}=p^2q$, then $\gamma$ corresponds to the right regular representation, so that we will assume $\gamma \neq 1$.

\subsection{The case \texorpdfstring{$\Size{\ker(\gamma)}=q$}{ker = q}}
\label{ssec:G9-kernel q}

Let $B=\ker(\gamma)$. Here $(G,\circ)$ is necessarily of type 5, as it is the only type having a normal subgroup of order $q$. 

By Proposition~\ref{prop:lifting}, since $A$ is characteristic, each GF on $G$ is the lifting of a RGF on $A$, and, conversely, a RGF on $A$ lifts to $G$ if and only if $B$ is invariant under $\{\gamma(a)\iota(a)\mid a\in A\}$.

For each $a\in A$, $\gamma(a)=\iota(a^{-\sigma})$, where $\sigma\in \GL(2,p)$, so that $\gamma(a)\iota(a)=\iota(a^{1-\sigma})$. Taking into account that each Sylow $q$-subgroup of $G$ is self-normalising, we obtain that $\gamma$ lifts to $G$ if and only if $\sigma=1$, namely when
$$\gamma(a)=\iota(a^{-1}). $$
Since this map is a morphism and $[A, \gamma(A)]=\Set{1}$, by Lemma~\ref{Lemma:gamma_morfismi} $\gamma$ is actually a RGF. Therefore, for each of the $p^2$ choices for a Sylow $q$-subgroup, there is a unique RGF on $A$ which lifts to $G$, and we obtain $p^2$ groups.

Note that for all the $\gamma$'s in this case $p\mid\Size{\ker(\gammatilde)}.$

As to the conjugacy classes, if $\gamma$ has kernel $B$, then, for $x\in A$, $\gamma^{\iota(x)}$ has kernel $B^{\iota(x)}$, as for $b\in \ker(\gamma)$,
$$\gamma^{\iota(x)}(b^{\iota(x)})=\iota(x^{-1})\gamma(b)\iota(x)=1 .$$
Since $\iota(A)$ conjugates transitively the $p^2$ Sylow $q$-subgroups of $G$, the orbits contain at least $p^2$ elements. Since there are $p^2$ GF, there is a unique orbit of length $p^2$.

\subsection{The case \texorpdfstring{$\Size{\ker(\gamma)}=pq$}{ker = pq}}
\label{ssec:G9-kernel pq}
Here $K=\ker(\gamma)$ is a subgroup of $G$ isomorphic to $\cC_p\rtimes \cC_q$, therefore we will obtain $(G,\circ)$ of type 6, as it is the only type having a non abelian normal subgroup of order $pq$. 

We can choose $K$ in $2p$ ways, indeed for each of the $p^2$ choices for a Sylow $q$-subgroup $B$, the subgroups of order $p$ that are $B$-invariant are the $1$-dimensional invariant subspaces of the action of $B$. Therefore, there are $2$ of such subgroups. Moreover, since $\cC_p\rtimes \cC_q$ has $p$ subgroups of order $q$, exactly $p$ choices for $B$ give the same group.

Let $K=\Span{a_1, b}$, and let $a_2\in A$ be such that $A=\Span{a_1, a_2}$. The cyclic complement $\Span{a_2}$ of $K$ in $G$ can be chosen in $p$ ways, and since $\gamma(G)\leq \iota(A)$, each of these choices yields a $\gamma(G)$-invariant subgroup. 

Therefore, by Proposition~\ref{prop:lifting}, each $\gamma$ is the lifting of a RGF defined on any of the complements of order $p$. So, we fix $\Span{a_2}$ and we consider the RGF's $\gamma':\Span{a_2} \to \Aut(G)$, taking into account that the choice of the complement is immaterial. Again appealing to Proposition~\ref{prop:lifting}, the RGF's $\gamma'$ which can be lifted to $G$ are those for which $K$ is invariant under $\{\gamma'(x)\iota(x) : x\in \Span{a_2} \}$, namely the maps defined as
$$\gamma'(a_2)=\iota(a_1^{j}a_2^{-1}),$$
for some $j$, $0\le j \le p-1$. Moreover, since $[\Span{a_2}, \gamma(\Span{a_2})]=\Set{1}$, by Lemma~\ref{Lemma:gamma_morfismi} the RGF's correspond to the morphisms. Therefore, since there are $p$ choices for $j$ and $2p$ for $K$, the number of distinct gamma functions is $2p^2$.

Notice that, for every $\gamma$ as above, $p\mid\Size{\ker(\gammatilde)}.$

As to the conjugacy classes, let $\phi\in \Aut(G)$. According to~\cite{classp2q}, $\phi$ has the form $\iota(x)\delta\psi$, where $x\in A$, $\delta_{|B}=1$ and, with respect to the fixed basis, $\delta_{|A}=(\delta_{ij}) \in \GL(2,p)$ is diagonal. $\psi$ is defined as $b^{\psi}=b^{r}$ and $a^{\psi}=a^{S}$, where either $r=1$ and $S=1$, or $r=-1$ and
$$ S= \begin{bmatrix}
0 & 1 \\
1 & 0
\end{bmatrix}.
$$

We have that $\gamma(a_1^{\phi^{-1}})=\gamma(a_1^{\psi\delta^{-1}})$, and
 $\gamma^{\phi}(a_1)=1$ if and only if $a_1^{\phi^{-1}} \in \ker(\gamma)\cap A=\Span{a_1}$, therefore $\psi=1$.
Moreover,
\begin{equation}
\label{eq:conj-class-G9-pq-1}
\gamma^{\phi}(b)=\phi^{-1}\gamma(b^{\iota(x^{-1})})\phi=\phi^{-1}\gamma(x^{1-Z^{-1}})\phi,
\end{equation}
so it is equal to $\gamma(b)=1$ when $x \in \Span{a_1}$. 
Now, writing $a=a_1^{j}a_2^{-1}$, we have
\begin{equation}
\label{eq:conj-class-G9-pq-2}
\gamma^{\phi}(a_2) 
=\phi^{-1}\gamma(a_2^{\delta^{-1}})\phi 
=\phi^{-1}\gamma(a_2^{\delta_{22}^{-1}})\phi 
=\iota(a^{\delta_{22}^{-1}})^{\delta},
\end{equation}
so that $\phi$ stabilises $\gamma$ if and only if $\iota(a^{\delta_{22}^{-1}})^{\delta}=\iota(a)$, and this yields the condition $ j(\delta_{11}-\delta_{22})=0$.

So, if $j=0$ the last condition is always satisfied, and if $j \ne 0$ the $\delta$'s in the stabiliser are the scalar matrices.
Therefore we get one orbit of length $2p$ and one orbit of length $2p(p-1)$.

\subsection{The case \texorpdfstring{$\Size{\ker(\gamma)}=p^2$}{ker = p2}}
\label{ssec:G9-kernel p2}

The action of $\gamma(G)$ of order $q$ on $G$ fixes at least one of the $p^2$ Sylow $q$-subgroups of $G$, say $B=\Span{b}$. 

Now, as $B$ is $\gamma(G)$-invariant, $\gamma(b)_{|B}=1$, and let $\gamma(b)_{|A}=\beta$.
The discussion in Subsubsection~\ref{sssec:order-q-automorphisms} yields that $\beta=\beta_1^{x_1}\beta_2^{x_2}$, where $x_1, x_2$ are not both zero, so that, with respect to the basis $\Span{a_1, a_2}$, we can represent $\beta$ as the matrix
$$T=\begin{bmatrix}
\lambda^{x_1}& 0\\
0 &  \lambda^{-x_2}\\
  \end{bmatrix},
$$
where $\lambda\ne 1$ of order $q$.

Therefore, since we are under the assumptions of Subsection~\ref{ssec:G789-kernel p^2}, we obtain that

\begin{enumerate}
\item there is a unique invariant Sylow $q$-subgroup when both $x_1,x_2 \neq 0$; 
\item there are $p$ invariant Sylow $q$-subgroups when either $x_1=0$ or $x_2=0$.
\end{enumerate}

Moreover, taking $k=-1$ in Subsection~\ref{ssec:G789-kernel p^2}, we find that the action of $b$ on $A$ with respect to the operation $\circ$ has associated matrix
  \begin{equation*}
   Z_{\circ} \sim
  \begin{bmatrix}
  \lambda^{1+x_1} & 0\\
    0 & \lambda^{-1-x_2}\\
  \end{bmatrix}.
\end{equation*}

Therefore we obtain the following groups $(G,\circ)$. 
\begin{description}
\item[Type 5] if $x_1=x_2=-1$, and there are $p^2$ groups.
\item[Type 6] if either $x_1=-1$ and $x_2\ne -1$, or $x_1\ne -1$ and $x_2=-1$. In both the cases there is a unique invariant Sylow $q$-subgroup, except if either $x_2=0$ or $x_1=0$, when there are $p$ invariant Sylow $q$-subgroups. Therefore there are $2p^2(q-2)$ groups when we are in the case (1), plus other $2p$ groups for the case (2).
\item[Type 7] if $x_1+1=-(x_2+1) \ne 0$. There are $p^2(q-3)$ groups for the case (1), plus $2p$ groups for the case (2).
\item[Type 8] if $Z_{\circ} $ is a non scalar matrix with no eigenvalues $1$, and determinant different from $1$.

In case (1) this corresponds to the conditions $x_2 \neq 0,-1$ and the four conditions $x_1\neq 0,-1,-x_2,-x_2-2$, which are independent if and only if in addition $x_2\neq -2$. 
Therefore, for $x_2\ne 0, -1, -2$ we obtain $p^2(q-4)(q-3)$, groups. For $x_2=-2$ the four conditions on $x_1$ reduce to three conditions, and we obtain further $p^2(q-3)$, groups. 

In case (2), suppose $x_1=0$. Then there are three independent conditions on $x_2$. Doubling for the case $x_2= 0$, we obtain $2p(q-3)$.

Summing up, we have just obtained $p^2(q-3)^2+2p(q-3)$ groups of type 8;
looking at the eigenvalues of $Z_\circ$, we easily obtain that they are $2p^2(q-3)+4p$ groups isomorphic to $G_s$, for every $s \in \mathcal{K}$.

\item[Type 9] if $ Z_{\circ} $ is a non-scalar matrix with no eigenvalue $1$ and determinant $1$, namely $x_1\ne -1, -x_2 -2$, $x_2\ne -1$, and $x_1-x_2=0$.
The case (2) can not happen, otherwise $\beta=1$. 
In case (1) we have $x_2\neq 0,-1$, therefore there are $p^2(q-2)$ groups.
\end{description}

As to the conjugacy classes, since the kernel $A$ is characteristic, we have that $\gamma^{\phi}(a)=\gamma(a)$, for every $\phi \in \Aut(G)$.

In the notation of Subsection~\ref{ssec:G9-kernel pq} , write $\phi=\iota(x)\delta\psi$. Since
$b^{\phi^{-1}}=b^{\psi\iota(x^{-1})}\equiv b^{r} \bmod \ker(\gamma)$, we have
\begin{equation*}
\gamma^{\phi}(b) 
= \phi^{-1}\gamma(b^r)\phi 
= \psi \delta^{-1} T^{r} \iota(x^{1-T^{r}}) \delta \psi 
= \psi \delta^{-1} T^{r} \delta \iota(x^{(1-T^{r})\delta}) \psi.
\end{equation*}
Therefore, $\phi$ stabilises $\gamma$ if and only if
\begin{equation*}
\begin{cases}
x^{(1-T^{r})\delta}=1 \\
\delta^{-1} T^{r} \delta = \psi T \psi .
\end{cases}
\end{equation*}
The first condition yields $x=1$ or, if $x=a_1^{u}a_2^{v}$, either $x_1=0$ and $v=0$, or $x_2=0$ and $u=0$. If $\psi=1$ the second condition is always satisfied.
If $\psi \ne 1$, since $\delta^{-1}T^{-1}\delta=T^{-1}$ and $\psi$ acts on $T$ by conjugation exchanging the eigenvalues, the second condition yields $x_1=x_2$.

We obtain the following.
\begin{enumerate}
\item For $(G,\circ)$ of type 5 the stabiliser has order $2(p-1)^2$, so that there is one orbit of length $p^2$.

\item For $(G,\circ)$ of type 6 the stabiliser has order $p(p-1)^2$
when either $x_1=0$ or $x_2=0$, and $(p-1)^2$ when $x_1, x_2 \ne 0$. Therefore, there is one orbit of length $2p$ together with $q-2$ orbits of length $2p^2$.

\item For $(G,\circ)$ of type 7 the stabiliser has order $(p-1)^2$ when $x_1, x_2 \ne 0$, and $p(p-1)^2$ otherwise. Therefore there are $\frac{q-3}{2}$ orbits of length $2p^2$, and one orbit of length $2p$.

\item For $(G,\circ)$ of type 8, if $x_1, x_2 \ne 0$ then the stabiliser has order $(p-1)^2$; otherwise either $x_1=0$ or $x_2=0$, and the stabiliser has order $p(p-1)^2$. Therefore, 
if $(G,\circ)\simeq G_s$, for every $s \in \mathcal{K}$ we obtain $q-3$ orbits of length $2p^2$ and two orbits of length $2p$.

\item For $(G,\circ)$ of type 9, $x_1, x_2 \ne 0$ so that the stabiliser has order $(p-1)^2$, and there are $q-2$ orbits of length $p^2$.
\end{enumerate}

\subsection{The case \texorpdfstring{$\Size{\ker(\gamma)}=p$}{ker = p}}
\label{ssec:G9_kerp}

To count the GF's of this case we will use Proposition~\ref{prop:more-lifting}.

Here $\Size{\gamma(G)}=pq$.
The discussion in Subsubsection~\ref{sssec:order-q-automorphisms} yields that $\gamma(G) = \Span{\iota(a_0), \beta}$, for some $1 \ne a_0 \in A$ with $A^{\sigma} = \Span{\iota(a_0)}$, and $\beta \neq 1$. We can assume $\gamma(b)=\iota(a_0^j)\beta$ for some $j$, where $\beta=\beta_1^{x_1}\beta_2^{x_2}$. With respect to the basis $\Span{a_1, a_2}$, where $\Span{a_1}$ and $\Span{a_2}$ are the eigenspaces of $\iota(b)$, the matrix associated to $\beta$ is $\diag(\lambda^{x_1}, \lambda^{-x_2})$, where $x_1$ and $x_2$ are not both zero.

We write $\gamma(b)=\iota(a_{\ast})\beta$, and with respect to $\Span{a_1, a_2}$, we can represent $\beta_{|A}$ as
$$ T=\begin{bmatrix}
\lambda^{x_1} & 0\\
0 & \lambda^{-x_2}
\end{bmatrix} ,$$
where $x_1, x_2$ are not both zero. 

Following Subsection~\ref{ssec:G789_kerp}, and recalling that for $G$ of type 9 $k=-1$, here we find the following cases:

\begin{itemize}
\item Case A: $\ker(\sigma)=\Span{a_1}$.
 \begin{enumerate}
  \item[(A1)] $\nu=0$, $\mu \neq 0$, $x_1+x_2= -1$;
  \item[(A2)] $\nu=1$, $\mu \neq 0$, $x_1=-x_2 $;
  \item[(A3)] $\nu=1$, $\mu = 0$.
 \end{enumerate}
\item Case B: $\ker(\sigma)=\Span{a_1a_2}$.
  \begin{enumerate}
  \item[(B1) \phantom{}] $\nu=0$, $\mu =-1$, $x_1=-x_2 -2$;
  \end{enumerate}
\end{itemize}
As explained in Subsection~\ref{ssec:G789_kerp}, the results in the cases (A1$^\ast$), (A2$^\ast$), (A3$^\ast$) and (B1$^\ast$) can be obtained doubling the results we will obtain in the cases (A1), (A2), (A3) and (B1).

Notice that $p$ divides both $\Size{\ker(\gamma)}$ and $\Size{\ker(\tilde\gamma)}$ if and only if $\sigma $ has both $0$ and $1$ as eigenvalues, that is, in all the cases above except A1, where, since $\sigma$ has only $0$ as eigenvalue, $p\mid \Size{\ker(\gamma)}$ but $p \nmid\Size{\ker(\tilde\gamma)}$.

\subsubsection{Invariant Sylow $q$-subgroups}
\label{sssec:inv-q-Sylow-G9}

Taking $k=-1$ in Subsubsection~\ref{sssec:inv-q-Sylow-kerp-G789}, we find the following.

\begin{prop}
\label{prop:inv_q-sylow_kerp_G9}
If $G$ is of type 9 and $\gamma$ is a GF on $G$ with $\Size{\ker(\gamma)}=p$, the number of invariant Sylow $q$-subgroups is
\begin{enumerate}
\item[\rm{(A1)}] $1$ when $x_1\neq 0, -1$ and $p$ otherwise.
\item[\rm{(A2)}] $1$ when $x_1 \neq 1$ and $p$ otherwise.
\item[\rm{(A3)}] $1$ when $x_1\neq 0$ and $x_2\neq -1$, $p^2$ when $x_1= 0$ and $x_2= -1$, and $p$ otherwise.
\item[\rm{(B1) \phantom{}}] $1$ when $x_1 \neq  -1, -2$ and $p$ otherwise.
\end{enumerate}
\end{prop}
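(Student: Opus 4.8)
The plan is to obtain the statement by specialising the general result on invariant Sylow $q$-subgroups, Proposition~\ref{prop:inv_q-sylow_kerp}, to the parameter value $k=-1$, which is the value attached to groups of type 9 in the framework of Section~\ref{sec: 7,8,9}.

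First I would check that a group $G$ of type 9, together with any GF $\gamma$ on it, falls under Assumption~\ref{assump}, so that the machinery of Section~\ref{sec: 7,8,9} is available. This is immediate from~\eqref{eq:gamma=iota_9}: every GF on such a $G$ satisfies $\gamma(a)=\iota(a^{-\sigma})$ for some $\sigma\in\End(A)$, hence $\gamma(A)\le\Inn(G)$; moreover hypotheses (1)--(4) of Proposition~\ref{prop:more-lifting} hold with $A$ the Sylow $p$-subgroup and $B$ any Sylow $q$-subgroup, as already recorded. Next I would note that here $Z=\diag(\lambda,\lambda^{-1})$, i.e. $k=-1$, and also $k^{-1}=-1$; consequently the relevant subcases from Subsection~\ref{ssec:G789_kerp} are (A1), (A2), (A3) and (B1), while the starred subcases (A1$^\ast$), (A2$^\ast$), (A3$^\ast$), (B1$^\ast$) yield identical conditions (since $k=k^{-1}$) and are thus absorbed into the corresponding unstarred ones --- exactly the list displayed in Subsection~\ref{ssec:G9_kerp}.

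Then I would simply substitute $k=-1$ into the table of Proposition~\ref{prop:inv_q-sylow_kerp}. Entry (A1) (``$1$ when $x_1\ne 0,k$, and $p$ otherwise'') becomes ``$1$ when $x_1\ne 0,-1$''; entry (A2) (``$1$ when $x_1\ne -k$'') becomes ``$1$ when $x_1\ne 1$''; entry (A3) involves no $k$ and is copied verbatim; entry (B1) (``$1$ when $x_1\ne -1,k-1$'') becomes ``$1$ when $x_1\ne -1,-2$''. These are precisely the four lines of the statement, the number being $p$ (or $p^2$ in the exceptional configuration of (A3)) in all remaining cases.

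There is no genuine obstacle here: the substantive work --- computing $\det(M)$ in each subcase, analysing the ranks of $(1-Z^{-1})M$, and verifying that the values of $a_\ast$ for which~\eqref{eq:inv q-sylow case 1} has no solution are exactly those violating the extension condition~\eqref{eq:condizione-a_ast} --- has already been carried out in Subsubsection~\ref{sssec:inv-q-Sylow-kerp-G789}. The only point deserving a moment's attention is the compatibility of conventions for the matrix $T$: with $k=-1$, equation~\eqref{eq:T-matrix} reads $T=\diag(\lambda^{x_1},\lambda^{-x_2})$, which is the normalisation fixed in Subsection~\ref{ssec:G9_kerp}, so the specialisation is legitimate and the proposition follows.
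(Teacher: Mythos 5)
Your proposal is correct and is exactly the paper's own argument: the paper derives this proposition by the single sentence ``Taking $k=-1$ in Subsubsection~\ref{sssec:inv-q-Sylow-kerp-G789}, we find the following,'' i.e.\ by specialising Proposition~\ref{prop:inv_q-sylow_kerp} to $k=-1$. Your additional checks (Assumption~\ref{assump} holds via~\eqref{eq:gamma=iota_9}, the starred subcases collapse onto the unstarred ones since $k=k^{-1}=-1$, and the normalisation of $T$ agrees) are sound and merely make explicit what the paper leaves implicit.
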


\subsubsection{Computations}
\label{sssec:computation-G9}

By Subsubsection~\ref{sssec:enumerate-GF-789} the action $Z_\circ$ of $b$ on $A$ in $(G, \circ)$ is given by
\begin{equation*}
Z_\circ = (\sigma(1-Z)+Z)T,
\end{equation*}
and we obtain the following.    
      
\textbf{Case A.} Here $\ker(\sigma)=\Span{a_1}$ and equality~\eqref{eq:Zcirc} yields 
$$Z_\circ=
\begin{bmatrix}
 \lambda^{x_1 + 1}& 0\\
\mu(1-\lambda)\lambda^{x_1} & \lambda^{-x_2}(\nu(1-\lambda^{-1})+\lambda^{-1})\\
  \end{bmatrix}.
$$

\begin{enumerate}
\item[(A1)] We have $p-1$ choices for $\sigma$, and
$$
 Z_\circ \sim \begin{bmatrix}
 \lambda^{x_1 +1}& 0\\
0 & \lambda^{x_1}\\
  \end{bmatrix}.
$$
We obtain the following groups $(G, \circ)$.
\begin{description}
\item[Type 5] does not arise.
\item[Type 6] if $x_1=0$ or $x_1=-1$. If $x_1=0$ for each of the $(p-1)$ choices for $\sigma$ we have $p^2/p$ choices for $B$ giving different GF's, so $p(p-1)$ groups. 
If $x_1=-1$, then there are $p(p-1)$ groups. 
\item[Type 7] does not arise.
\item[Type 8] if $x_1 \ne 0$, ${-1}$, $(q-1)/2$, and these are always three independent conditions. 
Since $x_1 \neq -1$, we get $p^2(p-1)(q-3)$ groups. They split in $2p^2(p-1)$ groups isomorphic to $G_s$ for every $s \in \mathcal{K}$.

\item[Type 9] if $x_1=(q-1)/2$. 
Since $x_1 \neq -1$, we get $p^2(p-1)$ groups.
\end{description}

\item[(A2)] We have $p-1$ choices for $\sigma$, and
$$
 Z_\circ \sim \begin{bmatrix}
 \lambda^{x_1 +1}& 0\\
0 & \lambda^{x_1}\\
  \end{bmatrix}.
$$
We obtain the following groups $(G, \circ)$.
\begin{description}
\item[Type 5] does not arise.
\item[Type 6] when $x_1=-1$. Since $x_1 \neq 1$, there are $p^2(p-1)$ groups. 
\item[Type 7] does not arise.
\item[Type 8] if $x_1\ne0$, ${-1}$, $(q-1)/2$, and these are always three independent conditions. 
Here $x_1$ can be equal to $1$ and so there are $p(p-1)+p^2(p-1)(q-4)$ groups. They split in $p(p-1)+p^2(p-1)$ groups isomorphic to $G_2$ and $2p^2(p-1)$ groups isomorphic to $G_s$ for every $s \neq 2, s \in \mathcal{K}$.
\item[Type 9] if $x_1=(q-1)/2$. Here $x_1=1$ if and only if $q=3$, so that there are $p^2(p-1)$ groups if $q>3$ and $p(p-1)$ if $q=3$.
\end{description}

\item[(A3)]
We have $1$ choice for $\sigma$, and
$$  Z_\circ \sim
\begin{bmatrix}
\lambda^{x_1+1}& 0\\
0 &\lambda^{-x_2}\\
\end{bmatrix}.
$$
We obtain the following groups $(G, \circ)$.
\begin{description}
\item[Type 5] if $1+x_1=-x_2=0$. Since $x_1 \neq 0$ and $x_2 \neq -1$, there are $p^2$ groups.
\item[Type 6] if either $x_1={-1}$ and $x_2\ne 0$ or $x_1 \ne {-1}$ and $x_2 = 0$. In the first case, there are $p$ groups when $x_2=-1$, otherwise, for $x_2 \neq -1$, there are $p^2(q-2)$ groups. 
In the second case, since $x_2=0$, we have to take $x_1\ne 0$ and there are $p^2(q-2)$ groups.
\item[Type 7] when $-x_2 =1+x_1\ne 0$. If $x_1=0$ and $x_2=-1$, then there is one group. The cases $x_1 \neq 0$, $x_2=-1$, and $x_1=0$, $x_2\neq -1$ can not happen, while
if $x_1 \neq 0$ and $x_2 \neq -1$ there are $p^2(q-2)$ groups.
\item[Type 8] when $x_1 \ne -1, -x_2-1, x_2-1$, $x_2\ne 0$.
The case $x_1= 0$ and $x_2=-1$ can not happen. 
If $x_1 \neq 0$ and $x_2=-1$, the four conditions on $x_1$ are actually three conditions, and there are $p(q-3)$ groups. 
If $x_1= 0$ and $x_2\neq -1$ we get further $p(q-3)$ groups.
 
Suppose now $x_1\neq 0$, $x_2 \neq -1$.

There are always four independent conditions on $x_1$ except when $x_2=1$, where the conditions become three. There are $p^2((q-4)(q-3)+(q-3))=p^2(q-3)^2 $ groups. 

Therefore we have just obtained $2p(q-3)+p^2(q-3)^2$ groups, which split in $4p+2p^2(q-3) $ groups isomorphic to $G_s$ for every $s \in \mathcal{K}$.

\item[Type 9] if $x_1 \ne -1$, $x_2\ne 0$ and $1+x_1-x_2 =0$.
For $x_2=-1$ and $x_1=-2 \neq 0$ there are $p$ groups. Similarly, for $x_2 \neq -1$ and $x_1 = 0$ and  there are $p$ groups.
For $x_2 \neq -1$ and $x_1=x_2 -1\neq 0$, namely $x_2 \neq 0,\pm 1$, we get $p^2(q-3)$ groups. 
\end{description}
\end{enumerate}

\textbf{Case B.} Here $\ker(\sigma)=\Span{a_1a_2}$, $x_1=-x_2 -2$. 

\begin{enumerate}
\item[(B1)] We have $p-1$ choices for $\sigma$, and equality~\eqref{eq:Zcirc} yields
$$Z_\circ=
\begin{bmatrix}
 \lambda^{x_1}& 0\\
-\lambda^{x_1}(1-\lambda) & \lambda^{-x_2-1}\\
  \end{bmatrix}
  \sim
\begin{bmatrix}
\lambda^{x_1} & 0 \\
0 & \lambda^{x_1+1}\\
\end{bmatrix}.
$$
We obtain the following groups $(G, \circ)$.
\begin{description}
 \item[Type 5] does not arise.
 \item[Type 6] when $x_1=0$ or $x_1=-1$. In the first case $x_1 \neq -2, -1$ and there are $p^2(p-1)$ groups, while in the second case there are $p(p-1)$ groups. 
 \item[Type 7] does not arise.
\item[Type 8] when $x_1 \neq 0,  -1, (q-1)/2$. If $x_1=-2$ there are $p(p-1)$ groups. Suppose now $x_1\neq -2$; the four conditions on $x_1$ are independent and there are $(p-1)(q-4)p^2$ groups. 
Therefore there are $p^2(p-1)(q-4)+p(p-1)$ groups, which split in $p(p-1)+p^2(p-1)$ groups isomorphic to $G_{2}$, and $2p^2(p-1)$ groups isomorphic to $G_s$ for every $s\ne 2, s \in \mathcal{K}$.
\item[Type 9] if $x_1 = (q-1)/2$ ($x_1 \neq 0$ and $x_1 \neq -1$). There are $(p-1)p^2$ groups. 
 \end{description}
\end{enumerate}

\subsubsection{Conjugacy classes}

As to the conjugacy classes, let $\phi=\iota(x)\delta\psi$, where $x \in A$, $\delta\in \GL(2,p)$ is diagonal, and $\psi \in \cC_2$. Suppose $\psi \ne 1$. Then
$$ \gamma^{\phi}(a_1)=\phi^{-1}\gamma(a_1^{S\delta^{-1}})\phi
=\phi^{-1}\iota(a_2^{-\delta_{22}^{-1}\sigma})\phi
=\phi^{-1}\iota(a_1^{-\mu\delta_{22}^{-1}}a_2^{-\nu\delta_{22}^{-1}})\phi.
$$
In the case (A) we have that $\gamma(a_1)=1$, and since $\iota(a_1^{-\mu\delta_{22}^{-1}}a_2^{-\nu\delta_{22}^{-1}})\ne 1$, then $ \gamma^{\phi}(a_1)\ne \gamma(a_1)$. In the case (B1), $\gamma(a_1)=\iota(a_1^{-\sigma})=\iota(a_1)$, and since here $\nu=0$ and $\mu=-1$,
$$ \gamma^{\phi}(a_1)=\psi^{-1}\delta^{-1} \iota(a_1^{\delta_{22}^{-1}}) \delta \psi 
=\psi^{-1} \iota(a_1^{\delta_{22}^{-1}\delta_{11}})  \psi = \iota(a_2^{\delta_{22}^{-1}\delta_{11}})
\ne \gamma(a_1).
$$

Therefore $\psi=1$, and consider $\phi=\iota(x)\delta$. Now taking $k=-1$ and $\delta$ diagonal in Subsubsection~\ref{sssec:789-conj-classes} we obtain the following.

In the case (A1), equation~\eqref{eq:G-7-ker-p-conj-clas-1} has one solution if $x_1 \ne 0, -1$, and $p$ solutions if $x_1=0, -1$, therefore the orbits have length $2p(p-1)$ if $x_1=0, -1$, and $2p^2(p-1)$ when $x_1 \ne 0, -1$.

In the case (A2), equation~\eqref{eq:G-7-ker-p-conj-clas-1} has one solution if $x_1 \ne 1$, and $p$ solutions if $x_1= 1$, therefore the orbits have length $2p(p-1)$ when $x_1= 1$, and $2p^2(p-1)$ when $x_1 \ne 1$.

In the case (A3), equation~\eqref{eq:G-7-ker-p-conj-clas-1} has one solution if $x_1 \ne 0$ and $x_2 \ne -1 $, $p^2$ solutions if $x_1= 0$ and $x_2=-1$, and $p$ solutions otherwise. 
Therefore the orbits have length $2p^{2}$ if $x_1\ne 0$ and $x_2 \ne -1$, $1$ if $x_1 =0$ and $x_2= -1$, and $2p$ otherwise.

In the case (B1), equation~\eqref{eq:G-7-ker-p-conj-clas-1} has one solution if $x_1 \ne -1, -2$, and $p$ solutions otherwise. 
Therefore the orbits have length $2p^2(p-1)$ if $x_1\ne -1, -2$, and $2p(p-1)$ otherwise.

Therefore here the orbits have length:
\begin{enumerate}
\item in the case (A1), $2p(p-1)$ if $x_1=0, -1$, and $2p^2(p-1)$ otherwise;
\item in the case (A2), $2p(p-1)$ if $x_1=-1$, and $2p^2(p-1)$ otherwise;
\item in the case (A3), $2p^2$ if $x_1\ne 0$ and $x_2 \ne -1$, $2$ if $x_1=0$ and $x_2=-1$,  and $2p$ otherwise;
\item in the case (B1), $2p(p-1)$ if $x_1=-1, -2$, and $2p^2(p-1)$ otherwise.
\end{enumerate}

\begin{recap}
For $G$ of type 9 and $\gamma$ a GF on $G$ with kernel of size $p$, we obtain the following.
\begin{enumerate}
\item the $2p^2$ groups $(G,\circ)$ of type 5 form one class of length $2p^2$.
\item the groups $(G,\circ)$ of type 6 split in this way:
	 \begin{itemize}
	 \item[$-$] $2p$ groups form one class of length $2p$;
	 \item[$-$] $4p^2(q-2)$ groups split in $2(q-2)$ classes of length $2p^2$;
	 \item[$-$] $6p(p-1)$ groups split in $3$ classes of length $2p(p-1)$;
	 \item[$-$] $4p^2(p-1)$ groups split in $2$ classes of length $2p^2(p-1)$;
	 \end{itemize}
In total there are $2(q+1)$ classes.
\item the $2+2p^2(q-2)$ groups $(G,\circ)$ of type 7 split in 
	 \begin{itemize}
	 \item[$-$] $1$ class of length $2$;
	 \item[$-$] $q-2$ classes of length $2p^2$;
	 \end{itemize}
In total there are $q-1$ classes.
\item the groups $(G,\circ)$ of type 8 split in this way:
	 \begin{itemize}
	\item[$-$] $8p$ groups isomorphic to $G_k$, which split in $4$ classes of length $2p$, for every $k \in \mathcal{K}$.
	\item[$-$] $4p^2(q-3)$ groups isomorphic to $G_k$, which split in $2(q-3)$ classes of length $2p^2$, for every $k \in \mathcal{K}$.
	\item[$-$] $4p(p-1)$ groups isomorphic to $G_2$, which split in two classes of length $2p(p-1)$.
	\item[$-$] $12p^2(p-1)$ groups isomorphic to $G_k$, which split in $6$ classes of length $2p^2(p-1)$, for every $k\ne 2$, $k\in \mathcal{K}$, and $8p^2(p-1)$ groups isomorphic to $G_2$ which split in $4$ classes of length $2p^2(p-1)$.
 	 \end{itemize}
In total there are $2(q+2)$ classes for every $G_k$.
 \item the groups $(G,\circ)$ of type 9 split in this way:
  	 \begin{itemize}
 	 \item[$-$] $4p$ groups split in two classes of length $2p$;
	 \item[$-$] $2p^2(q-3)$ groups split in $q-3$ classes of length $2p^2$;
	 \item[$-$] if $q=3$ there are further $2p(p-1)+4p^2(p-1)$ groups, which split in one class of length $2p(p-1)$ and two classes of length $2p^2(p-1)$;
	 \item[$-$] if $q>3$, there are further $6p^2(p-1)$ groups, which split in $3$ classes of length $2p^2(p-1)$.
	 \end{itemize}
In total there are $q+2$ classes.
\end{enumerate}
\end{recap}

\subsection{The case \texorpdfstring{$\Size{\ker(\gamma)}=1$}{ker = 1}}
\label{ssec:G9-ker-1}

The GF's of this case can be divided into subclasses according to the size of  $\ker(\gammatilde)$.
Those for which $\Size{\ker(\gammatilde)}\ne1$ can be recovered via duality from the previous computations applied to $\gammatilde$. For the others, for which $\Size{\ker(\gammatilde)}=1$, we will use Proposition~\ref{prop:more-lifting}.

We recall that $\gammatilde(x)=\gamma(x^{-1})\iota(x^{-1})$ for all $x\in G$, so $\Size{\ker(\gammatilde)}\ne1$ means that there exists $x_0\in G$, $x_0 \ne 1$, such that 
\begin{equation}
\label{eq:gamma=iota_9x0}
\gamma(x_0)=\iota(x_0^{-1})
\end{equation}
whereas the condition $\Size{\ker(\gamma)}=1$ corresponds to 
\begin{equation}
\label{eq:notiota}
\gammatilde(x)\ne\iota(x^{-1}), \text{  for each }x\in G, x\ne 1.
\end{equation}
Clearly, when $\Size{\ker(\gammatilde)}=p^2q$, $\gamma=\tilde{\gammatilde}$ corresponds to the left regular representation, and this gives one group of the same type of $G$.

In the remaining cases for which $q\mid \Size{\ker(\gammatilde)}$, 
the condition~\eqref{eq:notiota} is not fulfilled, so none of the corresponding $\gamma$'s has trivial kernel. 

Consider now the GF's $\gamma$ for which $p\mid \Size{\ker(\gammatilde)}$ (and $q \nmid \Size{\ker(\gammatilde)}$). Here $\gamma(a)=\iota(a^{-\sigma})$, where $\sigma$ has $1$, but not $0$, as eigenvalue (because $p \mid \Size{\ker(\gammatilde)}$ and $\gamma$ is injective).
Therefore, for each $a \in A$, we have that $\gammatilde(a)=\gamma(a^{-1})\iota(a^{-1})=\iota(a^{\sigma-1})$.

Suppose that $\sigma=1$. Then $\Size{\ker(\gammatilde)}=p^2$ and $\gamma(a)=\iota(a^{-1})$. Therefore $p^2 \mid \Size{\gamma(G)}$, and $\ker(\gamma)$ can have size $1$ or $q$. We have $\Size{\ker(\gamma)}=1$ if and only if~\eqref{eq:notiota} is satisfied, and by Subsection~\ref{ssec:G9-kernel p2}, $\gammatilde(b)=\iota(b^{-1})$ if and only if $x_1=x_2= -1$. Therefore, the $p^2$ GF's $\gammatilde$ corresponding to $(G,\circ)$ of type 5 are such that the corresponding $\gamma$ have kernel of size $q$, and all the others $\gammatilde$ correspond to $\gamma$ with kernel of size $1$. 

Suppose now that $\sigma \ne 1$. Since $\sigma$ has $1$ as eigenvalue, then $\Size{\ker(\gammatilde)}=p$, and if $a_0 \in A$ generates $\ker(\gammatilde)$, then $\gamma(a_0)=\iota(a_0^{-1})$, namely $p\mid \Size{\gamma(G)}$. Moreover, since $0$ is not an eigenvalue for $\sigma$, $p \nmid \Size{\ker(\gamma)}$. 
Therefore, again, $\ker(\gamma)$ can have size $1$ or $q$.
By Subsection~\ref{ssec:G9_kerp}, the $\gammatilde$'s such that $p\mid \Size{\ker(\gammatilde)}$ and $p \nmid \Size{\ker(\gamma)}$ are those of the cases (A1) and (A1$^{\ast}$). Moreover, for every $\gammatilde$ belonging to these cases the condition~\eqref{eq:notiota} is satisfied, namely the corresponding $\gamma$ are injective.

We are left with the case when $\Size{\ker(\gamma)}=\Size{\ker(\gammatilde)}  = 1$. In the following we suppose that $\sigma$ has no eigenvalues $0$ or $1$.

Here $\gamma(G) = \Span{\iota(a_1), \iota(a_2), \beta}$, where $\beta \neq 1$. As in Subsubsection~\ref{ssec:q-sylow-G789}, if $b$ is an element of order $q$ fixed by $\beta$, we can assume $\gamma(b)=\iota(a_\ast)\beta$ for some $a_\ast \in A^{\sigma}$, and $\beta=\beta_1^{x_1}\beta_2^{x_2}$.
As usual, denote by $T$ the matrix of $\gamma(b)_{|A}$ with respect to the basis $\Span{a_1, a_2}$. 
The discussion in Subsection~\ref{ssec:strategyG789} yields equation~\eqref{eq:conjugation}, which in our notation here is
\begin{equation}
(\sigma^{-1}-1)^{-1}T(\sigma^{-1}-1)=TZ.
\end{equation}
Now $T$ and $TZ$, being conjugate, have the same eigenvalues, so that $\lambda^{-x_2}=\lambda^{x_1+1}$. 
Therefore $T=\diag(\lambda^{x_1},\lambda^{1+x_1})$, and $\sigma^{-1}-1$ exchanges the two eigenspa\-ces, so
\begin{equation}
\label{eq:sigmamatrix}
\sigma^{-1}-1=
\begin{bmatrix}
0 & s_1\\
    s_2& 0\\
  \end{bmatrix}
\end{equation}
with the conditions $s_1,s_2\ne0$ (due to our assumptions on the eigenvalues of $\sigma$) and $s_1s_2\ne1$. 

\subsubsection{Invariant Sylow $q$-subgroups}
\label{sub:7.3.1}
\label{subsub:invariant_ker1}
We will show that also in this case there always exists at least one invariant Sylow $q$-subgroup.
By the discussion above $\gamma(b)=\iota(a_\ast)\beta_1^{x_1}\beta_2^{-(x_1+1)}$, for some $a_\ast \in A^{\sigma}$ and $0\le x_1 <q$.

By Subsection~\ref{ssec:q-sylow-G789} there exists an invariant Sylow $q$-subgroup, $\Span{b^x}$ where $x \in A$, if and only if the equation~\eqref{eq:inv q-sylow case 1}, namely 
$$ x^{(1-Z^{-1})M}=a_\ast^{(1-Z^{-1})T} $$
where $M=1-(1+T^{-1}\sigma(1-Z^{-1}))T$, has a solution in $x$.

Here
$$
M = 
\begin{bmatrix}
1-\lambda^{x_1}-\frac{(1-\lambda^{-1})}{1-s_1s_2}  & \frac{s_1\lambda(1-\lambda)}{1-s_1s_2}\\
\frac{s_2\lambda^{-1}(1-\lambda^{-1})}{1-s_1s_2}   & 1-\lambda^{x_1+1}-\frac{(1-\lambda)}{1-s_1s_2}
\end{bmatrix},
$$
and since $\det(1-Z^{-1}) \ne 0$ and $\det(M)=(1-\lambda^{x_1})(1-\lambda^{x_1+1})$, we have the following.
\begin{enumerate}
\item If $x_1 \neq 0, -1$, then $M$ has rank $2$ and the system~\eqref{eq:inv q-sylow case 1} admits a unique solution.
\item If $x_1=0$, then, writing $a_{\ast}=a_1^{x}a_2^{y}$, the system~\eqref{eq:inv q-sylow case 1} admits solutions if and only if $y = - s_1 x$. Moreover, in that case there are $p$ solutions. 
If $y \neq - s_1 x$, then there are no GF on $G$ extending the assignment $\gamma(b)=\iota(a_\ast)\beta$, as the condition~\eqref{eq:condizione-a_ast} is not satisfied.

\item If $x_1=-1$, then~\eqref{eq:inv q-sylow case 1} admits $p$ solutions if and only if $a_{\ast}=a_1^{- s_2 y}a_2^{y}$. 
Reasoning as above, if $x \ne -s_2 y$ there are no GF on $G$ extending the assignment $\gamma(b)=\iota(a_\ast)\beta$.
\end{enumerate}

\subsubsection{Computations}
Using Proposition~\ref{prop:more-lifting} we can count the GF's as follows.
\begin{itemize}
\item[$-$] Choose $\sigma\in\GL(2, p)$ without eigenvalues $1$, and a RGF $\gamma: B\to\Aut(G)$ such that $\sigma$ and $\gamma$ satisfy~\eqref{eq:sigma} ($q$ choices for $\gamma$ corresponding to $\gamma(b)=\beta_1^{x_1}\beta_2^{-(x_1+1)}$ and $(p-1)(p-2)$ choices for $\sigma$ as in equation~\eqref{eq:sigmamatrix}).
\item[$-$] By Proposition~\ref{prop:more-lifting} each such assignment defines a unique function $\gamma$, and the GF's obtained in this way are distinct for $x_1 \ne 0,-1$ (namely when $\gamma(G)$ is centerless) and each of them is obtained $p$ times when $x_1=0$ or $-1$  (namely when $Z(\gamma(G))$ is non trivial). 
\end{itemize}

Now, let $\gamma$ be a GF obtained for a choice of $\sigma, B, x_1$. 
Since $\gamma$ is injective, $(G,\circ)$ is isomorphic to $\gamma(G)$. We have
$$
\gamma(b)^{-1}\gamma(a)\gamma(b)=\gamma(b)^{-1}\iota(a^{-\sigma})\gamma(b)=\iota(a^{-\sigma T})=\gamma(a^{\sigma T\sigma^{-1}}),
$$ 
from which we obtain,
\begin{equation*}
  b^{\ominus 1} \circ a \circ b = a^{\sigma T \sigma^{-1}}.
\end{equation*}
Since $a^{\circ k}=a^k$, the action of $\iota(b)$ on $A$ in $(G, \circ)$ is
$$
Z_\circ
\sim \begin{bmatrix}
\lambda^{x_1} & 0\\
    0& \lambda^{x_1+1}\\
  \end{bmatrix}.
$$

We obtain the following groups $(G, \circ)$.
\begin{description}
\item[Type 5] does not arise.
\item[Type 6] when $x_1=0$ or $x_1={-1}$ and there are $2p(p-1)(p-2)$ groups.
\item[Type 7] does not arise.
\item[Type 8] when $x_1 \ne 0,-1,(q-1)/2$ and there are $p^2(p-1)(p-2)(q-3)$ groups. They are $2p^2(p-1)(p-2)$ groups isomorphic to $G_s$ for every $s \in \mathcal{K}$. 
\item[Type 9] when $x_1={(q-1)/2}$ and there are $p^2(p-1)(p-2)$ groups.
\end{description}

\subsubsection{Conjugacy classes}
As to the conjugacy classes, in the notation of Subsection~\ref{ssec:G9-kernel pq} , let $\phi=\iota(x)\delta\psi \in \Aut(G)$. We have
\begin{equation*}
\gamma^{\phi}(a)
= \phi^{-1}\gamma(a^{S\delta^{-1}})\phi 
= \psi^{-1}\delta^{-1}\iota(a^{-S\delta^{-1}\sigma})\delta\psi 
= \iota(a^{-S\delta^{-1}\sigma\delta S}) ,
\end{equation*}
so that $\gamma^{\phi}(a)=\gamma(a)$ if and only if $ \sigma^{-1} \delta S \sigma =  \delta S$. The last condition yields $\delta_{22}=\delta_{11}$ if $S=1$, and $\delta_{22}=\frac{s_2}{s_1}\delta_{11} $ if $S\ne 1$. 

Now,
\begin{equation*}
\gamma^{\phi}(b)
=\phi^{-1}\gamma(b^{\phi^{-1}})\phi
=\phi^{-1}\gamma(x^{1-Z^{-r}}b^{r})\phi ;
\end{equation*}
suppose first $\psi=1$. Using Proposition~\ref{prop:more-lifting}, we obtain
\begin{align*}
\gamma^{\phi}(b) 
&= \phi^{-1}\gamma(x^{1-Z^{-1}}b)\phi \\
&= \phi^{-1}\iota(x^{-(1-Z^{-1})T^{-1}\sigma} ) \beta \phi \\
&= \delta^{-1}\iota(x^{-1+T^{-1}-(1-Z^{-1})T^{-1}\sigma} ) \beta \delta \\
&= \iota(x^{(-1+T^{-1}-(1-Z^{-1})T^{-1}\sigma)\delta} ) \beta ,
\end{align*}
so that $\gamma^{\phi}(b)=\gamma(b)$ if and only if the system
$x^{H_1} =1$, where $H_1:=-1+T^{-1}-(1-Z^{-1})T^{-1}\sigma$, admits a solution. Since $\det(H_1)=(1-\lambda^{-x_1})(1-\lambda^{-x_1-1})$, there is one solution if $x_1 \ne 0, -1$, and $p$ solutions otherwise.

Suppose now that $\psi \ne 1$. Since $\gamma(b^{-1})=\beta^{-1}$, we have
\begin{align*}
\gamma^{\phi}(b) 
&= \phi^{-1}\gamma(x^{1-Z}b^{-1})\phi \\
&= \phi^{-1}\iota(x^{-(1-Z)T\sigma}) \gamma(b^{-1}) \phi \\
&= \phi^{-1}\iota(x^{-(1-Z)T\sigma}) \beta^{-1} \phi \\
&= \psi\delta^{-1}\iota(x^{-1+T-(1-Z)T\sigma}) \beta^{-1} \delta\psi \\
&= \psi\iota(x^{(-1+T-(1-Z)T\sigma)\delta}) \beta^{-1} \psi .
\end{align*}
If $H_2:=-1+T-(1-Z)T\sigma$, we have that $\gamma^{\phi}(b)=\gamma(b)$ if and only if 
$$ \iota(x^{H_2\delta}) \beta^{-1} =  \psi \beta \psi ,$$
namely if and only if
\begin{equation*}
\begin{cases}
x^{H_2}=1 \\
T^{-1}=STS .
\end{cases}
\end{equation*}
Since $\det(H_2)=(1-\lambda^{x_1})(1-\lambda^{x_1+1})$, the system $x^{H_2} =1$ has one solution if $x_1 \ne 0, -1$ and $p$ solutions otherwise, while the condition $T^{-1}=STS$ is satisfied if and only if $x_1=\frac{q-1}{2}$.

We obtain the following.
\begin{enumerate}
\item if $x_1=0, -1$, then the stabiliser has order $p(p-1)$. Here there are $2p(p-1)(p-2)$ groups $(G, \circ)$ of type 6, so that there are $p-2$ orbits of length $2p(p-1)$.
\item if $x_1=\frac{q-1}{2}$, then $(G, \circ)$ is of type 9, and the stabiliser has order $2(p-1)$. Since there are $p^2(p-1)(p-2)$ groups, they split in $p-2$ orbits of length $p^2(p-1)$.
\item if $x_1 \ne 0, -1, \frac{q-1}{2}$, then $(G, \circ)$ is of type 8, and the stabiliser has order $p-1$. 
Since for every $s \in \mathcal{K}$ there are $2p^2(p-1)(p-2)$ groups isomorphic to $G_s$, they split in $p-2$ classes for every $s \in \mathcal{K}$.
\end{enumerate}

\subsection{Results}

\begin{prop}
\label{prop:G9}
Let $G$ be a group of order $p^2q$, $p>2$, of type 9.
Then in $\Hol(G)$ there are:
\begin{enumerate}
\item $4p^2$ groups of type 5, which split in two conjugacy classes of length $p^2$, and one conjugacy class of length $2p^2$;
\item $2p^2(4q+3p-7)$ groups of type 6, which split in 
$4$ conjugacy classes of length $2p$, $4(q-2)$ conjugacy classes of length $2p^2$, 
$p+4$ conjugacy classes of length $2p(p-1)$, and two conjugacy classes of length $2p^2(p-1)$;

in total there are $4q+p+2$ conjugacy classes;
\item $2+4p+2p^2(2q-5)$ groups of type 7, which split in one conjugacy class of length $2$, two conjugacy classes of length $2p$, and $2q-5$ conjugacy classes of length $2p^2$;

in total there are $2(q-1)$ conjugacy classes;
\item 
\begin{itemize}
\item[$-$] $2p(p^3+3p^2-14p+4pq-6)$ groups of type 8 isomorphic to $G_2$, which split in 
$8$ conjugacy classes of length $2p$, 
$4(q-3)$ conjugacy classes of length $2p^2$, 
two conjugacy classes of length $2p(p-1)$, 
and $p+4$ conjugacy classes of length $2p^2(p-1)$;
\item[$-$] for every $s\ne 2$, $s \in \mathcal{K}$, $2p(p^3+5p^2-18p+4pq+8)$ groups of type 8 isomorphic to $G_s$, which split in 
$8$ conjugacy classes of length $2p$, 
$4(q-3)$ conjugacy classes of length $2p^2$, 
and $p+6$ conjugacy classes of length $2p^2(p-1)$;
\end{itemize}

in both the cases in total there are $4q+p+2$ conjugacy classes for every isomorphism class $G_s$;
\item if $q>3$, $2+4p+p^2(p^2+5p+4q-16)$ groups of type 9, which split in 
two conjugacy classes of length $1$,
$2(q-2)$ conjugacy classes of length $p^2$,
two conjugacy classes of length $2p$, 
$q-3$ conjugacy classes of length $2p^2$, 
$p-2$ conjugacy classes of length $p^2(p-1)$, 
and $4$ conjugacy classes of length $2p^2(p-1)$;

in total there are $3q+p-1$ conjugacy classes;
\item if $q=3$, $2+2p+p^3(p+3)$ groups of type 9, which split in 
two conjugacy classes of length $1$,
two conjugacy classes of length $p^2$,
two conjugacy classes of length $2p$, 
$p-2$ conjugacy classes of length $p^2(p-1)$, 
one conjugacy class of length $2p(p-1)$, 
and $3$ conjugacy classes of length $2p^2(p-1)$;

in total there are $8+p$ conjugacy classes.
\end{enumerate}
\end{prop}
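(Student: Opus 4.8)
The plan is to deduce Proposition~\ref{prop:G9} by collating the case analyses carried out in Subsections~\ref{ssec:G9_duality}--\ref{ssec:G9-ker-1}. By Theorem~\ref{th:hgs-regular-skew} and Lemma~\ref{lemma:conjugacy} it is enough to enumerate the gamma functions $\gamma$ on $G$, organised by the isomorphism type of $(G,\circ)$, and to describe their orbits under the conjugation action of $\Aut(G)$; the number of regular subgroups of $\Hol(G)$ of each type and the number of their conjugacy classes are then exactly these numbers. First I would partition the gamma functions by $\Size{\ker(\gamma)}$. Since $\ker(\gamma)$ is a subgroup of $G$, its order lies in $\{1,q,p,pq,p^2,p^2q\}$. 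The order $p^2q$ gives the right regular representation, a single $\gamma$ with $(G,\circ)\cong G$ of type $9$ whose $\Aut(G)$-orbit has length $1$; symmetrically, the gamma functions with $\Size{\ker(\gammatilde)}=p^2q$ form the left regular representation, again of type $9$ with orbit of length $1$. These account for the two conjugacy classes of length $1$ among the type-$9$ structures.

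Next I would import, essentially verbatim, the enumerations already completed for the remaining kernel orders: Subsection~\ref{ssec:G9-kernel q} for $\Size{\ker(\gamma)}=q$ (where only type $5$ occurs), Subsection~\ref{ssec:G9-kernel pq} for $\Size{\ker(\gamma)}=pq$ (only type $6$), Subsection~\ref{ssec:G9-kernel p2} for $\Size{\ker(\gamma)}=p^2$ (types $5$ through $9$, obtained via Proposition~\ref{prop:lifting} together with the count of invariant Sylow $q$-subgroups), and Subsection~\ref{ssec:G9_kerp} for $\Size{\ker(\gamma)}=p$ (obtained via Proposition~\ref{prop:more-lifting}, with the totals collected in the recap at the end of that subsection). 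Each of these records, for the given kernel order, how many gamma functions produce $(G,\circ)$ of each admissible type, together with the lengths of the corresponding orbits, so this part of the argument is a transcription; the strata are disjoint, being indexed by the unique value $\Size{\ker(\gamma)}$, so there is no double counting between them.

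The one genuinely delicate stratum is $\Size{\ker(\gamma)}=1$, treated in Subsection~\ref{ssec:G9-ker-1}, which I would split further according to $\Size{\ker(\gammatilde)}$. If $q\mid\Size{\ker(\gammatilde)}$, no such $\gamma$ occurs apart from the left regular representation already counted. If $p\mid\Size{\ker(\gammatilde)}$, i.e.\ the endomorphism $\sigma$ of $A$ with $\gamma(a)=\iota(a^{-\sigma})$ is invertible with eigenvalue $1$, then $\gammatilde$ is one of the gamma functions already enumerated in the strata $\Size{\ker(\gammatilde)}\in\{p,p^2\}$ --- precisely the subcases (A1), (A1$^\ast$) of Subsection~\ref{ssec:G9_kerp} when $\Size{\ker(\gammatilde)}=p$, and the functions of Subsection~\ref{ssec:G9-kernel p2} other than those giving type $5$ when $\Size{\ker(\gammatilde)}=p^2$. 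Since $\gamma\mapsto\gammatilde$ is an involution which preserves the isomorphism type of $(G,\circ)$ (\cite[Subsection~2.8]{p2qciclico}) and which commutes with conjugation by $\Aut(G)$, each such $\gamma$ contributes exactly one further gamma function, of the same type and with an orbit of the same length as its dual, so the effect is to double the orbit counts of those specific families. Finally, the gamma functions with $\Size{\ker(\gamma)}=\Size{\ker(\gammatilde)}=1$ --- equivalently $\sigma$ with no eigenvalue $0$ or $1$ --- are counted directly in Subsection~\ref{ssec:G9-ker-1} by the gluing construction of Proposition~\ref{prop:more-lifting}, after dividing by the number of invariant Sylow $q$-subgroups.

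It then remains to add up, type by type, the contributions of the six strata (doubling exactly the families matched to their duals, and leaving the self-dual families untouched) to obtain the totals in items (1)--(6), and to collect the orbit lengths into the conjugacy-class decompositions and the ``in total'' counts. I expect this final aggregation to be the main obstacle: it introduces no new mathematics, but one must correctly assign each partially computed family of orbits to its type, keep careful track of which families have been doubled by duality and which have not, and verify in each type that the orbit lengths sum to the claimed total --- the last point being a built-in consistency check on all of the preceding computations. Beyond the tools of Section~\ref{sec:tools} and the duality set-up of Subsection~\ref{ssec:G9_duality}, no new idea is needed.
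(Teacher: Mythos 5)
Your proposal is correct and follows essentially the same route as the paper: Proposition~\ref{prop:G9} is obtained precisely by stratifying the gamma functions by $\Size{\ker(\gamma)}$, importing the counts and orbit lengths from Subsections~\ref{ssec:G9-kernel q}--\ref{ssec:G9-ker-1}, using the duality involution to account for the trivial-kernel stratum (doubling the $\Size{\ker(\gamma)}=p^2$ families other than type $5$ and the (A1), (A1$^\ast$) families, and treating the case $\Size{\ker(\gamma)}=\Size{\ker(\gammatilde)}=1$ by gluing), and then aggregating by type. The only remaining work is the bookkeeping you correctly identify.
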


\section{Type 8} 
\label{sec: 8}

This case can be handled in a very similar way to the case in which G is type 9, so in the following we will often refer to the previous Section, highlighting only the points that require a different treatment.
\vskip 0.3 cm

Here $q\mid p-1$, $q>3$, and $G$ is isomorphic to one of the groups $(\cC_{p} \times \cC_{p}) \rtimes_{D_0} \cC_{q}$. 
The Sylow $p$-subgroup $A=\Span{a_1, a_2}$ of $G$ is characteristic, and if $a_1, a_2\in A$ are in the eigenspaces of the action of a generator $b$ of a Sylow $q$-subgroup $B$ on $A$, then this action can be represented by a non-scalar diagonal matrix $Z$, with no eigenvalues $1$ and $\det(Z) \ne 1$.

For all the section, we consider $A=\Span{a_1, a_2}$, where $a_1, a_2$ are eigenvectors for $\iota(b)$. With respect to that basis, we have
\begin{equation*}
 Z =
 \begin{bmatrix}
  \lambda & 0\\
  0 & \lambda^{k}\\
 \end{bmatrix},
\end{equation*}
where $\lambda \ne 1$ has order $q$, and $k\neq 0,\pm 1$. 

Recall that the type 8 includes $\frac{q-3}{2}$ different isomorphism classes of groups, and that $\Set{G_k : k \in \mathcal{K}}$ denotes a set of representatives of the isomorphism classes (see Section~\ref{sec:the-groups}).

According to Subsections 4.1 and 4.3 of~\cite{classp2q}, we have 
\begin{equation*}
\Aut(G) = \Hol(\cC_{p}) \times \Hol(\cC_{p});
\end{equation*}
as in the case of the groups of type 9, all the elements of the Sylow $p$-subgroup of $\Aut(G)$ are conjugation by elements of $A$, and for each gamma function $\gamma$ there exists $\sigma\in \End(A)$ such that~\eqref{eq:gamma=iota}, namely
$
\gamma(a)=\iota(a^{-\sigma})
$,
is satisfied for each $a\in A$ (see Section~\ref{sec: 9}).

\subsection{Duality}
\label{ssec:G8_duality}
For $G$ of type 8 the discussion in Subsections~\ref{subsec:sigma} and~\ref{subsec:sigma_1-sigma_inv} yields that $\sigma$ has $0$ or $1$ as an eigenvalue, and this corresponds to have $p \mid \Size{\ker(\gamma)}$ or $p \mid \Size{\ker(\gammatilde)}$.

Therefore we can assume that $p\mid \Size{\ker(\gamma)}$ (equivalently $\sigma$ has $0$ as an eigenvalue), and once we have counted the gamma functions with this property, we will double the number of those for which moreover $p \nmid \Size{\ker(\gammatilde)}$ (we will double only those GF for which $1$ is not an eigenvalue of $\sigma$).

\subsection{Description of the elements of order $q$ of \texorpdfstring{$\Aut(G)$}{Aut(G)}}
\label{ssec:order-q-automorphisms-G8}

The discussion in Subsubsection~\ref{sssec:order-q-automorphisms} here yields that, if $b \in G \setminus A$, recalling that $\iota(b)$ acts on $A$ as $\diag(\lambda, \lambda^{k})$, we can write
\begin{equation}
  \label{eq:b-and-beta-G8}
  \begin{aligned}
    \beta_{1} \colon &a_{1} \mapsto a_{1}^{\lambda}\\
    &a_{2} \mapsto a_{2}\\
    &b \mapsto b\\
  \end{aligned}
  \qquad
  \begin{aligned}
    \beta_{2} \colon &a_{1} \mapsto a_{1}\\
    &a_{2} \mapsto a_{2}^{\lambda^{k}}\\
    &b \mapsto b\\
  \end{aligned}
\end{equation}
so that $\iota(b) = \beta_{1} \beta_{2}$, and if $\beta \in \Aut(G)$ is an element of order $q$, then $\beta \in \Span{\beta_{1}, \beta_{2}}$, namely $\beta=\beta_{1}^{x_1}\beta_{2}^{x_2}$, where $0\le x_1, x_2<q$ are not both zero.

\vskip 0.3 cm

Let us start with the enumeration of the GF's on $G$. As usual, if $\Size{\ker(\gamma)}=p^2q$, then $\gamma$ corresponds to the right regular representation, so that we will assume $\gamma \neq 1$.

Suppose moreover that $G\simeq G_k$, for a certain $k\in \mathcal{K}$.

\subsection{The case \texorpdfstring{$\Size{\ker(\gamma)}=pq$}{ker = pq}}
\label{ssec:G8-kernel pq}
Reasoning as in Subsection~\ref{ssec:G9-kernel pq} we obtain $2p^2$ gamma functions, corresponding to groups $(G,\circ)$ of type 6.

Moreover, for every $\gamma$ here, $p\mid\Size{\ker(\gammatilde)}$.

As to the conjugacy classes, this time an automorphism $\phi$ of $G$ has the form $\phi=\iota(x)\delta$, where $x \in A$ and $\delta$ is such that $\delta_{|B}=1$, $\delta_{|A}=(\delta_{ij})\in \GL(2,p)$ diagonal with respect to the fixed basis.

With the same computations of Subsection~\ref{ssec:G9-kernel pq} we obtain two orbits of length $p$ and two orbits of length $p(p-1)$.

\subsection{The case \texorpdfstring{$\Size{\ker(\gamma)}=p^2$}{ker = p2}}
\label{ssec:G8-kernel p2}

Reasoning as in Subsection~\ref{ssec:G9-kernel p2} we obtain that each $\gamma$ on $G$ is the lifting of at least one RGF defined on an invariant Sylow $q$-subgroup $B$, 
and the RGF's on $B$ are precisely the morphisms. 
We have $\gamma(b)_{|B}=1$, and let $\gamma(b)_{|A}=\beta$; then the discussion in Subsubsection~\ref{ssec:order-q-automorphisms-G8} yields that $\beta=\beta_1^{x_1}\beta_2^{x_2}$, where $x_1, x_2$ not both zero, so that, with respect to the basis $\Span{a_1, a_2}$, we can represent $\beta$ as the matrix
$$T=\begin{bmatrix}
\lambda^{x_1}& 0\\
0 &  \lambda^{kx_2}\\
  \end{bmatrix},
$$
where $\lambda \ne 1$, $x_1, x_2$ are not both zero, and $k \in \mathcal{K}$ is such that $G\simeq G_k$.

To know the exact number of the invariant Sylow $q$-subgroups we appeal to the discussion in Subsubsection~\ref{ssec:q-sylow-G789}; here equation~\eqref{eq:inv q-sylow case 1} yields
$x^{(1-Z^{-1})M}= 1$, where
$$ M = 1-T = \begin{bmatrix}
1-\lambda^{x_1} & 0\\
0 & 1-\lambda^{kx_2}\\
\end{bmatrix} ,$$
and we obtain that
\begin{enumerate}
\item if both $x_1,x_2 \neq 0$, there is a unique invariant Sylow $q$-subgroup; 
\item if either $x_1=0$ or $x_2=0$, there are $p$ invariant Sylow $q$-subgroups.
\end{enumerate}

Denoting as usual by $Z_{\circ}$ the associated matrix of the action of $b$ on $A$ with respect to the operation $\circ$, here we have
  \begin{equation*}
   Z_{\circ} \sim
  \begin{bmatrix}
  \lambda^{1+x_1} & 0\\
    0 & \lambda^{k+kx_2}\\
  \end{bmatrix},
\end{equation*}
and we obtain precisely the same number of groups $(G,\circ)$ of type 5, 6, and 7 as in Subsection~\ref{ssec:G9-kernel p2}. As for the type 8 and 9, we have the following.

\textbf{Type 8.} 
In case (1) the type 8 corresponds to the conditions $x_2 \neq 0,-1$ and $x_1\neq 0,-1,-kx_2-k-1,kx_2+k-1$. The conditions on $x_1$ are independent if and only if in addition $x_2\neq k^{-1}-1, -k^{-1}-1$. When these four conditions on $x_1$ are dependent, they reduce to three. 

We obtain $p^2(q-4)^2$ groups if $x_2\neq -k^{-1}-1$, $k^{-1}-1$, plus
further $p^2(q-3)$ groups if $x_2= -k^{-1}-1, k^{-1}-1$.

In case (2), suppose $x_1=0$. Then, there are four independent conditions on $x_2$. Doubling for the case $x_2=0$, we obtain $2p(q-4)$ groups.

Now, looking at the eigenvalues of $Z_{\circ}$ as $x_1$ and $x_2$ vary, and taking into account the conditions on $x_1$ and $x_2$, one can see that 
the $2p^2(q-3)$ groups split in $4p^2$ groups isomorphic to $G_s$ for every $s\in\mathcal{K}$, and
the $p^2(q-4)^2+2p(q-4)$ groups split in $2p^2(q-5)+4p$ groups isomorphic to $G_s$ for every $s\ne k$, and $2p^2(q-5)+p^2+2p$ groups isomorphic to $G_k$. Therefore in total we have
	\begin{itemize}
	\item[$-$] $2p^2(q-3)+4p$ groups isomorphic to $G_s$ for every $s\neq k$;
	\item[$-$] $p^2(2q-5)+2p$ groups isomorphic to $G_k$. 
	\end{itemize}

\textbf{Type 9.}
$(G, \circ)$ is of type 9 when $x_1\ne -1, kx_2 + k -1$, $x_2\ne -1$, and $x_1+1+kx_2+k=0$.
In case (1) $x_2\neq 0,-1$ and also $x_2\neq -k^{-1}-1$ (otherwise we would have $x_1=0$). Since the last one is a further condition, we obtain $p^2(q-3)$ groups.
The case (2) yields $2p$ groups.

Summing up, there are $2p+p^2(q-3)$ groups.

As to the conjugacy classes, with the same computations as in Subsection~\ref{ssec:G9-kernel p2} (imposing $\psi=1$), we obtain the following.
\begin{enumerate}
\item For $(G,\circ)$ of type 5 there is one orbit of length $p^2$;
\item For $(G,\circ)$ of type 6 we obtain $2$ orbits of length $p$ and $2(q-2)$ orbits of length $p^2$;
\item For $(G,\circ)$ of type 7 there are $q-3$ orbits of length $p^2$ and $2$ orbits of length $p$.
\item For $(G,\circ)$ of type 8, $(G,\circ)\simeq G_s$, then for every $s \neq k$, $s \in \mathcal{K}$, we obtain $2(q-3)$ orbits of length $p^2$ and $4$ orbits of length $p$; otherwise $s=k$, and we get $2q-5$ orbits of length $p^2$ and $2$ orbits of length $p$.
\item For $(G,\circ)$ of type 9 there are $q-3$ orbits of length $p^2$ and $2$ orbits of length $p$.
\end{enumerate}

\subsection{The case \texorpdfstring{$\Size{\ker(\gamma)}=p$}{ker = p}}
\label{ssec:G8_kerp}

As in Subsection~\ref{ssec:G9_kerp}, we have $\gamma(G) = \Span{\iota(a_0), \beta}$, for some $1 \ne a_0 \in A$ with $A^{\sigma} = \Span{\iota(a_0)}$, and $\beta \neq 1$. We can assume $\gamma(b)=\iota(a_0^j)\beta$ for some $j$, where $\beta=\beta_1^{x_1}\beta_2^{x_2}$ (by Subsection~\ref{ssec:order-q-automorphisms-G8}). With respect to the fixed basis we can represent $\beta_{|A}$ as
$$ T=\begin{bmatrix}
\lambda^{x_1} & 0\\
0 & \lambda^{k x_2}
\end{bmatrix} .$$

Also here $\ker(\gamma)=\ker(\sigma)=\Span{v}$ and from equation~\eqref{eq:sigma} we obtain~\eqref{eq:sigmaTZ}, and we distinguish in the three cases $v\in \Span{a_1}$, $v\in \Span{a_2}$, and $v=a_1^{x}a_2^{y}$. 

Following Subsection~\ref{ssec:G789_kerp}, and recalling that $k\neq \pm 1, 0$, here we find the following cases.

\begin{itemize}
\item Case A: $\ker(\sigma)=\Span{a_1}$.
 \begin{enumerate}
  \item[(A1)] $\nu=0$, $\mu \neq 0$, $x_1-x_2 k= k$;
  \item[(A2)] $\nu=1$, $\mu \neq 0$, $x_1=x_2 k$;
  \item[(A3)] $\nu=1$, $\mu = 0$.
 \end{enumerate}
\item Case B: $\ker(\sigma)=\Span{a_1a_2}$.
  \begin{enumerate}
  \item[(B1) \phantom{}] $\nu=0$, $\mu =-1$, $x_1=x_2 k +k-1$;
  \end{enumerate}
\end{itemize}
As explained in Subsection~\ref{ssec:G789_kerp}, the cases (A1$^\ast$), (A2$^\ast$), (A3$^\ast$) and (B1$^\ast$) can be recovered by the cases (A1), (A2), (A3) and (B1) considering $k^{-1}$ instead of $k$.

Notice that $p$ divides both $\Size{\ker(\gamma)}$ and $\Size{\ker(\tilde\gamma)}$ if and only if $\sigma $ has both $0$ and $1$ as eigenvalues, that is, in all the cases above except A1 and A1$^\ast$, where, since $\sigma$ has only $0$ as eigenvalue, $p\mid \Size{\ker(\gamma)}$ but $p \nmid\Size{\ker(\tilde\gamma)}$.

\subsubsection{Invariant Sylow $q$-subgroups}

Taking $k\ne \pm 1, 0$ in Subsubsection~\ref{sssec:inv-q-Sylow-kerp-G789}, we find the following.
\begin{prop}
\label{prop:inv_q-sylow_kerp-G8}
If $G$ is of type 8 and $\Size{\ker(\gamma)}=p$, then the number of invariant Sylow $q$-subgroups is
\begin{enumerate}
\item[\rm{(A1)}] $1$ when $x_1\neq 0, k$ and $p$ otherwise.
\item[\rm{(A2)}] $1$ when $x_1 \neq -k$ and $p$ otherwise.
\item[\rm{(A3)}] $1$ when $x_1\neq 0$ and $x_2\neq -1$, $p^2$ when $x_1= 0$ and $x_2= -1$, and $p$ otherwise.
\item[\rm{(B1) \phantom{}}] $1$ when $x_1 \neq  -1, k-1$ and $p$ otherwise.
\item[\rm{(B1$^\ast$)}] $1$ when $x_2 \neq  -1, -1+k^{-1}$ and $p$ otherwise.
\end{enumerate}
\end{prop}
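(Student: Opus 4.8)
The plan is to deduce Proposition~\ref{prop:inv_q-sylow_kerp-G8} as the specialisation of the general count Proposition~\ref{prop:inv_q-sylow_kerp} to the parameter value $k \ne 0, \pm 1$ that characterises type~8, so that essentially no new computation is required. First I would record that groups of type~8 satisfy Assumption~\ref{assump}: as recalled at the start of Section~\ref{sec: 8} (via the argument of Section~\ref{sec: 9}), the Sylow $p$-subgroup $A = \Span{a_1, a_2}$ is characteristic and every GF $\gamma$ on $G$ has the form $\gamma(a) = \iota(a^{-\sigma})$ for some $\sigma \in \End(A)$; hence the hypotheses under which Subsection~\ref{ssec:G789_kerp} and Subsubsection~\ref{sssec:inv-q-Sylow-kerp-G789} were developed hold here, with $Z = \diag(\lambda, \lambda^{k})$ and $\gamma(b) = \iota(a_\ast)\beta$, $\beta$ represented by $T = \diag(\lambda^{x_1}, \lambda^{k x_2})$ as in~\eqref{eq:T-matrix}.

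Next I would identify which subcases of Subsection~\ref{ssec:G789_kerp} occur. Since $|\ker(\gamma)| = |\ker(\sigma)| = p$, writing $\ker(\sigma) = \Span{v}$, equation~\eqref{eq:sigmaTZ} forces $v^{-TZ\sigma} = 1$, so $v$ is an eigenvector of $TZ$; thus $v \in \Span{a_1}$ (Case~A), $v \in \Span{a_2}$ (Case~A$^\ast$), or $v$ lies off both axes, which forces $TZ$ scalar (Case~B). For $k \ne 1$ the internal analysis of Case~B shows that the possibility ``both $\mu, \nu \ne 0$'' (subcase (B2)) cannot arise, leaving only (B1), together with its mirror (B1$^\ast$) obtained via Remark~\ref{rem:k-inverse} by swapping the roles of $a_1, a_2$, i.e.\ replacing $k$ by $k^{-1}$; similarly Case~A$^\ast$ is the $k \mapsto k^{-1}$ mirror of Case~A. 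So the relevant subcases are exactly (A1), (A2), (A3) (with mirrors (A1$^\ast$), (A2$^\ast$), (A3$^\ast$) obtained by the same substitution) and (B1), (B1$^\ast$), while (B2) is absent.

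Finally I would read off the counts. The determinant computations for $M$ in Subsubsection~\ref{sssec:inv-q-Sylow-kerp-G789} were already carried out for general admissible $k$, and the argument there via~\eqref{eq:condizione-a_ast} that rules out the ``no invariant Sylow $q$-subgroup'' alternative applies verbatim. Substituting the relevant clauses of Proposition~\ref{prop:inv_q-sylow_kerp} yields: in (A1), $1$ unless $x_1 \in \{0, k\}$, else $p$; in (A2), $1$ unless $x_1 = -k$, else $p$; in (A3), $1$ unless $x_1 = 0$ or $x_2 = -1$, with $p^2$ when $x_1 = 0$ and $x_2 = -1$ and $p$ in the remaining degenerate cases; in (B1), $1$ unless $x_1 \in \{-1, k-1\}$, else $p$; in (B1$^\ast$), $1$ unless $x_2 \in \{-1, -1+k^{-1}\}$, else $p$. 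These are precisely the values in the statement. The only real work — and hence the only ``obstacle'' — is the bookkeeping of signs and case labels, which is routine since all the substantive content was proved for arbitrary $k$ in Subsubsection~\ref{sssec:inv-q-Sylow-kerp-G789}.
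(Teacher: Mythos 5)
Your proposal is correct and follows essentially the same route as the paper, whose proof consists precisely of specialising the general analysis of Subsubsection~\ref{sssec:inv-q-Sylow-kerp-G789} (i.e.\ Proposition~\ref{prop:inv_q-sylow_kerp}) to $k \ne 0, \pm 1$, with subcase (B2) dropped since $k \ne 1$ and the starred cases obtained via the $k \mapsto k^{-1}$ substitution of Remark~\ref{rem:k-inverse}. Your additional verification that type~8 satisfies Assumption~\ref{assump} is a sensible (if implicit in the paper) preliminary check.
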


\subsubsection{Computations}

By Subsubsection~\ref{sssec:enumerate-GF-789} the action $Z_\circ$ of $b$ on $A$ in $(G, \circ)$ is given by
\begin{equation*}
Z_\circ = (\sigma(1-Z)+Z)T,
\end{equation*}
and we obtain the following.

\textbf{Case A.} Here
$$
Z_\circ=
\begin{bmatrix}
 \lambda^{x_1 + 1}& 0\\
\mu(1-\lambda)\lambda^{x_1} & \lambda^{x_2 k}(\nu(1-\lambda^k)+\lambda^{k})\\
  \end{bmatrix}.
$$
\begin{enumerate}
\item[(A1)] We have $p-1$ choices for $\sigma$, and
$$
 Z_\circ \sim \begin{bmatrix}
 \lambda^{x_1 +1}& 0\\
0 & \lambda^{x_1}\\
  \end{bmatrix}.
$$
We obtain the following groups $(G, \circ)$.
\begin{description}
\item[Type 5] does not arise.
\item[Type 6] if $x_1=0$ or $x_1=-1$. If $x_1=0$ we obtain $p(p-1)$ groups; 
if $x_1=-1$ there are $p^2(p-1)$ groups. 
\item[Type 7] does not arise.
\item[Type 8] if $x_1 \ne 0$, ${-1}$, $(q-1)/2$. Suppose first that $k=(q-1)/2$; then $x_1 \neq k$ and there are $p^2(p-1)(q-3)$ groups. Otherwise $k\neq (q-1)/2$ and there are $p(p-1)+p^2(p-1)(q-4)$ groups. Therefore, if $k=\frac{q-1}{2}$, there are $2p^2(p-1)$ groups isomorphic to $G_s$ for every $s \in \mathcal{K}$, and if $k\ne \frac{q-1}{2}$, then there are $p(p-1)+p^2(p-1)$ groups isomorphic to $G_{1+k^{-1}}$ (obtained for $x_1=k, -(k+1)$) and $2p^2(p-1)$ groups isomorphic to $G_s$ for every $s\ne 1+k^{-1}, s \in \mathcal{K}$.

\item[Type 9] if $x_1=(q-1)/2$. When $k=(q-1)/2$ then $x_1=k$ and there are $p(p-1)$ groups. Otherwise $k\neq (q-1)/2$ so that $x_1 \neq k$ and there are $p^2(p-1)$ groups.
\end{description}

\item[(A2)] We have $p-1$ choices for $\sigma$, and
$$
 Z_\circ \sim \begin{bmatrix}
 \lambda^{x_1 +1}& 0\\
0 & \lambda^{x_1}\\
  \end{bmatrix}.
$$
We obtain the following groups $(G, \circ)$.
\begin{description}
\item[Type 5] does not arise.
\item[Type 6] when $x_1=-1$. Here $x_1 \neq -k$ and there are $p^2(p-1)$ groups. 
\item[Type 7] does not arise.
\item[Type 8] if $x_1\ne 0$, ${-1}$, $(q-1)/2$.
	When $k=1/2$ then $x_1 \neq -k$ and there are $(p-1)(q-3)p^2$ groups, which split $2p^2(p-1)$ groups isomorphic to $G_s$ for every $s \in\mathcal{K}$.
	If $k\neq 1/2$ then there are $(p-1)p+p^2(p-1)(q-4)$ groups, which split in $p^2(p-1)+p(p-1)$ groups isomorphic to $G_{1-k^{-1}}$, and $2p^2(p-1)$ groups isomorphic to $G_s$ for every $s \ne 1-k^{-1}, s \in \mathcal{K}$.
\item[Type 9] if $x_1=(q-1)/2$. If $k=1/2$ then $x_1=-k$ and there are $p(p-1)$ groups. Otherwise $k \neq 1/2$, so $x_1 \neq -k$ and there are $p^2(p-1)$ groups.
\end{description}

\item[(A3)]
We have $1$ choice for $\sigma$, and
$$  Z_\circ \sim
\begin{bmatrix}
\lambda^{x_1+1}& 0\\
0 &\lambda^{x_2 k}\\
\end{bmatrix}.
$$
We obtain the following groups $(G, \circ)$.
\begin{description}
\item[Type 5] if $1+x_1=x_2 k=0$. There are $p^2$ groups.
\item[Type 6] if either $x_1={-1}$ and $x_2\ne 0$ or $x_1 \ne {-1}$ and $x_2 = 0$. In the first case, there are $p$ groups when $x_2=-1$, and $p^2(q-2)$ otherwise. 
In the second case, since $x_2=0$, we have to take $x_1\ne 0$ and there there $p^2(q-2)$ groups.
\item[Type 7] when $x_2 k=1+x_1\ne 0$. In both the cases $x_1 \neq 0$, $x_2=-1$, and $x_1=0$, $x_2\neq -1$ there are $2p$ groups. If $x_1 \neq 0$ and $x_2 \neq -1$ there are $p^2(q-3)$ groups. 
\item[Type 8] when $x_1 \ne -1, x_2 k-1, -x_2 k-1$, $x_2\ne 0$.
If $x_1= 0$ and $x_2=-1$, then there is one group. 
If $x_1 \neq 0$ and $x_2=-1$, the four conditions on $x_1$ are independent, and so there are $p(q-4)$ groups. 
If $x_1= 0$ and $x_2\neq -1$ we get further $p(q-4)$ groups. 
Suppose now $x_1\neq 0$, $x_2 \neq -1$; there are always four independent conditions on $x_1$, except when $x_2= \pm k^{-1}$, where the conditions become three. Thus there are $p^2(q-4)^2+2p^2(q-3)$ groups. 

The $2p^2(q-3)$ groups split in $4p^2$ groups isomorphic to $G_s$ for every $s\in\mathcal{K}$, and the $1+2p(q-4)+p^2(q-4)^2$ groups split in $4p+2p^2(q-5)$ groups isomorphic to $G_s$ for every $s\ne -k$, and $1+2p+p^2+2p^2(q-5)$ groups isomorphic to $G_{-k}$. Therefore in total there are $1+ 2p+p^2(2q-5)$ groups isomorphic to $G_{-k}$, and $ 4p+2p^2(q-3)$ groups isomorphic to $G_s$ for every $s \ne -k, s \in \mathcal{K}$.
	
\item[Type 9] if $x_1 \ne -1$, $x_2\ne 0$ and $1+x_1+x_2 k =0$.
For $x_2=-1$ and $x_1=k-1 \neq 0$ there are $p$ groups. Similarly, for $x_2 \neq -1$ and $x_1 = 0$ and  there are $p$ groups.
For $x_2 \neq -1$ and $x_1=-x_2 k -1\neq 0$, namely $x_2 \neq 0,-1, k^{-1}$, we get $p^2(q-3)$ groups. 
\end{description}
\end{enumerate}

\textbf{Case B.} Here $x_1=x_2 k +k-1$.
\begin{enumerate}
\item[(B1)] We have $p-1$ choices for $\sigma$, and
$$
Z_\circ=
\begin{bmatrix}
 \lambda^{x_1}& 0\\
-\lambda^{x_1}(1-\lambda) & \lambda^{x_2 k+k}\\
  \end{bmatrix}
  \sim
\begin{bmatrix}
\lambda^{x_1} & 0 \\
0 & \lambda^{x_1+1}\\
\end{bmatrix}.
$$
We obtain the following groups $(G, \circ)$.
\begin{description}
 \item[Type 5] does not arise.
 \item[Type 6] when $x_1=0$ or $x_1=-1$. In the first case $x_1 \neq k-1, -1$ and there are $p^2(p-1)$ groups, while in the second case there are $p(p-1)$ groups. 
 \item[Type 7] does not arise.
\item[Type 8] when $x_1 \neq 0,  -1, -1/2$. If $x_1=k-1$ (and thus $k \neq 1/2$) there are $p(p-1)$ groups. Suppose now $x_1\neq k-1$; when $k\neq 1/2$ (in particular when $G$ is of type 9) the four conditions on $x_1$ are independent and there are $(p-1)(q-4)p^2$ groups. Otherwise $k=1/2$ and the four conditions are actually three, thus there are $p^2(p-1)(q-3)$ groups.
Therefore there are $p^2(p-1)(q-3)$ groups if $k=1/2$, and they split in $2p^2(p-1)$ groups isomorphic to $G_s$ for every $s \in \mathcal{K}$. If $k\neq 1/2$ there are $p^2(p-1)(q-4)+p(p-1)$ groups, which split in $p(p-1)+p^2(p-1)$ groups isomorphic to $G_{1-k^{-1}}$, 
and $2p^2(p-1)$ groups isomorphic to $G_s$ for every $s\ne 1-k^{-1}, s \in \mathcal{K}$.
\item[Type 9] if $x_1 = (q-1)/2$. We have that $x_1=k-1$ when $k=1/2$ and in this case there are $(p-1)p$ groups. Otherwise $k\neq 1/2$ and there are $(p-1)p^2$ groups. 
 \end{description}
\end{enumerate}

As for the conjugacy classes, here an automorphism of $G$ has the form $\phi=\iota(x)\delta$, where $x\in A$ and $\delta$ is a diagonal matrix. Therefore we can refer to Subsubsection~\ref{sssec:789-conj-classes} for the computation of the conjugacy classes.

Summing up all the results obtained for the kernel of size $p$, we have the following.

\begin{recap}
For $G$ of type 8, $G \simeq G_k$, and $\gamma$ a GF on $G$ with kernel of size $p$, we obtain the following.
\begin{enumerate}
\item $2p^2$ groups $(G,\circ)$ of type 5 which split in two classes of length $p^2$.
\item The groups $(G,\circ)$ of type 6 split in this way:
	 \begin{itemize}
	 \item[$-$] $2p$ groups split in two classes of length $p$;
	 \item[$-$] $4p^2(q-2)$ groups split in $4(q-2)$ classes of length $p^2$;
	 \item[$-$] $4p(p-1)$ groups split in $4$ classes of length $p(p-1)$;
	 \item[$-$] $6p^2(p-1)$ groups split in $6$ classes of length $p^2(p-1)$.
	 \end{itemize}
Therefore in total $4(q+1)$ classes.
\item The $4p+2p^2(q-3)$ groups $(G,\circ)$ of type 7 split in 
	 \begin{itemize}
	 \item[$-$] $4$ classes of length $p$;
	 \item[$-$] $2(q-3)$ classes of length $p^2$.
	 \end{itemize}
In total $2(q-1)$ classes.	 
\item The groups $(G,\circ)$ of type 8 split in this way:
	 \begin{itemize}
 	 \item[$-$] the cases (A3),(A3$^{\ast}$) yield $1+2p+p^2(2q-5)$ groups isomorphic to $G_{-k}$, $4p+2p^2(q-3)$ groups isomorphic to $G_s$ for every $s \ne -k, s \in \mathcal{K}$, $1+2p+p^2(2q-5)$ groups isomorphic to $G_{-k^{-1}}$ and $4p+2p^2(q-3)$ groups isomorphic to $G_s$ for every $s \ne -k^{-1}, s \in \mathcal{K}$.       
\\
      Therefore there are 
      $2+4p+2p^2(2q-5)$ groups isomorphic to $G_{-k}$,
      which split in two classes of length $1$, $4$ classes of length $p$, and $2(2q-5)$ classes of length $p^2$, and 
      $8p+4p^2(q-3)$ groups isomorphic to $G_s$, for every $s \ne -k, s \in \mathcal{K}$, which split in $8$ classes of length $p$, and $4(q-3)$ classes of length $p^2$.
\\      
      In total there are $4(q-1)$ conjugacy classes of groups isomorphic to $G_s$ for every $s\in\mathcal{K}$.
	 \item[$-$] The cases (A1),(A1$^{\ast}$) yield the following. If one among $k, k^{-1}$ is equal to $q-2$, then there are $p(p-1)+ 3p^2(p-1)$ groups isomorphic to $G_2$ and $4p^2(p-1)$ groups isomorphic to $G_s$, for every $s \ne q-2, s \in \mathcal{K}$. Otherwise both $k, k^{-1} \ne q-2$, and there are 
     $2p(p-1)+ 2p^2(p-1)$ groups isomorphic to $G_{1+k}$ if $G_{1+k}\simeq G_{1+k^{-1}}$,
	 $p(p-1)+ 3p^2(p-1)$ groups isomorphic to $G_s$ for $s=1+k, 1+k^{-1}$ if $G_{1+k}\not\simeq G_{1+k^{-1}}$, and $4p^2(p-1)$ groups isomorphic to $G_s$, for every $s \ne 1+k$, $1+k^{-1}$, $s \in \mathcal{K}$. 
\\	 
	 We obtain two classes of length $p(p-1)$ and two classes of length $p^2(p-1)$ in the first case, 
	 one class of length $p(p-1)$ and $3$ classes of length $p^2(p-1)$ in the second case, and $4$ classes of length $p^2(p-1)$ in the last case.
\\
      In total there are $4$ conjugacy classes of groups isomorphic to $G_s$ for every $s\in\mathcal{K}$.	
     \item[$-$] The cases (A2),(A2$^{\ast}$),(B1),(B1$^{\ast}$) yield the following. 
     If one among $k, k^{-1}$ is equal to $2$, then there are $2p(p-1)+6p^2(p-1)$ groups isomorphic to $G_{2}$, and $8p^2(p-1)$ groups isomorphic to $G_s$ for every $s \ne 2, s \in\mathcal{K}$.
     Otherwise both $k, k^{-1} \ne 2$, and there are 
     $4p(p-1)+ 4p^2(p-1)$ groups isomorphic to $G_{1-k}$ if $G_{1-k}\simeq G_{1-k^{-1}}$,
	 $2p(p-1)+ 6p^2(p-1)$ groups isomorphic to $G_s$ for $s=1-k, 1-k^{-1}$ if $G_{1-k}\not\simeq G_{1-k^{-1}}$, and $8p^2(p-1)$ groups isomorphic to $G_s$, for every $s \ne 1-k, 1-k^{-1}, s \in \mathcal{K}$ . 
\\	 
	 We obtain $4$ classes of length $p(p-1)$ and $4$ classes of length $p^2(p-1)$ in the first case, 
	 two classes of length $p(p-1)$ and $6$ classes of length $p^2(p-1)$ in the second case, and $8$ classes of length $p^2(p-1)$ in the last case.
\\
      In total there are $8$ conjugacy classes of groups isomorphic to $G_s$ for every $s\in\mathcal{K}$.
 	\end{itemize}
In total there are $4(q+2)$ classes for every $G_s$. 	 
 \item the groups $(G,\circ)$ of type 9 split in this way:
  	 \begin{itemize}
  	 \item[$-$] the cases (A3),(A3$^{\ast}$) yield $4p+2p^2(q-3)$ groups, which split in $4$ classes of length $p$, and $2(q-3)$ classes of length $p^2$;
  	 \item[$-$] the cases (A1),(A1$^{\ast}$) yield $p(p^2-1)+p^2(p^2-1)$ groups if one among $k, k^{-1}$ is equal to $q-2$ (namely if $G\simeq G_{-2}$) and $2p^2(p-1)$ groups if both $k, k^{-1} \ne q-2$ (so $G\not\simeq G_{-2}$). We obtain one class of length $p(p-1)$ plus one class of length $p^2(p-1)$ in the first case, and two classes of length $p^2(p-1)$ in the second case.
   	 \item[$-$] the cases (A2),(A2$^{\ast}$),(B1),(B1$^{\ast}$) yield 
 $2p(p^2-1)+2p^2(p^2-1)$ groups if $G \simeq G_2$, and $4p^2(p-1)$ groups if $G \not\simeq G_{2}$. We obtain two classes of length $p(p-1)$ plus two classes of length $p^2(p-1)$ in the first case, and $4$ classes of length $p^2(p-1)$ in the second case.
     \end{itemize}
In all the cases in total there are $2(q+2)$ classes.     
\end{enumerate}
\end{recap}

\subsection{Results}

\begin{prop}
\label{prop:G8}
Let $G$ be a group of order $p^2q$, $p>2$, of type 8, so that $G$ is isomorphic to $G_{k}$, where $k\ne 0, 1, -1$ determines the isomorphism class of $G$.
Then in $\Hol(G)$ there are:
\begin{enumerate}
\item $4p^2$ groups of type 5, which split in $4$ conjugacy classes of length $p^2$;
\item $8p^2(q+p-2)$ groups of type 6, which split in $8$ conjugacy classes of length $p$, $8(q-2)$ conjugacy classes of length $p^2$, $8$ conjugacy classes of length $p(p-1)$, and $8$ conjugacy classes of length $p^2(p-1)$; 

in total there are $8(q+1)$ conjugacy classes;
\item $8p+4p^2(q-3)$ groups of type 7, which split in  $8$ conjugacy classes of length $p$, and $4(q-3)$ conjugacy classes of length $p^2$;

in total there are $4(q-1)$ conjugacy classes;
\item if $G$ is not isomorphic to $G_{\pm 2}$, then $q>5$ and there are further 
	\begin{enumerate}
	\item if either $k$ or $k^{-1}$ is a solution of $x^2-x-1=0$,
	\begin{enumerate}
	\item $2(1+5p+4p^2q-17p^2+7p^3)$ groups of type 8 isomorphic to $G_s$, for $s=k, 1-k$, which split in two conjugacy classes of length $1$, $12$ conjugacy classes of length $p$, $2(4q-11)$ conjugacy classes of length $p^2$, two conjugacy classes of length $p(p-1)$, and $14$ conjugacy classes of length $p^2(p-1)$;
	\item $4(3p+2p^2q-8p^2+3p^3)$ groups of type 8 isomorphic to $G_{1+k}$, which split in $16$ conjugacy classes of length $p$, $8(q-3)$ conjugacy classes of length $p^2$, $4$ conjugacy classes of length $p(p-1)$, and $12$ conjugacy classes of length $p^2(p-1)$;	
	\item $8(2p+p^2q-5p^2+2p^3)$ groups of type 8 isomorphic to $G_s$ for every $s \in \mathcal{K}$, $s \ne k, 1+k, 1-k$, which split in $16$ conjugacy classes of length $p$, $8(q-3)$ conjugacy classes of length $p^2$, and $16$ conjugacy classes of length $p^2(p-1)$;
	\end{enumerate} 
in total there are $8(q+1)$ conjugacy classes for every isomorphism class $G_s$;
	\item if $k$ and $k^{-1}$ are the solutions of $x^2+x+1=0$,
	\begin{enumerate}
	\item $2(1+6p+4p^2q-19p^2+8p^3)$ groups of type 8 isomorphic to $G_k$, which split in two conjugacy classes of length $1$, $12$ conjugacy classes of length $p$, $2(4q-11)$ conjugacy classes of length $p^2$, and $16$ conjugacy classes of length $p^2(p-1)$;
	\item $2(1+4p+4p^2q-15p^2+6p^3)$ groups of type 8 isomorphic to $G_{1+k}$, which split in two conjugacy classes of length $1$, $12$ conjugacy classes of length $p$, $2(4q-11)$ conjugacy classes of length $p^2$, $4$ conjugacy classes of length $p(p-1)$, and $16$ conjugacy classes of length $p^2(p-1)$;
	\item $2(7p+4p^2q-18p^2+7p^3)$ groups of type 8 isomorphic to $G_{s}$ for $s=1-k, 1-k^{-1}$, which split in $16$ conjugacy classes of length $p$, $8(q-3)$ conjugacy classes of length $p^2$, two conjugacy classes of length $p(p-1)$, and $14$ conjugacy classes of length $p^2(p-1)$;	
	\item $8(2p+p^2q-5p^2+2p^3)$ groups of type 8 isomorphic to $G_s$ for every $s \in \mathcal{K}$, $s \ne k, 1+k, 1-k, 1-k^{-1}$, which split in $16$ conjugacy classes of length $p$, $8(q-3)$ conjugacy classes of length $p^2$, and $16$ conjugacy classes of length $p^2(p-1)$;
	\end{enumerate} 	
	in total there are $8(q+1)$ conjugacy classes for every isomorphism class $G_s$;
	\item if $k$ and $k^{-1}$ are the solutions of $x^2-x+1=0$, we obtain as many groups as in the previous case, but the isomorphism class depends on $-k$ instead of $k$.
	\item if $k$ and $k^{-1}$ are the solutions of $x^2+1=0$,
	\begin{enumerate}
	\item $4(1+2p+2p^2q-9p^2+4p^3)$ groups of type 8 isomorphic to $G_k$, which split in $4$ conjugacy classes of length $1$, $8$ conjugacy classes of length $p$, $4(2q-5)$ conjugacy classes of length $p^2$, and $16$ conjugacy classes of length $p^2(p-1)$;
	\item $4(3p+2p^2q-8p^2+3p^3)$ groups of type 8 isomorphic to $G_{s}$, for $s=1+k, 1-k$, which split in $16$ conjugacy classes of length $p$, $8(q-3)$ conjugacy classes of length $p^2$, $4$ conjugacy classes of length $p(p-1)$, and $12$ conjugacy classes of length $p^2(p-1)$;	
	\item $8(2p+p^2q-5p^2+2p^3)$ groups of type 8 isomorphic to $G_s$ for every $s \in \mathcal{K}$, $s \ne k, 1+k, 1-k$, which split in $16$ conjugacy classes of length $p$, $8(q-3)$ conjugacy classes of length $p^2$, and $16$ conjugacy classes of length $p^2(p-1)$;
	\end{enumerate} 
in total there are $8(q+1)$ conjugacy classes for every isomorphism class $G_s$;	
	
	\item if $k$ is not a solution of $x^2-x-1=0$, $x^2+x-1=0$, $x^2+x+1=0$, $x^2-x+1=0$, $x^2+1=0$,
	\begin{enumerate}
	\item $2(1+6p+4p^2q-19p^2+8p^3)$ groups of type 8 isomorphic to $G_s$ for $s=k, -k$, which split in two conjugacy classes of length $1$, $12$ conjugacy classes of length $p$, $2(4q-11)$ conjugacy classes of length $p^2$, and $16$ conjugacy classes of length $p^2(p-1)$;
	\item $2(7p+4p^2q-18p^2+7p^3)$ groups of type 8 isomorphic to $G_{s}$ for $s=1+k, 1+k^{-1}, 1-k, 1-k^{-1}$, which split in $16$ conjugacy classes of length $p$, $8(q-3)$ conjugacy classes of length $p^2$, two conjugacy classes of length $p(p-1)$, and $14$ conjugacy classes of length $p^2(p-1)$;	
	\item $8(2p+p^2q-5p^2+2p^3)$ groups of type 8 isomorphic to $G_s$ for every $s \in \mathcal{K}$, $s \ne k, 1+k, 1-k, 1-k^{-1}, 1+k^{-1}, -k$, which split in $16$ conjugacy classes of length $p$, $8(q-3)$ conjugacy classes of length $p^2$, and $16$ conjugacy classes of length $p^2(p-1)$;
	\end{enumerate} 	
in total there are $8(q+1)$ conjugacy classes for every isomorphism class $G_s$;
	\item  $4p(2+p(q+2p-5))$ groups of type 9, which split in $8$ conjugacy classes of length $p$, $4(q-3)$ conjugacy classes of length $p^2$, and $8$ conjugacy classes of length $p^2(p-1)$;

in total there are $4(q+1)$ conjugacy classes;
	\end{enumerate}
\item if $G$ is isomorphic to $G_k$ for $k=\pm 2$ and $q>5$, then there are further
	\begin{enumerate}
	\item $2(1+5p+4p^2q-17p^2+7p^3)$ groups of type 8 isomorphic to $G_2$, which split in two conjugacy classes of length $1$, $12$ conjugacy classes of length $p$, $2(4q-11)$ conjugacy classes of length $p^2$, two conjugacy classes of length $p(p-1)$, and $14$ conjugacy classes of length $p^2(p-1)$;

	in total there are $8(q+1)$ conjugacy classes for every isomorphism class $G_s$;
	\item if $q=7$, 
	\begin{enumerate}
	\item $2(1+4p+13p^2+6p^3)$ groups of type 8 isomorphic to $G_3$, which split in two conjugacy classes of length $1$, $12$ conjugacy classes of length $p$, $2(4q-11)$ conjugacy classes of length $p^2$, $4$ conjugacy classes of length $p(p-1)$, and $12$ conjugacy classes of length $p^2(p-1)$;

in total there are $64$ conjugacy classes for every isomorphism class $G_s$;	
	\end{enumerate}
	\item if $q>7$, 
	\begin{enumerate}
	\item $2(1+6p+4p^2q-19p^2+8p^3)$ groups of type 8 isomorphic to $G_{-2}$, which split in two conjugacy classes of length $1$, $12$ conjugacy classes of length $p$, $2(4q-11)$ conjugacy classes of length $p^2$, and $16$ conjugacy classes of length $p^2(p-1)$;
	\item $2(7p+4p^2q-18p^2+7p^3)$ groups of type 8 isomorphic to $G_s$, for $s=3, \frac{3}{2}$, which split in $16$ conjugacy classes of length $p$, $8(q-3)$ conjugacy classes of length $p^2$, two conjugacy classes of length $p(p-1)$ and $14$ conjugacy classes of length $p^2(p-1)$;
	\item $8(2p+p^2q-5p^2+2p^3)$ groups of type 8 isomorphic to $G_s$, for every $s \in \mathcal{K}, s\ne 3, \frac{q+3}{2}, 2, q-2$, which split in $16$ conjugacy classes of length $p$, $8(q-3)$ conjugacy classes of length $p^2$, and $16$ conjugacy classes of length $p^2(p-1)$;
	\end{enumerate}

in total there are $8(q+1)$ conjugacy classes for every isomorphism class $G_s$;
	\item  $2p(3+p(2q+3p-8))$ groups of type 9, which split in $8$ conjugacy classes of length $p$, $4(q-3)$ conjugacy classes of length $p^2$, two conjugacy classes of length $p(p-1)$, 
	and $6$ conjugacy classes of length $p^2(p-1)$;

in total there are $4(q+1)$ conjugacy classes;
	\end{enumerate}
\item If $q=5$, then $G$ is isomorphic to $G_2$ and there are further
	\begin{enumerate}
	\item $4(1+p+3p^{2}(p+1))$ groups of type 8 isomorphic to $G_2$, which split in $4$ conjugacy classes of length $1$, $8$ conjugacy classes of length $p$, $20$ conjugacy classes of length $p^2$, $4$ conjugacy classes of length $p(p-1)$, and $12$ conjugacy classes of length $p^2(p-1)$;

in total there are $48$ conjugacy classes for every isomorphism class $G_s$;
	\item  $8p(1+p+2p(p^2-1))$ groups of type 9, which split in $8$ conjugacy classes of length $p$, $8$ conjugacy classes of length $p^2$, $4$ conjugacy classes of length $p(p-1)$, and $4$ conjugacy classes of length $p^2(p-1)$.

in total there are $24$ conjugacy classes;
	\end{enumerate}
\end{enumerate}
\end{prop}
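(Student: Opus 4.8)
The proof will be a synthesis of the case analysis of Section~\ref{sec: 8}, organised around the duality dichotomy of Subsection~\ref{ssec:G8_duality}.

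\emph{Reduction via duality.} By Lemma~\ref{lemma:duality} and Proposition~\ref{prop:duality} applied with $C=A$ (the characteristic Sylow $p$-subgroup), every GF $\gamma$ on $G$ has the form $\gamma(a)=\iota(a^{-\sigma})$ with $\sigma\in\End(A)$, and — as recorded in Subsections~\ref{subsec:sigma}, \ref{subsec:sigma_1-sigma_inv} and~\ref{ssec:G8_duality} — $\sigma$ has $0$ or $1$ among its eigenvalues, equivalently $p\mid\Size{\ker(\gamma)}$ or $p\mid\Size{\ker(\gammatilde)}$. The involution $\gamma\mapsto\gammatilde$ on the set of GF's preserves the isomorphism type of $(G,\circ)$ (\cite[Subsection 2.8]{p2qciclico}) and, commuting with the conjugation action~\eqref{eq:conjugacy} of $\Aut(G)$, sends each conjugacy class of regular subgroups to one of the same length; moreover no GF of type 8 is self-dual, since $\gamma=\gammatilde$ would force $\sigma=\tfrac12\id$, which has no eigenvalue $0$ or $1$. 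Hence it suffices to (i) enumerate, with conjugacy decomposition, all GF's with $p\mid\Size{\ker(\gamma)}$, and then (ii) adjoin the duals of precisely those GF's for which in addition $p\nmid\Size{\ker(\gammatilde)}$, i.e. those whose $\sigma$ has $0$ but not $1$ as an eigenvalue; these duals have $p\nmid\Size{\ker}$, so they lie outside the first batch, are pairwise distinct, and contribute the same types and class lengths — so step (ii) simply doubles the relevant tallies.

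\emph{Step (i): splitting by kernel size.} I would split according to $\Size{\ker(\gamma)}\in\{p^2q,\,p^2,\,pq,\,p\}$. The value $p^2q$ gives the trivial GF (the right regular representation), of type 8 with $(G,\circ)\simeq G_k$ and a single class of length $1$; its dual $g\mapsto\iota(g^{-1})$ is the left regular representation, again of type $G_k$, a singleton class, with trivial kernel, so it is the step-(ii) contribution attached here. The three remaining values are exactly the content of Subsections~\ref{ssec:G8-kernel pq}, \ref{ssec:G8-kernel p2}, \ref{ssec:G8_kerp}: for $pq$ one gets $2p^2$ GF's, all of type $6$, with two orbits of length $p$ and two of length $p(p-1)$, and here $p\mid\Size{\ker(\gammatilde)}$ (no doubling); for $p^2$ one has $\sigma=0$, so all of these are doubled, with the counts of $(G,\circ)$ of types $5,6,7,8,9$ and the orbit lengths taken from Subsection~\ref{ssec:G8-kernel p2}; for $p$ one has the subcases A1, A2, A3, B1 together with their $k\leftrightarrow k^{-1}$ mirrors A1$^\ast$, A2$^\ast$, A3$^\ast$, B1$^\ast$ (the output collected in the Recap inside Subsection~\ref{ssec:G8_kerp}), and among these only A1 and A1$^\ast$ have $\sigma$ with eigenvalue $0$ but not $1$, hence only these are doubled in step (ii).

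\emph{The main obstacle: bookkeeping of isomorphism classes.} The delicate part is translating the parametric descriptions of $Z_\circ$ in Subsections~\ref{ssec:G8-kernel p2} and~\ref{ssec:G8_kerp} into the isomorphism class $G_s$ of $(G,\circ)$. For $(G,\circ)$ of type 8 the matrix $Z_\circ$ determines $G_s$ only up to $s\mapsto s^{-1}$, and in the subcases where $a_\ast$ lies in an eigenspace (one of $x_1,x_2$ taking a distinguished value) several of the independence conditions on $x_1,x_2$ collapse; this is what makes the distinguished classes $G_{1\pm k}$, $G_{1\pm k^{-1}}$, $G_{\pm k}$ occur with multiplicities different from the generic $G_s$. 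These distinguished classes may further coincide with one another or with $G_k$, and exactly when this happens is governed by whether $k$ (equivalently $k^{-1}$) is a root of one of $x^2-x-1$, $x^2+x-1$, $x^2+x+1$, $x^2-x+1$, $x^2+1$; one must also isolate the small-modulus degeneracies $q=5$ and $q=7$, where $k=\pm2$ is forced into such a relation and where $\Size{\mathcal{K}}$ is too small for all the distinguished classes to be distinct. I would therefore run a finite case split over these polynomial relations, in each case listing the distinguished classes, reading off their multiplicities from cases A1/A1$^\ast$, A2/A2$^\ast$, B1/B1$^\ast$, A3/A3$^\ast$ and from the $p^2$- and $pq$-kernel cases, summing (with the step-(ii) doubling of the A1/A1$^\ast$ and $p^2$ contributions), translating orbit lengths into conjugacy classes, and finally adding the two regular representations to the $G_k$ tally. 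The arithmetic is routine but extensive; the points requiring real care are getting the multiplicities of the coincident classes right and verifying that the grand totals (the ``$8(q+1)$ classes'', ``$4(q-1)$ classes'', etc., and the small-$q$ totals $48,64,24$) come out — which also serves as a cross-check against Theorems~\ref{number_regular} and~\ref{th:number_skew}.
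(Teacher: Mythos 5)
Your proposal follows essentially the same route as the paper's proof: sum the contributions from the kernel-size cases $pq$, $p^2$, $p$ (with the subcases A1--A3, B1 and their $k\leftrightarrow k^{-1}$ mirrors), double exactly the contributions with $p\nmid\Size{\ker(\gammatilde)}$ — namely the $p^2$-kernel case and (A1), (A1$^\ast$) — together with the two regular representations, and resolve the type-8 isomorphism-class bookkeeping by the finite case split on whether $k$ or $k^{-1}$ satisfies one of the quadratics $x^2\mp x\mp1$, $x^2\pm x+1$, $x^2+1$, isolating the degeneracies at $q=5$ and $q=7$. This is the paper's argument; the only addition is your explicit remark that no GF is self-dual, which the paper leaves implicit in its count-then-double accounting.
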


\begin{proof}
For the types 5, 6, 7 and 9, the number of $(G,\circ)$ is obtained just summing up the results in 
Subsections~\ref{ssec:G8-kernel pq}, \ref{ssec:G8-kernel p2}, 
and~\ref{ssec:G8_kerp},
and doubling those such that $p\nmid \Size{\ker(\gammatilde)}$ (see the discussion in Subsection~\ref{ssec:G8_duality}), namely when $\Size{\ker(\gamma)}=p^2$ and the cases (A1), (A1$^{\ast}$) when $\Size{\ker(\gamma)}=p$.

If $(G, \circ)$ is of type 8 and $q=5$, then there is only one isomorphism class of groups of type 8, so that also in this case we obtain the number of $(G, \circ)\simeq G_2$ simply summing up the results in the previous sections, and doubling for the cases $\Size{\ker(\gamma)}=p^2q$, $\Size{\ker(\gamma)}=p^2$ and the cases (A1), (A1$^{\ast}$) when $\Size{\ker(\gamma)}=p$.

Suppose now $(G, \circ)$ of type 8 and $q>5$. To obtain the total number of $(G, \circ)$ for every isomorpism class of groups of type 8, we have to distinguish some cases.

Suppose first $k=\pm 2$, then by Subsections~\ref{ssec:G8-kernel p2} and~\ref{ssec:G8_kerp} the number of $(G, \circ)$ of type 8 depends on the isomorphis classes of the groups $G_{2}, G_{3}, G_{\frac{3}{2}}$ and $G_{-2}$. Since two groups of type 8, say $G_{k_1}$ and $G_{k_2}$, are isomorphic if and only if $k_1=k_2$ or $k_1k_2=1$, in this case we have that the $G_{3}\simeq G_{\frac{3}{2}} \simeq G_{-2} \not\simeq G_2$ if $q=7$, and that $G_{2}, G_{3}, G_{\frac{3}{2}}$ and $G_{-2}$ represent different isomorphism classes if $q>7$. 

Suppose now $k\ne\pm 2$. By 
Subsections~\ref{ssec:G8-kernel p2} 
and~\ref{ssec:G8_kerp} the number of $(G, \circ)$ of type 8 depends on the isomorphis classes of the groups $G_{k}$, $G_{1+k^{-1}}$, $G_{1+k}$, $G_{1-k^{-1}}$, $G_{1-k}$ and $G_{-k}$. 

Suppose $5$ is a quadratic residue modulo $q$; then $G_{k}\simeq G_{1+k^{-1}}$, $ G_{1+k} \simeq G_{1-k^{-1}}$ and $G_{1-k} \simeq G_{-k}$ if and only if $k$ is a solution of $x^2-x-1=0$. Moreover $G_{k}\simeq G_{1+k}$, $ G_{1+k^{-1}} \simeq G_{1-k}$ and $G_{1-k^{-1}} \simeq G_{-k}$ if and only if $k$ is a solution of $x^2+x-1=0$. Note also that if $k$ is a solution of $x^2-x-1=0$, then $k^{-1}$ is a solution of $x^2+x-1=0$. Therefore if $G\simeq G_k$ and either $k$ or $k^{-1}$ are solutions of $x^2-x-1=0$ then the groups above are in three different isomorphism classes, namely $G_{k}\simeq G_{1+k^{-1}}$, $ G_{1+k} \simeq G_{1-k^{-1}}$, and $G_{1-k} \simeq G_{-k}$.

Suppose $q-3$ is a quadratic residue modulo $q$; then $ G_{1+k^{-1}} \simeq G_{1+k} \simeq G_{-k}$ if and only if $k$ is a solution of $x^2+x+1=0$. In that case $k^{-1}$ is the other solution, and the groups above are in four different isomorphism classes. Similarly, if $k$ is a solution of $x^2-x+1=0$ there are four different isomorphism classes. Note moreover that if $\alpha_1$, $\alpha_2$ are the solutions of $x^2+x+1=0$, then the solutions of $x^2-x+1=0$ are $-\alpha_1, -\alpha_2$. Therefore the last case can be obtained by the previous one changing $k$ in $-k$.

Lastly suppose $q-4$ is a quadratic residue modulo $q$; then $G_{k}\simeq G_{-k}$, $ G_{1+k} \simeq G_{1-k^{-1}}$ and $G_{1-k} \simeq G_{1+k^{-1}}$ if and only if $k$ is a solution of $x^2+1=0$. Also here the other solution is $k^{-1}$.

Now, note that either $k$ is a solution of exactly one of the above equations, or $k$ is no a solution for any of them. In the last case the groups above form $6$ different isomorphism classes.

In compliance with these facts, summing up the results of the previous subsections 
and doubling for the cases $\Size{\ker(\gamma)}=p^2q$, $\Size{\ker(\gamma)}=p^2$ and the cases (A1), (A1$^{\ast}$) when $\Size{\ker(\gamma)}=p$, 
we obtain \textit{(a)}-\textit{(e)} in \textit{4} and \textit{(a)}-\textit{(c)} in \textit{5}.
\end{proof}

\section{Type 7}

\label{sec: 7}

Here $q\mid p-1$ and $G$ is isomorphic to a group $(\cC_{p} \times \cC_{p}) \rtimes_{S} \cC_{q}$.
The Sylow $p$-subgroup $A=\Span{a_1, a_2}$ of $G$ is characteristic, and if $a_1$, $a_2$ are in the eigenspaces of the action of a generator $b$ of a Sylow $q$-subgroup $B$ on $A$, then this action can be represented by a scalar matrix with no eigenvalues $1$. 
Therefore, if $a_1, a_2$ are eigenvectors for $\iota(b)$, then with respect to that basis, we have
\begin{equation*}
 Z =
 \begin{bmatrix}
  \lambda & 0\\
  0 & \lambda\\
 \end{bmatrix}.
\end{equation*}

The divisibility condition on $p$ and $q$ implies that $(G,\circ)$ can  be of type 5, 6, 7, 8 and 9.

According to Subsections 4.1 and 4.2 of~\cite{classp2q}, we have 
\begin{equation*}
\Aut(G)= \Hol(\cC_{p}\times \cC_{p}).
\end{equation*}

Differently from the types 8 and 9, in this case if $\gamma$ is a GF on a group $G$ of type 7, then $\gamma(A)$ is not necessarily contained in $\Inn(G)$, as here a Sylow $p$-subgroup of $\Aut(G)$ is of the form $\iota(A)\rtimes \mathcal{P}$, where $\mathcal{P}$ is a Sylow $p$-subgroup of $\GL(2,p)$. 

In the following we will distinguish two cases, namely when $\gamma(A)\le \Inn(G)$ and when $\gamma(A) \not\le \Inn(G)$. In the first case, if $\gamma$ is a GF on $G$, then $\gamma_{|A}: A\to\Inn(G)\leq \Aut(G)$ is a RGF, as $A$ is characteristic in $G$. Moreover, Lemma~\ref{Lemma:gamma_morfismi} yields that $\gamma_{|A}$ is a morphism, as $\iota(A)$ acts trivially on the abelian group $A$.
Therefore, for each gamma function $\gamma$ there exists $\sigma\in \End(A)$ satisfying equation \eqref{eq:gamma=iota}, namely
\begin{equation*}
\gamma(a)=\iota(a^{-\sigma})
\end{equation*} 
for each $a\in A.$ 

The case $\gamma(A)\le \Inn(G)$ can be handled in a very similar way to the cases in which $G$
is type 8 or 9, therefore in the following we will often refer to Sections~\ref{sec: 7,8,9}, \ref{sec: 9} and \ref{sec: 8}. The case $\gamma(A) \not\le \Inn(G)$ instead will require a separate treatment.

\subsection{Duality}
\label{ssec: duality-G-7}

Suppose first that $\gamma(A)\leq \Inn(G)$, so that every $\gamma$ on $G$ satisfies equation~\eqref{eq:gamma=iota}. We can apply Lemma~\ref{lemma:duality} with $C=A$, and this yields equation~\eqref{eq:sigma}. 
By the discussion in Subsections~\ref{subsec:sigma} and~\ref{subsec:sigma_1-sigma_inv}, if $\sigma$ and $1-\sigma$ are not both invertible, then $p\mid \Size{\ker(\gamma)}$ or $p\mid \Size{\ker(\gammatilde)}$, namely $\sigma$ has $0$ or $1$ as an eigenvalue. Otherwise $\sigma$ and $1-\sigma$ are both invertible, but this happens only when $q=2$.

Suppose now that $\gamma(A)\not\le \Inn(G)$. We show that, appealing to duality, we can always suppose that $p \mid \Size{\ker(\gamma)}$.

If $\gamma(A)$ has order $p$, then $p\mid \Size{\ker(\gamma)}$. Moreover $\gamma(A)=\Span{\iota(c)\alpha}$, for some $c \in A$ and $\alpha$ in $\GL(2,p)$ of order $p$, therefore, by the discussion in Subsection~\ref{sss:GL2_2}, the kernel is the fixed point space of $\alpha$. 

Now suppose $\Size{\gamma(A)} = p^{2}$. 
We show that there exists a subgroup $C$ of order $p$ which satisfies the hypotheses of Proposition~\ref{prop:duality}, in the slightly more general version described in Remark~\ref{remark:duality-G-7}.
 
Let $\gamma(A) = \Span{\iota(c), \iota(d)\alpha}$, for some $c, d \in A$, and $\alpha \in \GL(2, p)$ of order $p$. Since $1=[\iota(c),  \iota(d)\alpha] = \iota( [c, \alpha ])$, we have that $\alpha$ fixes $c$.
Let $x_1, x_2 \in A$ be such that $\gamma(x_{1}) = \iota(c)$, and $\gamma(x_{2}) = \iota(d)\alpha $. Then
$$x_{1}^{\alpha} x_{2} = x_{1} \circ x_{2} = x_{2} \circ x_{1} = x_{2} x_{1},$$ 
so that $x_1\in\Span{c}$. It follows that $\gamma(c) = \iota(c^{-k})$ for
some $k$.
The subgroup $C=\Span{c}$ is $\gamma(G)$-invariant, as if $b \in G$ has order $q$, then $\gamma(A) \cap \iota(A)$ is normalised by $\Span{\gamma(b)}$, so that $\gamma(b)$ leaves $C$ invariant.
Since $C$ is also normal in $G$, 
Proposition~\ref{prop:duality} and Remark~\ref{remark:duality-G-7}
yield that $\gamma(c) = \iota(c^{-k})$ with $k = 0, 1$, namely either
$C\le \ker(\gamma)$ or $C\le \ker(\gammatilde)$. Now by
\cite[Corollary 2.25]{p2qciclico}
we can assume $C\le \ker(\gamma)$.

Therefore, when $\gamma(A)\leq \Inn(G)$ and $q>2$ we can assume that $p\mid \Size{\ker(\gamma)}$ (equivalentely $\sigma$ has $0$ as an eigenvalue), and once we have counted the gamma functions with this property, we will double the number of those for which moreover $p \nmid \Size{\ker(\gammatilde)}$ (we will double only those GF for which $1$ is not an eigenvalue of $\sigma$). If $\gamma(A)\leq \Inn(G)$ and $q=2$ then there are actually $\sigma$ with no eigenvalues $0$ and $1$, and this corresponds to the existence of $\gamma$ such that $p \nmid \Size{\ker(\gamma)}, \Size{\ker(\gammatilde)}$. Here, except for the case when both $\gamma$ and $\gammatilde$ have kernel of size not divisible by $p$, we will use duality to swich to a more convenient kernel. Otherwise, when $\gamma(A)\not\leq \Inn(G)$, we can assume that $p \mid \Size{\ker(\gamma)}$, and then we will double the numbers we will obtain.

\subsection{Description of the elements of order $q$ of $\Aut(G)$}
\label{ssec:order-q-automorphisms-G7}
An element of order $q$ in $\Aut(G)$ is of the form $\iota(a_\ast)\beta$, where $a_{\ast} \in A$, and $\beta \in \GL(2,p)$ of order $q$.

As we now show, the Sylow $q$-subgroups of $\Aut(G)$ are as many as the Sylow $q$-subgroups of $\GL(2,p)$ (see Subsection~\ref{sss:GL2_3}) multiplied by $p^2$.
In fact, the number of the Sylow $q$-subgroups is equal to the index of $\Norm_{\Aut(G)}(Q)$ in $\Aut(G)$, where $Q$ is any Sylow $q$-subgroup of $\Aut(G)$. Since $\Aut(G)= A\rtimes \GL(2,p)$, necessarily a Sylow $q$-subgroup of $\GL(2,p)$ is a Sylow $q$-subgroup of $\Aut(G)$ as well, therefore we can suppose that $Q$ is contained in $\GL(2,p)$. 

We use the following lemma to show that the normaliser of $Q$ in
$\Aut(G)$ is equal to the normaliser of $Q$ in $\GL(2,p)$, obtaining
the claim above.
\begin{lemma}
  Let $H$ be a permutation group containing a regular subgroup. Let $Q
  \le H$ be such that $Q$ is contained in a unique stabiliser $T_{a}$.
  Then $\Norm_{H}(Q) \le T_{a}$.
\end{lemma}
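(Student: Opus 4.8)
The plan is to argue by a counting/transitivity argument using the regular subgroup to parametrize the stabilizers. Let $R \le H$ be a regular subgroup, so $H = R\, T_{a}$ and $\Size{R} = [H : T_{a}]$; moreover, as $a$ ranges over the permutation domain $\Omega$, the stabilizers $T_{a}$ are exactly the conjugates of $T_{a}$ by elements of $R$, and distinct points give distinct stabilizers (since $R$ is regular, for $r \in R$ one has $T_{a}^{r} = T_{a^{r}}$, and $a \mapsto a^{r}$ is a bijection $\Omega \to \Omega$). The key hypothesis is that $Q$ lies in a \emph{unique} stabilizer among all the $T_{c}$, $c \in \Omega$; call it $T_{a}$.

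First I would take $n \in \Norm_{H}(Q)$ and consider the point $b = a^{n}$. Conjugating, $Q^{n} = Q$ is contained in $T_{a}^{n} = T_{a^{n}} = T_{b}$. So $Q$ is contained in both $T_{a}$ and $T_{b}$. By the uniqueness hypothesis, $T_{a} = T_{b}$, hence $a = b = a^{n}$ (again using that distinct points have distinct stabilizers, which holds because $R$ is regular and transitive). Therefore $n$ fixes $a$, i.e. $n \in T_{a}$. Since $n \in \Norm_{H}(Q)$ was arbitrary, $\Norm_{H}(Q) \le T_{a}$, as claimed.

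The only subtlety — and the step I'd be most careful about — is justifying that the map $c \mapsto T_{c}$ from $\Omega$ to the set of point stabilizers is injective; this is what lets us pass from $T_{a} = T_{b}$ to $a = b$. This is automatic here because $H$ contains a regular (hence transitive, and fixed-point-free) subgroup $R$: if $T_{a} = T_{b}$, pick $r \in R$ with $a^{r} = b$, so $T_{a}^{r} = T_{b} = T_{a}$, meaning $r$ normalizes $T_{a}$; but in a group with a regular subgroup acting, the normalizer of a point stabilizer in $R$ is trivial (an element of $R$ normalizing $T_{a}$ would have to permute the fixed points of $T_{a}$, and when the action is suitably faithful this forces $r = 1$), whence $a = b$. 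In the application to $H = \Aut(G)$ acting on $G$ with $R$ the (right) regular representation of $G$, all of this is transparent: point stabilizers are literally indexed by the points, and $a^{n} = a$ for $n$ fixing the identity-point recovers $n \in \Stab(a)$. So the lemma reduces to this clean bookkeeping and no genuine obstacle remains; the whole content is the two-line conjugation computation $Q = Q^{n} \le T_{a}^{n} = T_{a^{n}}$ combined with the uniqueness assumption.
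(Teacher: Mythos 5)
Your core argument is correct and is essentially the paper's: conjugate the containment $Q \le T_{a}$ by $n \in \Norm_{H}(Q)$ to get $Q = Q^{n} \le T_{a}^{n} = T_{a^{n}}$ and invoke uniqueness. Your version is in fact slightly more direct than the paper's, which first decomposes $h = sr$ with $s \in T_{a}$ and $r \in R$ (using transitivity of $R$ to obtain $H = T_{a}R$) and then runs the same conjugation computation to force $r = 1$.

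One caveat: the ``subtlety'' you isolate, the injectivity of $c \mapsto T_{c}$, should not be resolved the way you propose. Your claim that an element of $R$ normalising the stabiliser $T_{a}$ must be trivial is false in general (if all point stabilisers are trivial, every element of $R$ normalises $T_{a}$), so the map $c \mapsto T_{c}$ need not be injective merely because $H$ contains a regular subgroup. The hypothesis has to be read as saying that there is a unique \emph{point} $a$ with $Q \le T_{a}$ --- which is exactly what the paper verifies in its application, where the unique point is $1 \in A$. Under that reading, $Q \le T_{a^{n}}$ gives $a^{n} = a$ immediately and your two-line computation closes the proof with no injectivity statement needed. Under the weaker ``unique subgroup'' reading the lemma is actually false (take $H = R$ regular of order $2$ and $Q = \Set{1}$), so the digression cannot be repaired and is best deleted.
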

\begin{proof}
Let $R \leq H$ be a regular subgroup, and let $h \in H$. Then, given $a$ and $a^{h}$, there exists $r \in R$ such that $a^{hr}=a$. Therefore $hr \in T_{a}$ and $h=hr \cdot r^{-1} \in T_{a}R$.
  
Now, let $h=s r \in \Norm_{H}(Q)$, with $s \in T_{a}$, and $r \in R$. Then $Q =
  Q^{s r} = Q^{r} \le T_{a}^{r} = T_{a^{r}}$. It follows that $r = 1$.
\end{proof}
Now we identify $\Aut(G)$ with $H:=\Hol(A)=\rho(A)\Aut(A)$, in particular $\iota(A)$ with $R:=\rho(A)$ and $H$ is acting on $A$. If the Sylow $q$-subgroup $Q$ is such that $Q\subseteq \Stab(a)$ for a certain $a \in A$, then $a$ is fixed by every matrix with eigenvalues of order a power of $q$, and this yields $a=1$.
Therefore $Q$ is contained in a unique stabiliser, $T_{1}\simeq \GL(2,p)$, and by the lemma above $\Norm_{\Aut(G)}(Q)\le \GL(2,p)$, namely $\Norm_{\Aut(G)}(Q) = \Norm_{\GL(2,p)}(Q)$.

\vskip 0.3cm

Let us start with the enumeration of the GF's on $G$. We proceed case by case, according to the size of the kernel.

As usual, if $\Size{\ker(\gamma)}=p^2q$, then $\gamma$ corresponds to the right regular representation, so that we will assume $\gamma \neq 1$.

\subsection{The case \texorpdfstring{$\Size{\ker(\gamma)}=pq$}{ker = pq}}
\label{ssec:G7-kernel-pq} 

Here $K=\ker(\gamma)$ is a subgroup of $G$ isomorphic to $\cC_p\rtimes \cC_q$, therefore we will obtain $(G,\circ)$ of type 6, as it is the only type having a non abelian normal subgroup of order $pq$.

We can choose $K$ in $p(p+1)$ ways, indeed for each of the $p^2$ choices for a Sylow $q$-subgroup $B$, the subgroups of order $p$ that are $B$-invariant are the $1$-dimensional invariant subspaces of the action of $B$. Since the action of a Sylow $q$-subgroup on $A$ here is scalar, all the $p+1$ subgroups of $G$ of order $p$ are $B$-invariant. Moreover, since $\cC_p\rtimes \cC_q$ has $p$ subgroups of order $q$, exactly $p$ choices for $B$ give the same group.

Let $K=\Span{a_1, b}$, and let $a_2\in A$ be such that $A=\Span{a_1, a_2}$.

Suppose first that $\gamma(A)\leq \Inn(G)$. Reasoning as in Subsection~\ref{ssec:G9-kernel pq}, 
here we obtain $p^2(p+1)$ gamma functions, corresponding to groups $(G,\circ)$ of type 6.
Moreover, for every $\gamma$ here, $p\mid\Size{\ker(\gammatilde)}$.

As to the conjugacy classes, this time an automorphism $\phi$ of $G$ has the form $\phi=\iota(x)\delta$, where $x \in A$ and $\delta$ is such that $\delta_{|B}=1$, $\delta_{|A}=(\delta_{ij})\in \GL(2,p)$.
With computations similar to those of Subsection~\ref{ssec:G9-kernel pq} we obtain 
\begin{itemize}
\item[$-$] $\gamma^{\phi}(a_1)=1$ if and only if $a_1^{\delta^{-1}} \in \ker(\gamma)\cap A=\Span{a_1}$, namely $\delta_{12}=0$;
\item[$-$] taking into account \eqref{eq:conj-class-G9-pq-1}, we have that $\gamma^{\phi}(b)=1$ if and only if $x \in \Span{a_1}$; 
\item[$-$] taking into account \eqref{eq:conj-class-G9-pq-2} and writing $a=a_1^{j}a_2^{-1}$, we have that $\gamma^{\phi}(a_2)=\gamma(a_2)$ if and only if $\iota(a^{\delta_{22}^{-1}})^{\delta}=\iota(a)$, namely $ j(\delta_{11}-\delta_{22})=\delta_{21}.$
\end{itemize}
The latter condition yields $\delta_{21}$ as a function of the diagonal elements, so that the stabiliser has order $p(p-1)^2$, and we obtain one orbit of length $p^2(p+1)$.

Suppose now that $\gamma(A)\not\le \Inn(G)$.
In this case there are no $\gamma(G)$-invariant complements of $K$, therefore let us consider $G=KA$. 
Proposition~\ref{prop:lifting} yields that every GF on $G$ is the lifting of a RGF $\gamma':A\to \Aut(G)$ with $\gamma(G)=\gamma'(A)$, and such that $K$ is invariant under $\Set{\gamma'(x)\iota(x) : x\in A}$. 
Conversely, every RGF $\gamma'$ such that $\gamma'(\Span{a_1})=1$, and which makes $K$ invariant under $\Set{\gamma'(x)\iota(x) : x\in A}$, can be lifted to $G$.
Now we show that a such map is a morphism, and it is defined by 
$$\gamma'(a_2)=\alpha\iota(a_1^j a_2^{-1}),$$
for some $0 \leq j \leq p-1$, and $\alpha \in \GL(2,p)$ of order $p$.

Indeed, since $\gamma'(A)=\gamma'(\Span{a_2})$ has order $p$, $\gamma'(a_2)=\alpha \iota(a)$ for some $a\in A$, and $\alpha \in \GL(2,p)$ of order $p$. By Subsubsection~\ref{sss:GL2_2}, $\Span{a_1}$ is the space of the fixed points of $\alpha$, so that $a_1^\alpha=a_1$. Moreover, we can write $a_2^\alpha=a_1^{d}a_2$, for some $1 \leq d \leq p-1$, and by Lemma~\ref{lem:kernel-pq-gamma-morphisms} in the Appendix, the RGF's are morphisms.

Now, if $K$ is invariant under $\gamma'(x)\iota(x)$ for every $x \in A$, then
$\gamma'(a_2)\iota(a_2)=\alpha\iota(aa_2) $
leaves $K$ invariant, so that $a a_2 \in \Span{a_1}$, namely $a=a_1^{j}a_2^{-1}$ for some $j$, $0 \leq j \leq p-1$.
Conversely, choosing $a=a_1^{j}a_2^{-1}$ then $\gamma'(a_2)\iota(a_2)=\alpha\iota(a_1^{j})$, and since $\gamma'$ is a morphism, $K$ is invariant under $\gamma'(x)\iota(x)$ for every $x\in \Span{a_2}$, and so for every $x\in A$. 

Since there are $p(p+1)$ choices for $K$, $p-1$ choices for $\alpha$ and $p$ choices for $\iota(a_1^{j}a_2^{-1})$, we obtain $p^2(p^{2}-1)$ groups.

As to the conjugacy classes, let $\phi=\iota(x)\delta \in \Aut(G)$. As above, the conditions $\gamma^{\phi}(a_1)=\gamma(a_1)$ and $\gamma^{\phi}(b)=\gamma(b)$ yield $\delta_{12}=0$ and $x \in \Span{a_1}$. Moreover here
\begin{align*}
\gamma^{\phi}(a_2)
&=\phi^{-1}\gamma(a_2^{\delta^{-1}})\phi \\
&=\delta^{-1}\alpha^{\delta_{22}^{-1}}\iota({a}^{\alpha^{\delta_{22}^{-1}-1}+\cdots +\alpha +1})\delta \\
&=\delta^{-1}\alpha^{\delta_{22}^{-1}}\delta \iota(a^{\alpha^{\delta_{22}^{-1}-1}+\cdots +\alpha +1})^{\delta},
\end{align*}
so that $\phi$ stabilises $\gamma$ if and only if
both $\delta^{-1}\alpha^{\delta_{22}^{-1}}\delta= \alpha$ (namely $\delta_{22}^2=\delta_{11}$) and 
$$\iota(a^{\alpha^{\delta_{22}^{-1}-1}+\cdots +\alpha +1})^{\delta}=\iota(a),$$ 
namely $\delta_{21}=(j+\frac{d}{2})\delta_{22}(\delta_{22}-1)$.
Therefore the stabiliser has order $p(p-1)$, and there is one orbit of length $p^2(p^2-1)$.

\subsection{The case \texorpdfstring{$\Size{\ker(\gamma)}=p^2$}{ker = p2}}
\label{ssec:G7-kernel p2}

Reasoning as in Subsection~\ref{ssec:G9-kernel p2} we obtain that each $\gamma$ on $G$ is the lifting of at least one RGF defined on an invariant Sylow $q$-subgroup $B$, 
and the RGF's on $B$ are precisely the morphisms. 
We have $\gamma(b)_{|B}=1$, and let $\gamma(b)_{|A}=\beta$; then the discussion in Subsubsection~\ref{ssec:order-q-automorphisms-G7} yields that 
$\beta$ is a matrix of order $q$ in $\GL(2,p)$, and, with respect to a suitable basis of $A$, we have can represent $\beta$ as the diagonal matrix
$$T=\begin{bmatrix}
\lambda^{x_1}& 0\\
0 &  \lambda^{x_2}\\
  \end{bmatrix},
$$
where $\lambda \ne 1$, $x_1, x_2$ are not both zero (see Subsubsection~\ref{sss:GL2_3}).
We assume that $\beta$ is diagonal with respect to $\Set{a_1, a_2}$, taking into account that if $\beta$ is non-scalar, then there are $\frac{1}{2}p(p+1)$ choices for a pair $\Set{A_1, A_2}$ of distinct one-dimensional subspaces of $A$.

To know the exact number of the invariant Sylow $q$-subgroups we appeal to the discussion in Subsubsection~\ref{ssec:q-sylow-G789}; here equation~\eqref{eq:inv q-sylow case 1} yields
$x^{(1-Z^{-1})M}= 1$, where
$$ M = 1-T = \begin{bmatrix}
1-\lambda^{x_1} & 0\\
0 & 1-\lambda^{x_2}\\
\end{bmatrix} ,$$
and we obtain that
\begin{enumerate}
\item if both $x_1,x_2 \neq 0$, there is a unique invariant Sylow $q$-subgroup; 
\item if either $x_1=0$ or $x_2=0$, there are $p$ invariant Sylow $q$-subgroups.
\end{enumerate}

Denoting as usual by $Z_{\circ}$ the associated matrix of the action of $b$ on $A$ with respect to the operation $\circ$, here we have
  \begin{equation*}
   Z_{\circ} \sim
  \begin{bmatrix}
  \lambda^{1+x_1} & 0\\
    0 & \lambda^{1+x_2}\\
  \end{bmatrix}.
\end{equation*}
We obtain the followings groups $(G,\circ)$. 
\begin{description}
\item[Type 5] if $x_1=x_2=-1$, therefore $p^2$ groups.
\item[Type 6] if either $x_1=-1$ and $x_2\ne -1$, or $x_1\ne -1$ and $x_2=-1$. In both cases there is a unique invariant Sylow $q$-subgroup, except if either $x_2=0$ or $x_1=0$, when there are $p$ invariant Sylow $q$-subgroups. Therefore there are $p^3(p+1)(q-2) + p^2 (p+1)$ groups.
\item[Type 7] if $x_1+1=x_2+1 \ne 0$, namely $x_1=x_2 \ne -1$. Since we are in case (1), we obtain $p^2(q-2)$ groups.
\item[Type 8] if $Z_{\circ} $ is a non scalar matrix with no eigenvalues $1$, and determinant different from $1$.

In case (1) this corresponds to the conditions $x_2 \neq 0,-1$ and the four conditions $x_1\neq 0,-1,-x_2-2, x_2$, which are independent if and only if in addition $x_2\neq 0, -2$. 
When these four conditions are dependent, they reduce to three independent condition on $x_1$. 
If $x_2\ne 0, -1, -2$ we have four independent conditions on $x_1$, and therefore we obtain $\frac{1}{2}p^3(p+1)(q-4)(q-3)$ groups. For $x_2=-2$ the conditions are three, and we obtain further $\frac{1}{2}p^3(p+1)(q-3)$ groups. 

In case (2), if $x_1=0$ there are three independent conditions on $x_2$. Doubling for the case $x_2=0$, we obtain $p^2(p+1)(q-3)$ groups.

Summing up, we have just obtained $\frac{1}{2}p^3(p+1)(q-3)^2+p^2(p+1)(q-3)$ groups of type 8;
looking at the eigenvalues of $Z_\circ$, we easily obtain that they are $2p^2(p+1)+p^3(p+1)(q-3)$ groups isomorphic to $G_s$, for every $s \in \mathcal{K}$;

\item[Type 9] if $ Z_{\circ} $ is a non-scalar matrix with no eigenvalue $1$ and determinant $1$, namely $x_1\ne -1, x_2$, $x_2\ne -1$, and $x_1+x_2+2=0$.
In case (1) $x_2\neq 0,-1$ and also $x_2\neq -2$, otherwise we would have $x_1=0$;
since the latter is a new condition there are $\frac{1}{2}p^3(p+1)(q-3)$ groups.
The case (2) yields $p^2(p+1)$.

Summing up, there are $p^2(p+1) + \frac{1}{2} p^3(p+1)(q-3)$ groups.
\end{description}

As to the conjugacy classes, in the notation of Subsection \ref{ssec:G7-kernel-pq}, let $\phi=\iota(x)\delta \in \Aut(G)$. With computations similar to those in Subsection~\ref{ssec:G9-kernel p2} (taking $\psi=1$), here we obtain that $\phi$ stabilises $\gamma$ if and only if
\begin{equation*}
\begin{cases}
x^{(1-T)\delta}=1 \\
\delta^{-1} T \delta = T.
\end{cases}
\end{equation*}

The first condition yields $x=1$ or, if $x=a_1^{u}a_2^{v}$, either $x_1=0$ and $v=0$, or $x_2=0$ and $u=0$. From the second condition we obtain that $\delta$ is any matrix when $T$ is scalar, and $\delta$ is diagonal when $T$ is non-scalar.

We obtain the following.
\begin{enumerate}
\item For $(G,\circ)$ of type 5 , $x_1 = x_2 =-1$, so that the stabiliser has order $\Size{\GL(2,p)}$, and there is one orbit of length $p^2$.
\item For $(G,\circ)$ of type 6, $x_1 \ne x_2$. The stabiliser has order $p(p-1)^2$
when either $x_1=0$ or $x_2=0$, and $(p-1)^2$ when $x_1, x_2\ne 0$. Therefore we obtain one orbit of length $p^2(p+1)$ and $q-2$ orbits of length $p^3(p+1)$.
\item For $(G,\circ)$ of type 7, $x_1=x_2\ne -1$. The stabiliser has order $\Size{\GL(2,p)}$ and there are $q-2$ orbits of length $p^2$.

\item For $(G,\circ)$ of type 8 $x_1 \ne x_2$, so that if $x_1, x_2 \ne 0$ the stabiliser has order $(p-1)^2$; otherwise either $x_1=0$ or $x_2=0$, and the stabiliser has order $p(p-1)^2$. Therefore, 
if $(G,\circ)\simeq G_s$, for every $s\in \mathcal{K}$ we obtain $q-3$ orbits of length $p^3(p+1)$ and two orbits of length $p^2(p+1)$;
\item For $(G,\circ)$ of type 9, $x_1\ne x_2$. When $x_1, x_2 \ne 0$ the stabiliser has order $(p-1)^2$, otherwise the stabiliser has order $p(p-1)^2$. Therefore there are $\frac{q-3}{2}$ orbits of length $p^3(p+1)$ and one of length $p^2(p+1)$;
\end{enumerate}

\subsection{The case \texorpdfstring{$\Size{\ker(\gamma)}=p$}{ker = p} \texorpdfstring{and $\gamma(A)\leq \Inn(G)$}{inn}}
\label{ssec:G7_kerp-inn}

Since here $\Size{\gamma(G)}=pq$ and $\gamma(G)$ intersects $\iota(A)$ non-trivially, we have
$$\gamma(G) = \Span{\iota(c), \iota(d) \beta}$$ 
for some $c,d \in A$, with $A^{\sigma} = \Span{\iota(c)}$, and $\beta \in \GL(2,p)$, $\beta \neq 1$.

Let $b\in G$ (of order $q$) such that $\gamma(b)=\iota(d)\beta$. With respect to a suitable basis of $A$, the matrix associated to $\gamma(b)_{|A}$ is 
$$ T:=\begin{bmatrix}
\lambda^{x_1} & 0\\
0 & \lambda^{x_2}
\end{bmatrix} ,$$
where $x_1, x_2$ are not both zero. 
Denote by $\Set{a_1, a_2}$ such a basis, and keep in mind that when $[\gamma(b)_{|A}]$ is non-scalar there are $\frac{1}{2}p(p+1)$ choices for a pair $\Set{A_1, A_2}$ of distinct one-dimensional subspaces of $A$.

Following Subsection~\ref{ssec:G789_kerp}, and recalling that for $G$ of type 7 $k=1$, here we find the following cases:

\begin{itemize}
\item Case A: $\ker(\sigma)=\Span{a_1}$.
 \begin{enumerate}
  \item[(A1)] $\nu=0$, $\mu \neq 0$, $x_1-x_2= 1$;
  \item[(A2)] $\nu=1$, $\mu \neq 0$, $x_1=x_2$;
  \item[(A3)] $\nu=1$, $\mu = 0$.
 \end{enumerate}
\item Case B: $\ker(\sigma)=\Span{a_1a_2}$.
 \begin{enumerate}
  \item[(B2)] $\mu+1-\nu=0$, $x_1=x_2$.
  \end{enumerate}
\end{itemize}
As explained in Subsection~\ref{ssec:G789_kerp}, the results in the cases (A1$^\ast$), (A2$^\ast$) and (A3$^\ast$) can be obtained doubling the results we will obtain in the cases (A1), (A2) and (A3).

Notice that $p$ divides both $\Size{\ker(\gamma)}$ and $\Size{\ker(\tilde\gamma)}$ if and only if $\sigma $ has both $0$ and $1$ as eigenvalues, that is, in all the cases above except A1, where, since $\sigma$ has only $0$ as eigenvalue, $p\mid \Size{\ker(\gamma)}$ but $p \nmid\Size{\ker(\tilde\gamma)}$.

\subsubsection{Invariant Sylow $q$-subgroups}
\label{sssec:inv-q-Sylow-G7}

Following Subsubsection~\ref{sssec:inv-q-Sylow-kerp-G789} and taking $k=1$, we obtain the following.

\begin{prop}
\label{prop:inv_q-sylow_kerp-G7}
The number of invariant Sylow $q$-subgroups is
\begin{enumerate}
\item[\rm{(A1)}] $1$ when $x_1\neq 0, 1$ and $p$ otherwise.
\item[\rm{(A2)}] $1$ when $x_1 \neq -1$ and $p$ otherwise.
\item[\rm{(A3)}] $1$ when $x_1\neq 0$ and $x_2\neq -1$, $p^2$ when $x_1= 0$ and $x_2= -1$, and $p$ otherwise.
\item[\rm{(B2) \phantom{}}] $1$ when $x_1\ne -1$ and $p$ otherwise.
\end{enumerate}
\end{prop}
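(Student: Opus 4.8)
\textbf{Proof proposal for Proposition~\ref{prop:inv_q-sylow_kerp-G7}.}

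The plan is to specialise the general analysis of Subsubsection~\ref{sssec:inv-q-Sylow-kerp-G789} to the parameter value $k=1$, which is precisely the case of groups of type~$7$ with $\gamma(A)\le\Inn(G)$. Recall that in that subsection the number of invariant Sylow $q$-subgroups was computed, for each of the subcases (A1), (A2), (A3), (B1), (B2), as the number of solutions $x$ of equation~\eqref{eq:inv q-sylow case 1}, namely $x^{(1-Z^{-1})M}=a_\ast^{(1-Z^{-1})T}$, and this count was shown to equal $\det(M)$-dependent alternatives ($1$ solution when $\det(M)\ne0$, and $p$ or $p^2$ solutions otherwise, provided the compatibility condition~\eqref{eq:condizione-a_ast} on $a_\ast$ holds — which it automatically does whenever the assignment extends to a genuine GF). So the work is entirely in substituting $k=1$ into the explicit forms of $M$ and reading off the rank.

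First I would note that for $k=1$ the relevant subcases are exactly (A1), (A2), (A3) of Case~A and (B2) of Case~B, since (B1) occurs only for $k\ne1$ (see the case distinction in Subsection~\ref{ssec:G789_kerp}); this is why the statement lists only these four lines. Then, for Case~A, I substitute $k=1$ into the matrix
$$
M=\begin{bmatrix}
1-\lambda^{x_1} & 0\\
-\mu\lambda^{x_1-x_2}(1-\lambda^{-1}) & 1-\lambda^{x_2}-\nu(1-\lambda^{-1})
\end{bmatrix},
$$
and use the subcase relations: in (A1) one has $\nu=0$ and $x_1-x_2=1$, giving $\det(M)=(1-\lambda^{x_1})(1-\lambda^{x_1-1})$, hence rank $2$ unless $x_1\in\{0,1\}$, where $(1-Z^{-1})M$ drops to rank $1$ and~\eqref{eq:inv q-sylow case 1} acquires $p$ solutions (the no-solution alternative being excluded by~\eqref{eq:condizione-a_ast}, exactly as argued in the (A1) paragraph of~\ref{sssec:inv-q-Sylow-kerp-G789} with $k=1$); in (A2) one has $\nu=1$, $x_1=x_2$, so $T$ is scalar and $\det(M)=(1-\lambda^{x_1})\lambda^{-1}(1-\lambda^{x_1+1})$ up to a unit, giving rank $2$ unless $x_1=-1$ (note $x_1\ne0$ here, else $\beta=1$), where one gets $p$ solutions; in (A3) one has $\nu=1$, $\mu=0$, so $M$ is diagonal with entries $1-\lambda^{x_1}$ and $\lambda^{-1}(1-\lambda^{x_2+1})$ (using $k=1$), yielding rank $2$ unless $x_1=0$ or $x_2=-1$, with rank $0$ (hence $p^2$ solutions) precisely when both hold, and rank $1$ (hence $p$ solutions) otherwise. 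For Case~B with $k=1$, subcase (B2), I substitute into the Case~B matrix $M$ and use $\mu+1=\nu$, $x_1=x_2$; the computation already displayed at the end of Subsubsection~\ref{sssec:789-conj-classes} shows $\det(M)=(\lambda^{x_1}-1)\lambda^{-1}(\lambda^{x_1+1}-1)$, so since $x_1\ne0$ (else $\beta=1$) we get rank $2$ unless $x_1=-1$, where there are $p$ solutions.

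In each case where the generic rank drops, I would invoke the same dichotomy already established in~\ref{sssec:inv-q-Sylow-kerp-G789}: either~\eqref{eq:inv q-sylow case 1} has the stated number ($p$ or $p^2$) of solutions, or it has none — and the latter cannot occur for assignments that extend to a GF, because then $a_\ast$ would violate~\eqref{eq:condizione-a_ast}. Since we only care about $\gamma$ that are actual gamma functions, only the positive alternative survives, giving the counts in the statement. I do not expect any serious obstacle here: everything reduces to evaluating a $2\times2$ determinant in each subcase and quoting the already-proved compatibility lemma; the one point requiring a little care is checking that in subcases (A2) and (B2) the parameter $x_1=0$ is genuinely excluded (since $x_1=x_2=0$ would force $\beta=1$, contradicting $\Size{\gamma(G)}=pq$), which is exactly the remark already made in those paragraphs of~\ref{sssec:inv-q-Sylow-kerp-G789}.
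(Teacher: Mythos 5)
Your proposal is correct and follows exactly the paper's route: the paper's entire proof of this proposition is the single sentence ``Following Subsubsection~\ref{sssec:inv-q-Sylow-kerp-G789} and taking $k=1$, we obtain the following,'' and your explicit substitution of $k=1$ into the matrices $M$, together with the appeal to condition~\eqref{eq:condizione-a_ast} to rule out the no-solution alternative, faithfully reproduces that general analysis (including the observation that (B1) is absent and that $x_1=0$ is excluded in (A2) and (B2)). One minor slip: the determinant you quote for case (B2) is recorded in the case-B paragraph of Subsubsection~\ref{sssec:inv-q-Sylow-kerp-G789} itself, not at the end of Subsubsection~\ref{sssec:789-conj-classes}, which computes $\det(H)$ rather than $\det(M)$ --- but the value you use is the correct one.
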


\subsubsection{Computations}
\label{sssec:computation-G7}

By Subsubsection~\ref{sssec:enumerate-GF-789} the action $Z_\circ$ of $b$ on $A$ in $(G, \circ)$ is given by
\begin{equation}
Z_\circ = (\sigma(1-Z)+Z)T,
\end{equation}
and we obtain the following.    
      
\textbf{Case A.} Here $\ker(\sigma)=\Span{a_1}$ and equality~\eqref{eq:Zcirc} yields 
$$Z_\circ=
\begin{bmatrix}
 \lambda^{x_1 + 1}& 0\\
\mu(1-\lambda)\lambda^{x_1} & \lambda^{x_2}(\nu(1-\lambda)+\lambda)\\
  \end{bmatrix}.
$$

\begin{enumerate}
\item[(A1)+(A1$^\ast$)] We have $p-1$ choices for $\sigma$, and
$$
 Z_\circ \sim \begin{bmatrix}
 \lambda^{x_1 +1}& 0\\
0 & \lambda^{x_1}\\
  \end{bmatrix}.
$$
We obtain the following groups $(G, \circ)$.
\begin{description}
\item[Type 5] does not arise.
\item[Type 6] if $x_1=0$ or $x_1=-1$. If $x_1=0$ for each of the $(p-1)$ choices for $\sigma$ we have $p^2/p$ choices for $B$ giving different GF's, so $p^2(p^2-1)$ groups. 
If $x_1=-1$, then $x_1 = 1$ if and only if $q=2$. If $q>2$ there are respectively $p^3(p^2-1)$ groups, otherwise if $q=2$ there are $p^2(p^2-1)$ groups. 
\item[Type 7] does not arise.
\item[Type 8] if $x_1 \ne 0$, ${-1}$, $(q-1)/2$, and these are always three independent conditions. 
We have $p^2(p^2-1)$ groups when $x_1=1$ and $p^3(p^2-1)(q-4)$ groups when $x_1\ne 1$. They split in $p^2(p^2-1)+p^3(p^2-1)$ groups isomorphic to $G_2$, and $2p^3(p^2-1)$ groups isomorphic to $G_s$, for every $s \neq 2, s \in \mathcal{K}$.
\item[Type 9] if $x_1=(q-1)/2$. 
Since $(q-1)/2=1$ if and only if $q=3$, we have $p^2(p^2-1)$ groups when $q=3$ and $p^3(p^2-1)$ groups when $q>3$.
\end{description}

\item[(A2)+(A2$^\ast$)] We have $p-1$ choices for $\sigma$, and
$$
 Z_\circ \sim \begin{bmatrix}
 \lambda^{x_1 +1}& 0\\
0 & \lambda^{x_1}\\
  \end{bmatrix}.
$$
We obtain the following groups $(G, \circ)$.
\begin{description}
\item[Type 5] does not arise.
\item[Type 6] when $x_1=-1$. Since $x_1 = -1$ there are $2p(p-1)$ groups. 
\item[Type 7] does not arise.
\item[Type 8] if $x_1\ne 0$, ${-1}$, $(q-1)/2$, and these are always three independent conditions. 
Since $x_1 \neq -1$, there are $2p^2(p-1)(q-3)$ groups. They split in $4p^2(p-1)$ groups isomotphic to $G_s$ for every $s \in \mathcal{K}$. 
\item[Type 9] if $x_1=(q-1)/2$. Since $x_1 \neq -1$, there are $2p^2(p-1)$ groups.
\end{description}

\item[(A3)+(A3$^\ast$)]
We have $1$ choice for $\sigma$, and
$$  Z_\circ \sim
\begin{bmatrix}
\lambda^{x_1+1}& 0\\
0 &\lambda^{x_2}\\
\end{bmatrix}.
$$
We obtain the following groups $(G, \circ)$.
\begin{description}
\item[Type 5] if $1+x_1=x_2 =0$. Since $x_1 \neq 0$ and $x_2 \neq -1$, there are $p^3(p+1)$ groups. 
\item[Type 6] if either $x_1={-1}$ and $x_2\ne 0$ or $x_1 \ne {-1}$ and $x_2 = 0$. In the first case, there are $2p$ groups when $x_2=-1$, otherwise, for $x_2 \neq -1$, there are $p^3(p+1)(q-2)$ groups. 
In the second case, since $x_2=0$, we have to take $x_1\ne 0$ and there there $p^3(p+1)(q-2)$ groups. 
\item[Type 7] when $x_2 =1+x_1\ne 0$. If $x_1=0$ and $x_2=-1$, then $q=2$ and there are $p(p+1)$ groups. In both the cases $x_1 \neq 0$, $x_2=-1$, and $x_1=0$, $x_2\neq -1$ there are $2p^2(p+1)$ groups. If $x_1 \neq 0$ and $x_2 \neq -1$ there are $p^3(p+1)(q-3)$ groups. 
\item[Type 8] when $x_1 \ne -1, x_2 -1, -x_2 -1$, $x_2\ne 0$.
The case $x_1= 0$ and $x_2=-1$ does not arise. 
If $x_1 \neq 0$ and $x_2=-1$, the four conditions on $x_1$ are actually three conditions and there are $p^2(p+1)(q-3)$ groups. 
If $x_1= 0$ and $x_2\neq -1$ we get further $p^2(p+1)(q-3)$ groups.
 
Suppose now $x_1\neq 0$, $x_2 \neq -1$.
There are always four independent conditions on $x_1$ except when $x_2=1$; in the latter case the conditions become three. If $x_2=1$ then there is one invariant Sylow $q$-subgroup and $\beta$ is scalar if and only if $x_1=1=x_2$, so there are $2p^2+ p^3(p+1)(q-4)$ groups. If $x_2 \neq 1$,
there is one invariant Sylow $q$-subgroup and $\beta$ can always be scalar except when $x_2=(q-1)/2$, thus there are $p^3(p+1)(q-4)$ groups when $x_2=(q-1)/2$ and $2p^2 (q-4)+p^3 (p+1)(q-5)(q-4)$ when $x_2\ne (q-1)/2$.

Summing up, we have just obtained
$2p^2(p+1)(q-3)+2p^2(q-3)+p^3(p+1)(q-4)(q-3) $ groups. They split in $4p^2(p+1)+4p^2+2p^3(p+1)(q-4) $ groups isomorphic to $G_s$ for every $s \in \mathcal{K}$.
\item[Type 9] if $x_1 \ne -1$, $x_2\ne 0$ and $1+x_1+x_2 =0$.
Here $x_1=x_2$ if and only if $x_2=(q-1)/2$.
If $x_2=-1$ then $x_1=0$ and there are $p(p+1)$ groups. If $x_2=(q-1)/2=x_1$ there are $2p^2$ groups. Otherwise $x_2\ne 0,-1,(q-1)/2$ and there are $p^3(p+1)(q-3)$ groups.
\end{description}
\end{enumerate}

\textbf{Case B.} Here $\ker(\sigma)=\Span{a_1a_2}$, $x_1=x_2$.
\begin{enumerate}
\item[(B2)] Here $\mu+1=\nu$, and we have $p(p-1)$ choices for 
$$\sigma=
\begin{bmatrix}
 -\mu & -\mu-1\\
  \mu & \mu+1\\
  \end{bmatrix}.$$
Equality~\eqref{eq:Zcirc} yields
$$Z_\circ=
\begin{bmatrix}
 \lambda^{x_1 + 1}-\lambda^{x_1}\mu(1-\lambda)& -\lambda^{x_1} (\mu+1) (1-\lambda)\\
 \lambda^{x_1}\mu(1-\lambda) & \lambda^{x_1 + 1}+\lambda^{x_1}(\mu+1)(1-\lambda)\\
  \end{bmatrix},
$$
so that 
$$ Z_\circ \sim \begin{bmatrix}
\lambda^{x_1+1} & 0 \\
0 & \lambda^{x_1} 
\end{bmatrix} .$$
We obtain the following groups $(G, \circ)$.
\begin{description}
\item[Type 5] does not arise.
\item[Type 6] if $x_1=-1$, and there are $p^2(p-1)$ groups.
\item[Type 7] does not arise.
\item[Type 8] when $x_1 \ne -1, 0, (q-1)/2$. These are three independent conditions and there are $p^3(p-1)(q-3)$ groups.
\item[Type 9] if $x_1\ne -1, 0$, $x_1=(q-1)/2$, and there are $p^3(p-1)$ groups, which split in $2p^3(p-1)$ groups isomorphic to $G_s$ for every $s$. 
\end{description}
\end{enumerate}

As for the conjugacy classes, here an automorphism of $G$ has the form $\phi=\iota(x)\delta$, where $x\in A$ and $\delta \in \GL(2,p)$. Therefore we can refer to Subsubsection~\ref{sssec:789-conj-classes} for the computation of the conjugacy classes.

Summing up all the results obtained for the kernel of size $p$ we have the following.

\begin{recap}
For $G$ of type 7 and $\gamma$ a GF on $G$ with kernel of size $p$ and such that $\gamma(A)\leq \Inn(G)$, we obtain the following.
\begin{enumerate}
\item $p^3(p+1)$ groups $(G,\circ)$ of type 5 form one class of length $p^3(p+1)$.
\item the groups $(G,\circ)$ of type 6 split in this way:
	 \begin{itemize}
	 \item[$-$] $p^2(p^2-1)$ groups form one class of length $p^2(p^2-1)$;
	 \item[$-$] $p^3(p^2-1)$ groups form one class of length $p^3(p^2-1)$;
	 \item[$-$] $p^2(p+1)$ groups form one class of length $p^2(p+1)$;
	 \item[$-$] $2p^3(p+1)(q-2)$ groups split in $2(q-2)$ classes of length $p^3(p+1)$;
	 \end{itemize}
	 In total they split in $2q-1$ classes.
\item the $2p^2(p+1)+p^3(p+1)(q-3)$ groups $(G,\circ)$ of type 7 split in 
	 \begin{itemize}
	 \item[$-$] $2$ classes of length $p^2(p+1)$;
	 \item[$-$] $q-3$ classes of length $p^3(p+1)$;
	 \end{itemize}
	 In total they split in $q-1$ classes.	 
\item the groups $(G,\circ)$ of type 8 split in this way:
	 \begin{itemize}
	 \item[$-$] $p^2(p^2-1)$ groups isomorphic to $G_2$, which form one class of length $p^2(p^2-1)$.
 	 \item[$-$] $p^3(p^2-1)$ groups isomorphic to $G_2$, which form one class of length $p^3(p^2-1)$, and $2p^3(p^2-1)$ groups isomorphic to $G_s$, which split in two classes of length $p^3(p^2-1)$, for every $s\neq 2, s \in \mathcal{K}$.	 
	 \item[$-$] $4p^2(p+1)$ groups isomorphic to $G_s$, which split in $4$ classes of length $p^2(p+1)$, for every $s \in \mathcal{K}$.	
	 \item[$-$] $2p^3(p+1)(q-3)$ groups isomorphic to $G_s$, which split in $2(q-3)$ classes of length $p^3(p+1)$, for every $s\in\mathcal{K}$.
 	 \end{itemize}
In total they split in $2q$ classes for every $G_s$.
 \item the groups $(G,\circ)$ of type 9 split in this way:
	 \begin{itemize}
	 \item[$-$] $p(p+1)$ groups form one class of length $p(p+1)$;
	 \item[$-$] $p^3(p+1)(q-2)$ groups split in $(q-2)$ classes of length $p^3(p+1)$;
	 \item[$-$] if $q=3$, $p^2(p^2-1)$ groups form one class of length $p^2(p^2-1)$;
 	 \item[$-$] if $q>3$, $p^3(p^2-1)$ groups form one class of length $p^3(p^2-1)$;
 	 \end{itemize}
In total they split in $q$ classes.
\end{enumerate}
\end{recap}

\subsection{The case \texorpdfstring{$\Size{\ker(\gamma)}= p$}{ker = p} \texorpdfstring{and $\gamma(A)\not\leq \Inn(G)$}{not inn}}
\label{sub:G7-non-int}

Let $\ker(\gamma)=\Span{a_1}$. 
We claim that $\gamma(G)\cap \iota(A)=\Set{1}$.
Indeed, since $q \mid p-1$, $\gamma(G)$ (of order $pq$) has a unique subgroup of order $p$. 
Moreover $A$ is the Sylow $p$-subgroup of both $G$ and $(G, \circ)$, so that $\gamma(A) \leq \gamma(G)$ and the order of $\gamma(A)$ is a divisor of $p^2$. Therefore the unique subgroup of order $p$ of $\gamma(G)$ is necessarily $\gamma(A)$. Now, either $\Size{\gamma(G)\cap \iota(A)}=1$, and we are done, or $\Size{\gamma(G)\cap \iota(A)}=p$. In the latter case $\gamma(G)\cap \iota(A)=\gamma(A)$, namely $\gamma(A)\leq \Inn(G)$, contradiction.

Now, since $\gamma(G)$ intersects $\iota(A)$ trivially, 
\begin{equation*}
  \gamma(G) = \Span{ \iota(c) \alpha, \beta},
\end{equation*}
where $c \in A$, $\alpha \in \Aut(G)$ has order $p$, and $\beta \in \Aut(G)$ has order $q$.
So $\alpha_{|A}$ is an element of order $p$ in $\GL(2,p)$, and $\beta_{|A}$ is an element of order $q$ in $\GL(2, p)$. By Subsection~\ref{sss:GL2_2} $\alpha_{|A}$ fixes $\Span{a_{1}}$.
Moreover, by Subsection~\ref{sss:GL2_4}, $\Span{a_{1}}$ is an eigenspace for $\beta_{|A}$ too, and if $\Span{a_2}$ is another eigenspace for $\beta_{|A}$ for a suitable choice of $a_2$ we can write 
\begin{equation*}
  \alpha_{|A}
  =
  \begin{bmatrix}
    1 & 0\\
    1 & 1
  \end{bmatrix},
  \quad
  \beta_{|A}
  =
  \begin{bmatrix}
    \lambda^{x_1} & 0\\
    0 & \lambda^{x_2}
  \end{bmatrix},
\end{equation*}
with respect to the basis $a_1, a_2$, where $\lambda \in \mathbb{F}_p^{\ast}$ of order $q$, and $0 \leq x_1, x_2 \leq q-1 $ not both zero.

The subgroup $\Span{\beta}$ of $\Aut(G)$, of order $q$, acts on the set $\mathcal{Q}$ of the Sylow $q$-subgroups of $G$, and since $\Size{\mathcal{Q}}=p^2$, this action has at least one fixed point. Suppose it is $\Span{b}$.
We can suppose that $\Span{b}$ is fixed by $\alpha$ too, in fact otherwise it will be fixed by $\alpha':=\iota(x)\alpha$ for a suitable $x \in A$, and up to an appropriate adjustment of $c$, $\gamma(G)=\Span{\iota(c)\alpha', \beta}$.

Thus we assume in the following that $\alpha_{|A}$ and $\beta_{|A}$ are in the same copy of $\GL(2,p)$.

Note that the Sylow $q$-subgroups $\Span{xb}$ fixed by $\beta$ are those for which $x$ satisfies $x^{1-\beta}=1$. Therefore there is a unique fixed Sylow $q$-subgroup when $x_1,x_2 \ne 0$, and there are $p$ when either $x_1 = 0$ or $x_2=0$. 

\subsubsection{The case $\gamma(G)$ abelian}
Suppose first $\gamma(G)$ is abelian. Then $[\alpha, \beta]=1$ modulo $\iota(A)$, so that $\beta_{|A}$ is a non-trivial scalar matrix, namely $x_1=x_2 \ne 0$. We can assume that $\beta_{|A}$ is scalar multiplication by $\lambda$. Moreover by the discussion above $\Span{b}$ is the unique $\beta$-invariant Sylow $q$-subgroup, as $x_1,x_2\ne 0$.

Since $\gamma(G)$ is abelian, $\beta$ has to centralise $\iota(c)\alpha$, and since
\begin{equation*} 
 b^{\beta\iota(c)\alpha} = c^{(-1+\lambda^{-1})\alpha}b, \qquad 
 b^{\iota(c)\alpha\beta} = c^{(-1+\lambda^{-1})\alpha\beta}b,  
\end{equation*}
and $\beta$ does not have fixed points in $A$, we get $c=1$.

Therefore 
$$ \gamma(G)=\Span{\alpha, \beta} ,$$
where both $\alpha$ and $\beta$ fix $\Span{b}$ pointwise. In particular $\Span{b}$ is $\gamma(G)$-invariant.

We will have $\gamma(a_2)=\alpha^{i}$ for $i\ne 0$. Moreover, since $b$ is fixed by $\gamma(G)$, $\gamma(b)^q = \gamma(b^{q})=1$, namely $\gamma(b)=\beta^{j}$. Now we show that necessarily $j=-1$.

In fact, in this case $(G, \circ)$ can be of type 5 or 6, as they are the only types which have an abelian quotient of order $pq$. We have
$$  
b^{\ominus 1} \circ a \circ b = b^{-1}a^{\gamma(b)} b
= a^{\beta^{j}\iota(b)} = a^{\lambda^{j+1}}.
$$
Denoting by $Z_\circ$ the action of $\iota(b)$ on $A$ in $(G, \circ)$, and taking into account that 
$$a_1^{\circ k}=a_1^k \ \text{  and  } \ a_2^{\circ k}=a_1^{i(\frac{k(k-1)}{2})}a_2^k ,$$ 
we have that $Z_\circ$ has to be scalar multpilication by $1$. In this case $j=-1$, $(G, \circ)$ is of type 5 and $\gamma(b)=\beta^{-1}=\iota(b^{-1})$.

Now we show that such an assignment extends to a gamma function, namely if $a=a_1^{s}a_2^{t}$ the maps defined by
$$ \gamma(ab^{k})=\alpha^{it\lambda^{k}}\beta^{-k}$$
satisfy the GFE. Let $a'=a_1^{x}a_2^{y}$. Then
$$\gamma(ab^{k})\gamma(a'b^{m})= \alpha^{it\lambda^{k}+iy\lambda^{m}}\beta^{-(k+m)}, $$
and
\begin{align*}
\gamma((ab^{k})^{\gamma(a'b^{m})}a'b^{m})
&= \gamma(a^{\alpha^{iy\lambda^{m}}\beta^{-m}} b^{k} a' b^{m}) \\
&= \gamma((a_2^{t})^{\alpha^{iy\lambda^{m}}\beta^{-m}} a_2^{y \lambda^{-k}} b^{k+m}) \\
&= \gamma(a_2^{t \lambda^{-m}+y\lambda^{-k}} b^{k+m}) \\
&= \alpha^{i(t\lambda^{-m}+y\lambda^{-k})\lambda^{k+m}}\beta^{-(k+m)} .
\end{align*}
Since there are $p+1$ choices for $\ker(\gamma)$, $p-1$ choices for $i$, and $p^2$ choices for the Sylow $q$-subgroup fixed by $\beta$, we obtain $p^2(p^2-1)$ groups $(G, \circ)$.

As to the conjugacy classes, $B=\Span{b}$ is the unique $\gamma(B)$-invariant Sylow $q$-subgroup, so that by Lemma~\ref{lemma:conjugacy}-\eqref{item:invariance-under-conjugacy} 
$\bar{B}=B^{\iota(a)}$ is the unique $\gamma^{\iota(a)}(\bar{B})$-invariant Sylow $q$-subgroup. Since $\iota(A)$ conjugates transitively the Sylow $q$-subgroups of $G$, all classes have order a multiple of $p^2$.

Suppose now $\delta \in \GL(2,p)$. Then $\gamma(a_1^{\delta^{-1}})=1$ if and only if $\delta_{12}=0$.
Moreover,
\begin{align*}
\gamma^{\delta}(a_2) &= \delta^{-1}\gamma(a_2^{\delta_{22}^{-1}})\delta 
= \delta^{-1}\alpha^{i\delta_{22}^{-1}} \delta ,
\end{align*}
and it is equal to $\alpha^{i}$ when $\delta_{11}=\delta_{22}^2$. Since every $\delta$ stabilises $\beta$, as $[\delta, \beta]=1$, we have that the stabiliser has order $p(p-1)$. Therefore there is one class of length $p^2(p^2-1)$.

\subsubsection{The case $\gamma(G)$ non-abelian} 
Suppose now $\gamma(G)$ is non-abelian.
Here $\beta$ normalises but does not centralise $\iota(c)\alpha$. Therefore,
\begin{equation*}
(\iota(c) \alpha)^{\beta } =
\iota(c^{\beta})\beta^{-1}\alpha\beta
= \iota(c^{\beta})\alpha^{\lambda^{x_1-x_2}} 
\end{equation*}
is an element of $\Span{\iota(c)\alpha}$, and $x_1 \ne x_2$.
Since $(\iota(c)\alpha)^k=\iota(c^{1+\alpha^{-1}+\cdots +\alpha^{-(k-1)}})\alpha^k$, we have that $(\iota(c)\alpha)^{\beta} \in \Span{\iota(c)\alpha}$ if and only if
\begin{equation*}
c^{\beta} =
c^{1 + \alpha^{-1} + \dots + \alpha^{-(\lambda^{x_1-x_2} -1)}} .
\end{equation*}
Writing $c=a_1^{u}a_2^{v}$, the latter yields
\begin{equation}
\label{eq:system}
\begin{cases}
u(1-\lambda^{-x_2}) = \frac{1}{2}v\lambda^{-x_2}(1-\lambda^{x_1-x_2}) \\
v\lambda^{x_2} = v\lambda^{x_1-x_2} 
\end{cases}.
\end{equation}
From the second equation we obtain either $v = 0$ or $x_1 \equiv 2x_2 \bmod q$.

\textbf{First case.} 
If $v=0$ the first equation yields $u=0$ or $x_2=0$. 

If $x_2=0$ and $u \ne 0$, we have
$$ \gamma(G)= \Span{ \iota(a_1^{u})\alpha, \beta } , $$
where $\alpha_{|\Span{b}}, \beta_{|\Span{b}}=1$. 
In this case there are $p$ Sylow $q$-subgroups fixed by $\beta$, namely $\Span{a_2^t b}$. Among these, those fixed by $\iota(a_1^{u})\alpha$ too are
$$ a_2^t b = (a_2^t b)^{\iota(a_1^{u})\alpha} = (a_2^t)^{\alpha} a_1^{u(-1+\lambda^{-1})\alpha} b = (a_2^t)^{\alpha} a_1^{u(-1+\lambda^{-1})} b,$$
that is, those with $t$ such that $(a_2^t)^{1-\alpha}=a_1^{u(-1+\lambda^{-1})}$, namely $t=u(1-\lambda^{-1})$.

Since $\langle a_2^{u(1-\lambda^{-1})}b \rangle$ is fixed by both $\iota(a_1)\alpha$ and $\beta$, 
 we can suppose that $\gamma(G) = \Span{\alpha, \beta}$ with $\alpha$ and $\beta$ fixing the same Sylow $q$-subgroup (which will still denote by $\Span{b}$).

By the discussion above, when $v = 0$ we can always suppose
$$\gamma(G)=\Span{\alpha, \beta } ,$$
with $\alpha, \beta$ fixing $\Span{b}$. Moreover a Sylow $q$-subgroup $\Span{yb}$ is $\gamma(G)$-invariant if and only if $y^{\alpha}=y^{\beta}=y$. The latter has $y=1$ has unique solution except when $x_1=0$, which gives the $p$ solutions $y \in \Span{a_1}$.

We will have $\gamma(a_2)=\alpha^{i}$, $i\neq 0$, and replacing $\beta$ with another element of order $q$ in $\gamma(G)$ we can suppose $\gamma(b)=\beta$. 

Now we show that such an assignment extends to a gamma function if and only if $x_1=2x_2 +1$. If $a=a_1^{s}a_2^{t}$, consider the maps defined by
$$ \gamma(ab^{k})=\alpha^{it\lambda^{-k x_2}}\beta^{k} .$$
With computations similar to the previous case, for $a'=a_1^{x}a_2^{y}$ we find that
$$\gamma(ab^{k})\gamma(a'b^{m})= \alpha^{it\lambda^{-k x_2}+iy\lambda^{-m x_2 -k(x_1-x_2)}}\beta^{k+m} ,$$
and
$$
\gamma((ab^{k})^{\gamma(a'b^{m})}cb^{m})
= \alpha^{i(t\lambda^{-kx_2}+y\lambda^{-k-(k+m)x_2})}\beta^{k+m} ,
$$
so that they are equal precisely when $x_1=2x_2 +1$.

Since there are $p+1$ choices for the kernel $\Span{a_1}$, $p$ choices for $\Span{a_2}$, $p-1$ for the image of $a_2$, $q-1$ for $x_2$ such that $\beta_{|A}$ is non-scalar, and $p^2$ for the Sylow $q$-subgroup $\Span{b}$ fixed by $\alpha$ and $\beta$, we have $p^3(p^2-1)(q-1)$ GF. They are all distinct if $\Span{b}$ is the unique $\gamma(G)$-invariant Sylow $q$-subgroup. Otherwise, when $x_2=\frac{q-1}{2}$, the $p$ choices for a $\gamma(G)$-invariant Sylow $q$-subgroups yield the same GF, so there are $p^2(p^2-1)$ distinct GF.

Note that we do not consider the choices for the images of $b$, as if $\gamma(b)=\alpha^{i}\beta^{k}$, then the choices for $k$ correspond to the choices for $x_2$, and since $\alpha^{i}\beta^{k}$ will have eigenspaces $\Span{a_1}, \Span{a_3}$, for $a_3 \in A\setminus \Span{a_1}$, the choices for $i$ correspond to the choices for the second eigenspace, namely to the choice for $\Span{a_2}$.

Since 
$$ b^{\ominus 1} \circ a \circ b = a^{\gamma(b)\iota(b)} ,$$
and $a_1^{\circ k}=a_1^k$, $a_2^{\circ k}=a_2^k $ modulo $\Span{a_1}$, denoting by $Z_\circ$ the action of $\iota(b)$ on $A$ in $(G, \circ)$, we have
$$ Z_\circ \sim \begin{bmatrix}
    \lambda^{2x_2+2} & 0\\
    0 & \lambda^{x_2+1}
  \end{bmatrix}. $$
If $q=2$ the unique choice for $x_1, x_2$ compatible with the conditions $\beta$ non-scalar and $x_1=2x_2+1$ is $x_1=1$ and $x_2=0$. Therefore $Z_\circ=\diag(1, \lambda)$, and $(G, \circ)$ is of type 6. Therefore:
\begin{description}
\item[Type 6] is for $q=2$ and $x_2=0$. Since $x_1 \ne 0$ we obtain $p^3(p^2-1)$ groups.
\end{description}

Suppose now $q>2$. In this case if $x_2 = -1$, then $x_1=-1$ too so that $\beta$ is scalar, against the assumption $\gamma(G)$ non-abelian. So suppose $x_2 \ne -1$, so that the groups $(G, \circ)$ can have only types 8 or 9.

\begin{description}
\item[Type 9] is when $\det(Z_\circ)=1$, namely for $3(x_2+1)=0$. If $q>3$ then there are no groups of type 9. If $q=3$ then for $x_2=0$ there is a unique invariant Sylow $q$-subgroup, and we have $p^3(p^2-1)$ groups. If $x_2=1$ there are $p$ invariant Sylow $q$-subgroups, and there are $p^2(p^2-1)$ groups. 
Thus in total $p^2(p^2-1) + p^3(p^2-1)$ groups.
\item[Type 8] is when $q>3$ and $\det(Z_\circ)\ne 1$, namely $x_2 \ne -1$. For each choice of $x_2$ there are $p^3(p^2-1)$ groups, except when $x_2=\frac{q-1}{2}$, in which there are $p^2(p^2-1)$ groups. Thus in total $p^2(p^2-1) + p^3(p^2-1)(q-2)$ groups.

Note that in this case $Z_\circ \sim \diag(\mu^{2},\mu)$, where $\mu=\lambda^{x_2+1}$, therefore the groups $(G, \circ)$ are all isomorphic to $G_2$.
\end{description}

As to the conjugacy classes, when there is a unique Sylow $q$-subgroup $B$ which is $\gamma(B)$-invariant, as in the previous case, all classes have order a multiple of $p^2$. 
Otherwise $x_1=0$ and there are $p$ invariant Sylow $q$-subgroups. In this case $\iota(x)$ stabilises $\gamma$ if and only if $\iota(x)$ commutes with both $\alpha$ and $\beta$, namely when $x \in \Span{a_1}$.

Now suppose $\delta=(\delta_{ij})\in \GL(2,p)$. As above, $\gamma^{\delta}(a_1)=1$ and $\gamma^{\delta}(a_2)=\gamma(a_2)$ if and only if $\delta_{12}=0$ and $\delta_{11}=\delta_{22}^2$.
Moreover 
$$ \gamma^{\delta}(b) = \delta^{-1}\gamma(b)\delta
= \delta^{-1} \beta \delta,
$$
and $\beta$, which is non-scalar, is centralised by $\delta$ when $\delta$ is a diagonal matrix. 

Therefore the orbits have length $p^3(p^2-1)$ if $x_1 \ne 0$ and $p^2(p^2-1)$ if $x_1=0$.

\textbf{Second case.} 
Suppose now $v \ne 0$ and $x_1 \equiv 2x_2 \bmod q$. Then~\eqref{eq:system} yields $v=-2u$ and we have
$$\gamma(G)=\Span{\iota(c)\alpha, \beta} ,$$
where $\alpha$ and $\beta$ fix $\Span{b}$, and $c=a_1^{u}a_2^{-2u}$.

replacing $a_1$ with a suitable element in $\Span{a_1}$ we can suppose $u=\frac{1}{2}$.

We will have $\gamma(a_2)=(\iota(c)\alpha)^{i}$ for some $i\ne 0$, and $\gamma(b)=(\iota(c)\alpha)^{j}\beta^{k}$, with $k \ne 0$, as it is an element of order $q$ in $\gamma(G)$.
replacing $\beta$ with a suitable element in $\Span{\beta}$ we can suppose that $k=1$. 

Now we show that if the assignment above extends to a GF, then $i=1$.
In fact, denoting by $M_i$ the matrix $1+\alpha^{-1}+\cdots+\alpha^{-(i-1)}$, we will have
\begin{align*}
 b^{\ominus 1} \circ a_{2} \circ b  
 &= (b^{\gamma(b)^{-1}\gamma(a_2)\gamma(b)})^{-1}a_2^{\gamma(b)}b \\
 &= (b^{\iota(c^{M_{i}})\alpha^{i}\beta})^{-1}a_2^{\alpha^{j}\beta}b \\
 &= b^{-1} c^{(1-\lambda^{-1})M_i\alpha^i\beta} a_2^{\alpha^{j}\beta}b \\
 &= c^{(\lambda-1)M_i\alpha^i\beta} a_2^{\lambda \alpha^{j}\beta} \\
 &= a_1^{(\frac{1}{2}(1-\lambda)i^2+j\lambda)\lambda^{2 x_2}} a_2^{((1-\lambda)i + \lambda)\lambda^{ x_2}} .
\end{align*}
Applying $\gamma$ to both the sides we obtain
$$ \gamma(b)^{-1}\gamma(a_2)\gamma(b)=\gamma(a_2)^{((1-\lambda)i + \lambda)\lambda^{x_2}}, $$
and comparing with
$$ \gamma(b)^{-1}\gamma(a_2)\gamma(b) = \beta^{-1}\iota(c^{M_i})\alpha^i = \iota(c^{M_i \beta})\alpha^{i\lambda^{x_2}}
$$
we obtain $(1-\lambda)i+\lambda=i$, so that $i=1$.

Now we show that if the map $\gamma$ extends to a GF then there always exist at least one Sylow $q$-subgroup $B$ which is $\gamma(B)$-invariant. 

Write $x=a_1^{w}a_2^{z}$ for an element in $A$. If $\gamma$ extends to a GF, then 
$$ \gamma(xb)= \gamma(a_2^{z\lambda^{- x_2}})\gamma(b) = \iota(c^{M_K})\alpha^{K}\beta, $$
where $K:=j+z\lambda^{x_2}$. 

Since
\begin{equation*}
(xb)^{\gamma(xb)} 
= (xb)^{\iota(c^{M_K})\alpha^{K}\beta} 
= (x c^{(-1+\lambda^{-1})M_K})^{\alpha^{K}\beta} b ,
\end{equation*}
$(xb)^{\gamma(xb)} $ belongs to $\Span{xb}$ if and only if 
\begin{equation}
\label{eq:G-7-inv-q-syl}
 x^{1-\alpha^{K}\beta} = c^{(-1+\lambda^{-1})M_K \alpha^K \beta} .
\end{equation}
Writing $x$ and $c$ in the basis $\Span{a_1, a_2}$ and looking at their second component in~\eqref{eq:G-7-inv-q-syl} we find
\begin{equation} 
\label{eq:G-7-inv-q-syl-z}
z(\lambda^{-1}-\lambda^{x_2})= j\lambda^{x_2} (1-\lambda^{-1}) .
\end{equation}

If $x_2 \ne -1$ then there is a unique solution for $z$ and in this case the first component~\eqref{eq:G-7-inv-q-syl} yields
$$ w(1-\lambda^{2 x_2})=j^2 \lambda^{2 x_2} \frac{(1-\lambda^{-1})}{2(\lambda^{-1}-\lambda^{x_2})^2} (1-\lambda^{2 x_2}) ,$$
so that, since $q>2$ (as $x_2 \ne 0, -1$), there is a unique invariant Sylow $q$-subgroup.

Suppose now $x_2=-1$. By induction one can show that in this case if the map $\gamma$ is a GF then
$$ \gamma(b^m)=\gamma(a_2)^{j(m-(1+\lambda+\cdots +\lambda^{m-1}))}\gamma(b)^{m} .$$
Looking at the exponent of $\alpha$ in $\gamma(b^q)=1$ we obtain that $jq=0$, namely $j=0$.

Therefore~\eqref{eq:G-7-inv-q-syl-z} yields that there are $p$ solutions for $z$. Moreover in this case 
$$
w(1-\lambda^{-2})= \frac{1}{2} z^2 (1+\lambda^{-1}),
$$
so that there are $p$ invariant Sylow $q$-subgroups when $q>2$ and $p^2$ when $q=2$.

Now, since the Sylow $q$-subgroup $\Span{b}$ is invariant, $\gamma(b)=\beta^k$.

With computations similar to the previous cases one can show that the assignment
\begin{equation*}
\begin{cases}
\gamma(a_2)=\iota(c)\alpha \\
\gamma(b)=\beta
\end{cases}
\end{equation*}
extends to a GF, namely if $a=a_1^{s}a_2^{t}$ then the map defined as
$$ \gamma(ab^{k})=\iota(c^{M_{t\lambda^{-kx_2}}})\alpha^{t\lambda^{-k x_2}}\beta^{k} $$
satisfies the GFE.

Since there are 
$p+1$ choices for $\Span{a_1}$, 
$p$ choices for $\Span{a_2}$,
$p-1$ choices for $a_2$ in $\Span{a_2}$,
$q-1$ choices for $x_2$, and
$p^2$ choices for the Sylow $q$-subgroup fixed by $\alpha$ and $\beta$, we have $p^3(p^2-1)(q-1)$ GF. They are all distinct if $\Span{b}$ is the unique invariant Sylow $q$-subgroup. Otherwise, when $x_2=-1$, the $p$ (respectively $p^2$ when $q=2$) choices for an invariant Sylow $q$-subgroup yield the same GF, and so there are $p^2(p^2-1)$ (respectively $p(p^2-1)$) distinct GF.

We have
\begin{align*}
 b^{\ominus 1} \circ a_{1} \circ b  &= a_1^{\lambda^{2 x_2+1}}, \\
 b^{\ominus 1} \circ a_{2} \circ b  
 &= a_1^{\frac{1}{2}(1-\lambda)\lambda^{2 x_2}} a_2^{\lambda^{x_2}} ,
\end{align*}
and since $a_1^{\circ k}= a_1^k$ and $a_2^{\circ k}= a_2^{k}$ modulo $\Span{a_1}$,
denoting as usual by $Z_{\circ}$ the action of $\iota(b)_{|A}$ in $(G, \circ)$, we have
\begin{equation*} 
Z_\circ \sim 
\begin{bmatrix}
    \lambda^{2 x_2 + 1} & 0\\
    0 & \lambda^{x_2}
  \end{bmatrix}.
\end{equation*} 
\begin{description}
\item[Type 6] when $Z_{\circ}$ has an eigenvalue $1$, namely for $x_2=\frac{q-1}{2}$. In this case there are $p^3(p^2-1)$ groups.
\item[Type 7] when $Z_{\circ}$ is scalar, namely for $x_2=-1$. In this case there are $p^2(p^2-1)$ groups if $q>2$ and $p(p^2-1)$ if $q=2$.
(Note that for $x_1=-1$,  $b^{\ominus 1} \circ a_{2} \circ b = a_2^{\circ \lambda^{x_2}}$, so that $Z_\circ$ is actually a scalar matrix.)
\item[Type 8] when $q>3$ and $x_2 \ne -1, \frac{q-1}{2}, \frac{q-1}{3}$, so there are $p^3(p^2-1)(q-4)$ groups of type 8. 

Here each group $(G, \circ)$ is isomorphic to $G_{2+x_2^{-1}}$ for a certain $x_2$. For each $s\ne 2$, $s \in \mathcal{K}$, there are $2p^3(p^2-1)$ groups isomorphic to $G_s$, namely those obtained for $x_2$ such that $2+x_2^{-1}=s$ and $2+x_2^{-1}=s^{-1}$, while there are $p^3(p^2-1)$ groups isomorphic to $G_2$, as they can be obtained just for $x_2$ such that $2+x_2^{-1}=2^{-1}$. 

\item[Type 9] when $x_2 \ne -1, \frac{q-1}{2}$ and $Z_{\circ}$ has determinant equal to $1$, namely when $x_2=\frac{q-1}{3}$ and $q>3$. In this case we obtain $p^3(p^2-1)$ groups.
\end{description}

As to the conjugacy classes, with computations similar to the previous cases we obtain orbits of length $p^3(p^2-1)$ when $x_2 \ne -1$, otherwise $x_2=-1$ and there is unique orbit of length $p^2(p^2-1)$ if $q>2$, and $p(p^2-1)$ if $q=2$.

\subsection{The cases \texorpdfstring{$\Size{\ker(\gamma)}=q$ and $\Size{\ker(\gamma)}=1$ when $q=2$}{ker q, ker 1}}
\label{G7-and-q-is-2}

The case $q=2$ was treated by Crespo in~\cite{Cre2021}. In the following we show that we obtain the same results using the gamma functions.

As explained in Subsection~\ref{ssec: duality-G-7}, in the case $\gamma(A)\leq \Inn(G)$ and $q=2$ we can not assume that $p\mid \Size{\ker(\gamma)}$. But, except for the case when both $\gamma$ and $\gammatilde$ have kernel of size not divisible by $p$, we will use duality to swich to a more convenient kernel as we done for the type 9. In particular, here we have to consider also the kernels of size $q$ and $1$.

\subsubsection{The case \texorpdfstring{$\Size{\ker(\gamma)}=q=2$}{ker q}}
\label{sssec:G7-kernel-q}
With exactly the same argument used for the type 9 (see Subsection~\ref{ssec:G9-kernel q}), we obtain $p^2$ groups of type 5, which form a unique orbit of length $p^2$.

\subsubsection{The case \texorpdfstring{$\Size{\ker(\gamma)}=1$}{ker = 1}}
\label{sssec:G7-ker-1}

Also in this case we proceed as for the type 9 (see Subsection~\ref{ssec:G9-ker-1}), namely we divide the GF's of this case according to the size of  $\ker(\gammatilde)$, and for those for which $\Size{\ker(\gammatilde)}\ne1$ we use the previous computations applied to $\gammatilde$. 
The others, for which $\Size{\ker(\gammatilde)}=1$, can be obtained using Proposition~\ref{prop:more-lifting}.

Arguing as in Subsection~\ref{ssec:G9-ker-1}, we obtain the following.
\begin{itemize}
\item When $\Size{\ker(\gammatilde)}=p^2q$, $\gamma=\tilde{\gammatilde}$ corresponds to the left regular representation, and this gives one group of type 7. 
\item In the remaining cases for which $q\mid \Size{\ker(\gammatilde)}$, none of the corresponding $\gamma$'s has trivial kernel.
\item If $\Size{\ker(\gammatilde)}=p^2$, the $p^2$ GF's $\gammatilde$ corresponding to $(G,\circ)$ of type 5 are such that the corresponding $\gamma$ have kernel of size $q$, and all the others $\gammatilde$ correspond to $\gamma$ with kernel of size $1$. Therefore, in the case of kernel of size $p^2$ we double all the GF's except those corresponding to $(G,\circ)$ of type 5.
\item If $\Size{\ker(\gammatilde)}=p$, again, $\ker(\gamma)$ can have size $1$ or $q$.
By Subsection~\ref{ssec:G7_kerp-inn}, the $\gammatilde$'s such that $p\mid \Size{\ker(\gammatilde)}$ and $p \nmid \Size{\ker(\gamma)}$ are those of the cases A1 and A1$^{\ast}$. Moreover, for every $\gammatilde$ belonging to these cases the corresponding $\gamma$ are injective. Therefore, in the case of kernel of size $p$ we double the GF's of cases A1 and A1$^{\ast}$.
\item When $\Size{\ker(\gammatilde)}  = 1$, following the argument in Subsection~\ref{ssec:G9-ker-1}, we obtain $2p(p-1)(p-2)\cdot \frac{p(p+1)}{2}=p^2(p^2-1)(p-2)$ groups $(G, \circ)$ of type 6, which split in $p-2$ classes of length $p^2(p^2-1)$. 
\end{itemize}

\subsection{Results}

Taking into account what we said in Subsection~\ref{ssec: duality-G-7}, we obtain the following.

\begin{prop}
\label{prop:G7}
Let $G$ be a group of order $p^2q$, $p>2$, of type 7.
Then in $\Hol(G)$ there are:
\begin{enumerate}
\item $p^{3}(3p+1)$ groups of type 5, which split in two conjugacy classes of length $p^2$, one conjugacy class of length $p^3(p+1)$ and two conjugacy classes of length $p^2(p^2-1)$;
\item if $q=2$
\begin{enumerate}
	\item $p^3(p+1)(3p + 1)$ groups of type 6, which split in 
$4$ conjugacy classes of length $p^{2}(p+1)$, 
$p+4$ conjugacy classes of length $p^2(p^2-1)$, and
two conjugacy classes of length $p^{3}(p^2-1)$;

in total $10+p$ conjugacy classes;
	\item $2+p(p+1)(2p-1)$ groups of type 7, which split in 
two conjugacy classes of length $1$, 
two conjugacy classes of length $p(p^2-1)$, and
one conjugacy class of length $p(p+1)$;

in total $5$ conjugacy classes;
\end{enumerate}
\item if $q>2$
\begin{enumerate}
	\item $4p^2(p+1)(p^2+pq-2p)$ groups of type 6, which split in 
$4$ conjugacy classes of length $p^{2}(p+1)$, 
$4(q-2)$ conjugacy classes of length $p^3(p+1)$, 
$4$ conjugacy classes of length $p^2(p^2-1)$, 
and $4$ conjugacy classes of length $p^3(p^{2}-1)$;

in total $4(q+1)$ conjugacy classes;
\item $2+p^2(2p^2+pq+2q-4)$
groups of type 7, which split in 
two conjugacy classes of length $1$, 
$2(q-2)$ conjugacy classes of length $p^2$,
two conjugacy classes of length $p^2(p+1)$,
two conjugacy classes of length $p^2(p^2-1)$, 
and $q-3$ conjugacy classes of length $p^3(p+1)$;

in total $3q-1$ conjugacy classes;
\item if $q=3$, $p(p+1)(2p^3+3p^2-2p+1)$ groups of type 9, which split in 
one conjugacy class of length $p^3(p+1)$,
two conjugacy classes of length $p^2(p+1)$, 
one conjugacy class of length $p(p+1)$, 
two conjugacy classes of length $p^3(p^2-1)$,
and $4$ conjugacy classes of length $p^2(p^2-1)$;

in total $10$ conjugacy classes;
\item if $q>3$, 
\begin{itemize}
\item[$-$] $2p^2(p+1)(p^2q+pq-4p+2)$ groups of type 8 isomorphic to $G_2$, which split in 
$4(q-3)$ conjugacy classes of length $p^3(p+1)$, 
$8$ conjugacy classes of length $p^2(p+1)$, 
$4$ conjugacy classes of length $p^2(p^2-1)$, 
and $2q$ conjugacy classes of length $p^3(p^2-1)$;
\item[$-$] for every $s \in \mathcal{K}$, $s \ne 2$, $4p^2(p+1)(2p^2-5p+pq+2)$ groups of type 8 isomorphic to $G_s$, which split in 
$4(q-3)$ conjugacy classes of length $p^3(p+1)$, 
$8$ conjugacy classes of length $p^2(p+1)$, 
and $8$ conjugacy classes of length $p^3(p^2-1)$;
\end{itemize}

in total $6q$ conjugacy classes of groups isomorphic to $G_2$ and $4(q+1)$ conjugacy classes of groups isomorphic to $G_s$ for every $s\ne 2$;
\item if $q>3$, $4p^5+p^4(q-2)+p^3(2q-7)+3p^2+p$
groups of type 9, which split in 
$2q-5$ conjugacy classes of length $p^3(p+1)$,
two conjugacy classes of length $p^2(p+1)$, 
one conjugacy class of length $p(p+1)$, 
and $4$ conjugacy classes of length $p^3(p^2-1)$;

in total $2(q+1)$ conjugacy classes;
\end{enumerate}
\end{enumerate}
\end{prop}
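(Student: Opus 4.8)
The plan is to obtain Proposition~\ref{prop:G7} by collating the case-by-case enumerations carried out in Subsections~\ref{ssec:G7-kernel-pq}, \ref{ssec:G7-kernel p2}, \ref{ssec:G7_kerp-inn}, \ref{sub:G7-non-int} and~\ref{G7-and-q-is-2}, organised according to the size of $\ker(\gamma)$ and, when $\gamma(A)\le\Inn(G)$ fails, according to whether $\gamma(G)$ is abelian. First I would fix a group $G$ of type 7 and recall that $\Size{\ker(\gamma)}$ divides $p^{2}q$; the value $p^{2}q$ contributes the right regular representation, namely one structure of type 7. For the remaining values the two regimes $\gamma(A)\le\Inn(G)$ and $\gamma(A)\not\le\Inn(G)$ are treated separately, as set up in Section~\ref{sec: 7}: in the first regime the reduction of Subsection~\ref{ssec: duality-G-7} lets me assume, for $q>2$, that $p\mid\Size{\ker(\gamma)}$, so only the sizes $pq$, $p^{2}$, $p$ survive, counted (with their conjugacy classes) in Subsections~\ref{ssec:G7-kernel-pq}, \ref{ssec:G7-kernel p2} and~\ref{ssec:G7_kerp-inn} via Propositions~\ref{prop:lifting} and~\ref{prop:more-lifting}; in the second regime one again reduces to $p\mid\Size{\ker(\gamma)}$ and the relevant kernels have size $pq$ or $p$, handled in Subsections~\ref{ssec:G7-kernel-pq} and~\ref{sub:G7-non-int}.

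The next step is the duality bookkeeping. By the discussion in Subsection~\ref{ssec: duality-G-7}, each gamma function $\gamma$ with $p\mid\Size{\ker(\gamma)}$ but $p\nmid\Size{\ker(\gammatilde)}$ has a dual $\gammatilde$ which is a distinct gamma function defining a structure of the same type but not yet counted; such $\gamma$'s are exactly those with $\Size{\ker(\gamma)}=p^{2}$, those in cases (A1), (A1$^{\ast}$) of the kernel-$p$ analysis when $\gamma(A)\le\Inn(G)$, and \emph{all} the gamma functions arising when $\gamma(A)\not\le\Inn(G)$. So I would double the corresponding group counts and conjugacy-class data. For $q=2$ the reduction to $p\mid\Size{\ker(\gamma)}$ is unavailable, since $\sigma$ and $1-\sigma$ may both be invertible; instead one includes the kernels of size $q$ and $1$ from Subsections~\ref{sssec:G7-kernel-q} and~\ref{sssec:G7-ker-1}, using the pairing $\gamma\leftrightarrow\gammatilde$ to pass between the $p^{2}$-kernel and the small-kernel cases exactly as in the type-9 argument of Subsection~\ref{ssec:G9-ker-1}.

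Finally I would total the group counts, sort the resulting operations $\circ$ by the isomorphism type of $(G,\circ)$ (types 5, 6, 7, 8, 9, read off from the eigenvalues of the matrix $Z_\circ$ computed in each subsection), and for type 8 further split according to which $G_{s}$ occurs; adding up the conjugacy-class lengths produced in the same subsections then yields the stated decompositions, including the $q=2$ versus $q>2$ split of items (2) and (3). The main obstacle is purely the accounting: one must keep track, across five kernel sizes and the two regimes, of which eigenvalue configurations of $Z_\circ$ fall in which type and which isomorphism class $G_{s}$, of when the number of invariant Sylow $q$-subgroups (Proposition~\ref{prop:inv_q-sylow_kerp-G7}) forces several choices of an invariant Sylow $q$-subgroup to yield the same gamma function, and of the doubling above. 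There is no new idea beyond what was already used for types 8 and 9, but the combinatorics is heavy and error-prone, so I would cross-check the grand totals against the corresponding entries of Theorems~\ref{number_regular} and~\ref{number_hgs}.
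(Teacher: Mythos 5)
Your plan coincides with the paper's: Proposition~\ref{prop:G7} is obtained precisely by collating the counts and conjugacy-class data of Subsections~\ref{ssec:G7-kernel-pq}, \ref{ssec:G7-kernel p2}, \ref{ssec:G7_kerp-inn}, \ref{sub:G7-non-int} and~\ref{G7-and-q-is-2}, sorting by the type of $Z_\circ$, and applying the duality doubling set out in Subsection~\ref{ssec: duality-G-7}. One small correction to your bookkeeping for $q>2$: the set of gamma functions to be doubled is not exhausted by ``kernel $p^{2}$, cases (A1), (A1$^{\ast}$), and the non-inner regime'' --- it must also include the right regular representation, for which $\Size{\ker(\gamma)}=p^{2}q$ while $\Size{\ker(\gammatilde)}=1$, so that its dual (the left regular representation) is counted; this is what yields the second conjugacy class of length $1$ among the type-$7$ structures in item (3)(b), exactly as in the proof of Proposition~\ref{prop:G8}.
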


%\input{10}      
% !TEX root =p2q-ea.tex
\section{Type 10}
\label{sec : 10}

In this case $q\mid p+1$, where $q>2$, and $G=(\cC_{p} \times \cC_{p}) \rtimes_{C} \cC_{q}$. The Sylow $p$-subgroup $A=\Span{a_1,a_2}$ is characteristic  and a generator $b$ of a Sylow $q$-subgroup acts on $A$ as a suitable power $Z$ of a Singer cycle, namely $a^b=a^Z$ for $a\in A$. We know that $Z$ has determinant 1 and two (conjugate) eigenvalues $\lambda, \lambda^p=\lambda^{-1} \in\F_{p^2}\setminus\F_p$.

The divisibility condition on $p$ and $q$ implies that $(G,\circ)$ can only be of type 5 or 10.

According to Subsections 4.1 and 4.4 of~\cite{classp2q}, we have 
\begin{equation*}
\Aut(G)=(\cC_{p} \times \cC_{p}) \rtimes ( \cC_{p^{2}-1} \rtimes \cC_{2})  ,
\end{equation*}
where $\cC_p \times \cC_p = \iota(A)$, and for $\mu \in \cC_{p^2-1}$ and $\psi\in \cC_2$ we write
\begin{equation}
\label{eq:aut-G-10}
\mu:
\begin{cases}
a \mapsto a^{M} \\
b \mapsto b
\end{cases}, \ 
\psi:
\begin{cases}
a \mapsto a^{S} \\
b \mapsto b^{r}
\end{cases},
\end{equation}
where $M=uI+vZ \in \GL(2,p)$, for $u,v \in \mathbb{F}_p$ not both zero, and $S$, $r$ are such that either $r=1$ and $S=1$, or $r=-1$ and 
$$
S=\begin{bmatrix}
0 & 1 \\
1 & 0 
\end{bmatrix}.
$$

The Sylow $p$-subgroup of $\Aut(G)$ has order $p^2$ and is characteristic and a Sylow $q$-subgroup is cyclic, so $\gamma(G)$ of order a divisor of $p^2q$ is always contained in $\Inn(G)$. 

Moreover
\begin{itemize}
\item  since $A$ is characteristic, it is also a Sylow $p$-subgroup of $(G,\circ)$, so $\gamma(A)$ is a subgroup of $\iota(A)$, the Sylow $p$-subgroup of $\Aut(G)$. 
\item $\gamma_{|A}: A\to\Aut(G)$ is a morphism, as for each $a\in A$ the automorphism $\iota(a)$ acts trivially on the abelian group $A$, and so
\begin{equation*}
\gamma(a)\gamma(a')=
\gamma(a^{\gamma(a')}a')=
\gamma(aa').
\end{equation*}
Therefore, $\gamma(a)=\iota(a^{-\sigma})$ for each $a\in A$, where $\sigma\in \End(A)$.
\end{itemize}
In the following assume $\gamma\ne 1$.

\subsection{The case \texorpdfstring{$\Size{\ker(\gamma)} = p$}{ker = p}} 
\label{G-10-ker-p}
This case does not arise, in fact the group $(G,\circ)$ can not have type 5, since $\Inn(G)$ does not contain an abelian subgroup of order $pq$.
$(G,\circ)$ can neither be of type 10, since a group of type 10 has no normal subgroups of order $p$.

\subsection{The case \texorpdfstring{$\Size{\ker(\gamma)}=p^2$}{ker = p2}} 
\label{G-10-ker-p^2}

Here $\ker(\gamma)=A$ and  $\Size{\gamma(G)}=q$, so $\gamma(G)=\Span{\iota(b)}$ where $b$ is a $q$-element of $G$. In this case $B=\Span{b}$ is the unique $\gamma(G)$-invariant Sylow $q$-subgroup, therefore by 
Proposition~\ref{prop:lifting} 
each $\gamma$ is the lifting of exactly one RGF defined on the unique $\gamma(G)$-invariant Sylow $q$-subgroup. 
So, for each choice of $B=\Span{b}$ ($p^2$ possibilities), we can define $\gamma(b)=\iota(b^{-s})$, with $1\le s \leq q-1$ ($q-1$ choices). 

Since $[B,\gamma(B)]=1$, by Lemma~\ref{Lemma:gamma_morfismi} the RGF's correspond to the morphisms.

For $s=1$ we obtain a group of type 5 and for $s\ne 1$ ($q-2$ choices) we obtain a group of type 10.
In conclusion there are
\begin{itemize}
\item[(i)]$p^2$ groups of type 5; 
\item[(ii)] $p^2(q-2)$ groups of type 10.
\end{itemize}

As to the conjugacy classes, here the kernel $A$ is characteristic, so that every $\phi \in \Aut(G)$, stabilises $\gamma_{|A}$.

All orbits here have length a multiple of $p^2$, as
$$
\gamma^{\iota(x)}(b)
=\iota(x^{-1})\gamma(b)\iota(x) \\ 
=\iota(x^{-1+Z^{s}}b^{-s})=\iota(x^{-1+Z^{s}})\gamma(b).
$$
Now, let $\phi=\mu\psi$, where $\mu$ and $\psi$ are as in~\eqref{eq:aut-G-10}. Then
$$
\gamma^{\phi}(b)
=\phi^{-1}\iota(b^{-sr})\phi 
=\psi^{-1}\iota(b^{-sr})\psi
=\iota(b^{-s})
=\gamma(b),
$$
so that the orbits have length exactly $p^2$.

\subsection{The case \texorpdfstring{$q\mid \Size{\ker(\gamma)}$}{ker = q}}
\label{G-10-ker-q}

In this case $(G,\circ)$ can only be of type 5, as a group of type 10 has no normal subgroups of order $q$ or $pq$. 

Let $B\leq \ker(\gamma)$. Since $A$ is characteristic, by 
Proposition~\ref{prop:lifting} 
each GF on $G$ is the lifting of a RGF defined on $A$, and a RGF on $A$ can be lifted to $G$ if and only if $B$ is invariant under $\{\gamma(a)\iota(a)\mid a\in A\}$. 

 Now, for each $a\in A$, $\gamma(a)=\iota(a^{-\sigma})$, where $\sigma\in\End(A)$, so that $\gamma(a)\iota(a)=\iota(a^{1-\sigma})$. Since every Sylow $q$-subgroup of $G$ is self-normalising, necessarily $\sigma=1$, so that for each $a\in A$
 $$\gamma(a)=\iota(a^{-1}).$$
 
Since $[A,\gamma(A)]=1$, by Lemma~\ref{Lemma:gamma_morfismi} the RGF's correspond to the morphisms. 
So, for each of the $p^2$ choices for $B$  there is a unique RGF on $A$ which lifts to $G$.

In conclusion we obtain $p^2$ groups of type 5. Note that for all the GF's of this case $\Size{\ker(\gamma)}=q$, namely there are no GF's on $G$ with $\Size{\ker(\gamma)}=pq$.

As to the conjugacy classes, as in Subsection~\ref{ssec:G9-kernel q}, since $\iota(A)$ conjugates transitively the $p^2$ Sylow $q$-subgroups of $G$, the $p^2$ GF's are conjugate.

\subsection{The case \texorpdfstring{$\ker(\gamma)=\{1\}$}{ker = 1}} 
\label{ssec:G-10-ker-1}

As in Subsection~\ref{ssec:G9-ker-1}, the GF's of this case can be divided into subclasses according to the size of  $\ker(\gammatilde)$.

In this case $\gamma(G)=\Inn(G)\cong(G,\circ)$, so that all the GF's correspond to groups of type 10.

Let $\gamma(a)=\iota(a^{-\sigma})$ for some $\sigma\in\GL(2,p)$.

Consider first the case $\sigma=1$, namely $\gamma(a)=\iota(a^{-1})$. 
In this case $p^2\mid\Size{\ker(\tilde\gamma)}$, since $\gammatilde(x)=\gamma(x^{-1})\iota(x^{-1})$ for all $x\in G$.
By Subsection~\ref{G-10-ker-p^2}, $\gammatilde(b)=\iota(b^{-s})$, therefore $\gamma(b)=\gammatilde(b^{-1})\iota(b^{-1})=\iota(b^{s-1})$, and $q \mid \Size{\ker(\gamma)}$ precisely when $s=1$. Therefore, the $p^2$ GF's $\gammatilde$ corresponding to $(G,\circ)$ of type 5 are such that the corresponding $\gamma$ have kernel of size $q$, and these have already been considered in Subsection~\ref{G-10-ker-q}. All the other $\gammatilde$ correspond to $\gamma$ with kernel of size $1$.
The latter are $p^2(q-2)$, plus the right regular representation, and all of them correspond to groups of type 10. 
  
Let now $\sigma\ne1$. In this case $1$ is not an eigenvalue of $\sigma$, in fact otherwise $p\mid \Size{\ker(\tilde\gamma)}$, but, as seen before, this implies $p^2\mid \Size{\ker(\tilde\gamma)}$, and hence $\sigma=1$. 
Therefore here both $\sigma$ and $(1-\sigma^{-1})$ are invertible.

Let $b$ be a $q$-element and let $\gamma(b)=\iota(a_0b^{-s})$ for some $a_0\in A$ and $s\not\equiv0\bmod q$. Then Subsection~\ref{subsec:sigma_1-sigma_inv} yields~\eqref{eq:conjugation}, which in our notation here is
\begin{equation}
\label{eq:conjG10}
(\sigma^{-1}-1)^{-1}Z^{-s}(\sigma^{-1}-1)=Z^{1-s}.
\end{equation}
Recall that $Z$ has order $q$ and has two conjugate eigenvalues $\lambda$ and $\lambda^{-1}$ in the extension $\F_{p^2}\setminus\F_p$. An easy computation shows that the corresponding eigenspaces are $\Span{v_1}$ and $\Span{v_2}$ with $v_1=a_1+\lambda a_2$, $v_2=a_1+\lambda^{-1} a_2$.
 
From~\eqref{eq:conjG10} we get $\{\lambda^{-s}, \lambda^{s}\}=\{\lambda^{1-s}, \lambda^{-1+s}\}$, which is possible only for $\lambda^{-s}= \lambda^{-1+s}$: this  gives the condition $2s\equiv 1\bmod q$ and means that $\sigma^{-1}-1$ exchanges the two eigenspaces of $Z$. Therefore, with respect to the basis $\{v_1,v_2\}$,
\begin{equation*}
\sigma^{-1}-1=
  \begin{bmatrix}
 0 &\nu^p \\
 \nu & 0 
  \end{bmatrix},
\end{equation*} 
where $\nu\in\F^*_{p^2}.$
The condition that $\sigma$ has not $0$ and $1$ as eigenvalues reads here as $\det(\sigma^{-1}-1)=\nu^{p+1}\ne 0$ and $\det(\sigma^{-1})=1-\nu^{p+1}\ne0$, so we have $p^2-1-(p+1)=(p+1)(p-2)$ choice for $\sigma^{-1}-1$ and hence for $\sigma$. Since $2s\equiv1\bmod q$, there are $(p+1)(p-2)$ choices for the couple $(\sigma, s)$.

The next Proposition shows that all the GF's of this case can be constructed via gluing. 

\begin{prop}
\label{prop:invariantG10}
Let $\gamma$ be a GF on a group $G$ of type 10. If $\Size{\ker(\gamma)}=1$, then there is a unique Sylow $q$-subgroup $B$ of $G$ invariant under $\gamma(B)$. 
\end{prop}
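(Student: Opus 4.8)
The plan is to reduce the count of $\gamma$‑invariant Sylow $q$‑subgroups to the determinant computation already carried out for type~$9$ in Subsubsection~\ref{subsub:invariant_ker1}, after diagonalising $Z$ over $\F_{p^2}$. Recall from Subsection~\ref{ssec:G-10-ker-1} that $\Size{\ker(\gamma)}=1$ forces $\sigma\ne 1$, so $\gamma(a)=\iota(a^{-\sigma})$ with $\sigma\in\GL(2,p)$ having neither $0$ nor $1$ as an eigenvalue, while $\gamma(b)=\iota(a_0b^{-s})$ for some $a_0\in A$ and some $s$ with $2s\equiv1\pmod q$; in particular $s\not\equiv0\pmod q$, since $q>2$.

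First I would observe that the derivation in Subsection~\ref{ssec:q-sylow-G789} of equation~\eqref{eq:inv q-sylow case 1} is purely formal: it uses only that $\gamma(a)=\iota(a^{-\sigma})$, that $\gamma(b)=\iota(a_\ast)\beta$ with $\beta\in\Aut(G)$ of order $q$ fixing $\Span{b}$, and that $\iota(b)$ acts on $A$ as $Z$ — all of which hold here with $a_\ast=a_0$, $\beta=\iota(b^{-s})$ and $T:=\gamma(b)_{\restriction A}=Z^{-s}$ (Assumption~\ref{assump} is invoked in Section~\ref{sec: 7,8,9} only to name the types, not in this computation). Hence a Sylow $q$‑subgroup $\Span{b^{x}}$, $x\in A$, is invariant under its image under $\gamma$ if and only if
\[
  x^{(1-Z^{-1})M}=a_0^{(1-Z^{-1})Z^{-s}},\qquad \text{where } M=1-\bigl(1+Z^{s}\sigma(1-Z^{-1})\bigr)Z^{-s}.
\]
Since $Z$ has order $q$ and no eigenvalue $1$, $\det(1-Z^{-1})\ne0$, so the number of such subgroups is $1$ when $\det(M)\ne0$, and $p$ or $p^{2}$ (or $0$, but this cannot occur since $\gamma$ is a genuine GF) otherwise; thus it suffices to prove $\det(M)\ne0$.

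To compute $\det(M)$ I would pass to the basis $v_1=a_1+\lambda a_2$, $v_2=a_1+\lambda^{-1}a_2$ of $A$ over $\F_{p^2}$, in which $Z=\diag(\lambda,\lambda^{-1})$ and hence $T=\diag(\lambda^{-s},\lambda^{s})$, while $\sigma^{-1}-1=\left[\begin{smallmatrix}0&\nu^{p}\\\nu&0\end{smallmatrix}\right]$ by Subsection~\ref{ssec:G-10-ker-1}. Using $2s\equiv1\pmod q$, i.e.\ $\lambda^{2s}=\lambda$, the matrix $M$ becomes precisely the matrix $M$ of Subsubsection~\ref{subsub:invariant_ker1} under the substitutions $x_1=-s$ (so that $x_1+1\equiv s$), $s_1=\nu^{p}$, $s_2=\nu$; therefore $\det(M)=(1-\lambda^{-s})(1-\lambda^{s})$. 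As $\lambda$ has order $q$ and $s\not\equiv0\pmod q$, both factors are nonzero, so $\det(M)\ne0$ and the displayed equation has a unique solution $x\in A$; equivalently, $G$ has exactly one Sylow $q$‑subgroup invariant under its image under $\gamma$. The only genuine point to verify is the transfer of the invariance machinery of Section~\ref{sec: 7,8,9} to type~$10$, which is immediate because that machinery is computational, after which everything collapses to the type~$9$ determinant already computed.
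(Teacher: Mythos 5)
Your main computation is correct and, at bottom, it is the same argument as the paper's: both proofs write the invariance of $\Span{b^{x}}$ as a linear equation in $x$ over $A$ and show the coefficient matrix is invertible. The difference is in the bookkeeping. The paper recomputes the invariance condition from scratch inside Section~\ref{sec : 10} (phrasing it as $\gamma(b^x)\in\Span{\iota(b^x)}$) and evaluates the determinant of the resulting matrix $D=(1-Z^{-1})Z^s+(1-Z^s)\sigma^{-1}$ directly, getting $(1-\lambda^s)(1-\lambda^{-s})(1-\nu^{p+1})$; you instead import~\eqref{eq:inv q-sylow case 1} and reduce the determinant to the type~9 formula of~\ref{subsub:invariant_ker1}. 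Your transfer of the Subsection~\ref{ssec:q-sylow-G789} derivation is legitimate: it uses only $\gamma(a)=\iota(a^{-\sigma})$, $\gamma(b)=\iota(a_\ast)\beta$ with $b^{\beta}=b$ and $\beta_{\restriction A}=T$, all of which hold with $\beta=\iota(b^{-s})$, $T=Z^{-s}$. The two invariance conditions are equivalent (your exponent matrix is the paper's, right-multiplied by the invertible $-(1-Z^{-1})$ and $Z^{s}\sigma^{-1}$, which accounts for the missing factor $1-\nu^{p+1}$ in your determinant), and the identification $x_1=-s$, $s_1=\nu^{p}$, $s_2=\nu$ over $\F_{p^2}$ does give $\det(M)=(1-\lambda^{-s})(1-\lambda^{s})\ne 0$, since the type~9 determinant identity is a polynomial identity valid over any field with $s_1s_2\ne1$. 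What your route buys is not having to redo either computation; what it costs is having to argue that machinery stated under Assumption~\ref{assump} really applies outside it, which you do address.

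There is, however, one genuine error in your opening sentence: $\Size{\ker(\gamma)}=1$ does \emph{not} force $\sigma\ne 1$. Subsection~\ref{ssec:G-10-ker-1} explicitly exhibits trivial-kernel gamma functions with $\sigma=1$, namely $\gamma(a)=\iota(a^{-1})$, $\gamma(b)=\iota(b^{s-1})$ with $s\ne 1$, obtained by duality from the kernel-$p^{2}$ case; these include the left regular representation ($s=0$, i.e.\ $\gamma(g)=\iota(g^{-1})$). For the left regular representation the proposition as literally stated in fact \emph{fails}: every one of the $p^{2}$ Sylow $q$-subgroups satisfies $B^{\gamma(b')}=B^{\iota(b'^{-1})}=B$ for $b'\in B$. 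So your proof (like the paper's, which also silently assumes $2s\equiv 1\bmod q$ and the antidiagonal form of $\sigma^{-1}-1$, both consequences of $\sigma\ne1$) only establishes the statement in the case $\sigma\ne 1$ — which is the only case in which the proposition is subsequently used for the gluing construction. You should either restrict the hypothesis accordingly, or note that for $\sigma=1$ and $s\ne 0,1$ the same matrix becomes $\diag(1-\lambda^{-s},1-\lambda^{s})$, still invertible, so that the unique exception is the left regular representation. As written, the false claim ``trivial kernel forces $\sigma\ne1$'' is doing load-bearing work in setting up your hypotheses, and it needs to be repaired.
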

\begin{proof}
Let $B=\Span{b}$ a Sylow $q$-subgroup of $G$. Then, a Sylow $q$-subgroup $\Span{b^x}$, where $x\in A$, is invariant when $(b^{x})^{\gamma(b^{x})}\in\Span{b^{x}}$, that is, 
$$\gamma(b^{x})\in \Norm_{\Aut(G)}(\iota(b^x)) .$$
Since $\gamma(b^{x})$ is a $q$-element, the latter means that 
\begin{equation}
\label{eq:invariance}
\gamma(b^{x})\in\Span{\iota(b^x)}.
\end{equation}
Since
 \begin{align*}
 \gamma(b^x) &= \gamma(x^{-1+Z^{-1}}b) \\
  &= \gamma(x^{(-1+Z^{-1})\iota(b^{s})})\gamma(b)\\
  &= \iota(x^{(1-Z^{-1})Z^s\sigma} )\iota(a_0b^{-s})\\
  &= \iota(b^{-s})\iota(x^{(1-Z^{-1})Z^s\sigma Z^{-s}}a_0^{  Z^{-s}}),
\end{align*}
~\eqref{eq:invariance} becomes
$$
 \iota(b^{-s})\iota(x^{(1-Z^{-1})Z^s\sigma Z^{-s}}a_0^{  Z^{-s}})=\iota(x^{-1+Z^{-1}}b)^{-s},
$$ 
which is equivalent to
 $$
   b^{-s}x^{(1-Z^{-1})Z^s\sigma Z^{-s}}a_0^{  Z^{-s}}=b^{-s}x^{-Z^{-s}+1}.
 $$
Therefore, we are left with show that the equation 
\begin{equation}
\label{eq:solx}
x^{(1-Z^{-1})Z^s\sigma Z^{-s}}a_0^{  Z^{-s}}=x^{-Z^{-s}+1}
\end{equation}
has a unique solution $x$ for each choice of $a_0\in A$, namely that the matrix
$$ D=(1-Z^{-1})Z^s +(1-Z^s)\sigma^{-1} $$
is invertible. 
One can easily compute $D$ and $\det(D)=(1-\lambda^s)(1-\lambda^{-s})(1-\nu^{p+1})$, and since the latter is non-zero, $D$ is invertible and~\eqref{eq:solx} has a unique solution $x$.
\end{proof}

Summarising, each $\gamma$ admits a unique invariant Sylow $q$-subgroup, so that it is a gluing of a RGF $\gamma_B: B\to \Aut(G)$ and a RGF $\gamma_A$ determined by $\sigma$, with the condition that equation~\eqref{eq:conjG10} holds. Necessarily, $\gamma_B(b)=\iota(b^{-s})$ for some $s$, and by~\eqref{eq:conjG10} we get $2s\equiv1\bmod q$. Therefore for each $B$ ($p^2$ choices), we have only one RGF on $B$ and $(p+1)(p-2)$ choices for $\sigma$, so there are $p^2(p+1)(p-2)$ distinct GF, corresponding to groups of type 10.

As to the conjugacy classes, since each $\gamma$ has a unique Sylow $q$-subgroup $B$ which is $\gamma(B)$-invariant, by Lemma~\ref{lemma:conjugacy}-\eqref{item:invariance-under-conjugacy},
for $a\in A$, $\gamma^{\iota(a)}$ has $\bar{B}=B^{\iota(a)}$ as $\gamma(\bar{B})$-invariant Sylow $p$-subgroup. Now $\iota(A)$ conjugates transitively the Sylow $p$-subgroups of $G$, so that all classes have order a multiple of $p^2$.

Consider $\phi=\mu\psi \in \Aut(G)$, where $\mu$ and $\psi$ are as in~\eqref{eq:aut-G-10}. $\mu$ centralises both $b$ and $\gamma(b)$, so that it stabilises $\gamma_{|B}$. Moreover $\psi$ has order $2$, and $b^{\psi}=b^{r}$, $\iota(b)^{\psi}=\iota(b^{r})$, so that $\psi$ stabilises $\gamma_{|B}$ as well.

As for $\gamma_{|A}$, we have $a^{\phi^{-1}}=a^{\psi\mu^{-1}}=a^{SM^{-1}}$, so that
\begin{align*}
\gamma^{\phi}(a)=\phi^{-1}\gamma(a^{SM^{-1}})\phi
=\phi^{-1}\iota(a^{-SM^{-1}\sigma})\phi,
\end{align*}
and it coincides to $\gamma(a)=\iota(a^{-\sigma})$ if and only if $a^{SM^{-1}\sigma MS}=a^{\sigma}$, namely if and only if $\sigma MS \sigma^{-1} = MS $. The latter can be written as 
\begin{equation} 
\label{eq:G10_conj_class_ker_1}
(\sigma^{-1}-1)^{-1} MS (\sigma^{-1}-1) = MS.
\end{equation}
If $S=1$,~\eqref{eq:G10_conj_class_ker_1} yields $u+v\lambda^{-1}=u+v\lambda$, namely $v=0$, so that there are $p-1$ choices for $\mu$.
If $S\ne 1$,~\eqref{eq:G10_conj_class_ker_1} yields $(u+v\lambda^{-1})\nu^{p-1}=u+v\lambda$, namely
$$ \frac{u}{v}=\frac{\lambda-\lambda^{-1}\nu^{p-1}}{\nu^{p-1}-1} .$$
Since it is fixed by the Frobenius endomorphism, it is actually in $\mathbb{F}_p$, and there are $p-1$ choices for $\mu$.

Therefore, the stabiliser has order $2(p-1)$, and there are $p-2$ orbits of length $p^2(p+1)$.

We summarise, including the right and left regular representations.

\begin{prop}
\label{prop:G10}
Let $G$ be a group of order $p^2q$, $p>2$, of type 10.
Then in $\Hol(G)$ there are:
\begin{enumerate}
\item $2p^2$ groups of type 5, which split in two conjugacy classes of length $p^2$;
\item $2+p^2(2(q-2)+(p+1)(p-2))$ groups of type 10, which split in two conjugacy classes of length $1$, $2(q-2)$ conjugacy classes of length $p^2$, and $p-2$ conjugacy classes of length $p^2(p+1)$. 
\end{enumerate}
\end{prop}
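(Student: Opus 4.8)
The plan is to assemble the statement by collecting, case by case on $\Size{\ker(\gamma)}$, the enumerations already carried out in Subsections~\ref{G-10-ker-p}, \ref{G-10-ker-p^2}, \ref{G-10-ker-q} and~\ref{ssec:G-10-ker-1}, and then to add in the two regular representations. First I would recall the preliminary reductions of Section~\ref{sec : 10}: since $q\mid p+1$ and $q>2$, the only possible isomorphism types for $(G,\circ)$ are $5$ and $10$; moreover $\gamma(G)\le\Inn(G)$ always, and $\gamma_{|A}$ is a morphism of the form $a\mapsto\iota(a^{-\sigma})$ with $\sigma\in\End(A)$. The possible kernel sizes are therefore $1$, $p$, $p^2$, $q$, $pq$, and $p^2q$; Subsection~\ref{G-10-ker-p} shows size $p$ does not occur, and the remark at the end of Subsection~\ref{G-10-ker-q} (together with the discussion in~\ref{ssec:G-10-ker-1}) shows size $pq$ does not occur.

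Next I would tabulate the counts. From Subsection~\ref{G-10-ker-p^2} ($\ker(\gamma)=A$): there are $p^2$ GF's giving type $5$, forming one conjugacy class of length $p^2$, and $p^2(q-2)$ GF's giving type $10$, splitting into $q-2$ conjugacy classes of length $p^2$. From Subsection~\ref{G-10-ker-q} ($q\mid\Size{\ker(\gamma)}$, forced to be exactly $q$): $p^2$ GF's giving type $5$, in one conjugacy class of length $p^2$. From Subsection~\ref{ssec:G-10-ker-1} ($\ker(\gamma)=\{1\}$): after the duality bookkeeping (the $\sigma=1$ case produces $p^2(q-2)$ GF's beyond the right regular representation, all of type $10$, and these match the conjugacy-class structure found in~\ref{G-10-ker-p^2} with $q-2$ classes of length $p^2$; the $\sigma\ne1$ case produces $p^2(p+1)(p-2)$ GF's of type $10$, splitting into $p-2$ classes of length $p^2(p+1)$). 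Finally I would add the right regular representation (one GF with $\ker(\gamma)=G$, type $10$, a fixed point under conjugation, hence a class of length $1$) and the left regular representation $\gamma=\tilde{\tilde\gamma}$ (likewise type $10$, class of length $1$) — note these two are distinct since $G$ is nonabelian.

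Summing: the type~$5$ GF's total $p^2+p^2=2p^2$, forming two conjugacy classes each of length $p^2$, which gives item~(1). The type~$10$ GF's total $1+1+p^2(q-2)+p^2(q-2)+p^2(p+1)(p-2) = 2+p^2\bigl(2(q-2)+(p+1)(p-2)\bigr)$, split as two classes of length $1$ (the two regular representations), $2(q-2)$ classes of length $p^2$ (coming from $\ker(\gamma)=A$ and, dually, $\ker(\gamma)=1$ with $\sigma=1$), and $p-2$ classes of length $p^2(p+1)$ (from $\ker(\gamma)=1$ with $\sigma\ne1$); this gives item~(2). The main obstacle is not a calculation but a bookkeeping one: ensuring no GF is counted twice and none is omitted across the duality correspondence $\gamma\leftrightarrow\gammatilde$. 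Concretely, one must verify that the GF's with $\ker(\gamma)=1$ and $\sigma=1$ are exactly in duality with the GF's of Subsection~\ref{G-10-ker-p^2} of type $10$ (so that pairing via duality respects the conjugacy-class partition, using Lemma~\ref{lemma:conjugacy}), and that the $\ker(\gamma)=q$ family of Subsection~\ref{G-10-ker-q} is dual to the type~$5$ family of Subsection~\ref{G-10-ker-p^2} with $s=1$ — these are precisely the identifications spelled out at the start of Subsection~\ref{ssec:G-10-ker-1}, so the argument reduces to quoting them carefully and checking the arithmetic of the final sums.
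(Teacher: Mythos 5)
Your proposal is correct and follows essentially the same route as the paper: Proposition~\ref{prop:G10} is obtained exactly by summing the case-by-case counts of Subsections~\ref{G-10-ker-p}--\ref{ssec:G-10-ker-1} and adjoining the two regular representations, with the duality correspondence $\gamma\leftrightarrow\gammatilde$ (which commutes with conjugation by $\Aut(G)$) transferring the conjugacy-class structure from the $\Size{\ker(\gamma)}=p^2$ family to the $\ker(\gamma)=\Set{1}$, $\sigma=1$ family. The arithmetic in your final tallies matches the stated totals.
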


\section{Type 11}
\label{sec:mess}

In this case $p\mid q-1$ and $G=\cC_p \times (\cC_p \ltimes \cC_q)$. Let $Z=\Span{z}$ be the center of $G$, and $B=\Span{b}$ the Sylow $q$-subgroup. 

According to Subsection~4.6 of ~\cite{classp2q}, 
$$\Aut(G) = \Hol(C_{p})\times \Hol(C_{q}),$$ 
so that a Sylow $p$-subgroup of $\Aut(G)$ is of the form $\cC_p \times \mathcal{P}$, where $\cC_p$ is generated by a central automorphism and $\mathcal{P}$ is a Sylow $p$-subgroup of $\Hol(\cC_q)$. Therefore, a subgroup of order $p^2$ in $\Aut(G)$ is generated by an inner automorphism $\iota(a)$, for some non-central element $a$ of order $p$, and the central automorphism
\begin{equation}
\label{eq:def-central-psi}
  \psi :
  \begin{cases}
    z \mapsto z,\\
    a \mapsto a z,\\
    b \mapsto b
  \end{cases}.
\end{equation}

By 
Proposition~\ref{prop:duality} (see also \cite[Corollary 2.25]{p2qciclico})
in counting the GF’s we can suppose $B \leq \ker(\gamma)$.
Therefore the image $\gamma(G)$ is contained in a subgroup of $\Aut(G)$ of order $p^2$, that is,
\begin{equation*}
  \gamma(G) \leq  \Span{ \iota(a), \psi },
\end{equation*}
for $a\in A\setminus Z$, and $\psi$ as in~\eqref{eq:def-central-psi}.

In this case 
there exists at least one Sylow $p$-subgroup $A$ of $G$ which is $\gamma(G)$-invariant (see \cite[Theorem 3.3]{p2qciclico}). More precisely $A=\Span{a, z}$ is $\gamma(G)$-invariant, and it is the unique $\gamma(G)$-invariant Sylow $p$-subgroup if $\gamma(G)\cap \Inn(G) \neq \Set{1}$; otherwise $\gamma(G)\leq \Span{\psi}$, and every Sylow $p$-subgroup is $\gamma(G)$-invariant. 

We may thus apply Proposition~\ref{prop:lifting}, and look for the functions
\begin{equation*}
  \gamma' : A \to \Aut(G)
\end{equation*}
which  satisfy   the  GFE  (we   will  just  wrote  $\gamma$   in  the
following). Since $(A, \circ)$ is abelian, we have
\begin{equation*}
  a^{\gamma(z)} z   
  =  a \circ z 
  =  z \circ a 
  =  z^{\gamma(a)} a 
  =  z a,
\end{equation*}
so that $a^{\gamma(z)} = a$, namely
\begin{equation}
\label{eq:G11-gamma-z}
  \gamma(z) = \iota(a)^{s},
\end{equation}
for some $0 \leq s \leq p-1$. We also have
\begin{equation}
\label{eq:G11-gamma-a}
  \gamma(a) =  \iota(a)^{t} \psi^{u},
\end{equation}
for some $0\leq t,u \leq p-1$. 

If both $s=0$ and $u=t=0$, then $\ker(\gamma) = G$ and we get the right regular representation. 

Proposition~\ref{prop:lifting} yields also that the RGF's on $A$ with kernel of size $1$, respectively $p$, correspond to the GF's on $G$ with kernel of size $q$, respectively $qp$. In the following we suppose $\gamma(G) \ne \Set{1}$.

\subsection{The case \texorpdfstring{$\Size{\ker(\gamma)} = q$}{ker = q}}
\label{ssec:G11kerq}

Here $\gamma(G) = \Span{ \iota(a), \psi }$ and $A= \Span{a, z}$ is the unique Sylow $p$-subgroup of $G$ which is $\gamma(G)$-invariant. 
By the discussion above, we look for the RGF's $\gamma$ on $A$ extending the assignments~\eqref{eq:G11-gamma-z}, ~\eqref{eq:G11-gamma-a}, and with trivial kernel, namely $s\neq0$ and $u\neq 0$.
By Lemma~\ref{remark:gammaGFkerq} in the Appendix there is a unique RGF $\gamma$ like that, and since there are $q$ choices for $A$, we get $q p (p-1)^{2}$ maps. 

As to the circle operation, for every $x\in A$, $x^{\ominus 1 } \circ b \circ x = b^{\gamma(x)\iota(x)}$, so that
$$ a^{\ominus 1 } \circ b \circ a = b^{\iota(a^{t+1})\psi^{u}}, \ 
   z^{\ominus 1 } \circ b \circ z = b^{\iota(a^{s} z)}.
$$
Since $b^{\circ k}=b^k$ and $s \ne 0$, all groups $(G, \circ)$ are of type 11.

As to the conjugacy classes, let $\phi \in \Aut(G)$. Write $\phi=\phi_1 \phi_2$, where $\phi_1 \in \Hol(\cC_p)$ and $\phi_2\in \Hol(\cC_q)$, so that
\begin{equation}
\label{eq:aut-G-11}
  \phi_1 :
  \begin{cases}
    z \mapsto z^i \\
    a \mapsto a z^j \\
    b \mapsto b 
  \end{cases} ,
   \phi_2 :
  \begin{cases}
    z \mapsto z \\
    a \mapsto b^m a  \\
    b \mapsto b^k
  \end{cases}.
\end{equation}
Since the kernel $B$ is characteristic, then $\gamma_{|B}$ is stabilised by every automorphism of $G$.

Moreover 
\begin{align*}
\gamma^{\phi}(a)
&= \phi^{-1}\gamma(az^{-ji^{-1}})\phi \\ 
&= \phi^{-1}\gamma(a)\gamma(z)^{-ji^{-1}}\phi \\
&= \phi^{-1}\iota(a^{t-sji^{-1}})\psi^{u}\phi \\
&= (\iota(a^{t-sji^{-1}}))^{\phi_2}(\psi^{u})^{\phi_1} ,
\end{align*}
and
\begin{align*}
\gamma^{\phi}(z)
&= \phi^{-1}\gamma(z^{i^{-1}})\phi \\ 
&= \phi^{-1}\iota(a^{si^{-1}})\phi \\ 
&= (\iota(a^{si^{-1}}))^{\phi_2} ,
\end{align*}
so that $\phi$ stabilises $\gamma$ if and only if $\phi_1 = 1$ and $\phi_2 \in \cC_{q-1}$.

Therefore, the stabiliser has order $q-1$ and there are $p-1$ orbits of length $qp(p-1)$.

\subsection{The case \texorpdfstring{$\Size{\ker(\gamma)} = p q$}{ker = pq}}
\label{ssec:G11kerpq}

Here $\gamma(G)$ is a subgroup of order $p$ of $\Span{ \iota(a), \psi }$. 
We look for the RGF's $\gamma$ on $A$ extending the assignments~\eqref{eq:G11-gamma-z},~\eqref{eq:G11-gamma-a}, and with kernel of size $p$, namely $s=0$ or $u=0$.

Suppose first that $s=0$, so that the kernel is $ZB$ and $\gamma(a)=\iota(a)^{t}\psi^{u}$. By Lemma~\ref{lem:kernel-pq-gamma-morphisms} in the Appendix
the RGF's on $A$ with kernel of size $p$ are precisely the morphisms.
\begin{enumerate}
\item If $t=0$, then $\gamma(a)=\psi^{u}$ and every Sylow $p$-subgroup is $\gamma(G)$- invariant. Therefore here we obtain $p-1$ groups, and they are all of type 11 as $b$ is $\gamma(b)$-invariant and
$$ a^{\ominus 1} \circ b \circ a = b^{\iota(a)} .$$
\item If $t\neq 0$, then $\gamma(a)=\iota(a)^t\psi^{u}$, and $A= \Span{a, z}$ is the unique $\gamma(G)$-invariant Sylow $p$-subgroup, so that we have $q$ choices for $A$, $p-1$ for $t$ and $p$ for $u$, namely $qp(p-1)$ functions. Since
$$ a^{\ominus 1} \circ b \circ a = b^{\iota(a)^{t+1}} ,$$
they correspond to $qp$ groups of type 5 and $qp(p-2)$ groups of type 11.
\end{enumerate} 

As to the conjugacy classes, here the kernel $ZB$ is charactertistic, so that $\gamma_{|ZB}$ is stabilised by every automorphism of $G$. 

Now, since $a^{\phi} \equiv a \bmod \ker(\gamma)$, we have 
\begin{align*}
\gamma^{\phi}(a)
&= \phi^{-1}\gamma(a)\phi 
= \phi^{-1}\iota(a^{t})\psi^{u}\phi 
= (\iota(a^{t}))^{\phi_2}(\psi^{u})^{\phi_1},
\end{align*}
so that $\phi$ stabilises $\gamma$ if and only if it centralises $\gamma(a)$.

If $t=0$, the last condition is equivalent to say that $\phi_1 \in \Span{\psi}$ and $\phi_2 \in \Hol(\cC_q)$, so that the $p-1$ groups of type 11 form one orbit of length $p-1$.

If $t \neq 0$, then $\phi$ centralises $\gamma(a)$ if and only if $\phi_2 \in \cC_{q-1}$, and either $u\neq 0$ and $\phi_1\in\Span{\psi}$, or $u=0$ and $\phi_1 \in \Hol(\cC_p)$.
In the first case the stabiliser has order $p(q-1)$, and there is one orbit of length $q(p-1)$ for the type 5, and $p-2$ orbits of length $q(p-1)$ for the type 11. In the second case the stabiliser has order $p(p-1)(q-1)$, and there is one orbit of length $q$ for the type 5, and $p-2$ orbits of length $q$ for the type 11.

Suppose now $u=0$, so that $\gamma(a)=\iota(a^t)$ and $\gamma(z)=\iota(a^s)$. Here $\ker(\gamma)=\Span{v}$, where $v=z^{e}a^{f}$ is such that $tf+se=0$. Up to change the basis of $A$, we can appeal again to Lemma~\ref{lem:kernel-pq-gamma-morphisms}, which yields that the RGF's here are exactly the morphisms.
Again, $A= \Span{a, z}$ is the unique $\gamma(G)$-invariant Sylow $p$-subgroup, and we obtain $qp(p-1)$ functions. Since
$$ z^{\ominus 1 } \circ b \circ z = b^{\iota(a^{s})} $$
they correspond to groups of type 11.

As to the conjugacy classes, since $B\leq \ker(\gamma)$ is characteristic, $\gamma_{|B}$ is stabilised by every automorphism $\phi$. Moreover , let $\phi=\phi_1\phi_2$ where $\phi_1$, $\phi_2$ are as in~\eqref{eq:aut-G-11}. We have
\begin{align*}
\gamma^{\phi}(a)
&= \phi^{-1}\gamma(az^{-ji^{-1}})\phi = (\iota(a^{t-sji^{-1}}))^{\phi_2}, \\
\gamma^{\phi}(z)
&= \phi^{-1}\gamma(z^{i^{-1}})\phi = (\iota(a^{si^{-1}}))^{\phi_2},
\end{align*}
so that $\phi$ stabilises $\gamma$ if and only if $\phi_1=\id$ and $\phi_2\in \cC_{q-1}$, namely the stabiliser has order $q-1$, and there is one orbit of length $qp(p-1)$.

We summarise, including the right and left regular representations.

\begin{prop}
\label{prop:G11}
Let $G$ be a group of order $p^2q$, $p>2$, of type 11. Then in $\Hol(G)$ there are:
\begin{enumerate}
\item $2pq$ groups of type 5, which split in two conjugacy classes of length $q$, and two conjugacy classes of length $q(p-1)$;
\item $2p(1+q(p^2-2))$ groups of type 11, which split in 
\begin{enumerate}
\item two conjugacy classes of length $1$;
\item two conjugacy classes of length $p-1$;
\item two conjugacy classes of length $qp(p-1)$;
\item $2(p-1)$ conjugacy classes of length $qp(p-1)$;
\item $2(p-2)$ conjugacy classes of length $q(p-1)$;
\item $2(p-2)$ conjugacy classes of length $q$.
\end{enumerate}
\end{enumerate}
\end{prop}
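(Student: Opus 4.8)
The plan is to assemble the two stated totals and conjugacy-class lists purely by collecting the case analysis already carried out in Subsections~\ref{ssec:G11kerq} and~\ref{ssec:G11kerpq}, adjoining the two regular representations, and then applying the duality ``doubling''. First I would recall the reduction made at the start of Section~\ref{sec:mess}: the Sylow $q$-subgroup $B$ is cyclic of prime order $q$, is characteristic in $G$ (being its unique subgroup of that order), meets $Z(G)$ trivially, and the noncentral element of order $p$ induces on it an automorphism of order $p$; hence Proposition~\ref{prop:duality} applies with $C=B$, and for every GF $\gamma$ on $G$ \emph{exactly one} of $B\le\ker(\gamma)$, $B\le\ker(\gammatilde)$ holds (the two options correspond to $\sigma=0$ and $\sigma=1$ on $B$, which are incompatible since $B\neq\Set{1}$). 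Since $\gamma\mapsto\gammatilde$ is a fixed-point-free involution on the set of GF's that commutes with the $\Aut(G)$-conjugation of Lemma~\ref{lemma:conjugacy} and carries a GF to one defining a Hopf--Galois structure of the same type (\cite[Subsection~2.8]{p2qciclico}), the number of GF's, together with the list of lengths of their $\Aut(G)$-conjugacy classes, yielding a structure of a given type is exactly twice the corresponding number among the GF's with $B\le\ker(\gamma)$. So it suffices to tally the latter and double; in particular the left regular representation appears as the dual of the right one.

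Next I would enumerate the GF's with $B\le\ker(\gamma)$ by $\Size{\ker(\gamma)}$, noting that $q\mid\Size{\ker(\gamma)}$ forces $\Size{\ker(\gamma)}\in\Set{q,\,pq,\,p^{2}q}$. The value $p^{2}q$ is the right regular representation: one structure of type $11$, one conjugacy class of length $1$. The value $q$ is Subsection~\ref{ssec:G11kerq}: $qp(p-1)^{2}$ GF's, all of type $11$, forming $p-1$ classes of length $qp(p-1)$. The value $pq$ is Subsection~\ref{ssec:G11kerpq}, where the conditions on $(s,t,u)$ cutting out $\Size{\ker(\gamma)}=pq$ split into the disjoint, exhaustive cases $s=0$ and $s\neq0,\ u=0$: the subcase $s=0,\ t=0$ gives $p-1$ type-$11$ structures in one class of length $p-1$; the subcase $s=0,\ t\neq0$ gives $qp$ type-$5$ structures (one class of length $q(p-1)$, one of length $q$) and $qp(p-2)$ type-$11$ structures ($p-2$ classes of length $q(p-1)$ and $p-2$ of length $q$); the case $u=0$ gives $qp(p-1)$ type-$11$ structures in one class of length $qp(p-1)$.

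Summing the type-$5$ contributions gives $qp$ structures, one class of length $q(p-1)$ and one of length $q$; summing the type-$11$ contributions gives $1+qp(p-1)^{2}+(p-1)+qp(p-2)+qp(p-1)=p(1+q(p^{2}-2))$ structures, with classes: one of length $1$; $p-1$ of length $qp(p-1)$; one of length $p-1$; one of length $qp(p-1)$; $p-2$ of length $q(p-1)$; and $p-2$ of length $q$. Doubling each entry via the duality involution produces precisely the numbers and class lengths of Proposition~\ref{prop:G11}, items (1) and (2)(a)--(f). I do not expect a genuine obstacle here; the only real work is the bookkeeping --- checking that the $(s,t,u)$-cases of Subsection~\ref{ssec:G11kerpq} partition the kernel-size-$pq$ GF's without overlap, and that duality really does preserve both the isomorphism type of $(G,\circ)$ and the $\Aut(G)$-orbit lengths --- together with the arithmetic identity $(p-1)^{2}+(p-2)+(p-1)=p^{2}-2$ that collapses the type-$11$ total to the stated closed form.
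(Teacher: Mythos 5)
Your proposal is correct and follows essentially the same route as the paper: Section~\ref{sec:mess} likewise invokes Proposition~\ref{prop:duality} to reduce to $B\le\ker(\gamma)$, enumerates by kernel size $q$, $pq$, $p^{2}q$ exactly as in Subsections~\ref{ssec:G11kerq} and~\ref{ssec:G11kerpq}, and then doubles via duality when summarising (``including the right and left regular representations''). Your explicit checks --- that the $(s,t,u)$-cases partition without overlap, that $B\le\ker(\gamma)$ and $B\le\ker(\gammatilde)$ are mutually exclusive since $B\cap Z(G)=\Set{1}$, and the identity $(p-1)^{2}+(p-2)+(p-1)=p^{2}-2$ --- are precisely the bookkeeping the paper leaves implicit.
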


\section{Conclusions}

The proofs of Theorem~\ref{number_regular} and Theorem~\ref{th:number_skew} are obtained
piecing together the results of Propositions~\ref{prop:G5}, \ref{prop:G6}, \ref{prop:G9}, \ref{prop:G8}, \ref{prop:G7}, \ref{prop:G10}, \ref{prop:G11}, 
and recalling that if the Sylow $p$-subgroups of the groups $\Gamma$ and $G$ are not isomorphic, then $e'(\Gamma, G)=e(\Gamma, G)=0$ (\cite[Corollary 3.4]{p2qciclico}).

To prove Theorem~\ref{number_hgs}, we use Theorem~\ref{th:hgs-regular-skew}~\ref{item:hgs-byott96}, therefore, for each pair of finite groups $\Gamma, G$ with $\Size{\Gamma} = \Size{G}$, we have
\begin{equation*}
  e(\Gamma, G) =  \frac{\Size{\Aut(\Gamma)}} {\Size{\Aut(G)}} e'(\Gamma, G).
\end{equation*}
The values of $e'(\Gamma, G)$ computed in Propositions~\ref{prop:G5}, \ref{prop:G6}, \ref{prop:G9}, \ref{prop:G8}, \ref{prop:G7}, \ref{prop:G10}, \ref{prop:G11} and the cardinalities of the automorphism groups given in Table~\ref{table:grp_aut} yield the values of $e(\Gamma,G)$.

\appendix

\section{}

The following Lemma proves that the maps found in Subsections~\ref{sssec:Gcirc11},~\ref{ssec:G11kerq} in the case $\Size{\ker(\gamma)}=q$ are gamma functions.

\begin{lemma}
\label{remark:gammaGFkerq}
Let $G$ be a group of type 5 or 11, $B$ its Sylow $q$-subgroup, and $A=\Span{a_1, a_2}$ a Sylow $p$-subgroup. 

Let $\gamma: A \to \Aut(G)$ a map such that
\begin{equation}
\label{eq:app-1-ker-q}
\begin{cases}
\gamma(a_1)&=\eta_1 \\
\gamma(a_2)&=\eta_2 ,
\end{cases}
\end{equation}
where ${\eta_1}_{|A}=1$, $a_1^{\eta_2}=a_1$, $a_2^{\eta_2}=a_2a_1^k$, $1\leq k <p$.

Then 
$$ \gamma(a_1^{n} a_2^{m})= \eta_1^{n-k(1+\cdots+(m-1))}\eta_2^{m}$$
is the unique RGF extending the assignment above.
\end{lemma}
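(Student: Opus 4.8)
The plan is to verify directly that the stated formula defines a function satisfying the GFE, and then to argue uniqueness from the homomorphism-type results already available. First I would observe that the prescribed map is forced: since $A=\Span{a_1,a_2}$ is abelian and $\iota(A)$ acts trivially on $A$, any RGF $\gamma$ on $A$ extending~\eqref{eq:app-1-ker-q} must, by Lemma~\ref{Lemma:gamma_morfismi}, be a group morphism $A\to\Aut(G)$ as soon as $\gamma([A,\gamma(A)])=\Set{1}$; so the real content is to check that the group generated by $\eta_1,\eta_2$ in $\Aut(G)$ together with the action of $\iota(a_i)$ makes this work, and that the unique morphism compatible with the relations $a_1^p=a_2^p=1$ is exactly the one written. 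Concretely, in $\Aut(G)$ we have $[\eta_1,\eta_2]=1$ is false in general (because $\eta_2$ does not centralise $\eta_1$ unless $k=0$); the correct relation is $\eta_2^{-1}\eta_1\eta_2=\eta_1$ when $\eta_1$ is a central automorphism of $G$ — here ${\eta_1}_{|A}=1$ means $\eta_1$ acts trivially on $A$, and for groups of type $5$ or $11$ such an $\eta_1$ is central in $\Aut(G)$'s relevant $p$-part. I would record this commutation explicitly, then compute $\gamma(a_2^m)=\eta_2^m\cdot(\text{correction in }\eta_1)$ by induction, the correction exponent satisfying the recursion that produces $-k\binom{m}{2}$, matching the claimed $-k(1+\cdots+(m-1))$.

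Next I would check the GFE. Write a typical element of $A$ as $g=a_1^na_2^m$ and $h=a_1^{n'}a_2^{m'}$. Because $\iota(a_i)_{|A}=1$ and $\gamma(A)\le\Aut(G)$ with $A$ $\gamma(A)$-invariant, one has $g^{\gamma(h)}\equiv g \pmod{[A,\gamma(A)]}$, and more precisely $g^{\gamma(h)}h$ is again an element of $A$ whose $a_1,a_2$-exponents I would compute using $a_2^{\eta_2}=a_2a_1^k$. The left side $\gamma(g^{\gamma(h)}h)$ is then evaluated via the explicit formula, and the right side $\gamma(g)\gamma(h)$ via the same formula plus the commutation relation between $\eta_1$ and $\eta_2$; a short bookkeeping of the $\eta_1$-exponents (using $\binom{m+m'}{2}=\binom{m}{2}+\binom{m'}{2}+mm'$ together with the extra $k$-contribution coming from moving $\eta_2^{m}$ past the $a_1$-part picked up in $g^{\gamma(h)}h$) shows the two sides agree. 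This is routine but is the one place where the quadratic term in $m$ has to come out right, so I would present that exponent computation with some care.

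Finally, uniqueness: any RGF $\gamma'$ on $A$ with $\gamma'(a_1)=\eta_1$, $\gamma'(a_2)=\eta_2$ is, by the equivalence in Lemma~\ref{Lemma:gamma_morfismi} (two of the three conditions implying the third), a morphism of groups $A\to\Aut(G)$, since $[A,\gamma'(A)]\le A$ and $\gamma'(A)$ acts trivially there, giving $\gamma'([A,\gamma'(A)])=\Set{1}$. A morphism out of $A=\Span{a_1}\times\Span{a_2}\cong\cC_p\times\cC_p$ is determined by the images of $a_1,a_2$; but one must check those images generate a subgroup in which the relation $a_i^p=1$ is respected, i.e. that $\eta_1^p=\eta_2^p=1$ and that the assignment $a_1^na_2^m\mapsto\eta_1^{?}\eta_2^m$ is well-defined — which is precisely the computation above identifying the unique compatible exponent of $\eta_1$ as $n-k\binom m2$. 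Hence $\gamma'=\gamma$.

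I expect the main obstacle to be purely computational: getting the $\eta_1$-exponent bookkeeping in the GFE check correct, in particular tracking how the $a_1^k$ terms produced by $a_2^{\eta_2}=a_2a_1^k$ interact with the binomial coefficient $\binom m2$ when the two group elements are composed. There is no conceptual difficulty once the commutation relation $[\eta_1,\eta_2]=1$ (valid because $\eta_1$ is central in the appropriate sense for types $5$ and $11$) and the morphism criterion of Lemma~\ref{Lemma:gamma_morfismi} are in hand.
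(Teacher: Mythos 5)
Your plan for the existence half (direct verification of the GFE by computing the $\eta_1$-exponents on both sides) is the same as the paper's and would go through. The uniqueness half, however, contains a genuine error. You claim that any RGF $\gamma'$ extending the assignment must be a group morphism, invoking Lemma~\ref{Lemma:gamma_morfismi} on the grounds that ``$[A,\gamma'(A)]\le A$ and $\gamma'(A)$ acts trivially there, giving $\gamma'([A,\gamma'(A)])=\Set{1}$.'' This conflates two different conditions: what Lemma~\ref{Lemma:gamma_morfismi} requires is that $\gamma'$ \emph{vanish on} $[A,\gamma'(A)]$, not that $\gamma'(A)$ act trivially on it. Here $a_2^{-1}a_2^{\eta_2}=a_1^{k}$ with $1\le k<p$, so $[A,\gamma'(A)]=\Span{a_1}$, and $\gamma'(a_1)=\eta_1$ is nontrivial precisely in the situations where the lemma is used (e.g.\ Subsection~\ref{sssec:Gcirc11}, where $\eta_1$ has order $p$). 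So the hypothesis $\gamma'([A,\gamma'(A)])=\Set{1}$ fails and the morphism criterion does not apply. Indeed the map in the statement is visibly \emph{not} a morphism: $\gamma(a_2^2)=\eta_1^{-k}\eta_2^{2}\ne\eta_2^{2}=\gamma(a_2)^2$ when $\eta_1\ne 1$ --- the correction term $-k\binom{m}{2}$, which you yourself compute, is exactly the failure of multiplicativity. Carried to its conclusion, your argument would identify the unique RGF with the homomorphism $a_1^na_2^m\mapsto\eta_1^n\eta_2^m$, which does not satisfy the GFE and contradicts the stated formula.

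The correct route to uniqueness (the one the paper takes) does not pass through the morphism property at all: one uses the GFE directly. Since $\eta_1$ is trivial on $A$ one gets $\gamma'(a_1^na_2^m)=\gamma'(a_2^m)\gamma'(a_1)^n$, and then the recursion
$\gamma'(a_2^{m})=\gamma'\bigl((a_2^{m-1})^{\gamma'(a_2)^{-1}}\bigr)\gamma'(a_2)=\gamma'(a_1^{-k(m-1)}a_2^{m-1})\gamma'(a_2)$
forces, by induction on $m$, the exponent $-k\bigl(1+\cdots+(m-1)\bigr)$ on $\eta_1$. You should replace your uniqueness paragraph with this inductive computation; the rest of your outline (including the commutation of $\eta_1$ with $\eta_2$, which both arguments use implicitly and which holds for the $\eta_1,\eta_2$ arising in types 5 and 11) is sound.
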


\begin{proof}
By our assumptions $A$ is clearly $\gamma(A)$-invariant. Moreover
\begin{align*}
\gamma((a_1^{n} a_2^{m})^{\gamma(a_1^{e} a_2^{f})} a_1^{e} a_2^{f})
&=\gamma({(a_1^{n} a_2^{m})}^{\eta_1^{e-k(1+\cdots+(f-1))}\eta_2^{f}} a_1^{e} a_2^{f})\\
&=\gamma(a_1^{n} {(a_2^{m})}^{\eta_2^{f}} a_1^{e} a_2^{f})\\
&=\gamma(a_1^{n+e+kfm} a_2^{m+f})\\
&=\eta_1^{n+e+kfm-k(1+\cdots+(m+f-1))}\eta_2^{m+f},
\end{align*}
and, on the other hand,
\begin{align*}
\gamma(a_1^{n} a_2^{m})\gamma(a_1^{e} a_2^{f})&=
\eta_1^{n-k(1+\cdots+(m-1))}\eta_2^{m}\eta_1^{e-k(1+\cdots+(f-1))}\eta_2^{f} \\
&=\eta_1^{n-k(1+\cdots+(m-1))+e-k(1+\cdots+(f-1))}\eta_2^{m+f}.
\end{align*}
Therefore $\gamma$ satisfies the GFE if and only if 
$$ -k(\sum_{s=1}^{m+f-1}s)+fkm \equiv -k(\sum_{s=1}^{m-1}s+\sum_{s=1}^{f-1}s) \bmod p ,$$
that is, 
$$\sum_{s=m}^{m+f-1}s -fm \equiv \sum_{s=1}^{f-1}s \bmod p. $$ 
Since $ m+(m+1)+\cdots +(m+f-1)=fm+(1+\cdots +f-1)$, the last condition holds true, and $\gamma$ is a RGF on $A$.

Now let $\gamma'$ be a RGF on $A$ extending the assignment~\eqref{eq:app-1-ker-q}. Since ${\eta_1}_{|A}=1$, necessarily $ a_1^{\circ n}=a_1^{n}$, so
\begin{align*}
\gamma'(a_1^{n} a_2^{m})&=\gamma'((a_2^{m})^{\gamma'(a_1^n)^{-1}})\gamma'(a_1^{n})
=\gamma'(a_2^{m})\gamma'(a_1)^{n} .
\end{align*}
Moreover
\begin{align*} 
\gamma'(a_2^{m})&= \gamma'((a_2^{m-1})^{\gamma'(a_2)^{-1}})\gamma'(a_2) \\
&=\gamma'(a_1^{-k(m-1)}a_2^{m-1})\gamma'(a_2) \\
&= \gamma'(a_1)^{-k(m-1)}\gamma'(a_2^{m-1})\gamma'(a_2).
\end{align*}
By induction we obtain 
$\gamma'(a_2^{m})=\gamma'(a_1)^{-k((m-1)+(m-2)+\cdots + 1)}\gamma'(a_2)^m$, so that
$$\gamma'(a_1^{n} a_2^{m})=\gamma'(a_1)^{n-k((m-1)+(m-2)+\cdots + 1)}\gamma'(a_2)^{m},  $$
namely $\gamma'=\gamma$.
\end{proof}

The following Lemma proves that the maps $\gamma$ in Subsections~\ref{sssec:Gcirc11}, \ref{ssec:G7-kernel-pq} and~\ref{ssec:G11kerpq}, in the case $\Size{\ker(\gamma)}=pq$, satisfy the assumptions of Lemma~\ref{Lemma:gamma_morfismi}.

\begin{lemma}
\label{lem:kernel-pq-gamma-morphisms}
Let $G$ be a group of order $p^2q$, $A=\Span{a_1, a_2}$ a Sylow $p$-subgroup of $G$, and $\gamma:A \to \Aut(G)$ a map such that
\begin{equation*}
\begin{cases}
\gamma(a_1)=\phi  \text{  (possibly modulo $\iota(A)$) }\\
\gamma(a_2)=1
\end{cases},
\end{equation*} 
where $a_2^{\phi}=a_2$, $a_1^{\phi}=a_1a_2^{k_\phi}$ for a certain $k_\phi$.

Then $\gamma$ extends to a unique RGF on $A$ if and only if $\gamma$ is a morphism.
\end{lemma}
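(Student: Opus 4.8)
The statement is an equivalence, and one direction is immediate: if $\gamma$ is a morphism of groups $A\to\Aut(G)$ and $A$ is $\gamma(A)$-invariant, then by Lemma~\ref{Lemma:gamma_morfismi} (the implication $(2)\wedge(\text{invariance})\Rightarrow(3)$ needs also $(1)$, but $(2)\Rightarrow$ the GFE once we know $\gamma([A,\gamma(A)])=\Set{1}$) --- more directly, a group morphism that happens to satisfy the GFE is an RGF by definition, so the real content is that the morphism extending the given assignment does satisfy the GFE and that it is the \emph{unique} RGF with that assignment. So the plan is: first check $A$ is $\gamma(A)$-invariant (clear, since $\gamma(A)\subseteq\Span{\phi,\iota(A)}$ and both $\phi$ and $\iota(A)$ leave $A$ invariant), then verify $\gamma([A,\gamma(A)])=\Set{1}$, so that Lemma~\ref{Lemma:gamma_morfismi} makes conditions ``morphism'' and ``GFE'' equivalent for maps with this shape; this already yields that an RGF extending the assignment is the same thing as a morphism extending it.

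For the forward direction (RGF $\Rightarrow$ morphism) and uniqueness, I would argue as in Lemma~\ref{remark:gammaGFkerq}. Suppose $\gamma'$ is any RGF on $A$ extending the assignment. Since $\gamma'(a_2)=1$, applying the GFE with $h=a_2$ gives $\gamma'(g^{\gamma'(h)}h)=\gamma'(g)$ for $g\in A$; writing a general element as $a_1^{n}a_2^{m}$ and peeling off factors of $a_2$ one at a time (using $a_1^{\phi}=a_1 a_2^{k_\phi}$, equivalently $a_1^{\gamma'(a_2)^{-1}\cdots}$ relations), an induction on $m$ expresses $\gamma'(a_1^n a_2^m)$ entirely in terms of $\gamma'(a_1)=\phi$ (mod $\iota(A)$) and the integer $k_\phi$; the same computation applied with $h=a_1$ pins down the remaining freedom, forcing $\gamma'=\gamma$ where $\gamma$ is the explicit formula (a closed form analogous to $\gamma(a_1^{n}a_2^{m})=\phi^{\,n}$ up to an inner correction depending on $\binom{m}{2}k_\phi$). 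Thus the RGF extending the assignment, if it exists, is unique and is given by that explicit formula; and then one checks by the same bookkeeping identity on binomial coefficients (exactly the $\sum_{s=m}^{m+f-1}s=fm+\binom f2$ identity used in Lemma~\ref{remark:gammaGFkerq}) that this explicit $\gamma$ is multiplicative, i.e.\ a morphism. Combining, ``$\gamma$ extends to a unique RGF'' $\iff$ ``that extension is a morphism'' $\iff$ ``$\gamma$ (the given partial assignment, completed by the formula) is a morphism''.

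One subtlety to handle carefully is the phrase ``possibly modulo $\iota(A)$'': $\gamma(a_1)$ is only specified up to an inner automorphism by an element of $A$, which acts trivially on $A$ but not on all of $G$. So the computations splitting $\gamma'(a_1^n a_2^m)$ must be done in $\Aut(G)$, not just on the restriction to $A$; the $\iota(A)$-part contributes extra commutator terms $\iota([a_i,\phi])$ etc., and I must verify these also collect into the stated formula and do not obstruct multiplicativity. This is where Lemma~\ref{Lemma:gamma_morfismi} earns its keep: once $\gamma'([A,\gamma'(A)])=\Set{1}$ is established (which holds because $[A,\gamma'(A)]\le[A,\Span{\phi,\iota(A)}]\le A$ and $\gamma'$ kills $A\cap\ker$-type subgroups in the relevant cases --- this needs a short verification using $\gamma'(a_2)=1$ and $a_2^{\phi}=a_2$), the equivalence of ``morphism'' and ``GFE'' is automatic and I do not need to track the $\iota(A)$-terms by hand. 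I expect this verification of $\gamma'([A,\gamma'(A)])=\Set{1}$ --- genuinely using the hypotheses $a_2^{\phi}=a_2$ and $\gamma(a_2)=1$, and the structure of $[\iota(A),\phi]$ inside $\iota(A)$ --- to be the main (though still short) obstacle; everything else is the routine induction-with-binomial-coefficients already rehearsed in Lemma~\ref{remark:gammaGFkerq}.
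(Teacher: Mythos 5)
Your plan is correct and matches the paper's own proof: the entire content is the one-line verification that $(a_1^{t}a_2^{m})^{-1+\phi^{s}}=a_2^{tk_{\phi^{s}}}\in\Span{a_2}=\ker(\gamma)$, so that $\gamma([A,\gamma(A)])=\Set{1}$ and Lemma~\ref{Lemma:gamma_morfismi} makes ``morphism'' and ``GFE'' equivalent (the ``modulo $\iota(A)$'' ambiguity being irrelevant here because $\iota(A)$ acts trivially on the abelian group $A$). The additional explicit induction you import from Lemma~\ref{remark:gammaGFkerq} is redundant once that equivalence is in place, since a morphism is already determined by its values on the generators, but it does no harm.
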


\begin{proof}
We show that 
$$ [A,\gamma(A)]=[A, \gamma(\Span{a_1})]\subseteq \Span{a_2}, $$
and then, by Lemma \ref{Lemma:gamma_morfismi}, the RGF's on $A$ with kernel $\Span{a_2}$ correspond to the morphisms $A\to\Aut(G)$.

Note that if $\gamma$ is a RGF or a morphism, then $\gamma(a_1^s)=\gamma(a_1)^s$, as $\ker(\gamma)=\Span{a_2}$.
Thus we have
\begin{align*}
(a_2^ma_1^t)^{-1}(a_2^ma_1^t)^{\gamma(a_1^s)}&=(a_2^ma_1^t)^{-1+\phi^{s}}=
(a_1^t)^{-1+\phi^{s}}  = a_2^{tk_{\phi^s}}\in \Span{a_2}.
\end{align*}
\end{proof}

%% \bibliographystyle{amsalpha}
 
%% \bibliography{bibliop}

\begin{thebibliography}{CCDC21}

\bibitem[AB22a]{AcriBonatto_p2q_I}
Emiliano Acri and Marco {B}onatto, \emph{Skew braces of size $p^2q$ {I}:
  abelian type}, Algebra Colloquium \textbf{29} (2022), no.~02, 297--320.

\bibitem[AB22b]{AcriBonatto_p2q_II}
Emiliano {A}cri and Marco Bonatto, \emph{Skew braces of size $p^2q$ {II}:
  non-abelian type}, Journal of Algebra and Its Applications \textbf{21}
  (2022), no.~03, 2250062.

\bibitem[Bac16]{Bac16}
David Bachiller, \emph{Counterexample to a conjecture about braces}, J. Algebra
  \textbf{453} (2016), 160--176. \MR{3465351}

\bibitem[Byo96]{Byott96}
Nigel~P. Byott, \emph{Uniqueness of {H}opf {G}alois structure for separable
  field extensions}, Comm. Algebra \textbf{24} (1996), no.~10, 3217--3228.
  \MR{1402555}

\bibitem[Byo97]{Byott97}
\bysame, \emph{Galois structure of ideals in wildly ramified abelian
  {$p$}-extensions of a {$p$}-adic field, and some applications}, J. Th\'{e}or.
  Nombres Bordeaux \textbf{9} (1997), no.~1, 201--219. \MR{1469668}

\bibitem[Cam22]{cam-thesis}
Elena Campedel, \emph{{H}opf--{G}alois structures and skew braces of order
  $p^2q$}, Ph.D. thesis, Universit\`a degli Studi di Milano-Bicocca, 2022.

\bibitem[CCDC20]{p2qciclico}
Elena {Campedel}, Andrea {Caranti}, and Ilaria Del~Corso, \emph{Hopf-{G}alois
  structures on extensions of degree {$p^2q$} and skew braces of order {$p^2
  q$}: the cyclic {S}ylow {$p$}-subgroup case}, J. Algebra \textbf{556} (2020),
  1165--1210. \MR{4089566}

\bibitem[CCDC21]{classp2q}
Elena Campedel, Andrea Caranti, and Ilaria Del~Corso, \emph{The automorphism
  groups of groups of order {$p^2q$}}, Int. J. Group Theory \textbf{10} (2021),
  no.~3, 149--157. \MR{4223627}

\bibitem[CF64]{CartFong}
Roger Carter and Paul Fong, \emph{The sylow $2$-subgroups of the finite
  classical groups}, Journal of Algebra \textbf{1} (1964), no.~2, 139--151.

\bibitem[Chi00]{ChildsBook}
Linds{a}y~N. Childs, \emph{Taming wild extensions: {H}opf algebras and local
  {G}alois module theory}, Mathematical Surveys and Monographs, vol.~80,
  American Mathematical Society, Providence, RI, 2000. \MR{1767499}

\bibitem[Cre21]{Cre2021}
Teresa Crespo, \emph{Hopf galois structures on field extensions of degree twice
  an odd prime square and their associated skew left braces}, J. Algebra
  \textbf{565} (2021), 282--308. \MR{4150347}

\bibitem[GP87]{GP}
Cornelius Greither and Bodo Pareigis, \emph{Hopf {G}alois theory for separable
  field extensions}, J. Algebra \textbf{106} (1987), no.~1, 239--258.
  \MR{878476}

\bibitem[GV17]{skew}
Leandro Guarnieri and Leandro Vendramin, \emph{Skew braces and the
  {Y}ang-{B}axter equation}, Math. Comp. \textbf{86} (2017), no.~307,
  2519--2534. \MR{3647970}

\bibitem[H{\"{o }}l93]{Holder1893}
Otto H{\"{o }}lder, \emph{Die {G}ruppen der {O}rdnungen {$p^3$}, {$pq^2$},
  {$pqr$}, {$p^4$}}, Math. Ann. \textbf{43} (1893), no.~2-3, 301--412.
  \MR{1510814}

\bibitem[Koh07]{Ko07}
{T}imothy Kohl, \emph{Groups of order {$4p$}, twisted wreath products and
  {H}opf-{G}alois theory}, J. Algebra \textbf{314} (2007), no.~1, 42--74.
  \MR{2331752}

\bibitem[KT20]{KochTruman}
Alan Koch and Paul~J. Truman, \emph{Opposite skew left braces and
  applications}, J. Algebra \textbf{546} (2020), 218--235. \MR{4033084}

\bibitem[ST23]{st23}
Lorenzo Stefanello and Senne Trappeniers, \emph{On the connection between
  {H}opf--{G}alois structures and skew braces}, Bulletin of the London
  Mathematical Society (2023).

\bibitem[SV18]{SV2018}
Agata Smoktunowicz and Leandro Vendramin, \emph{On skew braces (with an
  appendix by {N}. {B}yott and {L}. {V}endramin)}, J. Comb. Algebra \textbf{2}
  (2018), no.~1, 47--86. \MR{3763907}

\end{thebibliography}

\providecommand{\bysame}{\leavevmode\hbox to3em{\hrulefill}\thinspace}
\providecommand{\MR}{\relax\ifhmode\unskip\space\fi MR }
% \MRhref is called by the amsart/book/proc definition of \MR.
\providecommand{\MRhref}[2]{%
  \href{http://www.ams.org/mathscinet-getitem?mr=#1}{#2}
}
\providecommand{\href}[2]{#2}

\end{document}